\long\def\symbolfootnote[#1]#2{\begingroup%
\def\thefootnote{\fnsymbol{footnote}}\footnote[#1]{#2}\endgroup}
\newenvironment{customtheorem}[1]
  {\innercustomthm}
  {\endinnercustomthm}
\newtheorem{theorem}{Theorem}[section]
\newtheorem{proposition}[theorem]{Proposition}
\newtheorem{corollary}[theorem]{Corollary}
\newtheorem{lemma}[theorem]{Lemma}
\newtheorem{conjecture}[theorem]{Conjecture}
\theoremstyle{definition}
\newtheorem{definition}[theorem]{Definition}
\newtheorem{remark}[theorem]{Remark}
\newtheorem*{namedtheorem}{\theoremname}
\newcommand{\theoremname}{testing}
\DeclareMathSymbol{\Alpha}{\mathalpha}{operators}{"41}
\DeclareMathSymbol{\Beta}{\mathalpha}{operators}{"42}
\DeclareMathSymbol{\Epsilon}{\mathalpha}{operators}{"45}
\DeclareMathSymbol{\Zeta}{\mathalpha}{operators}{"5A}
\DeclareMathSymbol{\Eta}{\mathalpha}{operators}{"48}
\DeclareMathSymbol{\Iota}{\mathalpha}{operators}{"49}
\DeclareMathSymbol{\Kappa}{\mathalpha}{operators}{"4B}
\DeclareMathSymbol{\Mu}{\mathalpha}{operators}{"4D}
\DeclareMathSymbol{\Nu}{\mathalpha}{operators}{"4E}
\DeclareMathSymbol{\Omicron}{\mathalpha}{operators}{"4F}
\DeclareMathSymbol{\Rho}{\mathalpha}{operators}{"50}
\DeclareMathSymbol{\Tau}{\mathalpha}{operators}{"54}
\DeclareMathSymbol{\Chi}{\mathalpha}{operators}{"58}
\DeclareMathSymbol{\omicron}{\mathord}{letters}{"6F}
\newcommand{\R}{\mathbb{R}}
\newcommand{\C}{\mathbb{C}}
\newcommand{\N}{\mathbb{N}}
\newcommand{\Z}{\mathbb{Z}}
\newcommand{\ZZ}{{\widehat{\mathbb Z}}}
\newcommand{\Q}{\mathbb{Q}}
\newcommand{\bt}{\bullet}
\newcommand\Res{\operatorname{Res}}
\def\Aut{\operatorname{Aut}}
\def\Sp{\operatorname{Sp}}
\def\SL{\operatorname{SL}}
\def\PSL{\operatorname{PSL}}
\def\Inn{\operatorname{Inn}}
\def\inn{\operatorname{inn}}
\def\Out{\operatorname{Out}}
\def\Mod{\operatorname{Map}}
\def\Hom{\operatorname{Hom}}
\def\Der{\operatorname{Der}}
\def\PG{\mathrm{P}\Gamma}
\def\Star{\operatorname{Star}}
\def\Link{\operatorname{Link}}
\def\Isom{\operatorname{Isom}}
\def\Spec{\operatorname{Spec}}
\def\P{{\mathbb P}}
\def\H{{\mathbb H}}
\def\M{{\mathbb M}}
\def\k{\Bbbk}
\def\kk{{\overline{\k}}}
\def\cP{{\mathcal P}}
\def\cL{{\mathcal L}}
\def\cI{{\mathcal I}}
\def\ccI{{\mathscr I}}
\def\hL{\widehat{\mathcal L}}
\def\cA{{\mathcal A}}
\def\cT{{\mathcal T}}
\def\wT{{\widehat{\mathcal T}}}
\def\cC{{\mathscr C}}
\def\cF{{\mathcal F}}
\def\wF{{\widehat{\mathcal F}}}
\def\cG{{\mathcal G}}
\def\cH{{\mathcal H}}
\def\wH{{\widehat{\mathcal H}}}
\def\cM{{\mathcal M}}
\def\cP{{\mathcal P}}
\def\hP{{\widehat\Pi}}
\def\bM{{\overline{\mathcal M}}}
\def\wM{{\widehat{\mathcal M}}}
\def\dd{\partial}
\def\a{{\alpha}}
\def\l{{\lambda}}
\def\L{{\Lambda}}
\def\g{{\gamma}}
\def\G{{\Gamma}}
\def\kG{{\check\Gamma}}
\def\kPG{\mathrm{P}{\check\Gamma}}
\def\hG{{\widehat\Gamma}}
\def\hPG{\mathrm{P}{\widehat\Gamma}}
\def\s{{\sigma}}
\def\bt{\bullet}
\def\kC{{\check  C}}
\def\hC{{\widehat{C}}}
\def\hI{{\hat{\mathrm{I}}}}
\def\cD{{\mathcal D}}
\def\ssm{\smallsetminus}
\def\ol{\overline}
\def\wh{\widehat}
\def\wt{\widetilde}
\def\ul{\underline}
\def\ra{\rightarrow}
\def\hookra{\hookrightarrow}
\def\co{\colon\thinspace}
\begin{document}

\title{Automorphisms of procongruence curve and pants complexes}
 
 \author[M. Boggi]{Marco Boggi}\thanks{Marco Boggi was partially supported by CAPES - C\'odigo de Financiamento 001 -
and by Institut Fourier, Laboratoire de Mathematiques UMR 5582,   Universit\'e Grenoble Alpes, France.}
\address{Instituto de Matem\'atica e Estadistica, Universidade Federal Fluminense \\
S\~ao Domingos, Niter\'oi - State of Rio de Janeiro, 24210-200, Brazil.}
\email{marco.boggi@gmail.com}

\author[L. Funar]{Louis Funar}
\address{Univ. Grenoble Alpes, CNRS, Institut Fourier,   
38000 Grenoble, France}
\email{louis.funar@univ-grenoble-alpes.fr} 



\begin{abstract}    

In this paper we study the automorphism group of the procongruence mapping class group through its action on the associated procongruence curve 
and pants complexes. Our main result is  a rigidity theorem for the procongruence completion of the pants complex. As an application we prove that
moduli stacks of smooth algebraic curves satisfy a weak anabelian property in the procongruence setting.
\bigskip

\noindent {\bf AMS Math Classification:} 14G32, 20E18, 14D23, 20F34, 57M10.

\end{abstract}

\maketitle

\section{Introduction}
Let $S=S_{g,n}$ be a closed orientable surface of genus $g(S)=g$ from which $n$ points have been removed.   We assume that $S$ has 
negative Euler characteristic, i.e.\ $2-2g-n<0$. Let $\Mod(S)$ be the extended mapping class group of the surface $S$, namely the group of isotopy 
classes of diffeomorphisms of $S$. The mapping class group $\G(S)$ is the subgroup of $\Mod(S)$ consisting of those elements which preserve a
fixed orientation of the surface. 

Let $\cM(S)$ be the Deligne--Mumford (briefly DM) moduli stack over $\Q$ parameterizing smooth algebraic 
curves whose complex model is diffeomorphic to $S$. 
For an algebraic stack $Y$ defined over $\k$ and a field extension $\k\subseteq\k'$, let $Y_{\k'}:=Y\times_\k\Spec\k'$. 
Then, after a choice of base point, the mapping class group $\G(S)$ identifies 
with the topological fundamental group of the complex moduli stack $\cM(S)_\C$.

In a series of papers (cf.\ \cite{[I1]}, \cite{[I2]}, \cite{[McC]}), Ivanov, for $g\geq 3$, and McCarthy, for $g\leq 2$, 
determined the automorphisms groups of $\G(S)$ and $\Mod(S)$. An essential tool was the \emph{complex of curves} $C(S)$. 
This is the (abstract) simplicial complex whose simplices consist of sets of isotopy classes of nonperipheral simple closed curves on $S$ 
which admit disjoint representatives. Its dimension is $d(S)-1=3g-2+n$, where $d(S)$ is called the \emph{modular dimension} of $S$, because it is also 
the dimension of the moduli stack $\cM(S)$. There is a natural action of $\Mod(S)$ on $C(S)$.
Ivanov observed that this action factors through the group of inner automorphisms $\Inn\Mod(S)$ and extends to a homomorphism 
$\Aut(\Mod(S))\to\Aut(C(S))$. Away from a few exceptions (for $d(S)\leq 3$), the latter homomorphism is injective. 
A fundamental result of Ivanov then states that, for $d(S)>2$,
the homomorphism $\Inn(\Mod(S))\to\Aut(C(S))$ is also surjective. This immediately implies that, for $d(S)> 3$, we have $\Aut(\Mod(S))=\Inn(\Mod(S))$
and $\Aut(\G(S))=\Inn(\Mod(S))$ (see Corollary~\ref{Theorem 2.4} for a more precise result).

Let $\hG(S)$ be the profinite completion of the mapping class group $\G(S)$. The automorphism group $\Aut(\hG(S))$ is of great arithmetic significance. 
By Bely\v{\i} theorem (cf.\ \cite{Belyi}), there is indeed a natural faithful representation $G_\Q\hookra\Out(\hG(S))$, where $G_\Q$ is the absolute 
Galois group of the rationals. Classical Grothendieck-Teichm\"uller theory can then be described as the attempt to corner the image of $G_\Q$ inside 
$\Out(\hG(S))$ (cf.\ \cite{Drinfeld}, \cite{Ihara}, \cite{LNS}).

The basic idea of Grothendieck-Teichm\"uller theory is to restrict to the subgroup of automorphisms of $\hG(S)$ which satisfy the same conditions 
naturally satisfied by those coming from the Galois action. This leads to consider the subgroup $\Out^\ast(\hG(S))$ of $\Out(\hG(S))$, 
roughly described as the subgroup of elements which preserve the conjugacy classes of a special set of geometrically significant subgroups. 
In fact, thanks to a result of Harbater and Schneps (cf.\ \cite{HS} for details),
the profinite Grothendieck-Teichm\"uller group $\wh{\mathrm{GT}}$ can be identified with $\Out^\ast(\hG(S))$
for $S$ the $5$-punctured sphere. Harbater and Schneps then proceeded to show that there is a natural isomorphism
$\wh{\mathrm{GT}}\cong\Out^\ast(\hG(S))$ for all $S$ such that $g(S)=0$ and $d(S)>1$. 

The main stumbling block in extending genus $0$ Grothendieck-Teichm\"uller theory to higher genus is the fact that, in the profinite setting contrary
to the topological one, there is no satisfactory geometric description of the subgroups of $\hG(S)$ arising as centralizers of the "profinite" Dehn twists
(cf.\ Section~\ref{section:prodehntwists}). A way to circumvent this difficulty is to consider, instead of $\hG(S)$, the \emph{procongruence mapping 
class group} $\kG(S)$, defined as the image of the natural representation $\hG(S)\to\Out(\wh{\pi}_1(S))$.
This approach was systematically developed in \cite{[B3]} (cf.\ also \cite{HM2}) where a complete description of centralizers of procongruence Dehn
twists is given. 

Note that, since the congruence subgroup property holds in genus $0$, there are natural isomorphisms $\hG(S)\cong\kG(S)$ and then
$\wh{\mathrm{GT}}\cong\Out^\ast(\kG(S))$ for $g(S)=0$. Moreover, by Corollary~7.6 in \cite{[B3]}, for instance, there is a natural faithful
representation $G_\Q\hookra\Out^\ast(\kG(S))$ for all genera (cf.\ Definition~\ref{inertiapreservingdef} for the precise definition of $\Out^\ast(\kG(S))$). 
Therefore, Grothendieck-Teichm\"uller theory can be rephrased in this context.

A key property of the group $\Aut^\ast(\kG(S))$ is that it admits a natural action on a profinite version of the curve complex and 
can then be approached in a manner similar to the group $\Aut(\G(S))$. The {\em procongruence curve complex} $\kC(S)$ of $C(S)$
is an abstract simplicial profinite complex (cf.\ Definition~3.2 in \cite{[B3]}) naturally associated to the congruence completion $\kG(S)$ of the mapping 
class group. There is a natural continuous action of $\kG(S)$ on $\kC(S)$ which factors through an action of $\Inn\kG(S)$ and then extends to an
action $\Aut^\ast(\kG(S))\to\Aut(\kC(S))$. In complete analogy with the topological case, this homomorphism is injective, except for a few cases 
when $d(S)\leq 3$ (cf.\ Theorem~\ref{faithfulness}). A completely different matter is to understand whether this homomorphism is surjective.

In this paper, we will deal with a somewhat more treatable but related problem. Let $C_P(S)$ be the \emph{pants graph} associated to the surface $S$. 
The vertices of $C_P(S)$ are the facets of $C(S)$, namely the maximal multicurves. Two vertices are connected by an edge if the corresponding 
multicurves share a subset of $d(s)-1$ elements while the remaining pair of curves has minimal nontrivial geometric intersection 
(cf.\ Section~\ref{pantsgraphdef}). As done for the curve complex, we can associate to the congruence completion $\kG(S)$ a 
profinite version of the pants graph. This is the {\em procongruence pants graph} $\kC_P(S)$, a $1$-dimensional 
abstract simplicial profinite complex endowed with a natural continuous action of $\kG(S)$. 

A \emph{level structure} $\cM^\l$ is a finite, geometrically connected, \'etale covering of the moduli stack $\cM(S)$ (cf.\ Section~\ref{levels}). 
This admits a canonical extension to a (in general, ramified) covering $\bM^\l\to\bM(S)$,  where $\bM(S)$ is the \emph{DM compactification} 
of $\cM(S)$ (cf.\ Section~\ref{moduli}).
Then, a basic feature of $\kC_P(S)$ is that it can be realized as the inverse limit of $1$-skeletons of some natural triangulations of the $1$-dimensional 
strata in the DM boundary of $\bM^\l_\C$. It is thus natural to expect  $\kC_P(S)$ to be a more rigid
object than the procongruence curve complex $\kC(S)$. The main result of the paper shows that this is indeed the case.
 
More precisely, the natural action of $\kG(S)$ on $\kC_P(S)$ factors through a homomorphism $\Inn(\kG(S))\to\Aut(\kC_P(S))$,
which is injective for $d(S)>1$. Then, the main result of the paper is that, in analogy with what happens in the topological setting, 
this map has cofinite image (cf.\ Theorem \ref{completepantsrigidity}):

\begin{customtheorem}{A}
For $S\neq S_{1,2}$ a connected hyperbolic surface such that $d(S)>1$, there is an exact sequence:
\[1\to\Inn(\kG(S))\to \Aut(\kC_P(S))\to \prod_{O(S)} \{\pm 1\},\] 
where $O(S)$ is the finite set of the topological types of $(d(S)-1)$-multicurves on $S$. For $S$ of type $(1,2)$, 
the group $\Aut(\kC_P(S_{1,2}))$ must be replaced with the subgroup of those automorphisms preserving the set of separating curves.
\end{customtheorem}  

Note that $O(S)$ can equivalently be described as the set of $\G(S)$-orbits of $(d(S)-1)$-multicurves: $C(S)_{d(S)-2}/\G(S)=\kC(S)_{d(S)-2}/\kG(S)$.
 
The second part of this paper is devoted to arithmetic applications of the procongruence rigidity of pants complexes.
Let $\k$ be a sub-$p$-adic field, that is to say a subfield of a finitely generated extension of $\Q_p$ for some prime $p$ and let $G_\k$ be its Galois group.  
By a classical result of S. Mochizuki, given a smooth hyperbolic curve $C$ and any smooth variety $X$, both defined over $\k$, there is a natural 
bijection between the set of dominant morphisms $\Hom_\k^\mathrm{dom}(X,C)$ and the set of $G_\k$-equivariant open homomorphism, up to inner 
automorphisms, $\Hom_{G_\k}^\mathrm{open}((\pi_1^\mathrm{et}(X_\kk),\pi_1^\mathrm{et}(C_\kk))^\mathrm{out}$  (cf.\ Theorem~A in \cite{[Mo]}). 
This property, known as the {\em anabelian property for hyperbolic curves}, had been conjectured by Grothendieck in \cite{[G]} for number fields.

In this paper, building on Mochizuki theorem and as an application of Theorem~A, we will be able to prove a procongruence version of the 
anabelian conjecture for moduli stacks of smooth hyperbolic curves, possibly with level structure.

Working with stacks, instead of varieties, entails the additional difficulty that they form a strict $(2,1)$-category, 
that is to say their $\Hom$ functors take values in groupoids
rather than in sets. More precisely, for each pair of stacks $X$ and $Y$, it is given a category $\Hom(X,Y)$, whose objects are the
$1$-morphisms $f\co X\to Y$ and such that morphisms between two $1$-morphisms $f,g\co X\to Y$, denoted by a double arrow 
$\alpha\co f\Rightarrow g$ and called $2$-morphisms, are invertible. We will denote the objects of the category $\Hom(X,Y)$ 
by $1$-$\Hom(X,Y)$ and its morphisms by $2$-$\Hom(X,Y)$.

Let  $\cM^\l$ and $\cM^\mu$ be level structures over the moduli stack $\cM(S)$ and $\cM(S')$, respectively, where, for simplicity, we further assume 
that both $S$ and $S'$ are different from $S_{0,4}$.
We then formulate an {\em anabelian conjecture for moduli stacks of curves with level structure} (cf.\ Conjecture~\ref{anabelianconjmod} for a more
general version of this conjecture) as the statement that: 
\begin{enumerate}
\item there is a natural bijection 
$1$-$\Isom_\k(\cM^\l_\k,\cM^\mu_\k)\stackrel{\sim}{\to}\Isom_{G_\k}(\pi_1^\mathrm{et}(\cM^\l_\kk),\pi_1^\mathrm{et}(\cM^\mu_\kk))^\mathrm{out}$,
where the latter is the set of outer $G_\k$-equivariant isomorphisms; 
\item the group of generic automorphisms $Z^\mu$ of the DM stack $\cM_\k^\mu$ identifies with the center of $\pi_1^\mathrm{et}(\cM^\mu_\kk)$.
\end{enumerate}

Since there is a natural isomorphism $Z^\mu\cong\Aut_\k(\mathrm{id}_{\cM^\mu_\k})$ (cf.\ (i) of Lemma~\ref{1-2-aut}) and the set 
$2$-$\Isom_\k(\cM^\l_\k,\cM^\mu_\k)$, if nonempty, is a trivial $\Aut_\k(\mathrm{id}_{\cM^\mu_\k})$-torsor over $1$-$\Isom_\k(\cM^\l_\k,\cM^\mu_\k)$, 
the above two conditions imply that the groupoid $\Isom_\k(\cM^\l_\k,\cM^\mu_\k)$ can indeed be recovered from the \'etale fundamental groups 
$\pi_1^\mathrm{et}(\cM^\l_\kk)$ and $\pi_1^\mathrm{et}(\cM^\mu_\kk)$ (cf.\ Section~\ref{sect:anabelian} for details).   

In our setting, we need to replace the geometric \'etale fundamental group  of the moduli 
stack of curves with the procongruence completion of the mapping class group or, equivalently, its image under the universal monodromy representation. 
Of course, if the congruence subgroup property holds, the prongruence completion coincides with the profinite completion. 
This property is known to hold for genus at most $2$ (cf.\ \cite{Asada,hyp}). 
Moreover, we will restrict to $\ast$-isomorphisms between open subgroups of  
procongruence completions $\kG$, namely those which preserve the set of stabilizers for the action on the procongruence curve complex.  
 
We can now formulate our main anabelian result (cf.\ Theorem \ref{mainanabelian}):

\begin{customtheorem}{B}
Let $\kG^\l$ and $\kG^\mu$ be open subgroups of $\kG(S)$ and $\kG(S')$, respectively, (where we assume that $S,S'\neq S_{0,4}$) 
and $\k$ a sub-$p$-adic field over which the associated level structures are both defined. Let $\Isom_{G_\k}^\ast(\kG^\l,\kG^\mu)^\mathrm{out}$ be 
the set of orbits for the action of $\kG^\mu$ by inner automorphisms on the set of $G_k$-equivariant isomorphisms. We then have:
\begin{enumerate}
\item there is a natural bijection $1$-$\Isom_\k(\cM^\l_\k,\cM^\mu_\k)\stackrel{\sim}{\to}\Isom_{G_\k}^\ast(\kG^\l,\kG^\mu)^\mathrm{out}$;
\item the group of generic automorphisms $Z^\mu$ of $\cM_\k^\mu$ identifies with the center of $\kG^\mu$.
\end{enumerate}
\end{customtheorem}

\begin{remark}Since the moduli space of genus $0$ curves with $4$ \emph{ordered} labels $\cM_{0,4}$ identifies with a level structure over 
the moduli stack of $1$-pointed, genus $1$ curves $\cM_{1,1}$, the above theorem also holds 
for $S= S_{0,4}$ (resp.\ $S'= S_{0,4}$) as soon as we assume that the level $\G^\l$ (resp.\ $\G^\mu$) is contained in the \emph{pure} 
mapping class group. For the general case, the problem is to deal with the group of generic automorphisms of the associated level
structure, which, for $S=S_{0,4}$, might be rather complicated. In fact, if we "erase" the group of generic automorphisms (cf.\ Section~\ref{sect:anabelian}), 
we can then state the anabelian result without any restriction on $S$ (cf.\ Theorem~\ref{mainanabelian3}). 
\end{remark}

Thanks to this result, we can also determine an arithmetic version of $\Aut^\ast(\kG(S))$. 
More precisely, we define the \emph{arithmetic procongruence mapping class group} $\kG(S)_\Q$ to be the image 
of the \'etale fundamental group of $\cM(S)_\Q$ via the monodromy representation associated to the universal punctured curve over $\cM(S)_\Q$.
This group fits in the short exact sequence:
\[1\to\kG(S)\to\kG(S)_\Q\to G_\Q\to 1.\]

There is a natural action of $\kG(S)_\Q$ on the procongruence curve complex $\kC(S)$. For an open subgroup $U$ of $\kG(S)_\Q$, we define
$\Aut^\ast(U)$ to be the group of automorphisms of $U$ which preserve the set of stabilizers for the action of $U$ on the set of $0$-simplices $\kC(S)_0$.
We then have the following absolute anabelian (cf.\ Remark~\ref{absan}) version of Theorem~B (cf.\ Theorem~\ref{absoluteanabelian}):

\begin{customtheorem}{C}For $d(S)>1$, let $U$ be an open normal subgroup of the arithmetic procongruence mapping class group 
$\kG(S)_\Q$ and let us denote by $Z(U)$ its center. There is then a short exact sequence:
\[1\to \Hom(U/Z(U),Z(U))\to\Aut^\ast(U)\to\Inn(\kG(S)_\Q)\to 1.\]
In particular, for $S\neq S_{1,2},S_{2}$, we have:
$\Aut^\ast(U)=\Inn(\kG(S)_\Q)\cong\kG(S)_\Q$.
\end{customtheorem}

\tableofcontents

The structure of this article is as follows. 
In Section~\ref{sect1} we collect the relevant definitions for later use, in particular 
we introduce curve complexes and pants complexes in the topological setting.  
In Section~\ref{rigiditycurvecomplex}, first, we explain the rigidity of the curve complex, after Ivanov. 
Then, we give a short proof of the rigidity of the pants complexes, after Margalit, which prepares the ground 
for our approach to the profinite case.   

The main objects of study of the article are the profinite avatars of curve and pants complexes, simple closed curves, braid and Dehn twists.
These are introduced in Section~\ref{sect3}. 

In order to extend Ivanov's approach to the rigidity of curve complexes in the profinite context, we 
need a parameterization of profinite Dehn twists and a description of their centralizers. 
Even though this is presently lacking, the first author obtained these results 
after replacing the full profinite topology on mapping class groups by the more tractable congruence topology. 

In order to keep the exposition self-contained  we state the results from \cite{[B3]} which will be needed later.  
Specifically, Theorem~\ref{multitwists}  gives necessary and sufficient conditions for two procongruence multitwists 
to be equal and Corollary~\ref{centralizers multitwists} shows that the centralizers of procongruence multitwists coincide 
with the stabilizers of the corresponding simplices of the procongruence curve complex. 
In particular, we can define the topological type of a procongruence multitwist, as in the topological case. 
 Along with Theorem~\ref{stabmulticurves}, this also allows to determine
the centralizers of open subgroups of the procongruence mapping class group (cf.\ Theorem~\ref{slim}). 

The main result (Theorem~\ref{typepreservation}) of Section~\ref{rigidityprocurvecomplex} establishes the procongruence 
analogue of the  fact that every automorphism of a curve complex (except for a $2$-punctured torus) 
preserves the topological types of multicurves.  As a first step, we show that two 
procongruence curve complexes are isomorphic if and only if, in the topological case, 
the corresponding curve complexes are isomorphic (cf.\ Theorem \ref{nonisomorphism}). 

Section~\ref{sect6} gives a geometric interpretation of the pants graph and of its procongruence completion in terms, 
respectively, of the Bers bordification of the Teichm\"uller space and of the Deligne-Mumford compactifications of level structures. 
The key observation is that the pants graph describes the $1$-skeleton of a natural triangulation of the $1$-dimensional stratum
of the Bers bordification of the Teichm\"uller space. This implies that the quotient of the pants graph by a level of the mapping class 
group describes the $1$-skeleton of a triangulation of the $1$-dimensional stratum of the Deligne-Mumford compactification of
the associated level structure over the moduli space of curves (cf.\ Proposition \ref{triangulation}). 
The main result of this section is that the procongruence curve complex can be reconstructed from the procongruence pants
graph (cf.\ Theorem \ref{autinjection}). 

In Section~\ref{automorphisms}, we show that the natural actions of the procongruence mapping class group 
on the procongruence curve and pants complexes, with the usual low genera exceptions, are both faithful (cf.\ Theorem \ref{faithfulness}).
We then observe that the action of the procongruence mapping class group on the procongruence curve complex extends to an action of a special
subgroup of the automorphisms group of the procongruence mapping class group. This is the closed subgroup consisting of those automorphisms
which preserve the set of stabilizers for the action of the procongruence mapping class group on the procongruence curve complex
(the so called $\ast$-condition).

The rigidity of the procongruence pants complexes is proved in Section~\ref{prorigidity} (cf.\ Theorem \ref{completepantsrigidity}). 
We start by analyzing the $1$-dimensional case. In this case, the finite quotients by levels of the pants graph identify with the $1$-skeletons 
of natural triangulations of the Deligne-Mumford compactification of the associated level structures and the result is more or less straightforward. 
Once orientations are taken into account, the results of Sections~\ref{rigidityprocurvecomplex} and~\ref{sect6} allow to "globalize"
the $1$-dimensional case to higher dimension.

Section~\ref{sect:anabelian} is devoted to a procongruence anabelian result for moduli stacks of curves with level structures, 
which, roughly speaking, states that isomorphisms between them correspond to orbits of Galois-equivariant $\ast$-isomorphisms  
between the corresponding procongruence levels (cf.\ Theorem \ref{mainanabelian}). 
In the last section, we deduce from it the absolute anabelian result stated at the end of the introduction, which describes the 
$\ast$-automorphisms of arithmetic proncongruence mapping class groups (cf.\ Theorem~\ref{absoluteanabelian}). 
\medskip

\noindent
{\bf Acknowledgements}. 
The starting point of this work was an unachieved project by Marco Boggi and Pierre Lochak (cf.\  \cite{BP}). 
While the first draft of this paper was being elaborated, Pierre Lochak
dropped from it, due to divergences with the other two authors. He later accepted to cosign
the first version of the paper, but, eventually, decided to withdraw from the revised version of the paper. 
We thank him for the discussions we had. We are grateful to the referees for their valuable comments  which helped us 
to improve the paper.

\section{Definitions}\label{sect1}

\subsection{} 
A surface $S$ is of (topological) type $(g,n)$ if 
it is diffeomorphic to $S_{g,n}$, namely the closed orientable surface of genus $g$ with $n$ deleted points.
We occasionally write $g(S)$ for the genus of $S$. 
The surface $S_{g,n}$ is {\it hyperbolic} if  $2g-2+n>0$. We will restrict henceforth to hyperbolic surfaces.

\subsection{}
Throughout this paper, we denote by $Z(G)$ the center of a group $G$ and, given a subgroup $H$ of $G$, we denote by $Z_G(H)$ the centralizer 
of $H$ in $G$ and by $N_G(H)$ the normalizer of $H$ in $G$.

\subsection{}\label{moduli}
Attached to a hyperbolic surface $S$ of type $(g,n)$ are the {\it Teichm\"uller space} $\cT(S)$ and the \emph{ Deligne--Mumford (DM) moduli stack} 
$\cM(S)$, parameterizing smooth algebraic curves whose complex model is diffeomorphic to $S$. 
We will also consider the \emph{DM compactification} $\bM(S)$ of $\cM(S)$. This is the DM moduli stack which parameterizes the \emph{stable curves}
obtained as nodal degenerations of the curves parameterized by $\cM(S)$. All these moduli stacks have dimension $d(S)=d_{g,n}=3g-3+n$, which we then 
call the \emph{modular dimension} of $S$ or of the corresponding type. 

For $S=S_{g,n}$, sometimes, we simply write $\cT_{g,n}$ for $\cT(S_{g,n})$, $\cM_{g,[n]}$ for $\cM(S_{g,n})$ and $\bM_{g,[n]}$ for $\bM(S_{g,n})$.
We use the brackets ($[n]$) to stress that the points are unlabelled, that is to say, they are considered as an unordered set.  
Instead, we denote by $\cM_{g,n}$ (resp.\  $\bM_{g,n}$) the moduli stack of algebraic projective curves of genus $g$ with $n$ labelled points (or punctures).

\subsection{} Let $\Mod(S)$ denote the \emph{extended mapping class 
group} of $S$, i.e.\ the group of isotopy classes of diffeomorphisms of $S$. The index 2
subgroup of orientation preserving isotopy classes is denoted $\Mod^+(S)$.  More generally 
an upper $+$ will mean {\it orientation preserving}.  We write 
$\G(S)=\Mod^+(S)$ and call it the {\it (Teichm\"uller) modular group} or, simply, the \emph{mapping class group}. It can be identified with
the topological fundamental group of the complex DM stack $\cM(S)_\C$ and then with the covering transformation group of the 
unramified cover $\cT(S)\to\cM(S)_\C$. So we have the tautological exact sequence:
\begin{equation}\label{A1}1\rightarrow \G(S)\rightarrow \Mod(S) \rightarrow \{\pm 1\} \rightarrow 1.
\end{equation} 

Let $\PG(S)$ be the \emph{pure} mapping class group of $S$, that is to say, the subgroup of $\G(S)$ consisting of elements which pointwise 
preserve each puncture of $S$ (note that in \cite{[B1]}, \cite{hyp}, \cite{[B3]}, \cite{[BZ2]} and \cite{congtop} the pure 
mapping class group is denoted by $\G(S)$). It is described by the short exact sequence:
\begin{equation}\label{A2}1\rightarrow \PG(S)\rightarrow \G(S) \rightarrow \Sigma_n\rightarrow 1,
\end{equation} 
where $\Sigma_n$ is the symmetric group on $n$ letters.

The group $\G(S)$ (resp.\ $\PG(S)$) is centerfree, except for the types $(1,1)$, $(1,2)$ and $(2,0)$ (resp.\ $(1,1)$ and $(2,0)$), 
where the center is isomorphic to the cyclic group of order $2$ and is generated by the hyperelliptic involution. 

Sometimes, for $S=S_{g,n}$, we will denote $\PG(S)$ and $\G(S)$ by $\G_{g,n}$ and $\G_{g,[n]}$, respectively.

\subsection{}\label{levels} A \emph{level} $\G^\l$ is a finite index subgroup of $\G(S)$. The \emph{level structure} $\cM^\l$ is the finite \'etale covering 
of the DM stack $\cM(S)$ which is naturally associated to the level $\G^\l$ by the above identification of $\G(S)$
with the topological fundamental group of $\cM(S)_\C$. An important role in this paper will be played by the \emph{DM compactification}
$\bM^\l$ of $\cM^\l$, which is defined to be the normalization of the DM compactification $\bM(S)$ in the function field of $\cM^\l$. 
We also call $\bM^\l$ the \emph{level structure over $\bM(S)$ associated to $\G^\l$}.

\subsection{}
We now briefly summarize the definitions pertaining to various 
{\it curve complexes}, referring to any of the many references (e.g. \cite{[I1], [I2], [L]} etc.) for more details. 

Given a hyperbolic surface $S$ of finite type, let $\cL(S)$ denote the set 
of isotopy classes of simple closed curves on $S$ and $\cL(S)_0$ the subset consisting of non peripheral curves. 
A {\it multicurve} $\s$ is a set of distinct elements of $\cL(S)_0$ which admit disjoint representatives. We say that $\s$ is a $k$-multicurve if it
consists of $k$ elements. The curve complex $C(S)$ is the abstract simplicial complex whose $k$-simplices
are $(k+1)$-multicurves $\ul \a=(\a_0,\ldots,\a_k)$. 

Note that $C(S)$ is a (non locally finite) simplicial complex of dimension $d(S)-1$ where $d(S)$ is the 
modular dimension of $S$ (see \S 2.2).  We will write $C^{(k)}(S)$ for 
the $k$-dimensional skeleton of $C(S)$ and use a similar notation for the other complexes.
There is a natural action of $\G(S)$ on $C(S)$.

\subsection{}\label{DehnBraids}
The set $\cL(S)_0$ parameterizes the \emph{Dehn twists} of $\G(S)$ (cf.\ Section~3.1.1 in \cite{[FM]} for the definition). 
We denote by $\tau_\g$ the right 
Dehn twist associated to an element $\g\in\cL(S)_0$. Such Dehn twists generate the pure mapping class group $\PG(S)$. 
Multicurves then parameterize sets of  pairwise commuting distinct Dehn twists.

Although Dehn twists generate the pure mapping class group $\PG(S)$, we need more elements 
to generate the full mapping class group $\G(S)$.
Denote by $\cL^b(S)\subset \cL(S)$ the classes of simple closed curves bounding a 
$2$-punctured disc on $S$. For $\g\in\cL^b(S)$, let $D\subset S$ be a disc with boundary $\dd D=\g$. 
The mapping class group $\G(D,\dd D)$ is isomorphic to $\Z$ and a standard 
generator is the braid turning once to the right and interchanging the two punctures.   
The \emph{braid twist} $b_\g$ about $\g$ is the image in $\G(S)$ of this braid via the natural monomorphism $\G(D,\dd D)\hookra\G(S)$.  
Note that $b_\g^2=\tau_\g$. Moreover, the mapping class group $\G(S)$ is generated by Dehn twists and braid twists.

\subsection{}\label{pantsgraphdef}
The \emph{pants complex} $C_P(S)$ was mentioned in
the appendix of the classical  paper by A. Hatcher and W. Thurston (see \cite{[HLS]} or \cite{[M]})
and first studied in \cite{FG} and \cite{[HLS]} where it is shown to be connected and simply connected for $d(S)>2$. 
It is a two dimensional, not locally finite, simplicial complex whose vertices are given by the pants
decomposition (i.e.\ maximal multicurves) of $S$; these correspond to the facets (simplices of highest
dimension $=d(S)-1$) of $C(S)$. Given two vertices
$\ul \a,\ul \a'\in C_P(S)$, they are connected by an edge if
 $\ul \a$ and $\ul \a'$ have $d(S)-1$ elements in common,
so that up to relabelling $\a_i=\a'_i$, $i=1,\ldots, d(S)-1$,
whereas $\a_0$ and $\a'_0$ differ by an {\it elementary move}, which means the following.
Cutting $S$ along the $\a_i$'s, $i>0$, there remains a surface $S'$ of modular
dimension $1$, so $S'$ is of type $(1,1)$ or $(0,4)$. Then, $\a_0$ and $\a'_0$,
which are supported on $S'$, should intersect in a minimal way, i.e.\ they 
have geometric intersection number $1$ in the first case and $2$, in the second case. 

We have thus defined the $1$-skeleton $C_P^{(1)}(S)$
of $C_P(S)$ which, following \cite{[M]}, we call the {\it pants graph} of $S$. We will not give
here the definition of the $2$-cells of $C_P(S)$ (see \cite{[HLS]} or \cite{[M]}), since we will not actually
use them. Here, it suffices to say that, for $d(S)=1$, the pants complex coincides with the Farey tessellation of the hyperbolic plane.
It is shown in \cite{[M]} how to recover the  $2$-dimensional pants complex from the pants graph.

We will only use the pants graph, i.e.\ the $1$-skeleton $C_P^{(1)}(S)$ of $C_P(S)$, which in order to simplify notation we will simply denote by $C_P(S)$. 
For $d(S)=1$, this is the $1$-skeleton of the Farey tessellation which we call the \emph{Farey graph} and denote by $F$.

\subsection{}
Sometimes it will be useful to consider a \emph{disconnected} surface $S$ such that all its connected components are hyperbolic
surfaces of the above type. It is easy to reformulate all the above definitions for this case. Thus the mapping class groups $\G(S)$
and $\Mod(S)$ are just the direct product of the corresponding mapping class groups of the connected components of $S$.  The same holds 
for the moduli stack and Teichm\"uller space associated to $S$. 
The curve complexes $C(S)$ and $C_P(S)$ are defined exactly in the same way in the connected and disconnected case.
It is not difficult to see that, if $S=\coprod_{i=1}^k S_i$ is the decomposition of $S$ in connected components, then we have:
\begin{itemize}
\item $C(S)=\star_{i=1}^kC(S_i)$, where $\star$ denotes the join of simplicial complexes;
\item $C_P(S)=\coprod_{i=1}^k C_P(S_i)$. 
\end{itemize}
Let us observe that $C(S_i)=C_P(S_i)=\emptyset$, when $S_i$ is a $3$-punctured sphere, and the empty set is the neutral element both
for the join and the disjoint union (coproduct) operators.

\section{Rigidity of curve complexes}\label{rigiditycurvecomplex}
In this section we prepare the ground by recalling some rigidity results for the various curve complexes introduced above in a manner
which will be later adapted to the procongruence setting; as a side benefit, it provides a simpler proof of the 
main result of \cite{[M]}, that is the rigidity of the pants graph. 

\subsection{Automorphisms of the curve complex} Let $\Aut(C(S))$ be the group of simplicial 
automorphisms of the curve complex $C(S)$. There is a natural homomorphism $\Mod(S)\rightarrow \Aut(C(S))$ induced by the action of 
diffeomorphisms on the set of simple closed curves. 

It is useful to introduce a group theoretic version of this map. Let $C_\cI(S)$ be the abstract simplicial complex whose
set of $k$-simplices consists of the set of abelian subgroups of $\G(S)$ of rank $k+1$ generated by Dehn twists. We denote by $\tau_\g$ the Dehn twist
about the simple closed curve $\g$ on $S$. For all $f\in\Mod(S)$, we then have the identity:
\[f\cdot\tau_\g\cdot f^{-1}=\tau_{f(\g)}^{\epsilon(f)},\]
where $\epsilon\co\Mod(S)\to\{\pm 1\}$ is the orientation character. Therefore, conjugation determines
a simplicial action of the extended mapping class group $\Mod(S)$ on $C_\cI(S)$ and there is a natural $\Mod(S)$-equivariant surjective map of
simplicial complexes $C(S)\to C_\cI(S)$, defined assigning to a multicurve $\s$ the abelian subgroup of $\G(S)$ generated by the Dehn twists $\tau_\g$,
for $\g\in\s$. Since a Dehn twist $\tau_\g$ is determined by the isotopy class $\g$, this map is an isomorphism. We will hence identify $C(S)$ with 
$C_\cI(S)$ and the natural geometric action of $\Mod(S)$ on multicurves with its action by conjugation on abelian subgroups of $\G(S)$.

In particular, we see that the homomorphism $\Mod(S)\rightarrow \Aut(C(S))$ factors through a homomorphism 
$\theta\co \Inn(\Mod(S))\rightarrow \Aut(C(S))$, where $\Inn(G)$ denotes the group of inner automorphisms of a group $G$. 
From the description of centralizers of Dehn twists in $\Mod(S)$, it follows that, for $S\neq S_{0,4}$, this homomorphism is injective.
A fundamental result by Ivanov (cf.\ \cite{[I2]}) then asserts that, in most cases, $\theta$ is also surjective. 
Ivanov's theorem was subsequently refined by Luo  (cf.\ \cite{[L]}), who settled the exceptional cases. The precise statement is as follows:

\begin{theorem}\label{rigiditycurves}Let $S$ be a connected hyperbolic surface of type $(g,n)$ with $d(S)>1$. Then, 
the natural map $\theta: \Inn(\Mod(S))\to\Aut(C(S))$ is an isomorphism
except if $(g,n)=(1,2)$, in which case it is injective but not surjective; in fact
$\theta$ maps $\Inn(\Mod(S_{1,2}))$ onto the proper subgroup consisting of the elements of $\Aut(C(S_{1,2}))$ 
which globally preserve the set of vertices representing nonseparating simple closed curves.
\end{theorem}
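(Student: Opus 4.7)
The proof splits into injectivity and surjectivity. Injectivity of $\theta$ is essentially settled by the discussion preceding the statement: if $\inn_f$ acts trivially on $C(S)$, then conjugation by $f$ preserves each Dehn twist up to sign, hence $f$ fixes every isotopy class of simple closed curve and thus lies in the center of $\Mod(S)$, which corresponds to the trivial class in $\Inn(\Mod(S))$. The hypothesis $d(S)>1$ rules out the one excluded surface $S_{0,4}$, so no further work is needed for this direction.

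For surjectivity the plan is to follow Ivanov's original strategy. Given $\phi\in\Aut(C(S))$, the first and most delicate step is to show that, except possibly when $(g,n)=(1,2)$, $\phi$ preserves the topological type of every multicurve. The combinatorial input is that topological type can be detected by the structure of links: the link of a nonseparating curve $\gamma$ is the curve complex of the connected surface $S\ssm\gamma$ of type $(g-1,n+2)$, while the link of a separating curve decomposes as a nontrivial join of two smaller curve complexes, the topological types of the factors distinguishing the various separating classes. An induction on dimension upgrades this to multicurves. Once topological types are preserved, one fixes a pants decomposition $P\subset C(S)$; since $\G(S)$ acts transitively on pants decompositions of each topological type, one may multiply $\phi$ by an inner automorphism to arrange that $\phi(P)=P$ vertex-wise. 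It then remains to show that an automorphism of $C(S)$ fixing every vertex of $P$ is trivial: any other vertex can be reached from $P$ by a sequence of elementary moves in the pants graph, which is connected by Hatcher--Thurston, and each elementary move is supported in a subsurface of modular dimension $1$ (a one-holed torus or four-holed sphere), where vertex-wise rigidity is immediate by direct inspection of the Farey complex. An induction on pants-graph distance from $P$ then propagates the rigidity to all of $C(S)$.

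The main obstacle, and the source of the $(1,2)$ exception, is entirely in the first step. For most surfaces, link invariants detect topological type; but on $S_{1,2}$ this detection fails, and Luo exhibited an explicit involution of $C(S_{1,2})$ interchanging nonseparating vertices with a specific class of separating vertices. This involution is not realized by any mapping class and is therefore genuinely outside the image of $\theta$. Once one restricts attention to those automorphisms of $C(S_{1,2})$ preserving the nonseparating locus, topological-type preservation is recovered and the argument of the preceding paragraph carries over verbatim, yielding exactly the description of the image of $\theta$ stated in the theorem.
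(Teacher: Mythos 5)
The paper does not actually prove Theorem~\ref{rigiditycurves}: it is imported from Ivanov \cite{[I2]} and Luo \cite{[L]}, so your sketch has to be judged against their argument. Your injectivity paragraph is fine, and your first surjectivity step (preservation of topological type via the link/join structure, with the failure of this detection on $S_{1,2}$ correctly identified as the source of the exception, settled by Luo) is the standard opening move.

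The genuine gap is in the endgame. The statement you reduce to --- ``an automorphism of $C(S)$ fixing every vertex of a pants decomposition $P$ is trivial'' --- is false: for any $\gamma\in P$ the Dehn twist $\tau_\gamma$ fixes every vertex of $P$ (all curves of $P$ are disjoint from $\gamma$) yet acts nontrivially on $C(S)$. Hence the vertex-wise stabilizer of $P$ inside the image of $\theta$ is already an infinite group, and no induction on pants-graph distance starting from ``$\phi$ fixes $P$'' can conclude triviality. Relatedly, the ``immediate'' rigidity you invoke on a modular-dimension-one piece does not exist inside $C(S)$: the link of the codimension-one simplex $P\smallsetminus\{\gamma\}$ is $C(S_{1,1})$ or $C(S_{0,4})$, an infinite discrete set with no edges, so a priori $\phi$ may permute it arbitrarily. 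The Farey edges (elementary moves) encode geometric intersection number $1$ or $2$, which is not part of the simplicial structure of $C(S)$; showing that automorphisms of $C(S)$ preserve such configurations is precisely one of the key lemmas in Ivanov's and Luo's proofs, and is also why Margalit's pants-graph theorem needs the reconstruction of Lemma~\ref{proofrigiditypants}, which goes from $C_P(S)$ to $C(S)$ and not conversely. The correct conclusion of the argument normalizes $\phi$ so that it agrees with a mapping class not merely on $P$ but on $P$ together with a suitable system of transverse (``dual'') curves whose joint stabilizer in $\Mod(S)$ is the center, and then propagates the agreement to all of $C(S)$ by an Alexander-method style induction that uses the previously established preservation of intersection-number-one/two relations (see the exposition in \cite{[FM]}). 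As written, your step (c) both asserts a false lemma and assumes for free the intersection-preservation statement that carries most of the weight of the theorem.
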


As Ivanov showed, Theorem \ref{rigiditycurves} allows to determine the automorphisms groups
of the mapping class groups. The basic result needed here is that the group-theoretical action $\theta\co \Inn(\Mod(S))\rightarrow \Aut(C(S))$
extends to an action of the automorphism group of $\Mod(S)$. As shown by McCarthy in Section~1 of \cite{[McC]}, in genus $\leq 2$, 
it is not always the case that an automorphism of $\Mod(S)$ preserves the cyclic subgroups generated by Dehn twists. So we need to
tweak a little the definition of $\theta$ in order to be able to extend it to all $\Aut(\Mod(S))$. 
The crucial remark is that the stabilizer $\Mod(S)_\s$ of a simplex $\s\in C(S)$ for the action of $\Mod(S)$
 is a self-normalizing group. Therefore, if we define $C_\cG(S)$ to be the abstract simplicial 
complex whose set of $k$-simplices consists of the set of stabilizers of $k$-simplices in 
$C(S)$, there is a natural $\Mod(S)$-equivariant
isomorphism $C(S)\to C_\cG(S)$, defined by the assignment $\s\mapsto\Mod(S)_\s$. It is a deep fact 
that the set of subgroups $\{\Mod(S)_\s\}_{\s\in C(S)}$ is preserved by an automorphism of $\Mod(S)$. Therefore, we get a natural homomorphism: 
\[\Theta\co\Aut(\Mod(S)))\rightarrow \Aut(C(S)),\] 
which extends $\theta$. The complete result by Ivanov (cf. \cite{[I1]} and references therein) is the following:

\begin{theorem}\label{inertiapreserving} 
The group theoretic action of $\Inn(\Mod(S))$ on $C(S)$ extends to an action of $\Aut(\Mod(S))$ and, if the center of 
$\Mod(S)$ is trivial and $S\neq S_{0,4}$, then the resulting natural homomorphism $\Theta\co\Aut(\Mod(S))\to\Aut(C(S))$ is injective.
\end{theorem}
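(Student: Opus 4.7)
The plan is to identify $C(S)$ with the stabilizer complex $C_\cG(S)$, whose $k$-simplices are the subgroups $\Mod(S)_\s$ for $\s$ a $k$-simplex of $C(S)$. The assignment $\s\mapsto\Mod(S)_\s$ is a $\Mod(S)$-equivariant simplicial bijection because distinct multicurves have distinct stabilizers and each stabilizer is self-normalizing. To extend the $\Inn(\Mod(S))$-action from $C(S)\cong C_\cG(S)$ to an action of all of $\Aut(\Mod(S))$, it then suffices to verify that every $\phi\in\Aut(\Mod(S))$ permutes the family $\{\Mod(S)_\s\}_\s$; since containments between the subgroups are respected, the induced action is automatically simplicial.

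The hard part is this permutation statement. It rests on a purely algebraic characterization of the subgroups $\Mod(S)_\s$ inside $\Mod(S)$. I would first identify vertex stabilizers as normalizers of the maximal infinite cyclic subgroups generated by Dehn twists, and then characterize the set of Dehn twists themselves through the structure of their centralizers in $\Mod(S)$, which split virtually as a direct product of the infinite cyclic subgroup generated by the twist and the mapping class groups of the complementary subsurfaces. Higher-dimensional simplex stabilizers are then recovered as intersections of vertex stabilizers. Since these characterizations are invariant under any group automorphism of $\Mod(S)$, every $\phi\in\Aut(\Mod(S))$ permutes the family $\{\Mod(S)_\s\}_\s$ preserving inclusions, yielding the extended homomorphism $\Aut(\Mod(S))\to\Aut(C(S))$.

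For injectivity, I would suppose $\phi\in\Aut(\Mod(S))$ acts trivially on $C(S)$. Then $\phi$ fixes each vertex stabilizer $\Mod(S)_\g$ setwise and hence fixes its center, which under the hypotheses $Z(\Mod(S))=1$ and $S\neq S_{0,4}$ equals $\langle\tau_\g\rangle$. Thus $\phi(\tau_\g)=\tau_\g^{\varepsilon(\g)}$ for some sign $\varepsilon(\g)\in\{\pm 1\}$, and applying $\phi$ to the braid relation for pairs of curves meeting once, together with the commutation relations for disjoint curves and the connectivity of the curve graph, forces $\varepsilon$ to be globally constant, say $\varepsilon$. Now for any $g\in\Mod(S)$, comparing $\phi(g\tau_\g g^{-1})=\tau_{g(\g)}^{\epsilon(g)\varepsilon}$ with $\phi(g)\phi(\tau_\g)\phi(g)^{-1}=\phi(g)\tau_\g^{\varepsilon}\phi(g)^{-1}$ shows that $g^{-1}\phi(g)$ centralizes every Dehn twist and therefore centralizes $\PG(S)$. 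Since $\PG(S)$ is centerless and of finite index in the centerless group $\Mod(S)$, its centralizer in $\Mod(S)$ is trivial, so $\phi(g)=g$ for all $g\in\Mod(S)$, and $\phi=\mathrm{id}$.
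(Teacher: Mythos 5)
Your injectivity argument contains a concrete error at its first step: the center of the vertex stabilizer $\Mod(S)_\g$ in the \emph{extended} mapping class group is not $\langle\tau_\g\rangle$. Orientation-reversing mapping classes fixing $\g$ exist for essentially every curve, and any such element $r$ satisfies $r\tau_\g r^{-1}=\tau_\g^{-1}$, so $\tau_\g$ is not central in $\Mod(S)_\g$ (e.g.\ already for $S_{0,5}$, which satisfies your hypotheses). Hence "$\phi$ fixes the stabilizer setwise, so it fixes its center $\langle\tau_\g\rangle$" does not deliver $\phi(\tau_\g)=\tau_\g^{\pm1}$, and everything downstream (the sign bookkeeping via braid relations -- which, incidentally, has no pairs of curves meeting once to work with when $g=0$ -- and the centralizer-of-$\PG(S)$ step) rests on that unproved claim. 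To recover $\phi(\tau_\g)\in\{\tau_\g^{\pm1}\}$ you would need a genuinely different algebraic characterization of $\langle\tau_\g\rangle$ inside $\Mod(S)_\g$ (normality and maximality arguments, say), which is precisely the delicate part. The paper avoids all of this: injectivity is a two-line formal consequence of Lemma~\ref{grouplemma} -- if $Z(\Mod(S))=1$ and the restriction of the homomorphism to $\Inn(\Mod(S))$ is injective (already known for $S\neq S_{0,4}$ from centralizers of Dehn twists), then the homomorphism is injective on any subgroup of $\Aut(\Mod(S))$ containing $\Inn(\Mod(S))$. In fact your own final computation, done with stabilizers instead of twists ($\phi(\Mod(S)_{g(\g)})=\Mod(S)_{\phi(g)(\g)}$ forces $\phi(g)(\g)=g(\g)$ for all $\g$), is essentially this lemma and makes the twist analysis unnecessary.

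For the extension part, your plan is to prove that automorphisms permute the stabilizers by first characterizing Dehn twists algebraically through their centralizers and then recovering vertex stabilizers as normalizers of twist subgroups. But the paper explicitly points out (following McCarthy) that in genus $\leq 2$ an automorphism of $\Mod(S)$ need \emph{not} preserve the cyclic subgroups generated by Dehn twists; this is exactly why the complex $C_\cG(S)$ of simplex stabilizers, rather than the complex of twist subgroups, is used, and why the preservation of $\{\Mod(S)_\s\}_{\s\in C(S)}$ is invoked as a deep fact of Ivanov--McCarthy rather than rederived from a twist characterization. Since the extension statement is asserted for all hyperbolic $S$ (the center hypothesis enters only for injectivity), your proposed route does not cover the low-genus cases, whereas the paper's treatment (cite the deep input, then apply the formal lemma) does.
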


\begin{proof}We already observed that the natural homomorphism $\theta: \Inn(\Mod(S))\to\Aut(C(S))$ is injective for $S\neq S_{0,4}$.
Then, the theorem follows from this fact and the group theoretic lemma:

\begin{lemma}\label{grouplemma}Let $G$ be a group with trivial center and $H$  a subgroup of $\Aut(G)$ containing $\Inn(G)$. If
$\phi\co H\to K$ is a homomorphism of groups such that its restriction to $\Inn(G)$ is injective, then the homomorphism $\phi$ is also injective.
\end{lemma}

\begin{proof}For all $f\in H$ and $x\in G$, we have the identity:
\[\phi(f)\cdot\phi(\inn x)\cdot\phi(f)^{-1}=\phi(f\circ\inn x\circ f^{-1})=\phi(\inn f(x)).\]
If $f\neq\mathrm{id}$, then there is some  $a\in G$ such that $\inn a\neq \inn f(a)$, because the center of $G$ is trivial. 
If $\phi(f)$ were trivial, then the above identity would imply that $\phi(\inn a)=\phi(\inn f(a))$, in contradiction
with the hypothesis that the restriction of $\phi$ to $\Inn(G)$ is injective.
\end{proof}
\end{proof}

Throughout this paper we denote by $Z(G)$ the center of a group $G$.
For future applications, it is interesting to consider in detail also the cases when the center $Z(\Mod(S))$ of $\Mod(S)$ is not trivial. 
The $\Mod(S)$-equivariant isomorphism $C(S)\cong C_\cG(S)$ implies, more generally, that, for a finite index subgroup $\G^\l$ of $\Mod(S)$, 
there is a natural homomorphism: 
\[\Theta^\l\co\Aut(\G^\l)\rightarrow \Aut(C(S)).\] 

\begin{proposition}\label{kernelaut}
For $\G^\l$ a finite index subgroup of $\Mod(S)$, where $d(S)>1$, there is a natural isomorphism $\ker\Theta^\l\cong\Hom(\G^\l/Z(\G^\l),Z(\G^\l))$.\end{proposition}

\begin{proof}By Theorem~\ref{inertiapreserving}, for $d(S)>1$, an automorphism $f\in\Aut(\G^\l)$ which acts trivially on the curve complex $C(S)$ 
descends to the identity on the quotient $\G^\l/Z(\G^\l)$. The conclusion then follows from the lemma:

\begin{lemma}\label{kernelautbis}Let $G$ be a group and $\Aut(G)_A$ the subgroup of elements of $\Aut(G)$ which preserve a normal
abelian subgroup $A$ of $G$. Then, there is a natural exact sequence:
\[1\to\Der(G/A,A)\to\Aut(G)_A\to\Aut(G/A)\times\Aut(A),\]
where the action of $G/A$ on $A$ is induced by the inner action of $G$ on $A$. 
If, moreover, the subgroup $A$ is central, there is also an exact sequence:
\[1\to H^1(G/A,A)\to\Out(G)_A\to\Out(G/A)\times\Aut(A).\]
\end{lemma}
 
\begin{proof}The first exact sequence is a weak form of Wells' Theorem in \cite{Wells} which can be proved as follows. 
An automorphism $f\in\Aut(G)_A$ whose image in $\Aut(G/A)$ is trivial and which restricts to the identity on $A$, is of the form $x\mapsto a\cdot x$, 
for some $a\in A$ depending on $x\in G$. Note that, for $x\in A$, we have $a=1$.
Let us then define a map $L_f\co G\to A$ by $L_f(x):=f(x)x^{-1}$. 
This map factors through a map $\log f\co G/A\to A$ which satisfies the identity:
\[\log f(xy)=f(x)f(y)(xy)^{-1}=f(x)x^{-1}\inn(x)(f(y)y^{-1})=\log f(x)\cdot\inn(x)(\log f(y)),\] 
where $\inn(x)$ denotes conjugation by $x$, so that $\log f$ is a derivation from $G/A$ to $A$. 

Conversely, for $\phi\in\Der(G/A,A)$, the map $\exp\phi\co G\to G$, defined by $\exp\phi(x):=\phi(\bar{x})x$, 
where we let $\overline{x}$ be the image of $x\in G$ in the quotient $G/A$, is an automorphism of $G$ which descends 
to the identity on $G/A$ and restricts to the identity on $A$. 
 
The second exact sequence then follows from the first one observing that the group of inner automorphism of $G$ intersects
$\Der(G/A,A)$ inside $\Aut(G)_A$ exactly in the subgroup of principal derivations.
\end{proof}
\end{proof}

Thanks to Theorem~\ref{inertiapreserving} and Proposition~\ref{kernelaut}, we can use Theorem~\ref{rigiditycurves} in order to study 
the automorphisms of $\G(S)$. Actually, it turns out to be no more difficult to study isomorphisms between 
two finite index subgroups of $\Mod(S)$ (cf.\ \cite{[I2]}, Theorem 2). 

\begin{definition}\label{transporter}Given two groups $G_1$ and $G_2$, let us denote by $\Isom(G_1,G_2)$ the set of isomorphisms 
between them. Given two subgroups $H_1$ and $H_2$ of a group $G$, we define the \emph{transporter} of $H_1$ onto $H_2$ to
be the set $T_{G}(H_1,H_2)\subseteq\Inn(G)$ of inner automorphisms of $G$ which map $H_1$ onto $H_2$.
Observe that the first set has a natural structure of $\Aut(G_2)$-torsor and the second one of $N_{G}(H_2)/Z(G)$-torsor.
\end{definition}

We then have:

\begin{corollary}\label{Theorem 2.4} For $d(S)>1$, let $\G_1$ and $\G_2$ be finite index subgroups of $\Mod(S)$. 
Then, the set $\Isom(\G_1,\G_2)$ is a $\Hom(\G_2/Z(\G_2),Z(\G_2))$-torsor over the set $T_{\Mod(S)}(\G_1,\G_2)$. 
In particular, if $\G_1=\G_2$ is a normal subgroup of $\Mod(S)$ and $Z(\G_1)=\{1\}$, then $\Aut(\G_1)=\Inn(\Mod(S))$.
\end{corollary}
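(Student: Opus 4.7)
My plan is to reduce the statement to the rigidity of the curve complex (Theorem~\ref{rigiditycurves}) combined with the kernel description of Proposition~\ref{kernelaut}, by extending the natural homomorphism $\Aut(\G^\l)\to\Aut(C(S))$ to the larger set of isomorphisms between two distinct finite index subgroups.

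First, I would construct a canonical map $\Phi\co\Isom(\G_1,\G_2)\to\Aut(C(S))$. Given $f\in\Isom(\G_1,\G_2)$ and a simplex $\s\in C(S)$, the intersection $\G_1\cap\Mod(S)_\s$ has finite index in the self-normalizing stabilizer $\Mod(S)_\s$, and the identification of such stabilizers underlying Theorem~\ref{inertiapreserving} (as the self-normalizing subgroups of maximal rank generated by powers of Dehn twists inside $\G_1$) is purely group-theoretic. Consequently, $f$ sends $\G_1\cap\Mod(S)_\s$ to $\G_2\cap\Mod(S)_{\s'}$ for a unique $\s'\in C(S)$, and the assignment $\s\mapsto\s'$ defines $\Phi(f)\in\Aut(C(S))$, reducing to the homomorphism of Proposition~\ref{kernelaut} when $\G_1=\G_2$. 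Applying Theorem~\ref{rigiditycurves} (after restricting, in the exceptional case $S=S_{1,2}$, to the type-preserving subgroup of $\Aut(C(S_{1,2}))$ where $\Phi(f)$ automatically lies), every $\Phi(f)$ is realized by a unique $\inn(\g)\in\Inn(\Mod(S))$. The intertwining property $\Phi(f)(\G_1\cap\Mod(S)_\s)=\G_2\cap\Mod(S)_{\Phi(f)(\s)}$ then forces $\g\G_1\g^{-1}=\G_2$, so $\Phi(f)\in T_{\Mod(S)}(\G_1,\G_2)$.

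Next, I would verify the torsor structure. Surjectivity of $\Phi$ onto $T_{\Mod(S)}(\G_1,\G_2)$ is immediate: any $\inn(\g)$ in the transporter restricts to an isomorphism $\G_1\to\G_2$ whose image under $\Phi$ is $\inn(\g)$ itself. For the fibers, two isomorphisms $f,f'\in\Isom(\G_1,\G_2)$ with $\Phi(f)=\Phi(f')$ satisfy $f'\circ f^{-1}\in\Aut(\G_2)$ and, by the multiplicativity of $\Phi$, act trivially on $C(S)$; hence, by Proposition~\ref{kernelaut}(i), which applies because $d(S)>1$ excludes $S=S_{0,4}$, we have $f'\circ f^{-1}\in\Hom(\G_2,Z(\G_2))$. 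Transporting this homomorphism across $f$ to an element $\psi\in\Hom(\G_1,Z(\G_1))$, we obtain $f'=f\circ\exp(\psi)$ in the notation of Lemma~\ref{kernelautbis}, and the formula $\psi\cdot f:=f\circ\exp(\psi)$ defines a free and transitive action of $\Hom(\G_1,Z(\G_1))$ on each fiber.

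For the "in particular" assertion, assume $\G_1=\G_2$ is normal in $\Mod(S)$ with $Z(\G_1)=\{1\}$. Normality gives $T_{\Mod(S)}(\G_1,\G_1)=\Inn(\Mod(S))$, and the triviality of $Z(\G_1)$ gives $\Hom(\G_1,Z(\G_1))=\{1\}$. Hence the torsor is trivial, and the resulting bijection $\Aut(\G_1)\to\Inn(\Mod(S))$ is a group isomorphism by the functoriality of $\Phi$. The step I expect to be the principal obstacle is the initial construction of $\Phi$: it requires an intrinsic, purely group-theoretic identification of stabilizers of simplices of $C(S)$ inside an arbitrary finite index subgroup of $\Mod(S)$, which is essentially the content packaged by Theorem~\ref{inertiapreserving} for a single $\G^\l$ but must here be read as a feature shared by both $\G_1$ and $\G_2$ compatibly with any isomorphism between them.
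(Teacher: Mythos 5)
Your overall route is the one the paper intends: the corollary is not proved separately there but is presented as the combination of the curve-complex rigidity (Theorem~\ref{rigiditycurves}), the kernel computation of Proposition~\ref{kernelaut}/Lemma~\ref{kernelautbis}, and the imported fact (Ivanov, \cite{[I2]}, Theorem~2) that an isomorphism between finite index subgroups of $\Mod(S)$ preserves the relevant family of subgroups and hence acts on $C(S)$. Your torsor bookkeeping and the ``in particular'' step are fine. Two points, however, deserve flagging. First, your parenthetical justification of the map $\Phi$ is not correct as stated: $\G_1\cap\Mod(S)_\s$ is not ``generated by powers of Dehn twists'' (it contains the mapping class groups of the complementary pieces), and maximal-rank abelian twist subgroups only detect top-dimensional simplices. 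The genuine input is Ivanov's algebraic characterization of Dehn-twist powers inside arbitrary finite index subgroups, which is a nontrivial theorem and is not a formal consequence of Theorem~\ref{inertiapreserving} (stated only for $\Aut(\Mod(S))$). Since the paper likewise treats this as a black box via the citation to \cite{[I2]}, this is a matter of attribution rather than a missing idea, and you do identify it as the main obstacle.

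The genuine gap is your treatment of $S_{1,2}$: you assert that $\Phi(f)$ ``automatically lies'' in the type-preserving subgroup of $\Aut(C(S_{1,2}))$, with no argument. Nothing in your construction shows that the induced automorphism of $C(S_{1,2})$ preserves the separating/nonseparating dichotomy, and this cannot be established in this generality: the hyperelliptic involution identifies $\G_{1,[2]}$ modulo its center with a finite index subgroup (the stabilizer of a marked point) of $\G_{0,[5]}$, so isomorphisms between finite index subgroups of $\Mod(S_{1,2})$ can be induced by elements of $\Mod(S_{0,5})$ that move the distinguished point and hence act on $C(S_{1,2})\cong C(S_{0,5})$ without preserving topological types. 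This is exactly the exceptional phenomenon behind the $(1,2)$ caveat in Theorem~\ref{rigiditycurves}, and it is the reason Ivanov's isomorphism theorem excludes the twice-punctured torus. So for $S=S_{1,2}$ your argument (and indeed the corollary read literally, without a caveat analogous to the one in Theorem~\ref{rigiditycurves} or in Theorem~\ref{completepantsrigidity}) breaks down at precisely this step; for all other $S$ with $d(S)>1$ your proof is the paper's.
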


\subsection{Automorphisms of the pants graph} \label{discrec}
In this paragraph, we study the group of simplicial automorphisms $\Aut(C_P(S))$ of the pants graph. A result similar 
to Theorem~\ref{rigiditycurves} was proved by Margalit (see \cite{[M]}): 

\begin{theorem}\label{rigiditypants}
Let $S$ be a hyperbolic surface of type $(g,n)$ with $d(S)>1$. Then the natural homomorphism 
\[\theta_P:\Inn(\Mod(S))\rightarrow \Aut(C_P(S))\]
is an isomorphism.
\end{theorem}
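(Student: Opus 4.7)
The plan is to bootstrap from Theorem~\ref{rigiditycurves}: first establish injectivity of $\theta_P$ directly, then construct a natural injection $\rho\co\Aut(C_P(S))\hookrightarrow\Aut(C(S))$ and invoke Ivanov--Luo rigidity of the curve complex to conclude surjectivity.

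Injectivity is the easy half. If $\inn(g)\in\ker\theta_P$, then $g$ fixes every pants decomposition of $S$ as a set. For any $\a\in\cL(S)_0$, the assumption $d(S)>1$ allows one to extend $\a$ to two pants decompositions $P_1,P_2$ with $P_1\cap P_2=\{\a\}$ (there is enough room in $S\ssm\a$ to vary the remaining curves independently). Since $g$ preserves both $P_1$ and $P_2$, we have $g(\a)\in P_1\cap P_2=\{\a\}$; hence $g$ fixes every isotopy class in $\cL(S)_0$, so $\theta(\inn g)=\mathrm{id}$ on $C(S)$, and the injectivity half of Theorem~\ref{rigiditycurves} forces $\inn(g)=\mathrm{id}$.

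For surjectivity I would reconstruct the curve complex combinatorially inside the pants graph. Assign to each $\a\in\cL(S)_0$ the full subgraph $V_\a\subset C_P(S)$ spanned by pants decompositions containing $\a$; this is naturally isomorphic to the pants graph of the cut surface $S\ssm\a$, a surface of strictly smaller modular dimension. The map $\a\mapsto V_\a$ is injective, and a multicurve $\{\a_0,\dots,\a_k\}$ is a simplex of $C(S)$ precisely when $V_{\a_0}\cap\cdots\cap V_{\a_k}\neq\emptyset$. Thus, if the family $\{V_\a\}$ can be recognized intrinsically in $C_P(S)$, each $\phi\in\Aut(C_P(S))$ permutes it and yields $\rho(\phi)\in\Aut(C(S))$. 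A local characterization should suffice: at each vertex $P$, the $d(S)$ curves of $P$ index a canonical partition of the edges of $C_P(S)$ incident to $P$, where two edges lie in the same class iff the corresponding elementary moves alter the same curve of $P$. This ``same curve'' relation is detectable via commuting squares, since elementary moves on disjoint curves always commute and close up to $4$-cycles in $C_P(S)$, while moves supported on a common modular-dimension-one subsurface produce a combinatorially different (Farey-type) configuration.

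With $\rho$ in hand, Theorem~\ref{rigiditycurves} applied to $\rho(\phi)$ yields an element $\inn(g)\in\Inn(\Mod(S))$ inducing $\rho(\phi)$ on $C(S)$; since pants decompositions are determined by their sets of curves and elementary moves are preserved by any element of $\Mod(S)$, we get $\theta_P(\inn g)=\phi$. For the exceptional surface $S_{1,2}$, I would verify separately that any $\phi\in\Aut(C_P(S_{1,2}))$ must preserve the set of non-separating curves---separating pants curves being topologically distinguishable via the isomorphism type of the associated $V_\a$---so that $\rho(\phi)$ still lies in the subgroup of $\Aut(C(S_{1,2}))$ identified with $\Inn(\Mod(S_{1,2}))$ by Theorem~\ref{rigiditycurves}. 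The hard part will be the intrinsic recognition of the subgraphs $V_\a$: rigorously establishing it amounts to a careful analysis of $4$-cycles in the pants graph, especially ruling out accidental commuting-square patterns arising inside Farey subgraphs, where the density of edges threatens to muddle the ``same curve vs.\ disjoint curves'' dichotomy.
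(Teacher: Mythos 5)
Your overall strategy (reconstruct the curve complex combinatorially from the pants graph, then quote Ivanov--Luo) is the same as the paper's, and your injectivity argument is fine; but the key recognition step fails. The claim that ``elementary moves on disjoint curves always commute and close up to $4$-cycles'' is false whenever the two curves $\a\neq\b$ of the pants decomposition $P$ are adjacent to a common pair of pants, i.e.\ whenever the two modular-dimension-one supporting subsurfaces overlap. For example, for any pants decomposition $\{\a,\b\}$ of $S_{0,5}$, every curve $\a'$ obtained from $\a$ by an elementary move (so disjoint from $\b$) meets every curve $\b'$ obtained from $\b$ by an elementary move: their traces in the middle pair of pants are the essential $\a$--$\a$ arc separating $\b$ from the fifth puncture and the essential $\b$--$\b$ arc separating $\a$ from the fifth puncture, and these classes have positive geometric intersection. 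This is exactly the pentagon (not square) relation of the pants complex, and the same phenomenon occurs at vertices of $C_P(S)$ for every hyperbolic $S$ (e.g.\ the $S_{1,2}$ pentagon). Consequently ``no commuting square'' does not imply ``same curve'': your criterion lumps together the edge-classes of any two adjacent curves of $P$, the partition of the edges at $P$ into $d(S)$ classes is not recovered, and the construction of the subgraphs $V_\a$, hence of $\rho$, collapses. You flagged the recognition step as the hard part but located the danger in the wrong place: the issue is not accidental squares inside Farey subgraphs but missing squares for pairs of distinct curves. The paper's proof of the same reduction avoids this by recognizing the Farey subgraphs $F_\s$ (indexed by $(d(S)-2)$-multicurves) instead: every edge and every triangle of $C_P(S)$ lies in a unique Farey subgraph, which is recovered intrinsically by the iterated star-intersection procedure of Lemma~\ref{proofrigiditypants}; one then passes to the dual graph $C^\ast(S)$, and Lemma~\ref{maxcomplete} together with flagness of $C(S)$ gives the monomorphism $\Aut(C_P(S))\hookra\Aut(C(S))$. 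A correct repair of your argument should follow this (or an equivalent) route rather than the square criterion.

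A second, smaller gap concerns $S_{1,2}$: your proposed fix there does not work, because for $\a$ nonseparating the cut surface is $S_{0,4}$ while for $\a$ separating the positive-dimensional piece is $S_{1,1}$, so $V_\a$ is a Farey graph in both cases and its isomorphism type cannot distinguish separating from nonseparating curves. Note that the paper does not reprove this case either: it establishes Theorem~\ref{rigiditypants} only for $(g,n)\neq(1,2)$ and refers to the last paragraph of Margalit's paper for the exceptional surface, where a genuinely different argument is needed since $\Aut(C(S_{1,2}))$ is strictly larger than $\Inn(\Mod(S_{1,2}))$.
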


Note that, in contrast with Theorem~\ref{rigiditycurves}, the case $(1,2)$ is no exception, see  the last paragraph of \cite{[M]}.
However, as we shall see, for $(g,n)\neq(1,2)$, Theorem~\ref{rigiditypants} is  a  consequence of Theorem~\ref{rigiditycurves}.

Let us recall some generalities on (abstract) simplicial complexes. The maximal simplices of a simplicial complex $X$ are called \emph{facets}. 
A simplicial complex whose facets all have the same dimension is \emph{pure}. In this case, $\dim X$ is finite and equal to the dimension of the facets 
of $X$. 

\begin{definition}\label{dualgraph}The \emph{dual graph} $X^\ast$ of an abstract pure simplicial complex $X$ is the $1$-dimensional abstract
simplicial complex whose set of vertices $X_0^\ast$ consists of the facets of $X$ and whose edges are pairs of facets $\{v_0,v_1\}$
such that $v_0$ and $v_1$ intersect in a simplex of dimension $\dim X-1$.
\end{definition}

Note that the set of all facets of $X$ which contain a given $(\dim X-1)$-simplex span a complete subgraph of $X^\ast$. 
The following lemma seems known to experts but, for lack of a reference, we give a proof:

\begin{lemma}\label{maxcomplete}Let $X$ be a pure simplicial complex of dimension $d$ such that every $(d-1)$-simplex is contained in at least
$d+3$ facets. Then, the $(d-1)$-simplices of $X$ are parameterized by the maximal complete subgraphs of $X^\ast$ with at least $d+3$ vertices. 
Moreover, two $(d-1)$-simplices of $X$ are contained in the same facet if and only if the corresponding maximal complete subgraphs of $X^\ast$ 
have a common vertex. Therefore, from the dual graph $X^\ast$, we can reconstruct the $1$-skeleton of $X$.
\end{lemma}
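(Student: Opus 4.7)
The plan is to prove the lemma by a purely combinatorial argument. For a $(d-1)$-simplex $\tau$ of $X$, let $K_\tau$ denote the set of facets containing $\tau$; since any two such facets share $\tau$, $K_\tau$ spans a complete subgraph of $X^\ast$, and by hypothesis $|K_\tau|\geq d+3$. The heart of the proof is the converse assertion: every complete subgraph $K\subseteq X^\ast$ on $m\geq d+3$ vertices is contained in some $K_\tau$. Granted this, each $K_\tau$ is automatically maximal (if $K\supsetneq K_\tau$ were complete, we would have $K\subseteq K_{\tau'}$, and then every facet in $K_\tau$ would contain $\tau\cup\tau'$, forcing $\tau=\tau'$ since facets have only $d+1$ vertices); the bijection with $(d-1)$-simplices then follows because $\tau=\bigcap_{F\in K_\tau}F$ recovers $\tau$ from $K_\tau$.

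To establish the key converse, I would first observe that any two distinct facets $F,F'$ of $X$ have $|F\cap F'|\leq d$, so sharing a $(d-1)$-simplex forces $F\cap F'$ to be exactly that $(d-1)$-simplex and $F'=(F\smallsetminus\{x\})\cup\{y\}$ for unique $x\in F$, $y\notin F$. Fixing a vertex $F_1$ of $K$ and parameterizing the other $m-1$ vertices as $F_i=(F_1\smallsetminus\{x_i\})\cup\{y_i\}$, an explicit computation of $|F_i\cap F_j|$ shows that the pairwise-adjacency condition $|F_i\cap F_j|=d$ (for $i\neq j$, with $F_i\neq F_j$) is equivalent to the dichotomy: either $x_i=x_j$ and $y_i\neq y_j$, or $x_i\neq x_j$ and $y_i=y_j$. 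A short case analysis then finishes the argument. If all $x_i$ coincide with a single $x\in F_1$, then $\tau:=F_1\smallsetminus\{x\}$ lies in every $F_i$ (and in $F_1$), giving $K\subseteq K_\tau$. Otherwise, picking two indices $i_0,j_0$ with $x_{i_0}\neq x_{j_0}$ and comparing each further index $k$ with both of them forces $y_k=y_{i_0}=y_{j_0}=:y$; but then the $x_i$ must be pairwise distinct elements of $F_1$, contradicting $m-1\geq d+2>d+1=|F_1|$.

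Once the bijection is in place, the ``common facet'' criterion is essentially tautological: a facet containing both $\tau_1$ and $\tau_2$ is precisely a common vertex of $K_{\tau_1}$ and $K_{\tau_2}$ in $X^\ast$. The final reconstruction statement then follows immediately, since the set of $(d-1)$-simplices of $X$ together with the relation ``lie in a common facet''---which is the $1$-skeleton in the relevant sense---is entirely determined by $X^\ast$.

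The main obstacle is the dichotomy-and-counting step above. It is where the bound $d+3$ is used in an essential way, and the bound is sharp: the sunflower family $\{F_1\}\cup\{(F_1\smallsetminus\{x\})\cup\{y_0\}:x\in F_1\}$, with any fixed $y_0\notin F_1$, exhibits $d+2$ pairwise-adjacent facets sharing no common $(d-1)$-simplex, so one cannot weaken the hypothesis to ``at least $d+2$ facets''.
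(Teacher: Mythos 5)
Your handling of the first two assertions is correct, and it essentially parallels the paper's argument while being more explicit: your dichotomy for the facets adjacent to a fixed $F_1$ and the resulting counting contradiction is precisely the content of the paper's terse claim that a family of pairwise-adjacent facets whose common intersection has dimension $<d-1$ has at most $d+2$ members, and your sunflower example correctly shows why the hypothesis $d+3$ cannot be weakened for the key converse. The maximality of $K_\tau$, the injectivity of $\tau\mapsto K_\tau$ via $\tau=\bigcap_{F\in K_\tau}F$, and the common-vertex criterion are all fine (the intersection identity deserves a line: a vertex in $\bigcap_{F\in K_\tau}F\smallsetminus\tau$ would force $|K_\tau|\le 1$).

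The gap is in the last step. The final assertion of the lemma is that the $1$-skeleton of $X$ --- the graph whose vertices are the $0$-simplices of $X$ and whose edges are its $1$-simplices --- is reconstructible from $X^\ast$, and this is what is used later in the paper (e.g.\ to recover $\kC^{(1)}(S)$, whose vertices are profinite simple closed curves, from the dual graph $\kC^\ast(S)$, and hence to compare $\Aut(C^{(1)}(S))$ with $\Aut(C^\ast(S))$). The graph you exhibit instead has the $(d-1)$-simplices as vertices and ``lie in a common facet'' as adjacency; this coincides with the $1$-skeleton only when $d=1$, so your ``in the relevant sense'' reading does not cover the cases $d>1$ that the application needs. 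To prove the statement as written you must also recover the vertex set of $X$ from $X^\ast$: a pair consisting of a vertex $f$ of $X^\ast$ and a maximal clique with at least $d+3$ vertices containing $f$ (equivalently, a facet together with a $(d-1)$-face of it) determines the unique vertex of $f$ not lying in that face; one then has to identify when two such pairs determine the same vertex of $X$, and finally observe that two vertices of $X$ span an edge if and only if they lie in a common facet, which is visible in the incidence data you have already established. This is the route the paper's proof of the third claim takes (itself only sketched there), and it is the piece missing from your argument.
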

 
\begin{proof}To a $(d-1)$-simplex $\s$ of $X$, we associate the complete subgraph $\cG_\s$ of $X^\ast$ spanned by the facets which contain $\s$.
Let us show that $\cG_\s$ is maximal with that property. Observe that a facet $f$ which does not 
contain $\s$ can share with a given facet $f'$ containing $\s$ at most one  $(d-1)$-simplex $\sigma'$. 
Moreover, $\sigma'$ cannot be shared with any other facet $f''$ containing $\s$ but distinct from $f'$. 
But $\s$ is contained in at least $d+3$ facets, by hypothesis, while 
a facet has only $d+1$ faces of dimension $(d-1)$.

Note further that the cardinality of a set of facets, whose intersection has dimension $< d-1$
and each pair of which have a common $(d-1)$-simplex face, is bounded by $d+2$.
This proves the first part of the lemma.

The second statement of the lemma is obvious. For the third, observe that a vertex of $X$ is determined by a facet $f$, which contains it, and by a 
$(d-1)$-dimensional face of $f$, which does not contain it. Therefore there is a natural bijection between the vertices of $X$ and pairs consisting of
a vertex $v$ of $X^\ast$ and a maximal complete subgraph of $X^\ast$ containing $v$, with at least $d+3$ vertices. Observe then that two vertices
of $X$ are joined by an edge if and only if they are both contained in a facet $f$ of $X$. The conclusion then follows from the previous point.
\end{proof}

Recall that a \emph{flag complex} $X$ is an (abstract) simplicial complex such that every set of 
vertices of $X$ which pairwise belong to a simplex of the complex is itself a simplex of $X$.

\begin{proof}[Proof of Theorem~\ref{rigiditypants} for $(g,n)\neq(1,2)$]  By its definition, the curve complex $C(S)$ is a flag complex. 
In particular, $C(S)$ is determined by its $1$-skeleton $C^{(1)}(S)$.
Let $C^\ast(S)$ be the dual graph of $C(S)$. From this remark and Lemma~\ref{maxcomplete}, it follows that there is a series of natural isomorphisms
\[\Aut(C(S))\cong\Aut(C^{(1)}(S))\cong\Aut(C^\ast(S)),\] 
compatible with the action of the mapping class group $\Mod(S)$.

Therefore, Theorem~\ref{rigiditypants}, for $(g,n)\neq(1,2)$, is a direct consequence of Theorem~\ref{rigiditycurves} along with the following lemma:

\begin{lemma}\label{proofrigiditypants}There is a natural monomorphism $\Aut(C_P(S))\hookrightarrow\Aut(C^\ast(S))$ compatible with the action of 
the mapping class group $\Mod(S)$.
\end{lemma}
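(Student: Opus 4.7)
The plan exploits the coincidence $V(C_P(S))=V(C^\ast(S))$, with both sets equal to the collection of pants decompositions of $S$, combined with a containment of edge sets: every $C_P$-edge is a $C^\ast$-edge, the only difference being that a $C_P$-edge additionally requires the two distinct curves to have minimal geometric intersection on the complementary modular-dimension-$1$ surface. Thus any $\phi\in\Aut(C_P(S))$ automatically induces a bijection of the common vertex set $V$, and I would define the map of the lemma by $\Psi(\phi):=\phi|_V$. Injectivity of $\Psi$ is then immediate, since a graph automorphism is determined by its action on vertices, and $\Mod(S)$-equivariance is immediate from the naturality of this assignment. The only nontrivial point is that $\Psi(\phi)$ respects the (larger) $C^\ast$-edge relation.

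Next, I would translate $C^\ast$-adjacency into the internal language of $C_P(S)$. Two pants decompositions $P,P'$ are $C^\ast$-adjacent exactly when they share a $(d(S)-1)$-multicurve $\mu$, equivalently when they both belong to the \emph{Farey subgraph} $F_\mu:=\{Q\in V:\mu\subseteq Q\}$, whose induced subgraph in $C_P(S)$ is connected and naturally isomorphic to the Farey graph $F$ of the dim-$1$ complementary surface $S\setminus\mu$. So $\Psi(\phi)\in\Aut(C^\ast(S))$ is equivalent to $\phi$ permuting the collection $\{F_\mu\}_\mu$ among themselves: once this is known, if $P,P'$ are $C^\ast$-adjacent via $\mu$, then $\phi(P),\phi(P')\in F_{\mu'}$ for some $\mu'$, and distinctness forces $|\phi(P)\cap\phi(P')|=d(S)-1$, so $\{\phi(P),\phi(P')\}$ is again a $C^\ast$-edge. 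The task is therefore reduced to characterizing Farey subgraphs of $C_P(S)$ in purely graph-theoretic terms.

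For the characterization I would recover each $F_\mu$ from any one of its edges $e=\{P,P'\}$ (with $\mu=P\cap P'$) by analyzing $4$-cycles in $C_P(S)$. A $4$-cycle $P\to P_1\to P_2\to P_3\to P$ is either a \emph{commuting-moves square}, arising from two elementary moves on disjoint complementary subsurfaces---in which case the diagonals satisfy $|P\cap P_2|=|P_1\cap P_3|=d(S)-2$ and are not even $C^\ast$-adjacent---or a \emph{Farey $4$-cycle} lying inside a single $F_\mu$. These two types can be distinguished graph-theoretically by local features such as the presence of triangles: every edge of the Farey graph lies in triangles, whereas no triangle in $C_P(S)$ completes a commuting-moves square. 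Iterating this distinction and collecting all vertices reachable from $e$ by Farey-type $4$-cycles (and the triangles of $F_\mu$) reconstructs the whole $F_\mu$ from $e$, giving the required intrinsic characterization.

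The main obstacle is making this combinatorial characterization watertight: one must ensure that no "exotic" Farey-isomorphic subgraphs of $C_P(S)$ appear outside the family $\{F_\mu\}$, and the distinction between commuting-moves and Farey $4$-cycles must be phrased using only intrinsic properties of $C_P(S)$. This step follows the outline of Margalit's original argument in \cite{[M]}, and it is exactly the argument the authors intend to adapt later to the procongruence setting, where the same $4$-cycle analysis must be carried out compatibly with the inverse-limit structure of the vertex set.
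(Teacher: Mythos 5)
Your proposal follows essentially the same route as the paper: both reduce the lemma to showing that any automorphism of $C_P(S)$ permutes the family of Farey subgraphs $\{F_\mu\}$, by reconstructing $F_\mu$ intrinsically from any one of its edges through iterated local data, and then conclude by observing that $C^\ast(S)$ is obtained from $C_P(S)$ by replacing each Farey subgraph with the complete graph on its vertices, so the induced vertex bijection respects $C^\ast$-adjacency and is $\Mod(S)$-equivariantly injective. The only real difference is the local mechanism: the paper iterates intersections of the stars of the endpoints of an edge (equivalently, adds triangles, each of which lies in a unique Farey subgraph), which already suffices and sidesteps the $4$-cycle dichotomy you flag as the main obstacle.
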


\begin{proof}As we observed in Section~\ref{pantsgraphdef}, for $d(S)=1$, the pants graph $C_P(S)$ coincides with the Farey graph $F$. 
In general, we can associate to a $(d(S)-2)$-simplex $\s$ of $C(S)$ the full subcomplex $F_\s$ of $C_P(S)$ 
whose vertices are the facets of  $C(S)$ which contain $\s$. The graph $F_\s$ is clearly naturally isomorphic to the Farey graph associated to the 
connected component $S_\s$ of $S\smallsetminus\s$ of positive modular dimension. The Farey subgraphs $F_\s$, for $\s\in C(S)_{d(S)-2}$, cover the
pants graph $C_P(S)$. The following remarks are then elementary:
\begin{enumerate}
\item Every edge and every triangle of $C_P(S)$ is contained in a unique Farey subgraph.
\item The Farey subgraph $F_e$, containing the edge $e$ of $C_P(S)$, is obtained by the following procedure: intersect the stars 
$\Star_{v_0}$ and $\Star_{v_1}$ in $C_P(S)$ of the vertices $v_0,v_1$ of $e$; take the full subcomplex of $C_P(S)$ generated by 
$e\cup(\Star_{v_0}\cap \Star_{v_1})$; iterate the procedure on all the new edges thus obtained and continue this way; the union of all 
these subcomplexes is then the Farey subgraph $F_e$ associated to the edge $e$. 
\end{enumerate}

The above remarks imply that every automorphism $\phi$ of $C_P(S)$ brings the Farey subgraph $F_e$, associated to an edge $e$ of $C_P(S)$, to
the Farey subgraph $F_{\phi(e)}$, associated to the edge $\phi(e)$. In particular, the automorphisms of $C_P(S)$ preserve the subdivision of $C_P(S)$
in Farey subgraph. At this point, Lemma~\ref{proofrigiditypants} (and so Theorem~\ref{rigiditypants}, for $(g,n)\neq(1,2)$) follows from the remark 
that $C^\ast(S)$ is obtained from $C_P(S)$ by replacing each Farey subgraph $F_\s$ of $C_P(S)$ with the complete graph on the same set of 
vertices of $F_\s$.
\end{proof}
\end{proof}

\section{The procongruence mapping class group and procongruence curve complex}\label{sect3}
In this section we define the profinite completions of the mapping class group and of the simplicial complexes which were introduced in the
previous section. We focus on the procongruence completion because crucial results are {\it not} available to-date 
for the {\it full} profinite completions (if $g\geq 3$), as will become clear below. These concepts were introduced in \cite{[B1], [B3]} and we refer to 
those papers for more details and complete proofs.

\subsection{Profinite groups and the procongruence completion of the mapping class group} 
Given a group $G$ and a downward directed poset of normal subgroups of finite index $\{H_i,\subseteq\}_{i\in\ccI}$, we define on $G$ the 
\emph{pro-$\ccI$ topology}, which has for basis of open subsets all cosets of the subgroups in $\{H_i,\subseteq\}_{i\in\ccI}$. In this way, $G$ becomes
a topological group. The \emph{pro-$\ccI$ completion} $\wh{G}^\ccI$ of the group $G$ is the completion with respect to this topology and can be explicitly
constructed observing that the set of quotients $\{G/H_i\}_{i\in\ccI}$ together with the natural maps $p_{ij}\co G/H_i\to G/H_j$, for
$H_i\subseteq H_j$, forms an \emph{inverse system}. We have:
\[\widehat{G}^\ccI=\varprojlim_{i\in\ccI}G/H_i.\]
The profinite group $\widehat{G}^\ccI$ comes with a natural homomorphism with dense image $G\to\widehat{G}^\ccI$. The closures of the subgroups 
$\{H_i,\subseteq\}_{i\in\ccI}$ in $\widehat{G}^\ccI$ form a fundamental system of \emph{clopen} (i.e.\ closed and open at the same time) neighborhood 
of the identity for the topology on $\widehat{G}^\ccI$. We will use the 
generic terms of pro-topology and pro-completion, unless $\ccI$ needs to be explicitly mentioned. When $\{H_i,\subseteq\}_{i\in\ccI}$ is the full poset of subgroups of finite index, 
we simply write $\widehat{G}$ for $\widehat{G}^\ccI$ and call it the \emph{profinite completion} of $G$.
 
The profinite completion $\hP$ of the fundamental group $\Pi:=\pi_1(S)$ (we omit base points because they are irrelevant here) 
determines a pro-completion of the mapping class group $\G(S)$ in the following way. 
A characteristic (or just $\G(S)$-invariant) finite index subgroup $K$ of $\Pi$ determines the {\it geometric level}
$\G_K$, defined to be the kernel of the induced representation:
\[\rho_K\co \G(S)\to\Out(\Pi/K).\]
The abelian levels $\G(m)$ of order $m$ are a particular case and are defined to be 
the kernel of the natural representations, for $m\geq 2$:
\[\rho_{(m)}\co \G(S)\to\Sp(H_1(\ol{S},\Z/m)),\]
where $\ol{S}$ is the closed surface obtained from $S$ filling in the punctures. 

The \emph{congruence topology on} $\G(S)$ is the pro-topology which has for fundamental system of neighborhoods of the identity 
the set of geometric levels $\{\G_K\}_{K\lhd\Pi}$. A \emph{congruence level} $\G^\l$ of $\G(S)$ is a finite index subgroup which is open 
for the congruence topology, i.e.\ contains a geometric level $\G_K$, for some characteristic finite index subgroup $K$ of $\Pi$.

The {\em procongruence completion of the mapping class group} (or simply {\em procongruence mapping class group}) $\kG(S)$ is the completion of $\G(S)$
with respect to this topology. By definition, there is a natural faithful continuous representation $\kG(S)\hookrightarrow\Out(\hP)$ and, since $\Pi$ 
is conjugacy separable, the natural homomorphism $\G(S)\to\kG(S)$ is injective.

There is a natural surjective homomorphism 
$ \widehat\G(S) \to \kG(S)$ and the \emph{congruence subgroup problem} (first proposed by  Ivanov) asks whether 
this is actually an isomorphism, which amounts to asking whether the geometric levels
form a cofinal system in the poset of all finite index subgroups of $\G(S)$. 
The conjecture has been proved for $g(S)\leq 2$ (cf.\ \cite{Asada}, \cite{hyp} and \cite{congtop}) but remains open for $g\geq 3$.

\subsection{Pro-completions of $G$-simplicial sets}\label{G-completion}
Let $X_\bullet$ be a simplicial set endowed with a simplicial action of a group $G$ such that $X_k$, for all $k\geq 0$, consists of finitely many $G$-orbits. 
For a downward directed poset of normal finite index subgroup $\{H_i,\subseteq\}_{i\in \ccI}$, we define its pro-completion 
${\widehat X}^\ccI_\bullet$ to be the simplicial profinite set:
\[{\widehat X}^\ccI_\bt=\varprojlim_{i\in \ccI} X_\bt/H_i.\]
Let us observe that, for any $\ccI$-open subgroup $\wh{H}^\ccI$ of $\wh{G}^\ccI$, the $\wh{H}^\ccI$-completion of $X_\bt$ is naturally isomorphic to
the $\wh{G}^\ccI$-completion ${\widehat X}^\ccI_\bullet$.
There is a natural map with dense image $\iota\co X_\bt\to{\widehat X}^\ccI_\bullet$ and the simplicial profinite set ${\widehat X}^\ccI_\bullet$ is 
characterized by the universal property that, for any simplicial profinite set $Y_\bt$ endowed with a continuous $\wh{G}^\ccI$-action and a 
$G$-equivariant map $f\co X_\bt\to Y_\bt$, there exist a unique continuous $\wh{G}^\ccI$-equivariant map $f'\co \wh{X}^\ccI_\bt\to Y_\bt$ such that
$f=f'\circ\iota$. 

\subsection{Automorphisms of simplicial profinite sets} 
Homomorphisms between simplicial profinite sets are always meant to be continuous. In particular, for a simplicial profinite set $X_\bt$, 
we define $\Aut(X_\bt)$ to be the group of continuous automorphisms of $X_\bt$. We have:

\begin{proposition}\label{autprofinite}The automorphism group $\Aut(X_\bt)$ of a simplicial profinite set $X_\bt$ is a profinite group.
\end{proposition}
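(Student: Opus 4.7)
The plan is to present $\Aut(X_\bullet)$ as an inverse limit of finite groups, built from the presentation of $X_\bullet$ as an inverse limit of levelwise finite simplicial sets.

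First, I would fix a presentation $X_\bullet = \varprojlim_{i \in I} X_\bullet^{(i)}$ with each $X_\bullet^{(i)}$ a levelwise finite simplicial set; such a presentation is guaranteed by the construction of Section~\ref{G-completion}, where the $X_\bullet^{(i)}$ arise as quotients $X_\bullet / H_i$ by a cofinal system of finite-index open normal subgroups. Endowing $\Aut(X_\bullet)$ with the compact-open topology (equivalently, uniform convergence on the compact profinite spaces $X_n$) gives a Hausdorff topological group; continuity of composition and inversion follows from uniform continuity of continuous maps on compact spaces.

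Next, I would pass to a cofinal subsystem of quotients that is $\Aut(X_\bullet)$-invariant. Given the simplicial equivalence relation $\sim_i$ on $X_\bullet$ induced by projection to $X_\bullet^{(i)}$, replace it by its $\Aut(X_\bullet)$-saturation $\widetilde{\sim}_i$, defined as the intersection over $g\in\Aut(X_\bullet)$ of the translates $g\cdot\sim_i$. By construction $\widetilde{\sim}_i$ is a simplicial equivalence relation invariant under the full automorphism group. The essential technical point, and in my view the main obstacle, is to verify that $\widetilde{\sim}_i$ still has a levelwise finite quotient $\widetilde X_\bullet^{(i)}$; this bounded-orbit property is not formal for arbitrary inverse systems of profinite sets, but in the setting of the paper it holds because the levels $H_i$ can be chosen to be characteristic with respect to the relevant geometric group actions, so that every continuous automorphism permutes only finitely many translates of $\sim_i$.

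Once such an $\Aut(X_\bullet)$-invariant cofinal family is secured, each $f\in\Aut(X_\bullet)$ descends to an automorphism of the levelwise finite $\widetilde X_\bullet^{(i)}$, producing natural homomorphisms $\rho_i\colon\Aut(X_\bullet)\to\Aut(\widetilde X_\bullet^{(i)})$ to finite groups. Let $N_i:=\ker(\rho_i)$. Since the family $\{\widetilde X_\bullet^{(i)}\}$ separates points of $X_\bullet$, we have $\bigcap_i N_i=\{1\}$, so the product map $\rho\colon\Aut(X_\bullet)\to\prod_i\Aut(\widetilde X_\bullet^{(i)})$ is injective, its image consists of coherent families and is therefore closed in the profinite target, and a short compactness argument identifies $\Aut(X_\bullet)$ topologically with $\varprojlim_i \mathrm{Im}(\rho_i)$. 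Being a closed subgroup of a profinite group, $\Aut(X_\bullet)$ is itself profinite.
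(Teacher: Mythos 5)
Your argument stands or falls on the step you yourself flag: the existence of an $\Aut(X_\bullet)$-invariant cofinal family of levelwise finite quotients. That step is a genuine gap, and the justification you sketch is not available. The proposition concerns an arbitrary simplicial profinite set: no group $G$, no levels $H_i$, and no ``geometric'' structure are part of the data, so there is nothing with respect to which the quotients could be chosen characteristic. Even in the completion setting of Section~\ref{G-completion} that you import, a continuous simplicial automorphism of ${\wh X}^\ccI_\bullet$ is not assumed to be compatible with the $\wh{G}^\ccI$-action in any way, so invariance of the $H_i$ under automorphisms of $G$ gives no control over how $f\in\Aut(X_\bullet)$ moves the relation $\sim_i$; the saturation $\widetilde{\sim}_i=\bigcap_{g}g\cdot\sim_i$ is an intersection over a typically uncountable group, and nothing bounds the index of the resulting quotient. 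To see concretely that this step can fail, take $X_\bullet$ to be the constant simplicial profinite set on a profinite set $P=\varprojlim P_i$: then $\Aut(X_\bullet)$ is the group of self-homeomorphisms of $P$, and a homeomorphism of $P$ need not carry the clopen partition defining $P\to P_i$ to any partition commensurable with it, so the $\Aut$-saturation of $\sim_i$ has no reason to have finite quotient. (Your preliminary step — writing a general simplicial profinite set as a cofiltered limit of levelwise finite simplicial sets — is likewise only justified in the completion setting and would itself need an argument for arbitrary $X_\bullet$.) There is also a minor circularity to repair at the end: closedness of the image of $\rho$ should be argued directly from coherence, not from compactness of $\Aut(X_\bullet)$, which is what is being proved.

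The paper's proof takes a different and much shorter route that selects no invariant quotients at all: it records that the group of continuous bijections of a profinite set is profinite, and then observes (via Lemma~56.3.1, Tag 0BMQ, of the Stacks Project) that $\Aut(X_\bullet)$ is a closed subgroup of the product $\prod_{k\geq 0}\Aut(X_k)$ — compatibility with the face and degeneracy maps being a closed condition — and hence is profinite. In other words, the compactness is extracted degreewise from the automorphism groups of the profinite sets $X_k$, rather than from a globally $\Aut$-invariant inverse system of finite simplicial quotients, which is exactly the object your approach would need and cannot produce from the stated hypotheses.
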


The proposition is a particular case of the following more general topological lemma: 

\begin{lemma}\label{lem:autprofinite}
Let $\mathcal C$ be a category, $\mathcal H$ be the category of Hausdorff topological spaces
and $F\co\mathcal C\to\mathcal H$ be a functor. There is a canonical injective map: 
\[\Aut(F) \to \prod_{X\in {\rm Ob}(\mathcal C)} \Aut(F(X)),\]
which identifies $\Aut(F)$ with a closed subgroup of $\prod_{X\in {\rm Ob}(\mathcal C)} \Aut(F(X))$, where we endow $ \Aut(F(X))$ 
with the compact open topology. In particular, if each F(X) is a profinite set, then $\Aut(F)$ is a profinite group.
\end{lemma}

\begin{proof}The proof is essentially the same as that of Lemma~\cite[\href{https://stacks.math.columbia.edu/tag/0BMR}{Tag 0BMR}]{stacks-project}.
Let $\xi=(\g_X)\in\prod_{X\in {\rm Ob}(\mathcal C)} \Aut(F(X))$ be an element not in $ \Aut(F(X))$. Then, there exists a morphism $f\co X\to X'$
of $\mathcal C$ and a point $x\in F(X)$ such that $F(f)(\g_X(x))\neq\g_{X'}(F(f)(x))$. By the Hausdorff property, there are open disjoint neighborhoods 
$V\ni F(f)(\g_X(x))$ and $V'\ni\g_{X'}(F(f)(x))$. 
Consider the open neighborhoods $U=\{\g\in\Aut(F(X))|\,\g(x)\in V\}$ and $U'=\{\g'\in\Aut(F(X'))|\,\g'(F(f)(x))\in V'\}$. 
Then, $U\times U'\times\prod_{X''\neq X,X'} \Aut(F(X''))$ is an open neighborhood of
$\xi$ not meeting $\Aut(F) $. 

The final statement of the lemma follows from the fact that $\prod_{X\in {\rm Ob}(\mathcal C)} \Aut(F(X))$ is a profinite group, 
since the group of continuous bijection of a profinite set, endowed with the compact open topology, is a profinite group. 
\end{proof}

Let us now consider simplicial profinite sets of the type introduced in Section~\ref{G-completion}. Let then ${\widehat X}^\ccI_\bullet$ be the
profinite $\wh{G}^\ccI$-completion of a $G$-simplicial set $X_\bt$. Recall that a profinite group is strongly complete when every finite
index subgroup is open or, equivalently, when it coincides with its own profinite completion. We have:

\begin{proposition}\label{autGcompletion}
If the profinite group $\wh{G}^\ccI$ is strongly complete, then $\Aut({\widehat X}^\ccI_\bullet)$ is also a strongly complete profinite group.
\end{proposition}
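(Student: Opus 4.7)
My plan is to reduce the problem to showing that every finite-index subgroup of the profinite group $A := \Aut(\wh{X}_\bt^\ccI)$ is open, then exhibit an explicit cofinal system of open normal subgroups of $A$ indexed by $\ccI$, and finally use the strong completeness of $\wh{G}^\ccI$ to control arbitrary finite-index subgroups.

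For each $i \in \ccI$, the closure $\wh{H}_i^\ccI$ of $H_i$ in $\wh{G}^\ccI$ is open and the quotient $\wh{X}_\bt^\ccI/\wh{H}_i^\ccI$ is the finite simplicial set $X_\bt/H_i$, so the projection $\pi_i\co \wh{X}_\bt^\ccI \to X_\bt/H_i$ partitions each $\wh{X}_k^\ccI$ into finitely many clopen fibers. I would introduce the subgroups
\[V_i := \{\phi \in A : \phi \text{ permutes the fibers of }\pi_i\}, \qquad U_i := \{\phi \in V_i : \phi \text{ fixes each fiber setwise}\},\]
noting the injection $V_i/U_i \hookrightarrow \Aut(X_\bt/H_i)$ into a finite group. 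In the profinite topology inherited from the product of homeomorphism groups (cf.\ Proposition~\ref{autprofinite}), a basic neighborhood of the identity is defined by preserving a given finite clopen partition, so $U_i$ is open; hence $V_i$ is open of finite index, and its normal core $W_i$ is open normal of finite index. Since $\bigcap_i U_i = \{1\}$, the family $\{W_i\}_{i \in \ccI}$ forms a cofinal system of open normal subgroups with trivial intersection, and $A = \varprojlim_i A/W_i$.

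The critical and hardest step is to show that every abstract finite-index subgroup $N \leq A$ contains some $W_i$. The plan is to use the natural homomorphism $\wh{G}^\ccI \to A$ induced by the $\wh{G}^\ccI$-action on $\wh{X}_\bt^\ccI$: pulling back, one obtains a continuous action of $\wh{G}^\ccI$ on the finite set $A/N$, and by the hypothesis that $\wh{G}^\ccI$ is strongly complete this action factors through some finite quotient $G/H_i$. The remaining point, which is the main obstacle, is to promote this factorisation to the full group $A$, i.e.\ to conclude that $W_i \subseteq N$. This requires exploiting the compatibility between the filtration $\{W_i\}$ of $A$ and the filtration $\{\wh{H}_i^\ccI\}$ of $\wh{G}^\ccI$, together with the fact that an element of $A$ is determined by its action on the inverse system $\{X_\bt/H_i\}$; combined with strong completeness of $\wh{G}^\ccI$, this should force any subgroup of $A$ whose index is finite to be open, yielding the desired strong completeness of $A$.
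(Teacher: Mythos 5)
Your argument stalls exactly at the step you yourself flag as ``the main obstacle,'' and that step is the entire content of the proposition. Write $A:=\Aut(\wh{X}^\ccI_\bt)$ and let $\rho\co\wh{G}^\ccI\to A$ be the homomorphism through which the $\wh{G}^\ccI$-action factors. Given a finite-index subgroup $N\le A$, strong completeness of $\wh{G}^\ccI$ only tells you that $\rho^{-1}(N)$ is open, hence contains some $\wh{H}^\ccI_i$, i.e.\ $\rho(\wh{H}^\ccI_i)\subseteq N$. This controls the image $\rho(\wh{G}^\ccI)$ inside $A$ and nothing more: $\rho$ need not be surjective, and your $W_i$ contains in general many automorphisms that do not lie in $\rho(\wh{G}^\ccI)$, so no appeal to ``compatibility of the filtrations'' can by itself promote $\rho(\wh{H}^\ccI_i)\subseteq N$ to $W_i\subseteq N$; your sketch produces no elements of $N$ outside $\rho(\wh{G}^\ccI)$. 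There are also secondary problems in the setup: the subgroups $U_i$ and $V_i$ impose conditions in \emph{all} simplicial degrees at once, whereas the basic open subgroups of $A\subseteq\prod_{k\geq 0}\Aut(\wh{X}^\ccI_k)$ constrain only finitely many degrees, so openness of $U_i$ (hence of $V_i$ and of its core $W_i$) is not justified; and even granting that, cofinality of the $W_i$ (cores of the fiber-\emph{permuting} subgroups $V_i$, not of the $U_i$) among open subgroups of $A$, which you need for $A=\varprojlim_i A/W_i$, is not established.

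The missing idea, as the paper's proof supplies it, is to work with completions of the simplicial profinite set rather than chasing subgroups of $A$: complete $\wh{X}^\ccI_\bt$ with respect to the finite-index subgroups of $A$ itself. Since the $\wh{G}^\ccI$-action factors through $A$, each quotient $\wh{X}^\ccI_\bt/N$ is dominated by $\wh{X}^\ccI_\bt/\rho^{-1}(N)$, and strong completeness makes $\rho^{-1}(N)$ open, so this $A$-completion coincides with $\wh{X}^\ccI_\bt$. The profinite completion $\wh{A}$ of $A$ then acts continuously on it, giving a continuous homomorphism $\wh{A}\to\Aut(\wh{X}^\ccI_\bt)=A$ that splits the canonical map $A\to\wh{A}$; since the image of $A$ is dense in $\wh{A}$, the composite $\wh{A}\to A\to\wh{A}$ is the identity, so $\wh{A}=A$ and $A$ is strongly complete. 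Some device of this kind (a retraction $\wh{A}\to A$, or an equivalent argument) is exactly what is needed to bridge the gap between ``$\rho^{-1}(N)$ is open in $\wh{G}^\ccI$'' and ``$N$ is open in $A$''; without it your proof does not close.
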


\begin{proof}There is a natural faithful continuous action of the profinite group $\Aut({\widehat X}^\ccI_\bullet)$ on ${\widehat X}^\ccI_\bullet$ such that
the natural action of $\wh{G}^\ccI$ on ${\widehat X}^\ccI_\bullet$ factors through it and a natural continuous homomorphism of profinite groups
$\wh{G}^\ccI\to\Aut({\widehat X}^\ccI_\bullet)$. It is then clear that the $\wh{\Aut}({\widehat X}^\ccI_\bullet)$-completion 
$({\widehat X}^\ccI_\bullet)^\wedge$ of
${\widehat X}^\ccI_\bullet$ is dominated by its $(\wh{G}^\ccI)^\wedge$-completion. From $(\wh{G}^\ccI)^\wedge=\wh{G}^\ccI$, it  follows that 
$({\widehat X}^\ccI_\bullet)^\wedge={\widehat X}^\ccI_\bullet$. Since the profinite group $\wh{\Aut}({\widehat X}^\ccI_\bullet)$ acts faithfully on 
$({\widehat X}^\ccI_\bullet)^\wedge$, we then have that $\wh{\Aut}({\widehat X}^\ccI_\bullet)=\Aut({\widehat X}^\ccI_\bullet)$.
\end{proof}

\subsection{Congruence completions of curve complexes} 
In order to define, the profinite completions of the (abstract) simplicial complexes $C(S)$ and $C_P(S)$ associated to some directed poset
of finite index subgroups of $\G(S)$, we need first to consider the associated simplicial sets $C(S)_\bt$ and $C_P(S)_\bt$ and then take 
pro-completions in the above sense. In this paper, we will be essentially interested in the pro-completions of $C(S)$ and $C_P(S)$ 
associated to the full profinite and congruence topologies on $\G(S)$. We will then denote by $\widehat{C}(S)_\bt$, $\widehat{C}_P(S)_\bt$ 
the profinite  completions and by $\check{C}(S)_\bt$, $\check{C}_P(S)_\bt$ the procongruence completions, respectively. 
By Proposition~3.3 in \cite{[B3]}, the natural maps from the simplicial sets $C(S)_\bt$ and 
$C_P(S)_\bt$ to all their profinite completions above are injective.

\subsection{Profinite simple closed curves}\label{section:pscc}
A more concrete avatar of the curve complex $C(S)$ can be constructed in the procongruence case (cf.\ Section~4 of \cite{[B3]}).
This is done introducing the profinite set $\hL(S)$  of \emph{profinite simple closed curves on $S$}. This set comes with a natural
injective map $\cL(S)\hookrightarrow\hL(S)$, where $\cL(S)$ is the usual set of isotopy classes of simple closed curves on $S$. 
There are two equivalent definitions. 

For the first one, we proceed as follows. For a set $X$, let $\cP(X)$ denote the set of {\it unordered} 
pairs of elements of $X$ and, for a group $G$, let $G/\!\!\sim$ denote the set of conjugacy classes in $G$. 
Then, for $x\in\hP$, let us denote by $x^\pm$ the pair $(x,x^{-1})$ in $\cP(\hP)$ and by
$[x^\pm]$ the equivalence class of $x^\pm$ in $\cP(\hP/\!\!\sim)$. Note that the latter has a natural structure 
of profinite set. There is a natural map $\iota\co \cL(S)\to \cP(\hP/\!\!\sim)$, defined as follows.
For $\g\in\cL(S)$, let $\vec{\g}\in\Pi\subset\hP$ be an element with free isotopy class $\g$
and define $\iota(\g)=[\vec\g^\pm]$, which is plainly independent of the choice of the representative 
$\vec\g$ of $\g$. Since the group $\Pi$ is conjugacy separable, the map $\iota$ is an embedding and
we define the set $\hat\cL=\hat\cL(S)$ to be the closure of the image $\iota(\cL(S))$ inside $\cP(\hP/\!\!\sim)$.

For this definition to be entirely meaningful, it would be desirable that, intersecting the profinite set $\hL(S)$ with $\cP(\Pi/\!\!\sim)$, we get back the
set of simple closed curves $\cL(S)$, that is to say the latter set is closed for the induced topology on $\cP(\Pi/\!\!\sim)$. This is indeed the case:

\begin{proposition}\label{scc:closed}With the above notation, let us identify the sets $\cL(S)$ and $\cP(\Pi/\!\!\sim)$ with their images in $\cP(\hP/\!\!\sim)$.
Then we have: $\hL(S)\cap\cP(\Pi/\!\!\sim)=\cL(S)$.
\end{proposition}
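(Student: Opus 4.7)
The equality $\hL(S)\cap\cP(\Pi/\!\!\sim)=\cL(S)$ is equivalent to the closedness of $\iota(\cL(S))$ inside $\cP(\Pi/\!\!\sim)$ for the topology induced from $\cP(\hP/\!\!\sim)$, by the general fact that closure in a subspace equals the ambient closure intersected with the subspace. Unpacked, the task is: if $x\in\Pi$ has the property that, for every characteristic finite-index subgroup $K\lhd\Pi$, there exists $\gamma_K\in\cL(S)$ with $x$ and $\vec\gamma_K^{\pm 1}$ conjugate in $\Pi/K$, then $x$ itself represents a simple closed curve on $S$.

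My first move is to exploit the finite partition $\cL(S)=O_1\sqcup\cdots\sqcup O_r$ into $\G(S)$-orbits by topological type. Pigeonhole on the family $\{\gamma_K\}_K$ restricts attention to a cofinal subsystem with all $\gamma_K$ inside a single orbit $O_j$. Fixing a representative $\gamma_j\in O_j$ and writing $\gamma_K=g_K\cdot\gamma_j$ for some $g_K\in\G(S)$, the cosets $g_K\,\G(S)_{\gamma_j}$ form a Cauchy net in the compact space $\kG(S)/\overline{\G(S)_{\gamma_j}}$, hence converge to some coset $\bar g\,\kG(S)_{\gamma_j}$. By Theorem~\ref{stabmulticurves} one has $\overline{\G(S)_{\gamma_j}}=\kG(S)_{\gamma_j}$, so passage to the limit yields the identity $[x^\pm]=\bar g\cdot[\vec\gamma_j^{\pm}]$ inside $\cP(\hP/\!\!\sim)$.

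The crux is the descent of $\bar g$ back into the discrete mapping class group: I would show that the natural injection $\G(S)/\G(S)_{\gamma_j}\hookrightarrow\kG(S)/\kG(S)_{\gamma_j}$ has image precisely the set of cosets whose induced action on $[\vec\gamma_j^\pm]$ lands in $\cP(\Pi/\!\!\sim)\subset\cP(\hP/\!\!\sim)$. This combines the conjugacy separability of $\Pi$ inside $\hP$ (which equates $\hP$-conjugacy and $\Pi$-conjugacy for elements of $\Pi$) with the procongruence description of stabilizers from \cite{[B3]}. Granting this step, $\bar g\in\G(S)\cdot\kG(S)_{\gamma_j}$, and hence $[x^\pm]=[h\cdot\vec\gamma_j^\pm]$ for some $h\in\G(S)$, exhibiting $x$ as a simple closed curve class.

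The main obstacle is precisely this descent: \emph{a priori} a procongruence mapping class $\bar g$ could transport $\vec\gamma_j$ to an element whose $\hP$-conjugacy class meets $\Pi$ without $\bar g$ being approximable by $\G(S)$ modulo the stabilizer. Ruling this out requires the structure theory of procongruence multitwists and their centralizers (Theorem~\ref{multitwists} and Corollary~\ref{centralizers multitwists}) together with Theorem~\ref{stabmulticurves}, which collectively give a clean decomposition of $\hL(S)$ into $\kG(S)$-orbits whose intersections with $\cP(\Pi/\!\!\sim)$ are exactly the discrete $\G(S)$-orbits of $\cL(S)$.
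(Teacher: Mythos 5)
Your reduction to a single $\kG(S)$-orbit is sound (indeed $\hL(S)$ is the finite union of the closed subsets $\kG(S)\cdot\g_j$, so the pigeonhole/Cauchy-net step can be shortened; also note compactness only gives a convergent subnet, which is harmless), but there is a genuine gap at exactly the point you call the crux. You assert that the descent step --- that $\bar g\cdot[\vec\g_j^{\pm}]\in\cP(\Pi/\!\!\sim)$ forces $\bar g\in\G(S)\cdot\kG(S)_{\g_j}$ --- follows from Theorem~\ref{multitwists}, Corollary~\ref{centralizers multitwists} and Theorem~\ref{stabmulticurves}. It does not. Those results concern profinite Dehn twists and stabilizers inside $\kG(S)$; the only statement of the right flavor, item (ii) of Corollary~\ref{centralizers multitwists}, applies to an element of $\G(S)$ already known to lie in a procyclic subgroup generated by a profinite twist, and your hypothesis produces no such element: you only know that the $\hP$-conjugacy class pair of $\phi(\vec\g_j)$, for some lift $\phi\in\Aut(\hP)$ of $\bar g$, meets $\Pi$ in the element $x$; the twist $\tau_{\bar g(\g_j)}$ need not lie in $\G(S)$, and conjugacy separability is of no use because $\phi(\vec\g_j)$ itself need not lie in $\Pi$. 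In fact the statement you ``grant'' --- that the $\kG(S)$-orbits in $\hL(S)$ meet $\cP(\Pi/\!\!\sim)$ exactly in the discrete $\G(S)$-orbits of $\cL(S)$ --- is not a consequence of the quoted structure theory but is precisely an orbit-wise strengthening of Proposition~\ref{scc:closed} itself; Lemma~\ref{orbits} only says that a $\kG(S)$-orbit meets $\cL(S)$ in a single $\G(S)$-orbit, which is the easy half, since there one already assumes the intersected class is simple. As written, your descent step begs the question.

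What is actually needed is a criterion detecting simplicity of an element of $\Pi$ by data that survives passage to $\hP$, and this is how the paper argues: by Proposition~4.1 and Theorem~4.3 of \cite{[BZ1]}, extended to punctured surfaces in \cite{congtop} (Theorem~7.1 and Proposition~7.5 there), if $\g\in\Pi$ is simple then every element of $\Pi$ lying in the orbit of $\g$ under the group $\Aut^\ast(\hP)$ of automorphisms preserving the conjugacy classes of the peripheral procyclic subgroups is again simple. Since $\kG(S)$ acts on $\hP$ through such (outer) automorphisms, and since a limit of simple classes lies in the $\kG(S)$-orbit of one of finitely many simple classes, this stronger statement immediately yields $\hL(S)\cap\cP(\Pi/\!\!\sim)=\cL(S)$. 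Your outline would become a proof only if you supplied an input of this kind (in \cite{[BZ1]} it comes from characterizing simple closed curves via homology groups of coverings) to justify the descent; the structure theory of procongruence twists and stabilizers alone cannot do it.
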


\begin{proof}In the paper \cite{[B3]}, this was eventually proved in Remark~5.14 but we are now able to provide a more straightforward proof
thanks to Proposition~4.1 and Theorem~4.3 in \cite{[BZ1]} and their generalization to the open surface case in \cite{congtop} (cf.\ Theorem~7.1 and 
Proposition~7.5 ibid.). Those results imply in fact the stronger statement that, if we denote by $\Aut^\ast(\hP)$ the (closed) subgroup of elements of 
$\Aut(\hP)$ which preserve the conjugacy classes of the procyclic subgroups of $\hP$ generated by small loops around the punctures of $S$, then, 
for $\g\in\Pi$ simple, the intersection of the $\Aut^\ast(\hP)$-orbit of $\g$ with $\Pi$ also consists of simple elements.
\end{proof}

For the second definition, let $\cG(\hP)/\!\!\sim$ be the (profinite) set of conjugacy classes of closed subgroups of $\hP$
and define a map $\iota'\co\cL(S)\to\cG(\hP)/\!\!\sim$ sending $\g\in\cL(S)$ to the conjugacy class of the procyclic subgroup of $\hP$ generated by 
$\vec\g$ (same notation as above). We then define $\hL'(S)$ to be the closure of the image of $\iota'$.
There is a natural continuous map $\psi\co\hL(S)\to\hL'(S)$ and the LERF property of surface groups implies that this map is a bijection of profinite sets.
Indeed, since finitely generated subgroups of $\Pi$ are closed in the profinite topology, we have, in particular, that, for all elements $x\in\Pi$, 
there holds $x^\ZZ\cap\Pi=x^\Z$, where we denote by $x^\Z$ and $x^\ZZ$ the cyclic and the \emph{closed procyclic} subgroups generated by $x$ in $\Pi$ 
and $\hP$, respectively. It is then easy to construct an inverse to $\psi$. Thus, in what follows, we will identify $\hL'(S)$ with $\hL(S)$ and 
denote both of them simply by $\hL(S)$.

\subsection{The procongruence curve complex}\label{section:isomorphism}
For every $k\geq 0$, there is a natural embedding $C(S)_k\hookrightarrow\cP_{k+1}(\hL(S))$ from the set of $k$-simplices of the
curve complex into the profinite set $\cP_{k+1}(\hL(S))$ of unordered subsets of $k+1$ elements of $\hL(S)$. 
The \emph{procongruence curve complex} $\kC_\cL(S)$ is then defined as the \emph{abstract simplicial profinite complex} (cf.\ (ii) of Definition~3.2 in \cite{[B3]})
whose set of $k$-simplices, for $k\geq 0$, is the closure of the image of $C(S)_k$ in $\cP_{k+1}(\hL(S))$. \emph{Note that, as an abstract simplicial complex,
$\kC_\cL(S)$ is a flag complex because $C(S)$ is.} This remark will play an important role later on in the paper.

So far we have two congruence completions of the curve complex $C(S)$: the simplicial profinite set $\kC(S)_\bt$ obtained as the $\kG(S)$-completion 
of the simplicial set $C(S)_\bt$ and the abstract simplicial profinite complex $\kC_\cL(S)$ defined above. Each construction has its own 
advantage. As we just remarked, $\kC_\cL(S)$ is a flag simplicial complex. On the other hand, the stabilizers for the action of $\kG(S)$ on $\kC(S)_\bt$ 
are obtained taking the closure in $\kG(S)$ of the stabilizers for the action of $\G(S)$ on $C(S)_\bt$ (cf.\ Proposition~6.5 in \cite{[B1]}). Let $\kC_\cL(S)_\bt$
be the simplicial profinite set associated (formally) to the abstract simplicial profinite complex $\kC_\cL(S)$. This is endowed with a continuous
$\kG(S)$-action and there is a natural $\G(S)$-equivariant map of simplicial sets $C(S)_\bt\to\kC(S)_\bt$, with dense image. 
By the universal property of $\kG(S)$-completions, there is then a natural surjective map of simplicial profinite sets $\kC(S)_\bt\rightarrow \kC_\cL(S)_\bt$.
According to Remark~7.11 of \cite{[B4]}, also $\kC(S)_\bt$ is the simplicial profinite set associated to an abstract simplicial profinite 
complex $\kC(S)$. Therefore, there is also an induced surjective map of abstract simplicial profinite complexes $r\co\kC(S)\rightarrow \kC_\cL(S)$.

The main technical result of (\cite{[B3]},  see \ Thm. 4.2) is:
  
\begin{theorem}\label{isomorphism}The natural map $r\co\kC(S)\to\kC_\cL(S)$ is a $\kG(S)$-equivariant isomorphism 
of abstract simplicial profinite complexes.
\end{theorem}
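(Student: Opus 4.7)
The plan is to prove that the natural surjection $r\co\kC(S)\to\kC_\cL(S)$ is an isomorphism of abstract simplicial profinite complexes by showing that on each set of $k$-simplices, the induced map $r_k\co\kC(S)_k\to\kC_\cL(S)_k$ is a bijection. Both sides are continuous $\kG(S)$-sets and the map is $\kG(S)$-equivariant with dense image $C(S)_k$, so surjectivity of each $r_k$ is immediate from compactness: the image contains the dense subset $C(S)_k$, is closed (since $\kC(S)_k$ is compact and $r_k$ is continuous), and thus fills $\kC_\cL(S)_k$.

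For injectivity, I would decompose both sides orbit-by-orbit under $\kG(S)$. Since $C(S)_k/\G(S)$ is finite (parameterized by topological types of $(k+1)$-multicurves) and is dense in the orbit spaces on both sides, each side is a finite disjoint union of $\kG(S)$-orbits indexed by the same set of topological types. It therefore suffices to prove that for every $\sigma=\{\gamma_0,\dots,\gamma_k\}\in C(S)_k$, we have
\[\ol{\Stab_{\G(S)}(\sigma)}^{\kG(S)}=\Stab_{\kG(S)}(\sigma),\]
where on the left the closure is taken in $\kG(S)$ with respect to the congruence topology, and on the right the stabilizer is computed for the action of $\kG(S)$ on $\cP_{k+1}(\hL(S))$. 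The inclusion $\subseteq$ is tautological; the point is the reverse inclusion.

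The key ingredient for the reverse inclusion is the structural theory of procongruence multitwists recalled earlier in the excerpt. Concretely: if $f\in\kG(S)$ preserves $\sigma$ as a subset of $\hL(S)$, then, via the identification $\hL(S)\cong\hL'(S)$, the element $f$ preserves (up to conjugation) each procyclic subgroup $\vec\gamma_i^{\hZ}\subset\hP$, hence commutes with each procongruence Dehn twist $\check\tau_{\gamma_i}$ and so centralizes the procongruence multitwist generated by $\check\tau_{\gamma_0},\dots,\check\tau_{\gamma_k}$. By Corollary~\ref{centralizers multitwists}, such an $f$ lies in the stabilizer of $\sigma$ viewed as a simplex of $\kC(S)$, which by Proposition~6.5 of \cite{[B1]} (already recorded above) is precisely the closure in $\kG(S)$ of $\Stab_{\G(S)}(\sigma)$. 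This establishes the equality of stabilizers and hence the orbit-wise bijectivity of $r_k$.

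Finally, to conclude that $r$ is an isomorphism of abstract simplicial profinite complexes (not only a bijection on simplices in each dimension), I would verify that the face maps are respected, which is automatic from the construction since both sides are built by completing the same $G$-equivariant simplicial data; moreover, since $\kC_\cL(S)$ is a flag complex by construction and $\kC(S)$ inherits this property from $C(S)$ via the $\kG(S)$-completion, the $k$-simplex structures are determined by the $1$-skeleton and the bijectivity in dimensions $0$ and $1$ already forces an isomorphism in all dimensions. The main obstacle in the whole argument is the reverse stabilizer inclusion, which cannot be proved directly from the definition of $\hL(S)$ alone and requires the full strength of the procongruence Dehn twist parameterization theorems \ref{multitwists} and \ref{centralizers multitwists}.
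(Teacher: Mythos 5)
Your overall skeleton (surjectivity by compactness, then an orbit-by-orbit reduction to an equality of stabilizers) is the same as the paper's, but the step that carries all the weight is circular as you have set it up. You derive the reverse inclusion $\Stab_{\kG(S)}(r(\s))\subseteq\ol{\G}(S)_\s$ from Theorem~\ref{multitwists}, Corollary~\ref{centralizers multitwists} and (implicitly) Theorem~\ref{stabmulticurves}; but in this paper, and in \cite{[B3]} where these results are proved, all of them sit \emph{downstream} of Theorem~\ref{isomorphism}: the parameterization $\check\tau\co\hL(S)_0\to\kG(S)$ of profinite Dehn twists by profinite simple closed curves is only defined via the identification $\kC(S)_0\cong\hL(S)_0$ that the theorem provides, and Theorem~\ref{stabmulticurves}, which Corollary~\ref{centralizers multitwists} quotes to describe $\kG(S)_\s$, is Theorem~4.5 of \cite{[B3]}, proved there after the isomorphism theorem. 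Your intermediate inference ``$f$ preserves the conjugacy classes of the procyclic subgroups $\vec\g_i^{\,\ZZ}$, hence commutes with each $\check\tau_{\g_i}$'' already presupposes what is at stake: a priori $f\tau_{\g_i}f^{-1}$ is $\check\tau$ evaluated at the image of $\g_i$ under $f$ in the completion $\kC(S)_0$, and the assertion that this depends only on the image in $\hL(S)_0$ is essentially the injectivity of $r_0$ you are trying to prove. Moreover, even taking Corollary~\ref{centralizers multitwists} as a black box, it identifies the centralizer of the multitwist with the stabilizer for the action on $\kC_\cL(S)$ (that is how $\kG(S)_\s$ is defined there via Theorem~\ref{stabmulticurves}), not with $\ol{\G}(S)_\s$, so your chain of inclusions returns to its starting point and never reaches the closure of the discrete stabilizer. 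The genuine content of the theorem is exactly the equality $\kG(S)_{r(\s)}=\ol{\G}(S)_\s$, which the paper obtains by encoding $\s$ in group-theoretic combinatorial data (Corollary~7.8 of \cite{[B4]}) and exploiting that procyclic subgroups of $\hP$ generated by simple elements are malnormal and geometric subgroups are self-normalizing (\cite{[BZ1]}, \cite{[BZ2]}); your proposal offers no independent argument for this step.

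Two secondary points. First, your orbit-indexing claim is only half justified: density of $C(S)_k$ gives that the finite set of topological types surjects onto the orbit sets on both sides, but not that two distinct topological types cannot merge into a single $\kG(S)$-orbit in $\kC_\cL(S)_k\subset\cP_{k+1}(\hL(S))$; ruling this out is the content of Lemma~\ref{orbits}, which uses the geometric origin of the action through the universal monodromy representation, and it needs to be said. Second, the closing appeal to flag complexes is both unnecessary and partly unfounded: for maps of abstract simplicial complexes, injectivity on $0$-simplices already forces injectivity in every dimension (a simplex is a set of vertices), which is exactly the paper's reduction, whereas flagness of $\kC(S)$ is not known prior to the theorem — the paper asserts it only for $\kC_\cL(S)$.
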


\begin{proof}[Sketch of the proof.] We will present here the main ideas of the proof. Since we are dealing with abstract
simplicial complexes, it is enough to show that the map $r$ is injective at the level of $0$-simplices. The first step
is then to show that $r$ induces a bijective correspondence between the $\kG(S)$-orbits in $\kC(S)_0$ and $\kC_\cL(S)_0$:

\begin{lemma}\label{orbits}The orbits for the action of $\kG(S)$ on $\kC(S)_0$ and $\kC_\cL(S)_0$ are parameterized by the topological types 
of nonperipheral simple closed curves on $S$, namely by the orbits of the action of $\G(S)$ on $\cL(S)_0$.
\end{lemma}

\begin{proof}The orbits of $\kG(S)$ in $\kC(S)_0$ parameterize the irreducible components of the DM boundary of $\ol{\cM}(S)$ which, in their turn,
are parameterized by the topological types of nonperipheral simple closed curves on $S$. To see that the same is true for the orbits of $\kG(S)$
in $\kC_\cL(S)_0$, we need to consider the monodromy representation $\hat\rho_S\co\hG(S)\to\Out(\hP)$ associated to the universal family of curves
$\cC(S)\to\cM(S)$. First observe that, by definition of the procongruence mapping class group, the representation $\hat\rho_S$ factors through a 
representation $\check\rho_S\co\kG(S)\to\Out(\hP)$ and it is this representation which induces the action of $\kG(S)$ on the set of profinite simple
closed curves $\cL(S)$. The lemma will follow if we show that the $\kG(S)$-orbit of an element 
$\g$ of $\cL(S)\subset\hL(S)$ intersects $\cL(S)$ precisely
in the $\G(S)$-orbit of $\gamma$. 
But this follows from the geometric nature of this action (for more details, see the first paragraphs of the proof of Theorem~4.2 in \cite{[B3]}).
\end{proof}

Thanks to the above lemma and its proof we can define the topological type also for profinite simple closed curves:

\begin{definition}\label{toptype}The topological type of a simplex $\s\in\kC(S)$ is the topological type of a simplex $\s'$ in the intersection
$\kG(S)\cdot\s\cap C(S)$, namely the topological type of the surface $S\ssm\s'$. We then define $\hL(S)_0$ to be the closed subset of 
$\hL(S)$ corresponding to profinite nonperipheral simple closed curves.
\end{definition}

The second and last step in the proof of Theorem~\ref{isomorphism} then consists of showing that the $\kG(S)$-stabilizer of a $0$-simplex
in $\kC(S)$ is naturally isomorphic to the $\kG(S)$-stabilizer of a $0$-simplex in $\kC_\cL(S)$:

\begin{lemma}Let us identify a $0$-simplex $\s$ in the curve complex $C(S)$ with its image in $\kC(S)$. Then we have
$\ol{\G}(S)_\s=\kG(S)_\s=\kG(S)_{r(\s)}$, where $\ol{\G}(S)_\s$ denotes the closure in the congruence completion $\kG(S)$ 
of the stabilizer $\G(S)_\s$ for the action of $\G(S)$ on $C(S)$.
\end{lemma}

\begin{proof}[Sketch of the proof.] The first identity is relatively simple to prove and is the content of Proposition~6.5 in \cite{[B1]}. 
The second identity is much more difficult and represents the bulk of the proof of Theorem~4.2 in \cite{[B3]}. Here, we explain the main ideas.

The basic result on which the proof rests is Corollary~7.8 in \cite{[B4]} which allows us to replace the simplex $\s\in\kC(S)_0$ with some 
group-theoretic combinatorial data (cf.\ Definition~7.5 in \cite{[B4]}). This reduces the proof of the second identity to showing that the stabilizer 
$\kG(S)_{r(\s)}$ preserves the equivalence class of this data. This happens essentially because geometric subgroups of $\hP$ are self-normalizing
(cf.\ Lemma~4.3 in \cite{[B3]}, which states that procyclic subgroups of $\hP$ generated by simple elements are actually \emph{malnormal}, 
and the more general Proposition~3.6 and Theorem~3.7 in \cite{[BZ2]}).
\end{proof}
\end{proof}

\begin{corollary}\label{stabilizers}Let us identify a simplex $\s$ in the curve complex $C(S)$ with its image in $\kC_\cL(S)$. 
Then we have $\kG(S)_\s=\ol{\G}(S)_\s$, where $\ol{\G}(S)_\s$ denotes the closure in the congruence completion $\kG(S)$ 
of the stabilizer $\G(S)_\s$ for the action of $\G(S)$ on $C(S)$.
\end{corollary}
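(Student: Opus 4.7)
The plan is to reduce the corollary to the analogous statement for the procongruence curve complex $\kC(S)$ itself, which was already established during the proof of Theorem~\ref{isomorphism}, and then transport it along the isomorphism $r\co\kC(S)\to\kC_\cL(S)$.

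First, I would observe that for every simplex $\s\in C(S)$, the stabilizer of its image in the $\kG(S)$-completion $\kC(S)_\bt$ is the closure $\ol{\G}(S)_\s$. For $0$-simplices this is the content of Proposition~6.5 of \cite{[B1]}, recalled in the sketch of Theorem~\ref{isomorphism}, but the same argument goes through verbatim in every dimension: it is a general property of the $\wh{G}^\ccI$-completion of a $G$-simplicial set, as constructed in Section~\ref{G-completion}, using only that each $C(S)_k$ consists of finitely many $\G(S)$-orbits and that the natural maps $C(S)_\bt\to\kC(S)_\bt$ are injective (as noted after the definition of $\kC(S)_\bt$). Thus $\kG(S)_\s=\ol{\G}(S)_\s$ already inside $\kC(S)_\bt$.

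Next, Theorem~\ref{isomorphism} provides a $\kG(S)$-equivariant isomorphism $r\co\kC(S)\stackrel{\sim}{\to}\kC_\cL(S)$ of abstract simplicial profinite complexes. Because $r$ is induced by the identity on the dense subset $C(S)$, it carries the image of $\s$ in $\kC(S)$ to the image of $\s$ in $\kC_\cL(S)$, i.e.\ to the very simplex we denote by the same symbol in the statement of the corollary. By $\kG(S)$-equivariance of $r$, the stabilizers of these two images coincide, which gives
\[
\kG(S)_{\s\in\kC_\cL(S)}\;=\;\kG(S)_{\s\in\kC(S)}\;=\;\ol{\G}(S)_\s,
\]
as required.

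The only delicate point is the extension of the identity $\kG(S)_\s=\ol{\G}(S)_\s$ from $0$-simplices to arbitrary simplices inside $\kC(S)_\bt$. This, however, is a formal consequence of how stabilizers behave under $G$-completions of simplicial sets and requires no new geometric input; all the substantive work has already been absorbed into Theorem~\ref{isomorphism}, so I do not expect any further obstacle.
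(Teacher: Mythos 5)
Your proposal is correct and follows essentially the same route as the paper: the identity $\kG(S)_\s=\ol{\G}(S)_\s$ for the action on the $\kG(S)$-completion $\kC(S)_\bt$ is exactly what the paper takes from Proposition~6.5 of \cite{[B1]} (which it already invokes for simplices of all dimensions, not just vertices), and the corollary is then obtained, as you do, by transporting this identity along the $\kG(S)$-equivariant isomorphism $r\co\kC(S)\to\kC_\cL(S)$ of Theorem~\ref{isomorphism}, which restricts to the identity on $C(S)$. Your remark that the closure-of-stabilizers statement is a formal feature of $\wh{G}^\ccI$-completions (the deep content being concentrated in $\kG(S)_\s=\kG(S)_{r(\s)}$, i.e.\ in Theorem~\ref{isomorphism}) matches the paper's own division of labor.
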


For a simplex $\s\in C(S)$, let $\mathrm{I}_\s$ be the abelian subgroup of $\G(S)$ generated by the Dehn twists 
about the curves contained in $\s$. Let then $\hat{\mathrm{I}}_\s$ be the profinite completion of $\mathrm{I}_\s$. 
If $\s=\{\g_0,\ldots,\g_h\}\in C(S)$, then we denote by $\vec\s$ the set of oriented simple curves
$\{\vec\g_0,\ldots,\vec\g_h\}$, where  $\vec{\g}\in\Pi\subset\hP$ is an element with free isotopy class $\g\in\cL(S)$. 
Let also $\vec\s^{\pm}=\{\vec\g_0^\pm,\ldots,\vec\g_h^\pm\}$  be the set of oriented simple closed curves in $\s$ with both orientations and $\Sigma_{\vec\s^{\pm}}$ the group of permutations on the set $\vec\s^{\pm}$. 
The stabilizers for the action of $\kG(S)$ on the procongruence curve complex  $\kC_\cL(S)$ can be further characterized as follows:

\begin{theorem}\label{stabmulticurves}Let $\s=\{\g_0,\ldots,\g_h\}\in\kC_\cL(S)$ be a simplex in the image of $C(S)$ and 
$S\ssm\s=S_1\coprod\ldots\coprod S_k$.
\begin{enumerate}
\item The stabilizer $\kPG(S)_\s$ of $\s$ for the action of $\kPG(S)$ on $\kC_\cL(S)$ is described by the two exact sequences:
\[\begin{array}{c}
1\ra\kPG(S)_{\vec{\s}}\to\kPG(S)_\s\to\Sigma_{\vec\s^{\pm}},\\
\\
1\ra\hat{\mathrm{I}}_\s\to\kPG(S)_{\vec{\s}}\to\kPG(S_1)\times\ldots\times\kPG(S_k)\to 1.
\end{array}\]
\item For $i=1,\ldots,k$, let $B_i$ be the set of punctures of $S_i$ bounded by a simple closed curve of $S$ and let $\kG(S_i)_{B_i}$ be the 
pointwise stabilizer of the set $B_i$ (for the natural action of $\kG(S_i)$ on the set of punctures of $S_i$). 
Then, the stabilizer $\kG(S)_\s$ of $\s$ for the action of $\kG(S)$ on $\kC_\cL(S)$ is described by the two exact sequences:
\[\begin{array}{c}
1\ra\kG(S)_{\vec{\s}}\to\kG(S)_\s\to\Sigma_{\vec\s^{\pm}},\\
\\
1\ra\hat{\mathrm{I}}_\s\to\kG(S)_{\vec{\s}}\to\kG(S_1)_{B_1}\times\ldots\times\kG(S_k)_{B_k}\to 1.
\end{array}\]
\end{enumerate}
\end{theorem}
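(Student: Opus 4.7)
The plan is to deduce both pairs of exact sequences from the corresponding classical discrete statements for $\PG(S)_\s$ and $\G(S)_\s$ by passing to procongruence closures, using Corollary~\ref{stabilizers} to identify the procongruence stabilizers $\kPG(S)_\s$ and $\kG(S)_\s$ with the closures in $\kG(S)$ of their discrete counterparts $\PG(S)_\s$ and $\G(S)_\s$.

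First I would handle the first exact sequence in each pair. The conjugation action of $\G(S)_\s$ (resp.\ $\PG(S)_\s$) on the finite set $\vec\s^\pm$ has pointwise stabilizer $\G(S)_{\vec\s}$ (resp.\ $\PG(S)_{\vec\s}$) and, since the target is finite, the induced homomorphism factors through a finite quotient of the discrete stabilizer and extends by continuity to the procongruence closure. Combined with Corollary~\ref{stabilizers} (applied to both $\s$ and $\vec\s$, viewing $\vec\s$ as a simplex equipped with orientation data that is also detected at the procongruence level), this yields the first exact sequence. Observe that the theorem correctly does not claim surjectivity onto $\Sigma_{\vec\s^\pm}$: the image is simply the image already realized at the discrete level.

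For the second exact sequence in (i), I would start from the classical ``cutting'' sequence
\[
1\to\mathrm{PI}_\s\to\PG(S)_{\vec\s}\to\prod_{i=1}^k\PG(S_i)\to 1,
\]
which is the standard multicurve generalization of the Birman exact sequence, and its natural counterpart for $\G(S)_{\vec\s}$ with targets $\G(S_i)_{B_i}$. Passing to procongruence closures, surjectivity on the right follows from compactness of $\kPG(S)_{\vec\s}$ together with density of the discrete image under the cutting map. Exactness in the middle reduces to identifying the closure of $\mathrm{PI}_\s\cong\Z^{h+1}$ in $\kPG(S)$ with its full profinite completion $\mathrm{P}\hat{\mathrm{I}}_\s\cong\hZ^{h+1}$, i.e.\ to showing that the congruence topology on $\PG(S)$ restricts to the full profinite topology on $\mathrm{PI}_\s$. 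This is the content of the procongruence rigidity of Dehn twists built into Theorem~\ref{multitwists}: procongruence multitwists are detected as monodromies around the boundary divisors of the Deligne--Mumford compactifications of level structures, so the filtration on $\mathrm{PI}_\s$ induced by geometric levels is cofinal with the full profinite one.

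The main obstacle, and the step requiring the most care, is the continuity of the cutting map $\PG(S)_{\vec\s}\to\prod_i\PG(S_i)$ with respect to the two congruence topologies: equivalently, that a geometric level on each factor $\PG(S_i)$ pulls back to a geometric level on $\PG(S)_{\vec\s}$. This requires a compatibility statement between characteristic finite-index subgroups of $\pi_1(S)$ and of the $\pi_1(S_i)$, which I would extract from van Kampen's theorem applied to the $\s$-decomposition of $S$ together with the description of each $\pi_1(S_i)$ as a stabilizer of a lift of $S_i$ in the Galois cover of $S$ associated to a characteristic subgroup of $\pi_1(S)$. Once this compatibility is established, the non-pure case (ii) follows from (i) via the finite quotient $\G(S)/\PG(S)=\Sigma_n$, tracking how the $B_i$ are permuted: the target $\prod_i\kG(S_i)_{B_i}$ is then exactly what remains after stabilizing $\vec\s$ pointwise while still allowing the permutations of punctures realized inside each cut-off piece, and the kernel is identified with the closure of the group generated by Dehn and braid twists supported on $\s$, i.e.\ with $\hat{\mathrm{I}}_\s$.
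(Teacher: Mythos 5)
Your route is genuinely different from the paper's: the paper does not reprove item (i) at all (it is quoted verbatim as Theorem~4.5 of \cite{[B3]}) and obtains (ii) from (i) by a short commutative-diagram argument comparing the pure and non-pure stabilizers; your sketch of (ii) is in the same spirit, but you attempt to prove (i) from scratch by completing the discrete cutting sequence, and there the argument has a genuine gap. Write $G=\PG(S)_{\vec\s}$ and $Q=\PG(S_1)\times\cdots\times\PG(S_k)$. Completing $G$ in the topology induced from the congruence topology of $\G(S)$ (which Corollary~\ref{stabilizers} identifies with $\kPG(S)_{\vec\s}$) automatically gives an exact sequence $1\to\ol{\mathrm{PI}}_\s\to\kPG(S)_{\vec\s}\to Q^{\wedge}\to 1$, where $Q^{\wedge}$ is the completion of $Q$ with respect to the \emph{pushforward} topology whose open subgroups are the images of the subgroups $G\cap\G_K$, for $\G_K$ a geometric level. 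Your ``main obstacle'' (continuity of the cutting map, i.e.\ that congruence levels of the pieces pull back to congruence-open subgroups of $G$) only produces a continuous surjection $Q^{\wedge}\to\kPG(S_1)\times\cdots\times\kPG(S_k)$. For the kernel of $\kPG(S)_{\vec\s}\to\prod_i\kPG(S_i)$ to be exactly $\mathrm{P}\hat{\mathrm{I}}_\s$ you also need the \emph{converse} comparison of topologies: for every geometric level $\G_K$ of $\G(S)$, the image of $\G_K\cap G$ in $Q$ must contain a congruence-open subgroup of $Q$. Without this, $Q^{\wedge}\to\prod_i\kPG(S_i)$ may have nontrivial kernel and the kernel of the completed cutting map would be strictly larger than the closure of $\mathrm{PI}_\s$, no matter what that closure is. So the step ``exactness in the middle reduces to identifying $\ol{\mathrm{PI}}_\s$ with $\ZZ^{h+1}$'' is not a valid reduction; the missing comparison is precisely the hard content of Theorem~4.5 in \cite{[B3]}, proved there via the geometry of the boundary of level structures and properties of profinite surface groups, and your outline never addresses it.

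A secondary issue: your identification of $\ol{\mathrm{PI}}_\s$ with $\mathrm{P}\hat{\mathrm{I}}_\s\cong\ZZ^{h+1}$ leans on Theorem~\ref{multitwists}, which in the source \cite{[B3]} is established \emph{after} the stabilizer description, so this risks circularity; moreover, as stated it only distinguishes multitwists with exponents in $m\cdot(\Z\ssm\{0\})$, which by itself does not show that the congruence topology restricts to the full profinite topology on $\mathrm{PI}_\s$. The first exact sequence in each pair and your reduction of (ii) to (i) via the symmetric group action (tracking the sets $B_i$ and $Q_i$) are fine and close to the paper's own diagram argument, but the proof of the second sequence in (i) is where the substance lies, and as written it does not go through.
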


\begin{proof}(i): This is just Theorem~4.5 in \cite{[B3]}. 
\medskip

\noindent
(ii): The natural homomorphism $\G(S)_{\vec{\s}}\to\kG(S)_{\vec{\s}}$ induces a homomorphism:
\begin{equation}\label{quotinduced}
\G(S)_{\vec{\s}}/\mathrm{I}_\s\cong\G(S_1)_{B_1}\times\ldots\times\G(S_k)_{B_k}\to\kG(S)_{\vec{\s}}/\hat{\mathrm{I}}_\s.
\end{equation}
The first item of the theorem implies, in particular, that the map $\G(S_i)_{B_i}\to\kG(S)_{\vec{\s}}/\hat{\mathrm{I}}_\s$ extends
to a homomorphism $\kG(S_i)_{B_i}\to\kG(S)_{\vec{\s}}/\hat{\mathrm{I}}_\s$, for $i=1,\ldots,k$. 
Therefore, the homomorphism~(\ref{quotinduced}) induces a homomorphism: 
\[\kG(S_1)_{B_1}\times\ldots\times\kG(S_k)_{B_k}\to\kG(S)_{\vec{\s}}/\hat{\mathrm{I}}_\s.\] 
In order to prove the second item of the theorem, it is enough to show that this is an isomorphism.

Let $Q_i$ be the set of punctures of $S_i$ in the complement of $B_i$ and $\Sigma_{Q_i}$ the symmetric group on the set $Q_i$, for $i=1,\ldots,k$.
There is a commutative diagram with exact rows:
\[\begin{array}{ccccccccc}
1&\to&\kPG(S)_{\vec{\s}}/\mathrm{I}_\s&\to&\kG(S_1)_{B_1}\times\ldots\times\kG(S_k)_{B_k}&\to&
\Sigma_{Q_1}\times\ldots\times\Sigma_{Q_k}\,&\to&1\\
&&\parallel&&\downarrow&&\parallel&&\\
1&\to&\kPG(S)_{\vec{\s}}/\hat{\mathrm{I}}_\s&\to&\kG(S)_{\vec{\s}}/\hat{\mathrm{I}}_\s&\to&\Sigma_{Q_1}\times\ldots\times\Sigma_{Q_k}.&&
\end{array}\]
The commutativity of the diagram implies that also the rightmost bottom horizontal map is surjective. Hence, the middle vertical map is an isomorphism.
\end{proof}

\subsection{Profinite Dehn twists in $\kG(S)$}\label{section:prodehntwists}
The upshot of Theorem~\ref{isomorphism} is that the set of nonperipheral profinite simple closed curves $\hL(S)_0$ can be used to parameterize the
\emph{profinite Dehn twists} of $\kG(S)$. This set is simply defined to be the closure $\ol{\cD}(S)$ of the image of the set of Dehn twists $\cD(S)$ of 
$\G(S)$ via the natural homomorphism $\G(S)\to\kG(S)$. However, it is far from obvious \emph{a priori} that this is a meaningful definition. 
For instance, it is not clear whether $\ol{\cD}(S)\cap\G(S)=\cD(S)$ (cf.\ Proposition~\ref{scc:closed}).

More in detail, an orientation of $S$ defines a natural (injective) map $\tau\co C(S)_0\to\G(S)$ which assign to the isotopy class of a 
nonperipheral closed simple closed curve $\gamma$ on $S$ the {\em left} Dehn twist $\tau_{\gamma}$ about that curve. By
composition, we get a map, which we denote in the same way, $\tau\co C(S)_0\to\kG(S)$. This map is clearly $\G(S)$-equivariant. Therefore, 
by the universal property of the $\kG(S)$-completion, we get an induced continuous $\kG(S)$-equivariant map $\check\tau\co\kC(S)_0\to\kG(S)$.
In Theorem~\ref{isomorphism}, we showed that the profinite set $\kC(S)_0$ can be identified with the profinite set of profinite nonperipheral  
simple closed curves $\hL(S)_0$. Thus, finally, we get a $\kG(S)$-equivariant map: 
\[\check\tau\co\hL(S)_0\to\kG(S).\]

The advantage of having defined this map from the profinite set $\hL(S)_0$ is that we now know that a profinite Dehn twist is determined by some
element in the profinite surface group $\hP$. The (profinite) combinatorial theory of this group is much more approachable than that of $\kG(S)$ and,
in the paper \cite{[BZ2]}, this was used to show that the parameterization of profinite Dehn twists provided by the map $\check\tau$ has almost all
the nice properties enjoyed, in the topological case, by the map $\tau$. For instance, $\check\tau$ is injective. A much stronger result, in analogy
with the topological case, actually holds. 

Let a \emph{profinite multicurve on the surface $S$} be a simplex of $\kC_\cL(S)$. 
For the sake of simplicity we will denote by $\tau_{\g}$ the profinite Dehn twist $\check\tau_{\g}$, 
also when $\g$ is a profinite simple closed curve. We then have  (cf.\ Theorem~6.6 in  \cite{[B3]} and Theorem~4.2 in \cite{congtop}):

\begin{theorem}\label{multitwists}Let $\s=\{\g_1,\ldots,\g_s\}$ and
$\s'=\{\delta_1,\ldots,\delta_t\}$ be two profinite multicurves on the surface $S$.
Suppose that, for the multi-indices $(h_1,\ldots,h_s)\in m_\s\cdot(\Z\ssm\{0\})^s$ and $(k_1,\ldots,k_t)\in m_{\s'}\cdot(\Z\ssm\{0\})^t$, 
where $m_\s,m_{\s'}\in(\ZZ)^\ast$, there is an identity:
\[\tau_{\g_1}^{h_1}\cdots\tau_{\g_s}^{h_s}=
\tau_{\delta_1}^{k_1}\cdots\tau_{\delta_t}^{k_t}\in\kG(S).\]
Then, we have that:
\begin{enumerate}
\item $t=s$;
\item there is a permutation $\phi\in\Sigma_s$ such that $\delta_i=\g_{\phi(i)}$ and $k_i=h_{\phi(i)}$,
for $i=1,\ldots,s$.
\end{enumerate}
\end{theorem}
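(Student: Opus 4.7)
The strategy is to reduce the identity to the uniqueness of coordinates in a free profinite abelian inertia group, after first enlarging the ambient simplex so that both sides lie in a common $\mathrm{P}\hI_{\s \cup \s'}$.

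The first step is to show that $\s \cup \s'$ spans a profinite simplex of $\kC_\cL(S)$. Each $\tau_{\g_i}$ commutes with $T$, hence with $T' := \prod_j \tau_{\d_j}^{k_j} \in \mathrm{P}\hI_{\s'}$. Using Theorem~\ref{stabmulticurves}(ii), one checks that the centralizer of such a multitwist with generic exponents equals $\kG(S)_{\vec{\s'}}$: the containment $\kG(S)_{\vec{\s'}} \subseteq Z_{\kG(S)}(T')$ is immediate, since a stabilizer of $\vec{\s'}$ conjugates every $\tau_{\d_j}$ to itself (surface orientation is preserved inside $\kG(S)$); conversely, the hypothesis $(k_j) \in m_{\s'}\cdot(\Z\ssm\{0\})^t$ ensures that $T'$ has nonzero coordinate on every basis element of $\mathrm{P}\hI_{\s'}$, so no nontrivial element of $\Sigma_{\vec{\s'}^\pm}$ can fix it. Hence $\tau_{\g_i} \in \kG(S)_{\vec{\s'}}$; its image in the quotient $\prod_l \kG(S'_l)_{B'_l}$ appearing in Theorem~\ref{stabmulticurves}(ii) is again a profinite Dehn twist or trivial in each factor, which in either case shows that $\{\g_i\} \cup \s'$ is a profinite simplex of $\kC_\cL(S)$. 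Running this for all $i$ and symmetrically for all $j$ assembles $\s \cup \s'$ into a profinite simplex.

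Next, I apply Theorem~\ref{stabmulticurves}(i) to $\s \cup \s'$: the natural map from the profinite completion $\mathrm{P}\hI_{\s\cup\s'}$ of the free abelian group on $\{\tau_\epsilon : \epsilon \in \s \cup \s'\}$ into $\kG(S)$ is injective, so this group is a free pro-$\hZ$ module $\hZ^{|\s\cup\s'|}$ with that basis, inside which both $T$ and $T'$ live. For any $\epsilon \in \s \ssm \s'$, the $\tau_\epsilon$-coordinate of $T$ is a nonzero integer multiple of the unit $m_\s$, hence nonzero in $\hZ$, while the corresponding coordinate of $T'$ is $0$. This contradiction forces $\s \subseteq \s'$ and, by symmetry, $\s = \s'$, so $t = s$ and the required permutation $\phi$ is the one matching basis elements; comparing the remaining coordinates yields $k_i = h_{\phi(i)}$.

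The principal obstacle is the first step, and specifically the containment $Z_{\kG(S)}(T') \subseteq \kG(S)_{\vec{\s'}}$. The genericity hypothesis $m_{\s'}\cdot(\Z\ssm\{0\})^t$ plays a double role: it rules out accidental symmetries coming from $\Sigma_{\vec{\s'}^\pm}$ in the centralizer computation, and it guarantees that the coordinates of $T$ and $T'$ in $\mathrm{P}\hI_{\s\cup\s'} \cong \hZ^{|\s\cup\s'|}$ are simultaneously nonzero on their respective supports, which is precisely what drives the contradiction in the second step.
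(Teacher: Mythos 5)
There is a genuine gap, and it sits exactly where the theorem's difficulty lies. Note first that the paper does not prove Theorem~\ref{multitwists} at all: it imports it from Theorem~6.6 of \cite{[B3]} and Theorem~4.2 of \cite{congtop}, and those proofs run through the faithful action of $\kG(S)$ on $\hP$ and the combinatorial group theory of profinite surface groups (e.g.\ malnormality of procyclic subgroups generated by simple elements), not through the formal structure of stabilizers. Your endgame (coordinate comparison inside $\mathrm{P}\hat{\mathrm{I}}_{\s\cup\s'}\cong\ZZ^{|\s\cup\s'|}$, after conjugating $\s\cup\s'$ into $C(S)$ so that Theorem~\ref{stabmulticurves} applies --- a reduction you should state, since that theorem is only formulated for simplices in the image of $C(S)$) is fine. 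The problem is the first step, which carries all the content.

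Your route to ``$\s\cup\s'$ is a simplex'' relies on the containment $Z_{\kG(S)}(T')\subseteq\kG(S)_{\vec{\s'}}$, i.e.\ on the centralizer description of multitwists. In this paper that description is Corollary~\ref{centralizers multitwists}, which is \emph{deduced from} Theorem~\ref{multitwists}; using it here is circular unless you prove it independently, and your justification does not do that. What you check is only that a nontrivial permutation in $\Sigma_{\vec{\s'}^\pm}$ cannot fix $T'$, i.e.\ you control elements \emph{already known} to lie in $\kG(S)_{\s'}$. The hard direction is that an arbitrary $f\in\kG(S)$ with $fT'f^{-1}=T'$ must stabilize $\s'$ at all; since $fT'f^{-1}=\tau_{f(\d_1)}^{k_1}\cdots\tau_{f(\d_t)}^{k_t}$, deducing $f(\s')=\s'$ from that identity is precisely Theorem~\ref{multitwists} applied to the pair $(f(\s'),\s')$ --- the statement you are trying to prove. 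A second, related gap: even granting $\tau_{\g_i}\in\kG(S)_{\vec{\s'}}$, your claim that its image in $\prod_l\kG(S'_l)_{B'_l}$ is ``a profinite Dehn twist or trivial in each factor,'' and that this forces $\{\g_i\}\cup\s'\in\kC_\cL(S)$, is asserted without argument. The curve $\g_i$ is an arbitrary profinite curve, a limit of twists $\tau_{f_n(c)}$ with $f_n\in\G(S)$ not in the stabilizer of $\s'$, so there is no evident way to transport it to the cut surface; identifying it with a profinite curve of $\hL(S\ssm\s')$ inside $\hL(S)$ is a nontrivial statement of essentially the same depth as the theorem. In short, the proposal reduces the theorem to facts that, in the paper's (and the references') logical order, are consequences of it, and no independent argument for those facts is supplied.
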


A crucial observation, at this point, is that in the profinite, exactly like in the topological case, for any 
profinite multicurve $\s=\{\g_1,\ldots,\g_s\}$ on $S$ and all $f\in\kG(S)$, we have the identity:
\[f\cdot(\tau_{\g_1}^{h_1}\cdots\tau_{\g_s}^{h_s})\cdot f^{-1}=\tau_{f(\g_1)}^{h_1}\cdots\tau_{f(\g_s)}^{h_s}.\]

A \emph{weight function} $w\co\cL(S)\to\Z\ssm\{0\}$ is defined to be a $\kG(S)$-equivariant map where $\kG(S)$ 
acts trivially on $\Z\ssm\{0\}$. An immediate consequence of Theorem~\ref{multitwists} and the above remark is then that, for all $u\in\ZZ^\ast$, 
there is a natural injective and continuous $\kG(S)$-equivariant map
\[\Delta^{w,u}_k\co\kC_\cL(S)_k\hookrightarrow\kG(S),\]
defined by the assignment $\s=\{\g_0,\ldots,\g_k\}\mapsto\tau_{\g_0}^{u\cdot w(\g_0)}\cdots\tau_{\g_k}^{u\cdot w(\g_k)}$. 
This can also be slightly improved in the following way. Let $\cG(\kG(S))$ be the profinite set of all closed subgroups of $\kG(S)$. 
Then, sending the profinite multicurve 
$\s\in\kC_\cL(S)_k$ to the procyclic subgroup generated by $\Delta^w_k(\s)$ defines a natural injective and continuous $\kG(S)$-equivariant map
\[M^w_k\co\kC_\cL(S)_k\hookrightarrow\cG(\kG(S)).\]
An alternative version of this map is defined by sending the profinite multicurve $\s\in\kC_\cL(S)_k$ to the closed subgroup of $\kG(S)$ 
generated by the elements $\tau_{\g_0}^{w(\g_0)},\ldots,\tau_{\g_k}^{w(\g_k)}$. We then get the natural injective and continuous $\kG(S)$-equivariant map
\[{\mathcal I}^w_k\co\kC_\cL(S)_k\hookrightarrow\cG(\kG(S)).\]
Therefore, Theorem~\ref{multitwists} and Proposition~\ref{scc:closed} imply:

\begin{corollary}\label{centralizers multitwists}\leavevmode
\begin{enumerate}
\item For $\s=\{\g_0,\ldots,\g_k\}$ a profinite multicurve on $S$ and a weight function $w\co\cL(S)\to \Z\ssm\{0\}$ as above, we have:
\[Z_{\kG(S)}(\tau_{\g_0}^{w(\g_0)}\cdots\tau_{\g_k}^{w(\g_k)})
= N_{\kG(S)}(\langle \tau_{\g_0}^{w(\g_0)}\cdots\tau_{\g_k}^{w(\g_k)}\rangle)
=N_{\kG(S)}(\langle \tau_{\g_0}^{w(\g_0)},\ldots,\tau_{\g_k}^{w(\g_k)}\rangle)=\kG(S)_\s,\]
where $\kG(S)_\s$ is the stabilizer of $\s$ described in (ii) of Theorem~\ref{stabmulticurves}. In particular, $\kG(S)_\s$,
for all $\s\in\kC_\cL(S)$, is a self-normalizing subgroup of $\kG(S)$.

\item Let $\cD^\infty(S)$ and $\check{\cD}^\infty(S)$ be the union of all cyclic subgroups  generated by the Dehn twists in $\G(S)$ and the 
union of all procyclic subgroup generated by the profinite Dehn twists in $\kG(S)$, respectively. 
Then, we have $\check{\cD}^\infty(S)\cap\G(S)=\cD^\infty(S)$. 
In other terms, the set of all powers of Dehn twists in $\G(S)$ is closed for the congruence topology.
\end{enumerate}
\end{corollary}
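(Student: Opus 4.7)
My plan for part (i) is to verify the circular chain of inclusions
\[
\kG(S)_\sigma \subseteq Z_{\kG(S)}(M) \subseteq N_{\kG(S)}(\langle M\rangle) \subseteq \kG(S)_\sigma,
\]
where $M := \tau_{\g_0}^{w(\g_0)}\cdots\tau_{\g_k}^{w(\g_k)}$, and then separately that $N_{\kG(S)}(\langle \tau_{\g_0}^{w(\g_0)}, \ldots, \tau_{\g_k}^{w(\g_k)}\rangle) \subseteq \kG(S)_\sigma$. The first inclusion uses the conjugation formula $f\cdot\tau_{\g_i}^{w(\g_i)}\cdot f^{-1} = \tau_{f(\g_i)}^{w(\g_i)}$ together with the $\kG(S)$-equivariance of $w$ (with trivial action on $\Z$): any $f \in \kG(S)_\sigma$ permutes the $\g_i$ via some $\phi$, with $w(\g_{\phi(i)}) = w(f\g_i) = w(\g_i)$, so $fMf^{-1} = M$. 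The middle inclusion is automatic. For the third, suppose $fMf^{-1} = M^a$ for some $a \in \ZZ^\ast$. Then
\[
\tau_{f(\g_0)}^{w(\g_0)}\cdots\tau_{f(\g_k)}^{w(\g_k)} = \tau_{\g_0}^{aw(\g_0)}\cdots\tau_{\g_k}^{aw(\g_k)},
\]
and Theorem~\ref{multitwists} forces a permutation $\phi$ with $f(\g_i) = \g_{\phi(i)}$ and $w(\g_i) = a\cdot w(\g_{\phi(i)})$; equivariance of $w$ gives $w(\g_i) = w(\g_{\phi(i)})$, hence $a=1$ and $f \in \kG(S)_\sigma$. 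The argument for the normalizer of $\langle \tau_{\g_i}^{w(\g_i)}\rangle_i$ runs in parallel: each conjugate $f\tau_{\g_i}^{w(\g_i)}f^{-1} = \tau_{f(\g_i)}^{w(\g_i)}$ must lie in the closed abelian subgroup generated by the $\tau_{\g_j}^{w(\g_j)}$; viewed as a single-curve multitwist equated with a multi-index multitwist, Theorem~\ref{multitwists} again forces it to coincide with a single $\tau_{\g_{\phi(i)}}^{w(\g_{\phi(i)})}$. The main technical care is in ensuring that the multi-index constraint of Theorem~\ref{multitwists} (non-vanishing powers up to a unit $m_\sigma$) applies in the setting of the normalizer equation; this is where the hypothesis $w(\g_i) \in \Z \ssm \{0\}$ together with $a \in \ZZ^\ast$ is used.

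For part (ii), the inclusion $\cD^\infty(S) \subseteq \check\cD^\infty(S) \cap \G(S)$ is clear. Conversely, take $g \in \G(S) \cap \check\cD^\infty(S)$, so $g = \tau_\g^n$ with $\g \in \hL(S)_0$ and $n \in \ZZ$; the case $n = 0$ gives $g = 1$. Otherwise, by Theorem~\ref{multitwists} (single-curve case) the pair $(\g, n)$ is uniquely determined by $g$. I would first show $\g \in \cL(S)$: the element $g \in \G(S)$ acts on $\Pi$ (not merely on $\hP$), and the profinite simple closed curve $\g$ is intrinsically recovered from this action as the class $[\vec\g^{\pm}] \in \cP(\hP/\!\!\sim)$ via the second description of $\hL(S)$. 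Since this invariant is computed from the action on $\Pi$, it lies in $\cP(\Pi/\!\!\sim)$, and Proposition~\ref{scc:closed} yields $\hL(S) \cap \cP(\Pi/\!\!\sim) = \cL(S)$, so $\g \in \cL(S)_0$. Then $\tau_\g \in \G(S)$ and the cyclic subgroup $\langle \tau_\g\rangle \cong \Z$ sits inside $\G(S) \cap \langle \tau_\g\rangle_{\kG(S)} \cong \G(S) \cap \ZZ$; Dehn twists are separated by congruence levels (the order of $\tau_\g$ modulo a geometric level tends to infinity), so $\Z$ is closed in $\ZZ$ for the topology induced from $\G(S)$, forcing $n \in \Z$ and $g = \tau_\g^n \in \cD^\infty(S)$.

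The main obstacle is the claim that $\g \in \cL(S)$: extracting the class $[\vec\g^{\pm}]$ from $g$ as an invariant that is already defined at the level of $\Pi$ (rather than only at the level of $\hP$) is the subtle point, and it is here that Proposition~\ref{scc:closed}—together with the interpretation of profinite simple closed curves as conjugacy classes of procyclic subgroups of $\hP$ from Section~\ref{section:pscc}—does the essential work. Everything else in the proof is a direct application of Theorem~\ref{multitwists}.
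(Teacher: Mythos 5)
Your chain $\kG(S)_\s\subseteq Z_{\kG(S)}(M)\subseteq N_{\kG(S)}(\langle M\rangle)\subseteq\kG(S)_\s$ for $M=\tau_{\g_0}^{w(\g_0)}\cdots\tau_{\g_k}^{w(\g_k)}$ is the right skeleton and is correctly carried out: the first inclusion from the conjugation identity plus equivariance of $w$, and the last from Theorem~\ref{multitwists}, whose hypotheses are indeed met there because the comparison exponents are $a\cdot w(\g_i)$ with a single unit $a\in\ZZ^\ast$ (that $a$ is a unit follows since conjugation is an automorphism of $\langle M\rangle\cong\ZZ$, a point worth one line). The genuine gap in part (i) is the fourth group, $N_{\kG(S)}(\langle\tau_{\g_0}^{w(\g_0)},\ldots,\tau_{\g_k}^{w(\g_k)}\rangle)$. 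There, $f\tau_{\g_i}^{w(\g_i)}f^{-1}=\tau_{f(\g_i)}^{w(\g_i)}$ is equated with an element $\prod_j\tau_{\g_j}^{w(\g_j)a_j}$ whose coefficients $(a_j)$ form a column of a matrix in $\mathrm{GL}_{k+1}(\ZZ)$: the exponents $w(\g_j)a_j$ are arbitrary elements of $\ZZ$, need not be of the form (unit)$\times$(nonzero integer), need not share a common unit, and may vanish; so the statement of Theorem~\ref{multitwists} quoted in the paper simply does not apply to this equation. Your remark that "the hypothesis $w(\g_i)\in\Z\ssm\{0\}$ together with $a\in\ZZ^\ast$" handles the technical point only covers the procyclic case $N(\langle M\rangle)$, where the unit $a$ exists for structural reasons; no such unit is available here. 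What you actually need is a statement of the shape "a nonzero integer power of a profinite Dehn twist lying in the closed group $\langle\tau_{\g_0}^{w_0},\ldots,\tau_{\g_k}^{w_k}\rangle$ must be one of the coordinate elements", i.e.\ a version of Theorem~\ref{multitwists} allowing profinite exponents, and that is not free.

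Part (ii) has a more serious gap. The heart of the claim is that if $g\in\G(S)$ equals $\tau_\g^n$ with $\g\in\hL(S)_0$, $n\in\ZZ\ssm\{0\}$, then $\g\in\cL(S)_0$; your justification is that "$\g$ is intrinsically recovered from the action on $\Pi$" and hence $[\vec\g^{\pm}]\in\cP(\Pi/\!\!\sim)$, after which Proposition~\ref{scc:closed} finishes. But being determined by an element of $\G(S)$ does not make the invariant an element of $\cP(\Pi/\!\!\sim)$: a priori a topological mapping class could coincide with a profinite power of a twist about a non-rational profinite curve, and excluding exactly this is the content of (ii). To make your sketch work you would have to exhibit a concrete recipe extracting (a conjugacy class of a power of) $\vec\g$ from the action of a lift of $\tau_\g^n$ on $\hP$, observe that on elements of $\Pi$ this recipe outputs elements of $\Pi$, and then argue that a profinite power of $\vec\g$ being conjugate into $\Pi$ forces $\g$ rational before Proposition~\ref{scc:closed} can be invoked; none of this is supplied, and it is where the real work lies. (The final step, that $n\in\Z$ once $\g\in\cL(S)_0$, is also only asserted, but it does follow quickly from the stabilizer description in Theorem~\ref{stabmulticurves} together with the injectivity of $\G(S)\to\kG(S)$, so that is a minor omission by comparison.)
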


\begin{remark}\label{grouptheoretic}
From (i) Corollary~\ref{centralizers multitwists}, we deduce a further and, for us, the most important group-theoretic realization of the complex 
of profinite curves $\kC_\cL(S)$. Let $\kG^\l$ be an open subgroup of $\kG(S)$. By Theorem~\ref{multitwists} and Theorem~\ref{stabmulticurves},
we have that $\kG^\l_\s=\kG^\l_{\s'}$, for $\s,\s'\in\kC_\cL(S)$, if and only $\s=\s'$ and, by (i) Corollary~\ref{centralizers multitwists}, we have that
$N_{\kG^\l}(\kG^\l_\s)=\kG^\l_\s$. Therefore, the assignment $\s\mapsto\kG_\s^\l$, for $\s\in\kC_\cL(S)$, defines, for all $k\geq 0$, an injective and continuous $\kG^\l$-equivariant map
\[\cG^\l_k\co\kC_\cL(S)_k\hookrightarrow\cG(\kG(S)).\]
The existence of such an injective continuous $\kG^\l$-equivariant map $\cG^\l_k$ is a key ingredient in establishing 
the anabelian result of Theorem \ref{mainanabelian2}. 
\end{remark}

\subsection{Profinite braid twists in $\kG(S)$}\label{section:probraidtwists}In Section~\ref{DehnBraids}, we associated to a simple closed curve
$\g\in\cL^b(S)$ the braid twist $b_\g$. We can then define $\hL^b(S)$ to be the closure of $\cL^b(S)$ in the profinite set $\hL(S)$ and a 
\emph{profinite braid twist} in $\kG(S)$ to be an element in the closure of the set of the images of braid twists in $\kG(S)$.

Theorem~\ref{isomorphism} implies that there is a natural continuous map $\check{b}\co\hL(S)_0\to\kG(S)$ whose image is the set of profinite braid twists.
The fact that $\check\tau$ is injective implies, in particular, that $\check{b}$ is injective. It is not difficult to show that also a version of
Theorem~\ref{multitwists} holds in this case. We will not pursue this here since we will not be using it in the sequel.

\subsection{The center of $\kG(S)$}\label{section:center}Thanks to Corollary~\ref{centralizers multitwists}, we can also describe
the center of the full procongruence mapping class group. More generally, we have:

\begin{theorem}\label{slim}Let $U$ be an open subgroup of $\kG(S)$. 
\begin{enumerate}
\item For $S\neq S_{0,4}, S_{1,1}, S_{1,2}$ and $S_2$, the centralizer of $U$ in $\kG(S)$ is trivial.
\item For $S= S_{1,1}, S_{1,2}$ and $S_2$, the centralizer of $U$ in $\kG(S)$ is generated by the hyperelliptic involution $\iota$.
\item For $S= S_{0,4}$, we have $Z(\hG(S))=\{1\}$ and  if $U\subseteq\kPG(S)$, then $Z_{\hG(S)}(U)=K_4$, where $K_4\cong\{\pm 1\}\times\{\pm 1\}$
is the Klein subgroup of $\G_{0,[4]}$. 
\end{enumerate}
\end{theorem}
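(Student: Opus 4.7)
The plan is to prove first that any $f \in Z_{\kG(S)}(U)$ acts trivially on the complex of profinite curves $\kC_\cL(S)$, and then to identify the kernel of this action with the claimed finite group in each case.

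Let $f \in Z_{\kG(S)}(U)$ and fix $\gamma \in \cL(S)_0$. Since the natural map $\G(S) \hookrightarrow \kG(S)$ is injective, the profinite Dehn twist $\tau_\gamma$ has infinite order, and since $U$ is open in $\kG(S)$ there exists $n \geq 1$ with $\tau_\gamma^n \in U$. Combining $[f, \tau_\gamma^n] = 1$ with the orientation-preserving identity $f\tau_\gamma f^{-1} = \tau_{f(\gamma)}$ (valid for all $f \in \kG(S)$ by density and continuity) yields $\tau_{f(\gamma)}^n = \tau_\gamma^n$ in $\kG(S)$, and Theorem~\ref{multitwists} then forces $f(\gamma) = \gamma$ in $\hL(S)_0$. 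Since $\cL(S)_0$ is dense in $\hL(S)_0 = \kC_\cL(S)_0$ and the $\kG(S)$-action is continuous, $f$ fixes every profinite simple closed curve. As $\kC_\cL(S)$ is a flag complex (cf.\ Section~\ref{section:isomorphism}), $f$ acts trivially on all of $\kC_\cL(S)$.

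It remains to compute the kernel $K := \ker(\kG(S) \to \Aut(\kC_\cL(S)))$. The inclusion $Z(\G(S)) \subseteq K$ is immediate: the hyperelliptic involutions (and, for $S_{0,4}$, the elements of the Klein subgroup $K_4$) fix every isotopy class in $\cL(S)_0$, hence by continuity every element of $\hL(S)_0$. For the reverse inclusion, I would fix a pants decomposition $\sigma = \{\gamma_0, \ldots, \gamma_{d(S)-1}\}$ of $S$. Any $f \in K$ lies in $\kG(S)_\sigma$ and fixes each $\gamma_i$ individually; by the two exact sequences of Theorem~\ref{stabmulticurves}(ii), the image of $f$ in $\Sigma_{\vec\sigma^\pm}$ involves only orientation reversals, and the image modulo $\hat{\mathrm I}_\sigma$ lies in $\prod_i \kG(S_i)_{B_i}$, a finite product of subgroups of $\Sigma_3$ (since each $S_i$ is a pair of pants). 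Varying $\sigma$ through elementary moves in the pants graph and applying Theorem~\ref{multitwists} to rule out nontrivial profinite multitwist components in $\hat{\mathrm I}_\sigma \cap \hat{\mathrm I}_{\sigma'}$, one concludes that $f$ is represented by an element of the topologically embedded finite center $Z(\G(S))$. The exceptional case $S = S_{0,4}$ is handled directly: with $\sigma$ a single curve, $\prod_i \kG(S_i)_{B_i} \cong \Sigma_2 \times \Sigma_2 \cong K_4$, and under the hypothesis $U \subseteq \kPG(S)$ the full group $K_4$ lies in the centralizer since $K_4$ acts trivially by conjugation on $\kPG(S_{0,4})$.

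The main obstacle is the reverse inclusion in the kernel identification: one must rule out spurious profinite elements sitting inside $\bigcap_\sigma \kG(S)_\sigma$ that do not descend from $Z(\G(S))$ or $K_4$. The crucial input is the sharp independence statement of Theorem~\ref{multitwists}, which controls how the profinite abelian subgroups $\hat{\mathrm I}_\sigma$ intersect for distinct pants decompositions, together with the finiteness of $\kG(S_i)_{B_i}$ for pairs of pants. Once $K$ is identified, the three cases of the statement follow from the classical description of centers of topological mapping class groups: trivial in the generic case, hyperelliptic involution for $S_{1,1}, S_{1,2}, S_2$, and Klein four for $S_{0,4}$.
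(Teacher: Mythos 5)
Your first step is fine: since $U$ is open, some power $\tau_\gamma^n$ ($n\in\Z\smallsetminus\{0\}$) lies in $U$ for every $\gamma$, and Theorem~\ref{multitwists} (whose hypotheses are met, the exponents being honest integers) forces $f(\gamma)=\gamma$; by continuity $f$ fixes all of $\hL(S)_0$, i.e.\ $f$ centralizes all profinite Dehn twists and hence $\kPG(S)$. But this only reduces the theorem to the special case $U=\kPG(S)$ (equivalently, to identifying the kernel of $\kG(S)\to\Aut(\kC(S))$), and that is exactly the hard content. Note that in the paper this kernel identification is Theorem~\ref{faithfulness}(i), whose proof is deduced \emph{from} Theorem~\ref{slim}, not the other way around; so at this point you owe an independent argument, and the one you sketch does not go through. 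Concretely: (a) for a pants decomposition $\s$ the quotient $\kG(S)_{\vec\s}/\hI_\s$ is finite, so at best some bounded power $f^N$ lies in $\hI_\s$ for every $\s$; but controlling $\hI_\s\cap\hI_{\s'}$ is not licensed by Theorem~\ref{multitwists}, which only treats multitwists whose exponents lie in $m_\s\cdot(\Z\smallsetminus\{0\})^s$ (a general element of $\hI_\s\cong\ZZ^{d}$ is not of this shape) and says nothing about the braid-twist generators of $\hI_\s$ (the paper explicitly declines to state a braid-twist version); (b) even granting $f^N\in\bigcap_\s\hI_\s=\{1\}$, you only learn that $f$ is a torsion element of $\kG(S)$ fixing every profinite curve, and the final claim ``one concludes that $f$ is represented by an element of the topologically embedded finite center $Z(\G(S))$'' is precisely a torsion-realization/rigidity statement for the procongruence completion that neither this paper nor your argument provides. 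Nothing in your sketch injects the external input needed to rule out exotic elements of $\bigcap_\s\kG(S)_\s$; indeed, since you never use any base-case information, your argument would ``prove'' the faithfulness statement purely formally, which cannot be right given the paper's logical order. (Smaller gaps: you do not actually show $Z(\hG_{0,4})=\{1\}$, which requires the nontrivial action of $\wh{\PSL}_2(\Z)$ on $K_4$.)

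For comparison, the paper argues quite differently: it inducts on the number of punctures. Choosing $\g$ bounding a $2$-punctured disc, some power $\tau_\g^k$ lies in $U$, so by Corollary~\ref{centralizers multitwists} $Z_{\kG(S)}(U)\subseteq Z_{\kG(S)}(\tau_\g^k)=\kG(S)_\g$; the stabilizer exact sequence of Theorem~\ref{stabmulticurves}(ii) (with kernel $b_\g^\ZZ$, which meets the centralizer trivially) then pushes the centralizer into $Z_{\kG(S')_P}(U')$ for the surface $S'$ with one puncture fewer, and the induction bottoms out in external inputs: Corollary~6.2 of \cite{[B3]} for genus $\geq 1$ with $n\leq 1$, and the explicit isomorphism $\hG_{0,[4]}\cong\wh{\PSL}_2(\Z)\ltimes K_4$ (plus known properties of $\wh{\PSL}_2(\Z)$) in genus $0$. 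If you want to salvage your approach, you must either re-prove such a base-case/centralizer input or find a substitute for the missing globalization step; as written, the proposal has a genuine gap.
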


\begin{proof} The case of genus $g(S)\geq 1$ follows by induction on the number  $n(S)$ of punctures of $S$.
For $n(S)=0,1$, this is just Corollary~6.2 in \cite{[B3]}. Assume now  that the claim holds for $g(S)\geq 1$ and $n(S)=k$
and let us prove it for $n(S)=k+1$.

Let $\g$ be a separating simple closed curve on $S$ bounding a $2$-punctured open disc $D$. 
The open subgroup $U$ contains some power $\tau_\g^k$, for $k\in\N^+$ and, by Corollary~\ref{centralizers multitwists}, we have 
$Z_{\kG(S)}(\tau_\g^k)=\kG(S)_\g$. Therefore, we have $Z_{\kG(S)}(U)\subseteq Z_{\kG(S)_\g}(U\cap\kG(S)_\g)$.

Note that $\kG(S)_\g=\kG(S)_{\vec\g}$. Let $S\ssm\g=S'\coprod D$. By item (ii) of Theorem~\ref{stabmulticurves}, 
there is then a short exact sequence:
\[1\to b_\g^\ZZ\to\kG(S)_\g\to\kG(S')_P\to 1,\]
where $\kG(S')_P$ is the stabilizer of the puncture $P$ on $S'$ corresponding to $\g$.
Since $b_\g^\ZZ\cap Z_{\kG(S)}(U)=\{1\}$, we can identify $Z_{\kG(S)}(U)$ with a subgroup of
$Z_{\kG(S')_P}(U')$, where $U'$ is the image of $U\cap\kG(S)_\g$ in $\kG(S')_P$. This is then a finite index subgroup of $\kG(S')$, 
where $n(S')=n(S)-1$. Therefore, by the induction hypothesis, we have that $Z_{\kG(S')_P}(U')$ is either trivial or generated by the 
hyperelliptic involution (when $S'=S_{1,1}$). This concludes the proof of the case of genus $g(S)\geq 1$ of Theorem~\ref{slim}.

We treat the genus $0$ case of the theorem starting with the case $S=S_{0,4}$. By Proposition~2.7 in \cite{[FM]}, 
there is a natural isomorphism $\G_{0,[4]}\cong\PSL_2(\Z)\ltimes K_4$, where $\PSL_2(\Z)$ acts on the Klein group
$K_4$ through its quotient $\PSL_2(\Z/2\Z)\cong\Sigma_3\cong\Aut(K_4)$ and the normal subgroup $\G_{0,4}$ of $\G_{0,[4]}$ identifies with the kernel of the 
natural epimorphism $\PSL_2(\Z)\to\PSL_2(\Z/2\Z)$. This implies that there is a natural isomorphism
$\hG_{0,[4]}\cong\wh{\PSL}_2(\Z)\ltimes K_4$ with similar properties, from which item (iii) of Theorem~\ref{slim} follows. 

We treat next the case  $S=S_{0,5}$. Let $\g$ be a separating simple closed curve on $S$ bounding a $2$-punctured open disc $D$ and let
$S\ssm\g=S'\coprod D$. By item (ii) of Theorem~\ref{stabmulticurves}, there is a short exact sequence:
\[1\to b_\g^\ZZ\to\hG(S)_\g\to\hG(S')_P\to 1,\]
where $\hG(S')_P$ is the stabilizer of the puncture $P$ on $S'$ bounded by $\g$. This group fits in the short exact sequence:
\[1\to\hPG(S')\to\hG(S')_P\to\Sigma_3\to 1.\]

The isomorphism $\hG_{0,[4]}\cong\wh{\PSL}_2(\Z)\ltimes K_4$ then implies that there is an isomorphism $\hG(S')_P\cong\wh{\PSL}_2(\Z)$.
 
In order to show that, for an open subgroup $U$ of $\hG(S)$, we have $Z_{\hG(S)}(U)=\{1\}$, as in the proof of the genus $\geq 1$ case, 
it is then enough to show that $Z_{\hG(S')_P}(U')=\{1\}$, where $U'$ is the finite index image of $U\cap\hG(S)_\g$ in $\hG(S')_P$.
But this follows from the isomorphism $\hG(S')_P\cong\wh{\PSL}_2(\Z)$ and well known properties of the latter group.

For $S=S_{0,n}$, with $n\geq 5$, we proceed by induction on the number of punctures, 
arguing exactly as in the proof of the genus $\geq 1$ case.
\end{proof}

\section{Automorphisms of the procongruence and pants curve complexes}\label{rigidityprocurvecomplex}
In this section, we will study more in detail the procongruence curve complex $\kC(S)$. By Theorem~\ref{isomorphism}, this complex can and will
be identified with the complex of profinite curves $\kC_\cL(S)$. Therefore, from now on, both complexes will be simply denoted by $\kC(S)$.

\subsection{Distinguishing procongruence curve complexes}
\begin{theorem}\label{nonisomorphism}
Let $S=S_{g,n}$ and $S'=S_{g',n'}$ be two connected hyperbolic surfaces of different types $(g,n)$
and $(g',n')$. Then the procongruence complexes $\kC(S)$ and $\kC(S')$ are not isomorphic, 
except for the exceptional cases $\kC(S_{1,1})\cong \kC(S_{0,4})$, 
$\kC(S_{1,2})\cong \kC(S_{0,5})$ and $\kC(S_{2,0})\cong \kC(S_{0,6})$.
\end{theorem}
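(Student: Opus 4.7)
I would proceed by induction on the modular dimension $d=d(S)$. Any abstract simplicial isomorphism $\phi:\kC(S)\xrightarrow{\sim}\kC(S')$ must preserve the dimension of the complex, so from $\dim\kC(S)=d(S)-1$ we get $d(S)=d(S')$. For the base cases $d(S)\in\{1,2\}$, the only hyperbolic types available at each dimension are exactly the listed exceptional pairs $(S_{1,1},S_{0,4})$ and $(S_{1,2},S_{0,5})$, so there is nothing further to prove. For $d=3$ the candidate types are $S_{2,0},S_{0,6},S_{1,3}$; after the exceptional isomorphism $\kC(S_{2,0})\cong\kC(S_{0,6})$, I must separate $S_{1,3}$ from these two. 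I would do so by counting orbits: $S_{1,3}$ admits three topological types of nonperipheral simple closed curves (one nonseparating and two separating, bounding respectively two or three punctures), whereas $S_{2,0}$ and $S_{0,6}$ each admit only two. Because $\kG(S)\subset\Aut(\kC(S))$ and the $\kG(S)$-orbits on $\kC(S)_0$ are exactly the topological types by Lemma~\ref{orbits}, this orbit count, possibly refined by the isomorphism type of vertex links, is an invariant of $\kC(S)$.

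For the inductive step $d(S)\geq 4$, I analyze the link $\Link(v,\kC(S))$ of a vertex $v\in\kC(S)_0$. When $v$ comes from a topological simple closed curve $\gamma\in C(S)_0$, Theorem~\ref{stabmulticurves}(ii) together with Theorem~\ref{isomorphism} identify the link canonically with the procongruence curve complex $\kC(S\setminus\gamma)$ of the (possibly disconnected) cut surface, which decomposes as the join $\star_i\kC(S_i)$ of the $\kC$ of its connected components. The set of isomorphism types of such links, indexed by the topological-type orbits of vertices, is a combinatorial invariant of $\kC(S)$; by the inductive hypothesis applied to each smaller-dimensional component, it determines the collection of topological types of cut surfaces on $S$, which in turn determines $(g,n)$ outside the exceptional pairs.

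The main obstacle is that $\phi$ need not preserve the topological subcomplex $C(S)\subset\kC(S)$, so a priori $\phi$ could send a topological vertex to a genuinely profinite one, whose link is harder to describe directly. I would handle this by observing that each $\kG(S)$-orbit on the profinite vertex set $\hL(S)_0$ is the closure of a single $\G(S)$-orbit of topological curves (Lemma~\ref{orbits}), and that the assignment $v\mapsto\Link(v,\kC(S))$ depends continuously on the profinite vertex. Consequently the link of any profinite vertex in a given $\kG(S)$-orbit is canonically isomorphic to the link $\kC(S\setminus\gamma)$ at a topological representative $\gamma$ of that orbit, so the link-type invariant is well-defined on all of $\kC(S)_0$ and not merely on the topological part, and is therefore preserved by $\phi$. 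Combined with the inductive hypothesis, this closes the argument.
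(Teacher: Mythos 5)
Your reduction of profinite vertices to topological representatives (step 5) is fine, and the general strategy of looking at vertex links is also the paper's; but there is a genuine gap exactly at the case that carries all the difficulty, $d=3$. The number of $\kG(S)$-orbits on $\kC(S)_0$ is \emph{not} known to be an invariant of the abstract simplicial profinite complex: an isomorphism $\phi\co\kC(S_{1,3})\to\kC(S_{0,6})$ is not assumed equivariant for any group actions, so it has no reason to match $\kG(S_{1,3})$-orbits with $\kG(S_{0,6})$-orbits. Upgrading orbits to something $\phi$ must preserve is essentially the content of Theorem~\ref{typepreservation}, which in the paper is proved \emph{after}, and by means of, the present theorem, so invoking it here would be circular. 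Your fallback, the isomorphism type of vertex links, provably fails to separate this pair: on $S_{1,3}$ the three topological types of curves have links $\kC(S_{0,5})$ (nonseparating), $\kC(S_{0,3})\star\kC(S_{1,2})\cong\kC(S_{1,2})$ (boundary type) and $\kC(S_{1,1})\star\kC(S_{0,4})$, while on $S_{0,6}$ the two types have links $\kC(S_{0,5})$ and $\kC(S_{0,4})\star\kC(S_{0,4})$; because of the exceptional isomorphisms $\kC(S_{1,2})\cong\kC(S_{0,5})$ and $\kC(S_{1,1})\cong\kC(S_{0,4})$, the two complexes exhibit exactly the same set of link types, $\{\kC(S_{0,5}),\,\kC(S_{0,4})\star\kC(S_{0,4})\}$. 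So neither the orbit count nor the link-type census, as you propose them, rules out $\kC(S_{1,3})\cong\kC(S_{0,6})$, and since the paper's treatment of the higher cases (e.g.\ $(2,1)$ vs $(1,4)$, and $(2,n)$ vs $(1,n+3)$) reduces to this one by passing to links, the gap propagates into your inductive step as well.

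The same contamination by exceptional isomorphisms undermines the inductive step as stated: knowing that a link is isomorphic to $\kC(S_{0,5})$ does not tell you whether the cut surface is $S_{0,5}$ or $S_{1,2}$, so the multiset of link types does not ``determine the collection of topological types of cut surfaces''; moreover, to read off the components of a cut surface from its link you must also justify that an isomorphism of joins $\star_i\kC(S_i)\cong\star_j\kC(S'_j)$ matches the factors, which is not automatic. The paper's route supplies precisely the missing tools: the connectivity of the dual link $L^-_{\kC(S)}(\a)$ (Lemma~\ref{connectedduallink}) gives a genuinely combinatorial dichotomy between curves that are separating not of boundary type and the rest (Lemma~\ref{sepinvariant}), from which one extracts the numerical invariants $\mathrm{Sep}(S)$ and $\mathrm{NSep}(S)$ (Lemma~\ref{calculsep}); together with the dimension these reduce the problem to three explicit families, which are then excluded by link comparisons and induction. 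If you want to salvage your argument, you need an invariant of this finer kind (or a second-order analysis of how the two classes of vertices are positioned relative to each other), not orbit counts or bare link types.
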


From the topological case, we know that there are exceptional isomorphisms $C(S_{1,1})\cong C(S_{0,4})$, $C(S_{1,2})\cong C(S_{0,5})$ and 
$C(S_2)\cong C(S_{0,6})$. We also know that, correspondingly, $\G_{1,1}$ and $\G_{0,4}$, $\G_{1,2}$ and $\G_{0,5}$ and 
$\G_2$ and $\G_{0,6}$ are pairs of commensurable group in a way which is compatible with their respective actions on curve complexes
and the above isomorphisms between them. Since the $\hG(S)$-completion does not change if we 
replace $\G(S)$ with any finite index subgroup, by the congruence subgroup property in genus $\leq 2$, these exceptional isomorphisms induce 
isomorphisms also on their respective congruence completions.

In order to prove Theorem \ref{nonisomorphism}, we then need first to drastically reduce the number of 
possible  isomorphisms between two complexes $\kC(S_{g,n})$ and $\kC(S_{g',n'})$ for different types 
$(g,n)$ and $(g',n')$. To this purpose we introduce two invariants. The first one is  the modular dimension of $S_{g,n}$, which is also
the dimension of the curve complex, $d_{g,n}=\dim(C(S_{g,n}))=3g-3+n$. We will then introduce another invariant, or rather two closely 
related ones, which will require some preliminary lemmas. 

The first lemma-definition introduces an useful invariant in the topological case, which will subsequently be shown 
to survive completion. For an abstract simplicial complex $X$, let $X^-$ be the $1$-dimensional abstract simplicial complex
with the same vertex set as $X$ and an edge joining two vertices of $X^-$ if and only if these are not connected by an edge in $X$.
For a simplex $\s\in X$, we then define the \emph{dual link} of $\s$ as the simplicial complex $\Link_X(\sigma)^-$ and denote it simply by $L^-_X(\sigma)$. 
We say that a profinite simple closed curve $\a\in\hL(S)$) is of {\em boundary type} if a simple closed curve 
$\a'\in\kG(S)\cdot\a\cap\cL(S)$ bounds a subsurface of type $(0,3)$ in $S$. We have the following lemma whose topological version is immediate:

\begin{lemma}\label{connectedduallink}
Let $\a\in\hL(S)_0$ be a profinite simple closed curve on $S$. 
The dual link  $L^-_{\kC(S)}(\a)$ is nonempty when $S$ is different from $S_{1,1}$ ($\a$ nonseparating) 
and  $S_{0,4}$ ($\a$ separating and of boundary type). In this case, the dual link  $L^-_{\kC(S)}(\a)$ 
is connected if and only if $\a$ is either nonseparating or of boundary type.
\end{lemma}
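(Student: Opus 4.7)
I first reduce to the case of topological $\a\in\cL(S)_0$ using Lemma \ref{orbits}: every profinite simple closed curve lies in the $\kG(S)$-orbit of some topological curve of the same topological type (in particular, being of boundary type is, by Definition \ref{toptype}, an orbit-invariant notion), and the $\kG(S)$-action on $\kC(S)$ preserves all simplicial structure, so $L^-_{\kC(S)}(\a)$ is unchanged under this reduction. For topological $\a$, the subcomplex of simplices of $\kC(S)$ containing $\a$ naturally identifies with $\kC(S\smallsetminus\a)$: this follows from Theorem \ref{stabmulticurves}(ii) (which displays $\kG(S)_\a$ as an extension of $\kG(S\smallsetminus\a)$ by the inertia at $\a$) together with Theorem \ref{multitwists} (parametrization of profinite multicurves by their topological components). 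Hence $\Link_{\kC(S)}(\a)\cong\kC(S_1)\star\ldots\star\kC(S_k)$, the join over the connected components $S_i$ of $S\smallsetminus\a$.

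Assume first that $\a$ is separating and not of boundary type, so $S\smallsetminus\a=S_1\sqcup S_2$ with $d(S_i)\geq 1$ for both $i$. By definition of the join, every vertex of $\kC(S_1)$ is connected by an edge in $\kC(S_1)\star\kC(S_2)$ to every vertex of $\kC(S_2)$; consequently $L^-$ has no edges between the two sides, decomposing as $\hL(S_1)_0^-\sqcup\hL(S_2)_0^-$, which is disconnected. In the remaining cases, either $\a$ is nonseparating on $S\neq S_{1,1}$ or $\a$ is separating of boundary type on $S\neq S_{0,4}$, and in both situations the link equals $\hL(S')_0$ for a single connected component $S'$ of modular dimension $\geq 1$; moreover $L^-$ is nonempty. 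The excluded cases $S=S_{1,1}$ (nonseparating $\a$) and $S=S_{0,4}$ (boundary-type $\a$) give $S\smallsetminus\a$ with all components of type $(0,3)$, making $L^-$ empty.

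It remains to prove that $\hL(S')_0^-$ is connected for every connected hyperbolic surface $S'$ with $d(S')\geq 1$. If $d(S')=1$ (that is $S'=S_{1,1}$ or $S_{0,4}$), any two distinct simple closed curves on $S'$ have nontrivial geometric intersection, so $\kC(S')$ has no $1$-simplices and $\hL(S')_0^-$ is a complete graph, hence connected. Suppose now $d(S')\geq 2$. For any $\b\in\hL(S')_0$, the set $N(\b):=\{\d\in\hL(S')_0\mid\{\b,\d\}\text{ is not a }1\text{-simplex of }\kC(S')\}$ is open in $\hL(S')_0$, since the $1$-simplices form a closed subset of $\cP_2(\hL(S')_0)$ by the definition of $\kC_\cL(S')$. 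By $\kG(S')$-equivariance together with the elementary topological fact that any topological simple closed curve on $S'$ admits a non-disjoint simple closed curve, $N(\b)$ is nonempty; density of $\cL(S')_0$ in $\hL(S')_0$ then furnishes a topological $\d\in N(\b)$. Applied to any pair $\b_1,\b_2\in\hL(S')_0$, this yields topological non-neighbors $\d_1,\d_2$ which, by the standard topological connectivity of $\cL(S')_0^-$ for $d(S')\geq 2$ (any two topological curves on $S'$ can be bridged through a curve intersecting both), are connected by a topological chain, producing a path in $\hL(S')_0^-$ from $\b_1$ to $\b_2$.

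The principal obstacle is the $d(S')\geq 2$ subcase above: the argument crucially relies on Theorem \ref{isomorphism}, which identifies $\kC(S)$ with $\kC_\cL(S)$ and thus guarantees that ``non-disjointness'' of profinite curves is genuinely an open condition, so that topological curves (which are dense by construction) can be used to supply the required non-neighbors. Without this identification the very meaning of a $1$-simplex of $\kC(S)$ would not be amenable to the profinite density argument that drives the connectivity proof.
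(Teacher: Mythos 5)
Your proposal follows the paper's proof quite closely for most of the argument: reduce to a topological $\a$ via its $\kG(S)$-orbit, identify $\Link_{\kC(S)}(\a)$ with $\kC(S\ssm\a)$ (the paper simply quotes Remark~4.7 of \cite{[B3]} for this; deducing it from Theorem~\ref{stabmulticurves}(ii) and Theorem~\ref{multitwists} alone is not immediate, so you should cite that remark as the paper does), and observe that in the separating, non-boundary case the dual link splits as $\kC(S_1)^-\coprod\kC(S_2)^-$ with both pieces nonempty. Where you genuinely diverge is the connectedness step: the paper settles it in one line, noting that $L^-_{\kC(S)}(\a)$ contains the path-connected topological dual link $L^-_{C(S)}(\a)$ as a dense subspace, whereas you construct explicit finite edge-paths between arbitrary profinite vertices using openness of the non-adjacency locus, density of $\cL(S')_0$ and $\kG(S')$-equivariance. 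Your route gives the stronger combinatorial statement that any two vertices of the dual link are joined by a finite path, which is a pleasant by-product.

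There is, however, one load-bearing step you do not justify, and it is not the one you single out at the end. Both when you claim $N(\b')\neq\emptyset$ for a topological $\b'$ (``admits a non-disjoint simple closed curve'') and when you promote a chain in $\cL(S')_0^-$ to a path in $\hL(S')_0^-$, you are using that two \emph{intersecting} topological curves do not form a $1$-simplex of $\kC(S')$, i.e.\ that non-adjacency in $C(S')$ persists in the procongruence completion. This does not follow from the closedness of $\kC_\cL(S')_1$ alone (Theorem~\ref{isomorphism} only makes non-adjacency an open condition, which is the point you flag): a priori an intersecting pair could lie in the closure of the disjoint pairs. The fact is true and can be extracted from Corollary~\ref{centralizers multitwists}: adjacency of $\g_0,\g_1$ in $\kC(S')$ forces $\tau_{\g_0}$ and $\tau_{\g_1}$ to commute (this is exactly how the corollary is used in the proof of Proposition~\ref{inertiapreservingprop}), while $Z_{\kG(S')}(\tau_{\b'})=\kG(S')_{\b'}$ and a twist $\tau_{\d'}$ about a curve with $i(\b',\d')>0$ does not fix $\b'$, hence does not lie in $\kG(S')_{\b'}$. (The paper's density argument tacitly needs the same fact for $L^-_{C(S)}(\a)$ to be a subgraph of $L^-_{\kC(S)}(\a)$, but in your write-up it is invoked twice and presented as an elementary topological fact, which it is not.) With that justification supplied, your argument is complete.
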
 

\begin{proof}Since the $\kG(S)$-orbit of $\a$ contains an element in $\cL(S)$, we can assume that $\a\in\cL(S)_0\subset \hL(S)_0$. 
By Remark 4.7 in \cite{[B3]}, the link of $\a$ in $\kC(S)$ is the procongruence curve complex $\kC(S\ssm\a)$. 
If $S\ssm\a$ has two connected components $S_1$ and $S_2$, we have that
\[\kC(S\ssm\a)=\kC(S_1)\star\kC(S_2).\]
Thus, if $d(S_i)>0$, for $i=1,2$, then $L^-_{\kC(S)}(\a)=\kC(S_1)^-\coprod\kC(S_2)^-$ has both components nonempty and so is disconnected.
Instead, if either $S\ssm\a$ is connected or if one of the connected components of $S\ssm\a$ has modular dimension zero, then 
$L^-_{\kC(S)}(\a)$ is connected, since it contains $L^-_{C(S)}(\a)$ as a dense path-connected subspace.
\end{proof}

For $S=S_{g,n}$ hyperbolic, let $\mathrm{Sep}(S)$ (resp.\ $\mathrm{NSep}(S)$) denote the \emph{maximal number of pairwise disjoint separating curves 
not of boundary type} (resp.\ \emph{maximal number of disjoint curves which are either nonseparating or of boundary type})
which a simplex $\s\in\kC(S)$ can contain. By Lemma~\ref{orbits}, this is of course purely a topological invariant of the surface $S$.
It is easy to compute these numbers explicitly. Thus, we leave the proof of the following lemma to the reader as an easy exercise:

\begin{lemma}\label{calculsep}

We have
\[ \mathrm{Sep}(S_{g,n})=
\left\{\begin{array}{ll}
\max(n-5,0), & {\rm if}\; g=0;\\ 
\max(n-2,0), & {\rm if }\; g=1;\\
2g+n-3, & {\rm if }\; g\geq 2.\\
\end{array}
\right.
\]
\[ \mathrm{NSep}(S_{g,n})=
\left\{\begin{array}{ll}
\left[\frac n 2 \right], & {\rm if}\; g=0;\\ 
3g+n-3, & {\rm if }\; g\geq 1.\\
\end{array}
\right.
\]

\end{lemma}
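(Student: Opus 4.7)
By Lemma~\ref{orbits}, the invariants $\mathrm{Sep}(S)$ and $\mathrm{NSep}(S)$ depend only on the topological type of $S$, so the whole computation reduces to a classical combinatorial analysis of multicurves in $C(S)$. The plan is to verify each formula in two steps: (i) exhibit an explicit multicurve realizing the asserted bound, and (ii) show that no larger multicurve of the relevant kind exists, by an argument on the dual tree of the complementary decomposition.

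For $\mathrm{NSep}(S_{g,n})$, the upper bound $\mathrm{NSep}(S_{g,n})\le d(S_{g,n})=3g+n-3$ is automatic from the dimension of $C(S)$. When $g=0$, every boundary-type curve encloses exactly two punctures in a disk, so disjoint boundary-type curves correspond to disjoint $2$-element subsets of the punctures of $S_{0,n}$; the maximum is therefore $\lfloor n/2\rfloor$, achieved by any near-perfect matching of the punctures. When $g\ge 1$, I would construct a pants decomposition in which every curve is either nonseparating or boundary type: use $g$ successive nonseparating curves to reduce the genus to zero, then use boundary-type curves to cut off pairs of punctures, and finally complete the remaining planar piece by additional nonseparating curves (a small parity case-split handles the subtleties). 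This achieves equality with $d(S_{g,n})$.

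For $\mathrm{Sep}(S_{g,n})$, let $\sigma$ be a multicurve of $k$ pairwise disjoint non-boundary-type separating curves and consider the dual tree $T$ whose vertices are the components of $S\ssm\sigma$ (weighted by their topological type) and whose edges are the curves in $\sigma$. The non-boundary condition says that for each edge $\gamma$, neither side of $\gamma$ in $S$ has type $(0,3)$. When $g=0$, this forces each side of each $\gamma$ to contain at least $3$ punctures; a leaf vertex of $T$ therefore needs $\ge 3$ punctures and a degree-$2$ interior vertex needs $\ge 1$ puncture. Adding up these constraints shows that any $T$ requires at least $k+5$ punctures, with equality on a linear chain; this gives both the upper bound $k\le n-5$ and the construction via the nested chain $\gamma_1\supset\cdots\supset\gamma_k$, where $\gamma_i$ encloses $i+2$ punctures.

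The cases $g\ge 1$ proceed by the same dual-tree analysis, with exactly one vertex of $T$ carrying the handles. For $g=1$ this ``genus-$1$ leaf'' does not require punctures on its side, which lowers the required puncture count and yields $k=n-2$ (realized by a chain that starts by enclosing the handle and all punctures and then successively drops punctures). For $g\ge 2$, the optimal construction combines a chain of $g-1$ curves each chopping off a $(1,1)$ piece, together with one further separating curve inserted in each of the $g-2$ intermediate $(1,2)$ pieces, plus $n$ more curves produced by incorporating the punctures as in the $g=1$ case; this gives $k=2g+n-3$. In every case the matching upper bound is obtained by the same tree analysis, now permitting one distinguished vertex absorbing all the handles. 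The main obstacle is precisely the combinatorial tree argument: one has to verify rigorously that branching in $T$ is never more efficient than a linear chain, and that the ``handle vertex'' cannot support more than a bounded number of extra edges; these verifications are routine but require separate treatment in the sphere, torus, and higher-genus cases.
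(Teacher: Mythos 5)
The paper gives no argument for this lemma at all (it is explicitly left to the reader as an easy exercise), so there is nothing to compare your route against; your overall strategy — explicit extremal configurations plus an upper bound via the dual tree of the complementary decomposition — is the natural one, and the configurations you exhibit do realize the stated values (with the caveat noted below).

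There is, however, a genuine flaw in your upper-bound argument for $\mathrm{Sep}$ when $g\ge 2$: you set up the tree analysis "with exactly one vertex of $T$ carrying the handles'' and later "permit one distinguished vertex absorbing all the handles''. Since all curves in the multicurve are separating, the genus of $S$ is the \emph{sum} of the genera of the complementary regions, and nothing confines it to a single vertex; indeed the extremal configurations — including your own construction, which cuts off $g$ genus-one pieces — distribute the genus over many vertices. Under your premise the argument breaks down: for $n=0$ every leaf other than the genus vertex would need at least three punctures, forcing $k=0$ and contradicting $\mathrm{Sep}(S_{g,0})=2g-3$. The correct constraints are: for each edge, each of the two sides must have genus $\ge 1$ or at least $3$ punctures (otherwise the curve is inessential, peripheral or of boundary type), and no degree-two vertex may have genus $0$ and no punctures (otherwise its two curves are isotopic). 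With these, the counting does close: writing $L$, $D_2$, $D_{\ge 3}$ for the numbers of leaves, degree-two and higher-degree vertices, the handshake inequality $L\ge D_{\ge 3}+2$ gives $3L+D_2\ge k+5$ in genus $0$, the analogous bound $k\le n-2$ in genus $1$, and, splitting vertices according to whether they carry genus, $k\le 2(\#\,\mbox{genus vertices of degree}\le 2)+2L_0+D_{2,0}-3\le 2g+n-3$ for $g\ge2$. These verifications are precisely the "routine'' steps you defer, and they constitute the actual content of the lemma, so as written the proposal does not establish the upper bounds. A minor further caveat: in genus $0$ your "near-perfect matching'' realizes $\lfloor n/2\rfloor$ only for $n\ge 5$ (for $n=4$ the two curves around complementary pairs are isotopic and $\mathrm{NSep}(S_{0,4})=1$; for $n=3$ there is no essential boundary-type curve), a degeneracy that is harmless for the paper's application but should be flagged if the formulas are asserted for all hyperbolic types.
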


Let now $\hL^\mathrm{sep}(S)$ (resp.\ $\hL^\mathrm{nsep}(S)$) be the (profinite) subset of $\hL(S)$ consisting of profinite simple closed curves which
are separating but not of boundary type (resp.\ either nonseparating or of boundary type). Then, we have:

\begin{lemma}\label{sepinvariant}
An isomorphism $\phi\co\kC(S)\stackrel{\sim}{\to}\kC(S')$ induces bijections $\hL^\mathrm{sep}(S)\stackrel{\sim}{\to}\hL^\mathrm{sep}(S')$ 
and $\hL^\mathrm{nsep}(S)\stackrel{\sim}{\to}\hL^\mathrm{nsep}(S')$. 
\end{lemma}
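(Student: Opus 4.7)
The plan is to give an intrinsic characterization of membership in $\hL^{\mathrm{nsep}}(S)$ purely in terms of the local simplicial structure of $\kC(S)$ at a vertex, via Lemma \ref{connectedduallink}. Once such a characterization is available, the $\phi$-invariance of both $\hL^{\mathrm{sep}}$ and $\hL^{\mathrm{nsep}}$ becomes automatic.

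First I would observe that the isomorphism $\phi\co\kC(S)\to\kC(S')$, being a bijection on vertex sets that both preserves and reflects the edge relation of the $1$-skeleton, also preserves and reflects non-adjacency, and hence commutes with the construction $X\mapsto X^{-}$. Consequently, for every $\a\in\hL(S)_0$, $\phi$ restricts to an isomorphism of $1$-dimensional abstract simplicial profinite complexes
\[L^-_{\kC(S)}(\a)\xrightarrow{\sim} L^-_{\kC(S')}(\phi(\a)).\]
In particular, the dual link at $\a$ is nonempty (resp.\ combinatorially connected) if and only if the dual link at $\phi(\a)$ is.

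Next I would invoke Lemma \ref{connectedduallink}. Assuming $S\neq S_{1,1},S_{0,4}$, that lemma says $L^-_{\kC(S)}(\a)$ is nonempty, and is connected if and only if $\a\in\hL^{\mathrm{nsep}}(S)$; equivalently, $\a\in\hL^{\mathrm{sep}}(S)$ iff the dual link is nonempty and disconnected. Since $\phi$ forces $\dim\kC(S)=\dim\kC(S')$, and $(g,n)=(1,1),(0,4)$ are the only hyperbolic types with modular dimension $1$, the hypothesis $S\neq S_{1,1},S_{0,4}$ transfers to $S'$; so the same characterization applies at $\phi(\a)$. Combined with the previous paragraph, the two claimed bijections follow at once.

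The remaining case $S\in\{S_{1,1},S_{0,4}\}$ is trivial: there $\dim\kC(S)=0$ forces $S'\in\{S_{1,1},S_{0,4}\}$ as well, and on both surfaces $\hL^{\mathrm{sep}}=\emptyset$ while $\hL^{\mathrm{nsep}}$ coincides with the entire vertex set, so any bijection $\phi$ manifestly respects the partition. I do not anticipate a real obstacle; the one point requiring a moment of care is that combinatorial connectedness of the profinite graph $L^-_{\kC(S)}(\a)$ be interpreted intrinsically (path-connectedness in the $1$-skeleton, i.e.\ in terms of the incidence data of vertices and edges), so that it is automatically preserved by continuous simplicial bijections.
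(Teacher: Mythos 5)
Your proposal is correct and follows essentially the same route as the paper: the isomorphism $\phi$ preserves links and hence dual links, and then Lemma~\ref{connectedduallink} characterizes membership in $\hL^\mathrm{nsep}$ versus $\hL^\mathrm{sep}$ by (non)connectedness of the dual link. Your explicit treatment of the degenerate cases $S_{1,1}$, $S_{0,4}$ and of the meaning of connectedness is just a more careful spelling-out of what the paper leaves implicit.
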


\begin{proof}The isomorphism $\phi$ induces an isomorphism $\Link_{\kC(S)}(\a)\cong\Link_{\kC(S')}(\phi(\a))$ and then also 
$L^-_{\kC(S)}(\a)\cong L^-_{\kC(S')}(\phi(\a))$, for all $\a\in\hL(S)_0$. The conclusion then immediately follows from Lemma~\ref{connectedduallink}.
\end{proof}

\begin{proof}[Proof of Theorem~\ref{nonisomorphism}]
Assume that there exists an isomorphism $\phi\co \kC(S)\to \kC(S')$, where $S=S_{g,n}$ and $S'=S_{g',n'}$. 
Then, by Lemma~\ref{sepinvariant}, we have the equalities:
\[\dim(S)=\dim(S'), \quad \mathrm{Sep}(S)=\mathrm{Sep}(S'), \quad \mathrm{NSep}(S)=\mathrm{NSep}(S').\]

Straightforward bookkeeping using Lemma~\ref{calculsep} shows that, if the two types are different, 
the only possible isomorphisms occur for:   

\begin{enumerate}

\item $g=2, n\geq 0$, $g'=0, n'=n+6$ and 
$n'=\left[\frac{n+6}{2} \right]$, so that $n=0$. In this case, $(g,n)=(2,0)$ and $(g',n')=(0,6)$.  
\item  $g=1, n\geq 1$, $g'=0, n'=n+3$ and $n'=\left[\frac{n+3}{2} \right]$, so that $n\in\{2,3\}$.
In this case, either $(g,n)=(1,2)$ and $(g',n')=(0,5)$ or  $(g,n)=(1,3)$ and $(g',n')=(0,6)$.   
\item  $g=2, n\geq 0$ and $g'=1, n'=n+3$.  

\end{enumerate} 

The first case is one of the exceptional isomorphisms.
In the second case, we have to exclude the isomorphism between $\kC(S_{1,3})$ and $\kC(S_{0,6})$.
Assume then that such an isomorphism $\phi\co \kC(S_{1,3})\to \kC(S_{0,6})$ exists. 
From Lemma~\ref{sepinvariant}, it follows that $\phi$ sends profinite simple closed curves both of nonseparating and of boundary type on $S_{1,3}$
to profinite simple closed curves of boundary type on $S_{0,6}$. This implies that the link of a simple closed curve of nonseparating type 
and one of boundary type on $S_{1,3}$ are isomorphic. But, by Remark 4.7 in \cite{[B3]}, the first one is isomorphic to $\kC(S_{0,5})$ and 
the second one to $\kC(S_{1,1})\cong\kC(S_{0,4})\not\cong\kC(S_{0,5})$. Thus, we get a contradiction. 
This also excludes the case $(g,n)=(2,0)$ and $(g',n')=(1,3)$ of item (iii), since $\kC(S_{2,0})$ is isomorphic to $\kC(S_{0,6})$.

We have to exclude the third case for $n\geq 1$. Let us consider first the case $(g,n)=(2,1)$ and $(g',n')=(1,4)$. Let us assume that there is an
isomorphism $\phi\co \kC(S_{1,4})\to \kC(S_{2,1})$. By Lemma~\ref{sepinvariant}, both profinite simple closed curves of nonseparating and of boundary 
type on $S_{1,4}$ are sent to  profinite nonseparating simple closed curves on $S_{2,1}$. As above, it follows that the links of a simple closed curve 
of nonseparating and of boundary type, respectively, on $S_{1,4}$ are isomorphic. But these are isomorphic to $\kC(S_{0,6})$ and to $\kC(S_{1,3})$, 
respectively, which we have already proved not to be isomorphic, a contradiction.

Let us consider eventually the third case for $n\geq 2$. In this case, an isomorphism $\phi\co \kC(S_{2,n})\to \kC(S_{1,n+3})$ sends a profinite 
simple closed curve of boundary type on $S_{2,n}$ either to a profinite simple closed curve of boundary type or to a profinite nonseparating 
simple closed curve on $S_{1,n+3}$. Considering the isomorphism induced on the respective links and possibly using induction on $n$, 
we are reduced to one of the situations which we have already excluded.
\end{proof}

\subsection{Automorphisms of the procongruence curve complex}
We say that an element $f\in\Aut(\kC(S))$ is \emph{type preserving} if $\s$ and $f(\s)$ have the same topological type for all $\s\in\kC(S)$.
We then have:

\begin{theorem}\label{typepreservation}Let $S=S_{g,n}$ be a hyperbolic surface; if $S$ is not of type $(1,2)$, every
automorphism of $\kC(S)$ is type preserving. If $S=S_{1,2}$, an element of $\Aut(\kC(S))$ is type 
preserving if and only if it preserves the set of separating profinite simple closed curves. 
\end{theorem}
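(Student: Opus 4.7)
The plan is to establish type preservation first for $0$-simplices of $\kC(S)$, then extend to higher-dimensional simplices by induction on dimension, exploiting the fact that the link in $\kC(S)$ of a simplex $\sigma$ is canonically isomorphic to the procongruence curve complex $\kC(S\smallsetminus\sigma)$ of the complementary surface (cf.\ Remark~4.7 in \cite{[B3]}).

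For a $0$-simplex $\alpha\in\hL(S)_0$, Lemma~\ref{sepinvariant} supplies the primary dichotomy: any $\phi\in\Aut(\kC(S))$ sends $\hL^{\mathrm{sep}}(S)$ to itself and $\hL^{\mathrm{nsep}}(S)$ to itself. Within $\hL^{\mathrm{sep}}(S)$, if $S\smallsetminus\alpha=S_1\sqcup S_2$ with each component of positive modular dimension, then the link of $\alpha$ is $\kC(S_1)\star\kC(S_2)$; the induced link isomorphism, combined with Theorem~\ref{nonisomorphism} and the uniqueness of join decompositions into indecomposable factors, determines the unordered pair $\{[S_1],[S_2]\}$ of topological types (up to the three exceptional isomorphisms). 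A case-by-case verification then shows that inside a fixed ambient $S$ this data pins down the topological type of $\alpha$.

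Within $\hL^{\mathrm{nsep}}(S)$, the link of a nonseparating curve on $S_{g,n}$ is $\kC(S_{g-1,n+2})$ while the link of a boundary-type curve is $\kC(S_{g,n-1})$. By Theorem~\ref{nonisomorphism}, these are non-isomorphic unless $(g-1,n+2)$ and $(g,n-1)$ match via one of the three exceptional pairs, which by direct enumeration forces $S\in\{S_{1,2},S_{1,3}\}$. The $S_{1,2}$ case is the stated exception: both candidate links equal $\kC(S_{0,4})\cong\kC(S_{1,1})$, which is $0$-dimensional, so no local feature inside $\kC(S_{1,2})$ can separate the two types, and the hypothesis that $\phi$ preserves separating curves is exactly what is needed to break the symmetry. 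For $S=S_{1,3}$, I plan to use the $\phi$-invariance of $\hL^{\mathrm{sep}}(S_{1,3})$ globally: a generic separating non-boundary curve $\gamma$ cuts $S_{1,3}$ into $S_{1,1}\sqcup S_{0,4}$, and a nonseparating curve disjoint from $\gamma$ must lie on the $S_{1,1}$-side whereas a boundary-type curve disjoint from $\gamma$ must lie on the $S_{0,4}$-side; the resulting combinatorial incidence patterns between $\hL^{\mathrm{sep}}(S_{1,3})$ and each candidate orbit in $\hL^{\mathrm{nsep}}(S_{1,3})$ yield the distinguishing invariant.

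For higher-dimensional simplices, induct on $k=\dim\sigma$. Given a $k$-simplex $\sigma$, choose a face $\tau\subset\sigma$ of dimension $k-1$: by the inductive hypothesis $\phi(\tau)$ has the same topological type as $\tau$, and $\phi$ restricts to an automorphism of $\Link(\tau)\cong\kC(S\smallsetminus\tau)$, a procongruence curve complex of strictly smaller modular dimension. Applying the $0$-simplex result inside this smaller complex gives type preservation for the remaining vertex of $\sigma$, and combining with the type of $\tau$ recovers the full topological type of $\sigma$ in $S$. The main obstacle is the $S_{1,3}$ case, where the links of nonseparating and boundary-type vertices are abstractly isomorphic via the exceptional iso $\kC(S_{0,5})\cong\kC(S_{1,2})$, so no purely local argument inside $\kC(S_{1,3})$ works, and the invariant must be extracted from the global incidence with $\hL^{\mathrm{sep}}(S_{1,3})$; a secondary subtlety throughout the induction is that the exceptional isomorphisms among procongruence curve complexes of low-complexity subsurfaces could in principle permit swapping components of $S\smallsetminus\sigma$, and one must verify case by case that any such swap is either precluded by the ambient data or corresponds to the same topological type of $\sigma$ in $S$.
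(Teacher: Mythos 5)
Your overall architecture is the same as the paper's: Lemma~\ref{sepinvariant} gives the dichotomy between $\hL^\mathrm{sep}(S)$ and $\hL^\mathrm{nsep}(S)$; the type of a separating vertex is recovered from the two factors of its link (equivalently the two components of its dual link) together with Theorem~\ref{nonisomorphism} and the genus/puncture bookkeeping, which is exactly Lemma~\ref{typepreservationandseparation}; and higher simplices are handled by induction, normalizing by an element of $\kG(S)$ so that a codimension-one face is fixed and working inside $\Link(\tau)\cong\kC(S\ssm\tau)$, which is the paper's concluding double induction (with the same component-permutation subtlety, which you at least flag). The one point where you genuinely diverge is the nonseparating-versus-boundary distinction inside $\hL^\mathrm{nsep}(S)$: the paper's Lemma~\ref{separatingpreserved} settles it by comparing link dimensions, whereas you compare the isomorphism types of the links $\kC(S_{g-1,n+2})$ and $\kC(S_{g,n-1})$ via Theorem~\ref{nonisomorphism}. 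Your variant is in fact the more robust one (every vertex of $\kC(S)$ lies in a facet, so all vertex links have the same dimension $d(S)-2$, and it is the isomorphism type, not the dimension, that discriminates), but it forces you to isolate the case $S_{1,3}$, where $\kC(S_{0,5})\cong\kC(S_{1,2})$, which the paper's argument does not single out.

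For $S_{1,3}$ your sketch has a gap as stated: knowing that a nonseparating curve disjoint from $\gamma\in\hL^\mathrm{sep}(S_{1,3})$ lies in the $\kC(S_{1,1})$ factor of $\Link(\gamma)$ while a boundary-type curve lies in the $\kC(S_{0,4})$ factor is not yet an invariant, because the two factors are abstractly isomorphic ($0$-dimensional) and an automorphism preserving $\hL^\mathrm{sep}$ could a priori swap them at each $\gamma$; "which side you lie on" is not intrinsic. You need to name an actual automorphism-invariant property, for instance: for an edge $\{u,v\}$ of $\kC(S_{1,3})$ with both endpoints in $\hL^\mathrm{nsep}$, there exists a vertex of $\hL^\mathrm{sep}$ adjacent to both $u$ and $v$ if and only if exactly one of them is of boundary type. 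Indeed no two boundary-type curves of $S_{1,3}$ are disjoint; two disjoint nonseparating curves admit no common separating neighbor (they would both lie in the $(1,1)$ side of such a $\gamma$, where distinct essential curves intersect); and a disjoint nonseparating/boundary pair always admits one (take the boundary of a one-holed torus neighborhood of the nonseparating curve in the complement of the twice-punctured disc). Since every nonseparating vertex does lie on an edge with another nonseparating vertex, this characterizes the two types; the existence and non-existence statements transfer from $C(S_{1,3})$ to $\kC(S_{1,3})$ because simplices of $\kC(S)$ have well-defined topological types (Lemma~\ref{orbits} and Definition~\ref{toptype}). With that step supplied, and the case-by-case check in the induction carried out as you indicate, your proof goes through.
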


\begin{proof}The statement of the theorem is empty for $d(S)=0$ and obvious for $d(S)=1,2$. Hence, from now on, we assume that $d(S)>2$. 
As a first step, let us prove the following lemma:

\begin{lemma}\label{separatingpreserved}Let $S$ be such that $d(S)>2$. Then, every automorphism of $\kC(S)$ preserves the subsets of 
profinite simple closed curves of $\kC(S)_0$ which are of separating, nonseparating and of boundary type.
\end{lemma}

\begin{proof}By Lemma~\ref{sepinvariant}, we just have to show that the bijection $\hL^\mathrm{nsep}(S)\simeq\hL^\mathrm{nsep}(S)$ induced
by an element $f\in\Aut(\kC(S))$ preserves topological types. But this follows observing that, by Remark 4.7 in \cite{[B3]},
the dimension of the link in $\kC(S)$ of a profinite curve of boundary type is smaller than the dimension of the link of a profinite nonseparating curve.
\end{proof}

The next step completes the proof of Theorem~\ref{typepreservation} at the level of $0$-simplices: 

\begin{lemma}\label{typepreservationandseparation}Let $S$ be such that $d(S)>2$. Then, every automorphism of $\kC(S)$ preserves the
topological types of $0$-simplices.
\end{lemma} 

\begin{proof}Precomposing and then composing a given $\phi\in Aut(\kC(S))$ with the actions by suitable elements of $\kG(S)$,
we are reduced to consider the case when both $\a$ and $\phi(\a)$ belong to $C(S)_0\subset\kC(S)_0$.
By Lemma~\ref{separatingpreserved}, it is then enough to show that $\phi$ sends a separating simple closed curve
$\a$ on $S$, such that the connected components of $S\ssm\a$ both have modular dimension $>0$, to a separating simple closed curve of the
same topological type. Since $\phi$ induces an isomorphism $L^-_{\kC(S)}(\a)\cong L^-_{\kC(S)}(\phi(\a))$, from Lemma~\ref{connectedduallink},
it immediately follows that also the connected components of $S\ssm\phi(\a)$ have modular dimension $>0$.

Let $S\ssm\a=S_1\coprod S_2$ and $S\ssm\phi(\a)=S_1'\coprod S_2'$. By Theorem~\ref{stabmulticurves}, the inclusion $S_i\subset S$ (resp.\
$S'_i\subset S$) induces a monomorphism of procongruence mapping class groups $\kG(S_i)\subset \kG(S)$ (resp.\
$\kG(S'_i)\subset \kG(S)$) and then of procongruence curve complexes $\kC(S_i)\subset \kC(S)$ (resp.\ $\kC(S'_i)\subset\kC(S)$), for $i=1,2$.
As we saw in the proof of Lemma~\ref{connectedduallink}, we have:
\[L^-_{\kC(S)}(\a)=\kC(S_1)^-\coprod\kC(S_2)^-\hspace{1cm}\mbox{and}\hspace{1cm}L^-_{\kC(S)}(\phi(\a))=\kC(S'_1)^-\coprod\kC(S'_2)^-.\]
Since $C(S)^-$ is nonempty and path-connected for $d(S)\geq 1$ and identifies with a dense subset of $\kC(S)^-$, it follows that $\kC(S)^-$ is also connected
for $d(S)\geq 1$. We conclude that both $L^-_{\kC(S)}(\a)$ and $L^-_{\kC(S)}(\phi(\a))$ consist of two connected components which are then preserved by 
$\phi$. Let us suppose, for instance, that $\phi$ maps $\kC(S_i)^-$ to $\kC(S_i')^-$, for $i=1,2$. This implies that $\phi$ induces an isomorphism
of procongruence curve complexes $\kC(S_i)\cong\kC(S_i')$, for $i=1,2$. Then, Theorem~\ref{nonisomorphism}, together with the identities 
$g(S_1)+g(S_2)=g(S_1')+g(S_2')$ and $n(S_1)+n(S_2)=n(S_1')+n(S_2')$, yields the lemma.
\end{proof}

We conclude the proof of Theorem~\ref{typepreservation} by double induction on the dimension of the simplex $\s\in\kC(S)$ and on $d(S)$.
For $d(S)\leq 2$, the statement of the theorem is obvious and, for $d(S)>2$ and $\dim\s=0$, it is given by
Lemma~\ref{typepreservationandseparation}. As usual, we can assume that $\s\in C(S)\subset\kC(S)$ and, by the induction hypothesis, 
that a face $\s'$ of $\s$ of maximal dimension is fixed by the given automorphism $\phi\in\Aut(\kC(S))$. Then, also that the connected components 
of the dual link of $\s'$ are fixed by $\phi$. 

The conclusion then follows from the induction hypothesis applied to the connected component of the surface $S\ssm\s'$ which contains 
the simple closed curve $\g=\s\ssm\s'$ and the restriction of $\phi$ to that component.
\end{proof}

\section{Curve complexes and moduli spaces}\label{sect6}
\subsection{Geometric interpretation of the curve complex and of its congruence completion}\label{curvecomplexint}
Let $\cT(S)$ be the Teichm\"uller space associated to the surface $S$. The \emph{Harvey cuspidal bordification} $\wh{\cT}(S)$ 
of $\cT(S)$ (cf.\  \cite{Harvey}) is described as follows. 
Let $\bM(S)$ be the DM compactification of the stack  $\cM(S)$ and 
$\dd\bM(S)=\bM(S)\ssm \cM(S)$ be its DM boundary, which is a  normal crossings divisor.  
Let $\wh{\cM}(S)$ be the real oriented blow-up of the complex DM stack $\bM(S)_\C$  along the DM boundary $\dd\bM(S)_\C$. 
This is a real analytic DM stack with corners whose boundary $\dd\wM(S):=\wM(S)\ssm\cM(S)_\C$ 
is homotopic to a deleted tubular neighborhood of the DM boundary of $\bM(S)_\C$. 

For an intrinsic construction of $\wh{\cM}(S)$, we
apply Weil's restriction of scalars $\Res_{\C/\R}$ to the complex DM stack $\bM(S)_\C$ and blow up $\Res_{\C/\R}\bM(S)_\C$ along the codimension
two substack $\Res_{\C/\R}\dd\bM(S)_\C$. The real oriented blow-up $\wh{\cM}(S)$ is then obtained taking the set of real points of this blow-up 
and cutting it along the exceptional divisor (which has codimension $1$).
The natural projection $\wh{\cM}(S)\to\bM(S)_\C$ restricts to a bundle in $k$-dimensional tori over each codimension $k$  open stratum. 

The Harvey cuspidal bordification $\widehat{\cT}(S)$ is the universal cover of the real analytic stack
$\wh{\cM}(S)$. It can be shown that $\widehat{\cT}(S)$ is representable and thus a real analytic manifold with corners (cf.\ Section~4 in \cite{[B4]}).
The inclusion $\cM(S)_\C\hookra\wh{\cM}(S)$  is a homotopy equivalence and then induces an inclusion of the respective
universal covers $\cT(S)\hookra\widehat{\cT}(S)$, which is also a homotopy equivalence. The \emph{ideal boundary} of the Teichm{\"u}ller space 
$\cT(S)$ is defined to be $\dd\,\widehat{\cT}(S):=\widehat{\cT}(S)\ssm\cT(S)$.
 
The Harvey cuspidal bordification $\widehat{\cT}(S)$ is endowed with a natural action of
the mapping class group $\G(S)$ and has the property that the nerve of the cover of the ideal boundary $\partial\,\widehat{\cT}(S)$ 
by (analytically) irreducible components is described by the curve complex $C(S)$ (cf.\ \cite{Harvey}).
More precisely, the stratum associated to a $k$-simplex $\s\in C(S)$ is naturally isomorphic to $\cT(S\ssm\s)\times\R^{k+1}$.
In particular, these strata are all contractible. Hence, there is a $\G(S)$-equivariant \emph{weak} homotopy equivalence  
between the ideal boundary $\partial\,\widehat{\cT}(S)$ and the geometric realization of $C(S)$ (cf.\ Theorem~2 in \cite{Harvey}). 

From this description of the curve complex $C(S)$, it is clear that, for $\G^\l$ a level of $\G(S)$, the simplicial finite set $C^\l(S)_\bt:=C(S)_\bt/\G^\l$
describes the nerve of the DM boundary of the level structure $\bM^\l$ over $\bM(S)$ corresponding to the subgroup $\G^\l$.
Therefore, the complex of profinite curves $\kC(S)$ describes the DM boundary of the inverse limit of all the geometric level structures.

\subsection{Geometric interpretation for the pants graph and its congruence completion}\label{geomintpants}
The role that, for the curve complex $C(S)$, is played by the Harvey bordification $\wh{\cT}(S)$ 
of Teichm\"uller space is  now played by the \emph{Bers bordification} $\ol{\cT}(S)$ for the pants graph $C_P(S)$. This is obtained from $\wh{\cT}(S)$ by 
collapsing the real affine spaces which appear in the boundary components associated to nonperipheral simple closed curves on $S$.
The Bers boundary of the Bers bordification $\ol{\cT}(S)$ is then the complement $\dd\ol{\cT}(S):=\ol{\cT}(S)\ssm\cT(S)$.
The irreducible closed stratum of the Bers boundary associated to a simplex $\s\in C(S)$ is then isomorphic to the Bers bordification
$\ol{\cT}(S\ssm\s)$ of $\cT(S\ssm\s)$. There is a natural map $\ol{\cT}(S)\to\bM(S)$ with infinite ramification along the Bers boundary
and with functorial restriction to the strata of the Bers boundary. 

The above description of the Bers boundary shows that there is a natural injective map from the set of facets of $C(S)$, which forms the set of vertices 
of the pants graph $C_P(S)$, to the Bers boundary $\dd\ol{\cT}(S)$. This map sends a maximal multicurve $\s$ to the
$0$-dimensional stratum $\ol{\cT}(S\ssm\s)$ of $\dd\ol{\cT}(S)$ and we can extend it to a continuous map from 
the geometric realization $|C_P(S)|$ of $C_P(S)$
to the topological space underlying $\dd\ol{\cT}(S)$ in the following way. 
If  $\s'$ is a submaximal (i.e.\ of dimension $\dim C(S)-1$) simplex of $C(S)$, then the associated closed $1$-dimensional
stratum $\ol{\cT}(S\ssm\s')$ of $\dd\ol{\cT}(S)$ is  identified with the cuspidalization 
$\ol{\H}(S')$ of the hyperbolic plane $\H(S')$, which we regard as the 
universal cover of the unique connected component of positive modular dimension $S'$ of $S\ssm\s'$. 
On the other hand, the geometric realization of the pants graph $C_P(S')$ is naturally identified with the $1$-skeleton of the Farey triangulation 
$F(S')$ of $\ol{\H}(S')$. Since, as we already observed in the proof of Lemma~\ref{proofrigiditypants},  every edge of $C_P(S)$ is contained in a
unique Farey subgraph $F(S')=C_P(S')$, we can use the previous identification to define a continuous injective map $\Psi\co |C_P(S)|\hookra\dd\ol{\cT}(S)$.

It is quite remarkable that a similar property is enjoyed by the finite quotients $C_P^\l(S)_\bt:=C_P(S)_\bt/\G^\l$, for all levels $\G^\l$ of $\G(S)$
satisfying some mild conditions. We need the following definition:

\begin{definition}For $S$ a hyperbolic surface of modular dimension $d=d(S)$, the \emph{Fulton curve} $\cF(S)$ is the $1$-dimensional 
closed substack of the moduli stack $\bM(S)$ which parameterizes curves with at least $d-1$ nodes. More generally, for a level structure
$\bM^\l(S)$ over $\bM(S)$, the \emph{Fulton curve} $\cF^\l(S)$ is the inverse image of $\cF(S)$ via the natural morphism $\bM^\l(S)\to\bM(S)$.
\end{definition}

Let us observe that, in general, $\cF^\l(S)$ is a $1$-dimensional proper reduced DM stack curve with at most multicross singularities. 
We will be essentially interested in the case 
when the level structure $\bM^\l(S)$ and then the Fulton curve $\cF^\l(S)$ are representable. The importance of this $1$-dimensional closed
stratum in $\bM(S)$ was first recognized in a conjecture formulated by W. Fulton, hence our name and the notation $\cF$ (see \cite{[GKM]}).

\begin{proposition}\label{triangulation}Let $\G^\l$ be a level of $\G(S)$ contained in an abelian level of order $m\geq 2$. Then, the quotient 
$C_P(S)_\bt/\G^\l$ is the simplicial set associated to a simplicial complex $C^\l_P(S)$ whose geometric realization  is identified with the $1$-skeleton 
of a triangulation of (the coarse moduli space of) $\cF^\l(S)_\C$. This triangulation restricts on each irreducible component of $\cF^\l(S)_\C$ to the one 
induced by the Farey triangulation of $\ol{\H}$.
\end{proposition}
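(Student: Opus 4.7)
The plan is to identify the geometric realization of the quotient complex with a triangulation of the Fulton curve, component by component, and then to check that the gluing of components is correctly encoded by the combinatorics of elementary moves.

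First I would recall from Section~\ref{geomintpants} that the Bers bordification $\ol\cT(S)$ restricts to a natural map $\ol\cT(S)\to\bM(S)$ taking strata to strata, and that the preimage of the Fulton curve $\cF(S)$ consists precisely of the $1$-dimensional closed strata of $\dd\ol\cT(S)$, each of which is identified with the cuspidalized hyperbolic plane $\ol\H(S')$ associated to the unique positive-dimensional component $S'=S_{1,1}$ or $S_{0,4}$ of the complement of a submaximal multicurve $\s'$. Under the embedding $\Psi\co|C_P(S)|\hookra\dd\ol\cT(S)$ constructed there, the Farey subgraph $F(S')$ of $C_P(S)$ associated to $\s'$ maps to the $1$-skeleton of the Farey triangulation of $\ol\H(S')$. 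So the $1$-skeleton of a natural triangulation of the full preimage of $\cF(S)$ in $\dd\ol\cT(S)$ is canonically identified with $|C_P(S)|$ itself.

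Next I would pass to the quotient by $\G^\l$. The level structure $\bM^\l$ over $\bM(S)$ is classified by the $\G(S)$-covering $\cT(S)\to\cM(S)_\C$, and the Bers bordification descends to a real oriented blow-up type completion $\ol\cM^\l$ whose restriction over a tubular neighborhood of $\cF^\l(S)_\C$ is the quotient of the corresponding neighborhood in $\ol\cT(S)$ by $\G^\l$. Since $\G^\l$ acts on $\dd\ol\cT(S)$ preserving the stratification and transporting Farey subgraphs to Farey subgraphs, the induced triangulation on the preimage of $\cF(S)$ descends to a triangulation of $\cF^\l(S)_\C$ whose $1$-skeleton is the geometric realization of $C_P(S)_\bt/\G^\l$. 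The components of $\cF^\l(S)_\C$ are in $\G^\l$-equivariant bijection with $\G^\l$-orbits of submaximal multicurves on $S$, and each component is (the coarse moduli space of) a level structure over either $\bM_{1,1}$ or $\bM_{0,[4]}$ whose universal cover is $\ol\H$ with its Farey triangulation. This gives the claim on each component.

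The main obstacle, which is precisely what forces the hypothesis that $\G^\l$ lies in an abelian level of order $m\geq 2$, is to verify that $C_P(S)_\bt/\G^\l$ is not merely a simplicial set but is genuinely the simplicial set of a simplicial complex $C^\l_P(S)$: no element of $\G^\l$ may fix a vertex and flip an incident edge, fold two edges incident to the same vertex, or identify the two endpoints of a single edge. Equivalently, the restriction of $\G^\l$ to each Farey subgraph $F(S')$, via the natural homomorphism to $\G(S')$ with $S'\in\{S_{1,1},S_{0,4}\}$, must land in a torsion-free subgroup whose action on the Farey tessellation of $\ol\H$ is free on vertices and edges. Here I would use that the hypothesis implies that this restriction lands in an abelian level of $\G(S')$ of order $m\geq 2$, hence in the principal congruence subgroup $\G(m)\subset\SL_2(\Z)$ (resp.\ in the analogous subgroup of $\PSL_2(\Z)\ltimes K_4$), which is torsion-free and acts freely on the cells of the Farey tessellation. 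This also ensures representability of $\cF^\l(S)$ so that the coarse moduli space coincides with the stack, making the identification of triangulations unambiguous.

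Finally I would check compatibility of the gluing: two Farey subgraphs $F(S_1')$ and $F(S_2')$ of $C_P(S)$ share a vertex precisely when the associated submaximal multicurves $\s_1',\s_2'$ have a common extension to a maximal multicurve, which corresponds exactly to the $0$-dimensional boundary stratum at the intersection of the corresponding components of $\cF^\l(S)$. Passing to the quotient by $\G^\l$, this gives that the assembly of the per-component Farey triangulations described above is precisely the simplicial structure of $C^\l_P(S)$, completing the proof.
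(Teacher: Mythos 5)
Your overall route is the same as the paper's: use the Bers bordification picture of Section~\ref{geomintpants} to identify $|C_P(S)|$ with the $1$-skeleton of the natural triangulation of the preimage of $\cF(S)$, and then descend stratum by stratum to the quotient by $\G^\l$, reducing everything to the action of a suitable subgroup of $\PSL_2(\Z)$ on the Farey tessellation of $\ol{\H}$. The genuine gap is at the one step that carries all the content: you assert that ``the hypothesis implies that this restriction lands in an abelian level of $\G(S')$ of order $m\geq 2$'', i.e.\ that for a submaximal simplex $\s$ the image $\bar\G^\l_\s$ of the stabilizer $\G^\l_\s$ in $\G(S\ssm\s)$ lies in an abelian level of order $m\geq 2$ of the modular-dimension-one component. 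This is not a formal consequence of $\G^\l\subseteq\G(m)$. When the positive-dimensional component $S'$ is of type $(1,1)$ one can indeed restrict the action on $H_1(\ol{S},\Z/m)$ to the symplectic summand spanned by the handle, but when $S'$ is of type $(0,4)$ the closed surface $\ol{S'}$ is a sphere, $H_1(\ol{S'},\Z/m)=0$, and triviality of the action on $H_1(\ol{S},\Z/m)$ gives no direct constraint on the image of $\G^\l_\s$ in $\G_{0,[4]}$ (equivalently, on the induced subgroup of $\PSL_2(\Z)$, which is what must act without inversions on the Farey tessellation). Controlling this boundary restriction of abelian levels is precisely the nontrivial input the paper takes from Theorem~5.2 of \cite{[B4]}; your proof needs either that citation or an argument replacing it, and as written it has neither.

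Two smaller points. First, your justification that the quotient is a simplicial complex invokes a subgroup ``which is torsion-free and acts freely on the cells of the Farey tessellation'': this is false for vertices, since the cusps have infinite parabolic stabilizers even in $\G(m)$; the correct (and sufficient) statement, which is what the paper uses, is that a subgroup contained in an abelian level of order $m\geq 2$ acts on the Farey triangulation without inversions, so that the quotient of the triangulation of $\ol{\H}$ is again a triangulation of the quotient surface. Second, the claim that the hypothesis ``ensures representability of $\cF^\l(S)$'' is not correct for $m=2$ (in genus $2$ the hyperelliptic involution acts as $-\mathrm{id}$ on homology and lies in the abelian level of order $2$); this is harmless here because the proposition is stated for the coarse moduli space of $\cF^\l(S)_\C$, but you should not rely on representability in the argument.
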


\begin{proof}A subgroup $H$ of $\SL_2(\Z)$ contained in an abelian level of order $m\geq 2$
acts without inversions on the Farey triangulation $F(S')$ of $\ol{\H}(S')$ and its quotient $F(S')/H$ is a triangulation of the closed surface $\ol{\H}(S')/H$.
From Theorem~5.2 in \cite{[B4]}, it follows that, for $\s$ a submaximal simplex of $C(S)$, the image $\bar\G^\l_\s$ of the stabilizer $\G^\l_\s$ in the 
mapping class group $\G(S\ssm\s)$ is contained in an abelian level of order $m\geq 2$. Since $\G^\l_\s$ is also the stabilizer of the associated closed
stratum $\ol{\cT}(S\ssm\s)$ and $\G^\l_\s$ acts on it through its quotient $\bar\G^\l_\s$, the conclusion follows.
\end{proof}

From Proposition~\ref{triangulation}, we immediately deduce the nontrivial fact that, for $\G^\l$ as above, the finite quotient $C_P^\l(S)_\bt$ is in fact 
the simplicial set associated to an abstract simplicial complex $C_P^\l(S)$ and that the same is true for the simplicial profinite set $\kC_P(S)_\bt$. 
We denote by $\kC_P(S)$ the abstract simplicial profinite complex whose associated simplicial profinite set is $\kC_P(S)_\bt$ and henceforth this 
will be the object to which we refer when we talk about the procongruence pants graph. 

Let $|\kC_P(S)|$ be the geometric realization of $\kC_P(S)$ and $\dd\ol{\M}(S):=\varprojlim_{\l\in\Lambda}\dd\bM(S)^\l$ be the DM boundary 
of the inverse limit $\ol{\M}(S):=\varprojlim_{\l\in\Lambda}\bM(S)^\l$ of all geometric 
level structures over $\bM(S)$. We then see that the continuous injective map 
$\Psi\co |C_P(S)|\hookra\dd\ol{\cT}(S)$, defined above, extends to a continuous injective map 
\[\check\Psi\co |\kC_P(S)|\hookra\dd\ol{\M}(S)_\C.\]

The irreducible closed strata of $\dd\ol{\M}(S)$ are parameterized by the simplices of the curve complex $\kC(S)$. For $\s\in\kC(S)$, let us then denote
by $\Delta_\s$ the corresponding irreducible closed stratum of $\dd\ol{\M}(S)$. In particular, the $1$-dimensional strata are parameterized by 
$(d(S)-2)$-simplices of $\kC(S)$ and, for $\s\in \kC(S)_{d(S)-2}$, the intersection $\check\Psi(|\kC_P(S)|)\cap(\Delta_\s)_\C$ is the $1$-skeleton 
of a triangulation of the complex stratum $(\Delta_\s)_\C$. Let us denote by $\wh{F}_\s$ the subgraph of $\kC_P(S)$ whose geometric realization is 
$\check\Psi(|\kC_P(S)|)\cap(\Delta_\s)_\C$.

\begin{definition}\label{profarey}
For $\s\in \kC(S)_{d(S)-2}$, we call $\wh{F}_\s$ the \emph{profinite Farey subgraph of $\kC_P(S)$ associated to $\s$}. 
It is clear that, for $\s\in \kC(S)_{d(S)-2}$, the profinite Farey subgraphs $\wh{F}_\s$ cover the procongruence pants graph $\kC_P(S)$. 
\end{definition}

In Section~\ref{discrec}, we denoted by $F_\s$ the Farey subgraph of $C_P(S)$ associated to a simplex $\s\in C(S)_{d(S)-2}$ and noticed that all 
Farey subgraphs of the pants graph $C_P(S)$ arise uniquely in this way. It is then clear that the closure of $F_\s$ in $\kC_P(S)$ is the 
profinite Farey subgraph $\wh{F}_\s$. 

\begin{remark}
Here, the "hat" notation is suggestive of the nontrivial fact that, by the subgroup congruence property in genus $\leq 2$, 
a set  $\{H^\l\}_{\l\in\Lambda}$ of finite index subgroups of $\SL_2(\Z)$ such that $\wh{F}_\s=\varprojlim_{\l\in\Lambda}F_\s/H^\l$ 
forms a basis of neighborhoods of the identity for the profinite topology. 
\end{remark}

\subsection{Relation with the procongruence curve complex}
In Section~\ref{rigiditycurvecomplex}, we saw how the $1$-skeleton of the curve complex and then the curve complex itself could be recovered
from the pants graph. This was the basis of Margalit rigidity Theorem~\ref{rigiditypants}. The same turns out to be true in the procongruence
setting thanks to the results of the previous sections. As in the topological case, the intermediary between the two objects is the dual graph
$\kC^\ast(S)$ of the procongruence curve complex $\kC(S)$. This is naturally an abstract simplicial profinite complex since the profinite topology
on the set of facets of $\kC(S)$ (i.e.\ vertices of $\kC^\ast(S)$) induces a profinite topology also on the set of edges of $\kC^\ast(S)$. 
In what follows, we regard the dual graph $\kC^\ast(S)$ of $\kC(S)$ as a $1$-dimensional abstract simplicial profinite complex.
We then have:

\begin{lemma}\label{produalgraph}The procongruence curve complex $\kC(S)$ can be reconstructed from its dual graph $\kC^\ast(S)$.
In particular, there is a natural isomorphism $\Aut(\kC^\ast(S))\cong\Aut(\kC(S))$.
\end{lemma}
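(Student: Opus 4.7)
The plan is to adapt the topological argument (Lemma~\ref{proofrigiditypants} combined with Lemma~\ref{maxcomplete}) to the procongruence setting. As in the topological case, the two crucial ingredients are that $\kC(S)$ is a flag complex, so that it is determined by its $1$-skeleton $\kC(S)^{(1)}$, and that $\kC(S)^{(1)}$ can in turn be recovered purely combinatorially from the dual graph $\kC^\ast(S)$.

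The flag property of $\kC(S)=\kC_\cL(S)$ was already noted in Section~\ref{section:isomorphism} as a consequence of the corresponding fact for $C(S)$, and $\kC(S)$ is pure of dimension $d(S)-1$ because $C(S)$ is pure and embeds with dense image via $C(S)_\bt\to\kC(S)_\bt$. Consequently, a continuous automorphism of $\kC(S)^{(1)}$ which preserves the property of spanning a simplex of $\kC(S)$ extends uniquely and continuously to an automorphism of $\kC(S)$, giving a natural isomorphism $\Aut(\kC(S))\cong\Aut(\kC(S)^{(1)})$.

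For the reconstruction of $\kC(S)^{(1)}$ from $\kC^\ast(S)$, I would run the proof of Lemma~\ref{maxcomplete} verbatim with $X=\kC(S)$ and $d=d(S)-1$. The hypothesis that each $(d-1)$-simplex is contained in at least $d+3$ facets is amply verified: by the discussion in Section~\ref{geomintpants}, the facets of $\kC(S)$ containing a fixed $(d(S)-2)$-simplex $\s$ are exactly the vertices of the profinite Farey subgraph $\wh{F}_\s$, which is infinite (being the closure in $\kC_P(S)$ of the infinite Farey subgraph $F_\s$). The lemma then identifies the $(d-1)$-simplices of $\kC(S)$ with the maximal complete subgraphs of $\kC^\ast(S)$ on at least $d+3$ vertices, and the vertices of $\kC(S)$ with pairs consisting of a vertex of $\kC^\ast(S)$ and such a subgraph through it; two reconstructed vertices are adjacent iff they lie in a common facet. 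This recovers $\kC(S)^{(1)}$, and hence $\kC(S)$, functorially in $\kC^\ast(S)$.

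The main obstacle I anticipate is the continuity aspect: to conclude that $\Aut(\kC^\ast(S))\cong\Aut(\kC(S))$ holds as profinite groups (both sides existing by Proposition~\ref{autprofinite}), the reconstruction and its inverse must be continuous for the profinite topology. This should follow because the vertex set of $\kC^\ast(S)$ is, by definition of the dual graph as a $1$-dimensional abstract simplicial profinite complex, the profinite set $\kC(S)_{d(S)-1}$; the condition for a finite subset of $\kC^\ast(S)_0$ to span a complete subgraph is closed in $\cP_{k+1}(\kC^\ast(S)_0)$; and the maximality of such a subgraph can be checked compatibly at the level of finite quotients by geometric levels. The simplex sets of $\kC(S)$ are thus obtained as continuous images of closed subsets built functorially from $\kC^\ast(S)$, and the naturality of the construction in both directions yields the desired topological isomorphism of profinite groups.
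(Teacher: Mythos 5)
Your proposal is correct and follows essentially the same route as the paper: apply Lemma~\ref{maxcomplete} to recover the $1$-skeleton from $\kC^\ast(S)$, use the flag property of $\kC(S)$ (Theorem~\ref{isomorphism}) to recover the whole complex, and check that the profinite topologies on vertices and facets determine one another so the identification $\Aut(\kC^\ast(S))\cong\Aut(\kC(S))$ is one of profinite groups. Your explicit verification of the ``at least $d+3$ facets'' hypothesis via the (infinite) profinite Farey subgraphs is a detail the paper leaves implicit, but it does not change the argument.
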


\begin{proof}By Lemma~\ref{maxcomplete}, we can reconstruct the $1$-skeleton $\kC^{(1)}(S)$ of $\kC(S)$ from $\kC^\ast(S)$ as an abstract
simplicial complex. Therefore, since the profinite topologies on the sets of vertices and facets of $\kC(S)$ determine reciprocally, 
we can recover $\kC^{(1)}(S)$ from $\kC^\ast(S)$ as an abstract simplicial profinite complex. By Theorem~\ref{isomorphism},
$\kC(S)$ is a flag complex and so it can be recovered (again as an abstract simplicial profinite complex) from its $1$-skeleton and then 
from its dual graph $\kC^\ast(S)$. The last statement of the lemma also follows.
\end{proof}

The next step is to reconstruct the dual graph $\kC^\ast(S)$ of $\kC(S)$ from the procongruence pants graph $\kC_P(S)$:

\begin{lemma}\label{edgespantsgraph}\leavevmode
\begin{enumerate}
\item Two vertices of $\kC_P(S)$ are joined by an edge only if they have in common exactly $d(S)-1$ profinite simple closed curves. Therefore,
an edge of $\kC_P(S)$ is contained in a unique profinite Farey subgraph $\wh{F}_\s$ and the same statement holds for a triangle in $\kC_P(S)$.
\item The profinite Farey subgraph $\wh{F}_e$, containing the edge $e$ of $\kC_P(S)$, is intrinsically obtained by the following procedure: 
intersect the stars $\Star_{v_0}$ and $\Star_{v_1}$ in $\kC_P(S)$ of the vertices $v_0,v_1$ of $e$; take the full subcomplex of $\kC_P(S)$ 
generated by $e\cup(\Star_{v_0}\cap \Star_{v_1})$; iterate the procedure on all the new edges thus obtained and continue this way; 
the union of all these subcomplexes is then a dense subgraph of the profinite Farey subgraph $\wh{F}_e$ associated to the edge $e$.
\item Two vertices of $\kC^\ast(S)$ are joined by an edge if and only if they have in common exactly $d(S)-1$ profinite simple closed curves.
Therefore, there is a natural embedding $\kC_P(S)\subset\kC^\ast(S)$ and $\kC^\ast(S)$ is obtained from the procongruence pants graph 
replacing each profinite Farey subgraph $\wh{F}_\s$, for $\s\in \kC(S)_{d(S)-2}$, with the (profinite) complete subgraph on the vertex set of $\wh{F}_\s$.
\end{enumerate}
\end{lemma}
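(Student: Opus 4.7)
The overall strategy is to exploit the geometric realization of $\kC_P(S)$ as the $1$-skeleton of a natural triangulation of the Fulton curves provided by Proposition~\ref{triangulation}, together with the parameterization of $1$-dimensional strata of $\dd\ol\cM(S)$ by $(d(S)-2)$-simplices of $\kC(S)$. For (i), I would work first at each finite level $\G^\l$ satisfying the hypothesis of Proposition~\ref{triangulation}: the quotient $C_P^\l(S)$ is the $1$-skeleton of a triangulation of $\cF^\l(S)_\C$, whose irreducible components correspond bijectively to the $\G^\l$-orbits of $(d(S)-2)$-simplices of $\kC(S)$, so an edge of $C_P^\l(S)$ lies on a unique component and its endpoints share a common $(d(S)-1)$-multicurve. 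Passing to the inverse limit, each edge $e$ of $\kC_P(S)$ determines a unique profinite $(d(S)-2)$-simplex $\s$ whose $d(S)-1$ profinite simple closed curves are shared by its endpoints; since two distinct pants decompositions cannot share all $d(S)$ of their curves, the common curves number exactly $d(S)-1$, and the edge $e$ lies in a unique profinite Farey subgraph $\wh F_\s$. The same argument treats triangles.

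For (ii), the topological version of the iterative procedure carried out in Lemma~\ref{proofrigiditypants} already recovers the entire topological Farey subgraph $F_e = F_\s \subset C_P(S)$ starting from $e$; in particular, the procedure done in $\kC_P(S)$ produces a subgraph containing $F_\s$. The step where I expect the main obstacle is showing that this procedure, performed in $\kC_P(S)$, cannot escape $\wh F_\s$. Given $v_0, v_1 \in \wh F_\s$, I would write $v_i = \s \cup \{\alpha_i\}$ where $\alpha_0$ and $\alpha_1$ have nontrivial geometric intersection on the modular-dimension-$1$ component of $S\ssm\s$; for any $w \in \Star_{v_0} \cap \Star_{v_1}$, part (i) gives that $w$ shares $d(S)-1$ curves with each $v_i$, and if $w$ did not contain $\s$ then $w$ would be forced to contain both $\alpha_0$ and $\alpha_1$ as disjoint curves of a pants decomposition, contradicting their nontrivial intersection. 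Hence every vertex produced by the iteration lies in $\wh F_\s$, and since the resulting subgraph already contains $F_\s$, which is dense in $\wh F_\s$ by construction (as the closure of $F_\s$ in $\kC_P(S)$), the union is a dense subgraph of $\wh F_e$, as claimed.

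For (iii), the edge characterization of $\kC^\ast(S)$ is immediate from Definition~\ref{dualgraph} applied to the simplicial profinite complex $\kC(S)$ of dimension $d(S)-1$: two facets are adjacent in the dual graph iff their intersection is $(d(S)-2)$-dimensional, iff they share exactly $d(S)-1$ profinite simple closed curves. Combined with (i), this gives the embedding $\kC_P(S) \hookrightarrow \kC^\ast(S)$, since both graphs share the same vertex set and each edge of $\kC_P(S)$ is an edge of $\kC^\ast(S)$. For the replacement statement, the vertex set of $\wh F_\s$ consists precisely of the facets of $\kC(S)$ containing $\s$, any two of which share $\s$ and are therefore adjacent in $\kC^\ast(S)$; conversely every edge of $\kC^\ast(S)$ determines a unique such $\s$ by the edge characterization. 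Hence $\kC^\ast(S)$ is obtained from $\kC_P(S)$ by completing each profinite Farey subgraph to a complete profinite graph on the same vertex set.
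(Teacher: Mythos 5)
Your proof is correct and follows essentially the same route as the paper's: part (i) is argued there directly from the covering of $\kC_P(S)$ by profinite Farey subgraphs (which comes from exactly the triangulations of the Fulton curves you invoke), part (ii) is dismissed as a formal consequence of (i), and part (iii) is proved just as you do from the definition of the dual graph. Your fuller argument for (ii) is a legitimate filling-in of that formal step, with the minor caveat that the key exclusion should be phrased group-theoretically or by $\kG(S)$-equivariance (the edge lies in the $\kG(S)$-orbit of a topological edge, whose two differing curves cannot span a simplex of $\kC(S)$ since their Dehn twists do not commute) rather than via the ``geometric intersection'' of profinite curves, which is not literally defined.
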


\begin{proof}(i): Every edge $e$ of $\kC_P(S)$ is contained in some profinite Farey subgraph $\wh{F}_\s$. The vertices $v_0$ and $v_1$ of $e$
then contain the set of $d(S)-1$ profinite simple closed curves $\s$. Since they are distinct and 
each one consists of a set of $d(S)$ profinite simple closed curves, they should have in common exactly the elements of $\s$. This also shows that the only profinite Farey subgraph of $\kC_P(S)$ which contains $e$ is $\wh{F}_\s$.
The same statement is true for any triangle of $\kC_P(S)$ which has $e$ for edge.
\smallskip

\noindent
(ii): This is a formal consequence of the previous item.
\smallskip

\noindent
(iii): The first statement is essentially the definition of the dual graph $\kC^\ast(S)$. The following statements follows from the first one, the 
previous items of the lemma and the parameterization of profinite Farey subgraphs of $\kC_P(S)$ by $(d(S)-1)$-simplices of $\kC(S)$.
\end{proof}

An immediate consequence of Lemma~\ref{edgespantsgraph}, Lemma~\ref{produalgraph} and Theorem~\ref{typepreservation} is then:

\begin{theorem}\label{autinjection}Every continuous automorphism of the procongruence pants graph $\kC_P(S)$ sends a profinite Farey subgraph
to another profinite Farey subgraph. Therefore there is a natural continuous monomorphism $\Aut(\kC_P(S))\hookra\Aut(\kC(S))$.
In particular, the continuous automorphisms of the procongruence pants graph  preserve the topological types of its vertices.
\end{theorem}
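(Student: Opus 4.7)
The plan is to extract an automorphism of $\kC(S)$ from each continuous automorphism of $\kC_P(S)$ by running the reconstruction chain $\kC_P(S)\rightsquigarrow\kC^\ast(S)\rightsquigarrow\kC(S)$ provided by Lemmas~\ref{edgespantsgraph} and~\ref{produalgraph}. First I would observe that the description of the profinite Farey subgraph $\wh F_e$ given by Lemma~\ref{edgespantsgraph}(ii) is purely intrinsic: it depends only on iterating the star operation and the "full subcomplex generated by" operation starting from $e$, followed by closure in the profinite topology on $\kC_P(S)$. A continuous automorphism $\phi$ of $\kC_P(S)$ commutes with all of these operations, and in particular with the closure, so it must carry $\wh F_e$ onto $\wh F_{\phi(e)}$. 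Thus $\phi$ permutes the family $\{\wh F_\s\}_{\s\in\kC(S)_{d(S)-2}}$ of profinite Farey subgraphs.

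Second, this permutation allows $\phi$ to be extended canonically to a continuous automorphism $\phi^\ast$ of the dual graph $\kC^\ast(S)$: by Lemma~\ref{edgespantsgraph}(iii), two vertices $v_0,v_1$ of $\kC^\ast(S)$ are joined by an edge precisely when they both lie in the vertex set of some profinite Farey subgraph $\wh F_\s$, a condition preserved by $\phi$. Composing with the natural continuous isomorphism $\Aut(\kC^\ast(S))\cong\Aut(\kC(S))$ of Lemma~\ref{produalgraph} yields a continuous homomorphism
\[\Theta\co\Aut(\kC_P(S))\longrightarrow\Aut(\kC(S)).\]
For injectivity, suppose $\Theta(\phi)=\mathrm{id}$. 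Since the vertices of $\kC_P(S)$ are by definition the facets of $\kC(S)$, they are fixed by $\Theta(\phi)$, hence also by $\phi$; as the edges of $\kC_P(S)$ are determined by their endpoints, we conclude $\phi=\mathrm{id}$. Continuity of $\Theta$ is immediate from Proposition~\ref{autprofinite}, since every step above is defined by continuous simplicial operations.

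Finally, the topological-type statement for vertices of $\kC_P(S)$ reduces, through $\Theta$, to the type-preservation statement for facets of $\kC(S)$, which is exactly the content of Theorem~\ref{typepreservation}. The main obstacle I foresee is the exceptional case $S=S_{1,2}$, where Theorem~\ref{typepreservation} only yields type preservation under the additional hypothesis that separating profinite simple closed curves be preserved. I would handle this by arguing directly inside $\kC_P(S_{1,2})$: the two topological types of pants decomposition (two nonseparating curves versus one separating plus one nonseparating) can be distinguished by local combinatorial invariants of $\kC_P(S_{1,2})$, for instance by the topological types of the two one-dimensional pieces of $S_{1,2}$ cut out by the two submaximal multicurves meeting at a given vertex, which translate into the types of the two profinite Farey subgraphs incident to that vertex. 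Since any continuous automorphism of $\kC_P(S_{1,2})$ permutes Farey subgraphs and their incidence pattern at each vertex, these invariants are preserved, forcing $\Theta(\phi)$ to preserve separating curves, and Theorem~\ref{typepreservation} then applies.
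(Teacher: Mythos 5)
Your main argument is the paper's own: the intrinsic, iterative description of $\wh{F}_e$ in (ii) of Lemma~\ref{edgespantsgraph} (together with continuity, to pass to closures) shows that any $\phi\in\Aut(\kC_P(S))$ permutes the profinite Farey subgraphs; (iii) of that lemma then extends $\phi$ to the dual graph $\kC^\ast(S)$; Lemma~\ref{produalgraph} identifies $\Aut(\kC^\ast(S))$ with $\Aut(\kC(S))$; injectivity is clear on vertices; and type preservation is pulled back from Theorem~\ref{typepreservation}. Up to this point the proposal is correct and coincides with the proof in the paper, which indeed consists of exactly this citation chain.

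The gap is in your treatment of the exceptional case $S=S_{1,2}$. The invariant you propose -- the ``types'' of the two profinite Farey subgraphs incident to a vertex, supposedly reflecting whether the positive-dimensional piece of the cut surface is of type $(0,4)$ or $(1,1)$ -- is not an invariant of the graph at all: as recalled in Section~\ref{geomintpants} and used in Proposition~\ref{completepantsrigidity1}, every profinite Farey subgraph $\wh{F}_\s$ is abstractly isomorphic to the procongruence completion $\wh{F}$ of the Farey graph, because both $\G_{1,1}$ and $\G_{0,[4]}$ act on $F$ through $\PSL_2(\Z)$ and, by the congruence subgroup property in genus $\leq 2$, the relevant quotients are cofinal among all finite-index quotients. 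Likewise the incidence pattern is the same for both kinds of vertices (each vertex of $\kC_P(S_{1,2})$ lies on exactly two Farey subgraphs), so an automorphism permuting Farey subgraphs and incidences sees no difference between a vertex with two nonseparating curves and one containing a separating curve; your conclusion that $\Theta(\phi)$ preserves separating curves does not follow. This is not a cosmetic point: the paper itself does not know unconditional type preservation for $\kC_P(S_{1,2})$ -- that is precisely why Theorem~\ref{completepantsrigidity} replaces $\Aut(\kC_P(S_{1,2}))$ by the subgroup of automorphisms preserving separating curves, a caveat your argument, if correct, would eliminate. Margalit's special topological argument for $(1,2)$ uses finer combinatorial structure of the pants complex than the Farey decomposition and its incidences, and no procongruence analogue of it is established here. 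So either carry the $S_{1,2}$ proviso through the ``in particular'' clause, as the rest of the paper effectively does, or supply a genuinely new argument for that case.
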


\section{Automorphisms of the procongruence mapping class group}\label{automorphisms}
\subsection{The $\ast$-condition}As we saw in Section~\ref{rigiditycurvecomplex}, a basic property of the mapping class group $\G(S)$
is that any automorphism of this group preserves the set of stabilizers for the action of $\G(S)$ on the curve complex $C(S)$. 
This property allows to define a natural representation $\Aut(\G(S))\to\Aut(C(S))$. It is not known whether a similar property holds for the procongruence 
mapping class group $\kG(S)$. For this reason, in order to be able to define an action on the procongruence curve complex, we need to restrict to
elements of $\Aut(\kG(S))$ which satisfy a similar property:

\begin{definition}\label{inertiapreservingdef}For $\kG^\l$ an open subgroup of $\kG(S)$, let $\Aut^\ast(\kG^\l)$ be the closed subgroup 
of $\Aut(\kG^\l)$ consisting of those automorphisms which preserve the set of subgroups $\{\kG^\l_\g\}_{\g\in\hL(S)_0}$ of $\kG^\l$. 
\end{definition}

\begin{proposition}\label{inertiapreservingprop}There is a natural continuous homomorphism: 
\[\check\Theta^\l\co\Aut^\ast(\kG^\l)\to\Aut(\kC(S)).\]
\end{proposition}

\begin{proof}The natural injective and continuous $\kG^\l$-equivariant map $\cG^\l_0\co\kC(S)_0\hookrightarrow\cG(\kG(S))$, defined in
Remark~\ref{grouptheoretic}, shows that an element $f\in\Aut^\ast(\kG^\l)$ induces a continuous action on the profinite set of $0$-simplices of 
$\kC(S)$. Since $\kC(S)$ is a flag complex, the proposition follows if we prove that, for $\{\g_0,\g_1\}\in\kC(S)_1$, we have $\{f(\g_0),f(\g_1)\}\in\kC(S)_1$. 

This immediately follows from Corollary~\ref{centralizers multitwists}, which 
implies that $\{\g_0,\g_1\}\in\kC(S)_1$ if and only if the centers of $\kG_{\g_0}^\l$ and $\kG_{\g_1}^\l$ commute and this condition is obviously 
preserved by an automorphism of $\kG^\l$.
\end{proof}
 
\begin{theorem}\label{faithfulness}\leavevmode
\begin{enumerate}
\item The natural action of $\kG(S)$ on $\kC(S)$ and $\kC_P(S)$ factors through:
\begin{enumerate}
\item for $S\neq S_{0,4}$, monomorphisms: 
\[\kG(S)/Z(\kG(S))\hookra\Aut(\kC(S))\hspace{0.4cm}\mbox{and}\hspace{0.4cm}\kG(S)/Z(\kG(S))\hookra\Aut(\kC_P(S));\] 
\item for $S= S_{0,4}$, monomorphisms: 
\[\kG_{0,[4]}/K_4\hookra\Aut(\kC(S))\hspace{0.4cm}\mbox{and}\hspace{0.4cm}\kG_{0,[4]}/K_4\hookra\Aut(\kC_P(S)),\] 
where $K_4\cong\{\pm 1\}\times\{\pm 1\}$ is the Klein subgroup of $\G_{0,[4]}$. 
\end{enumerate}
\item  Let $\kG^\l$ be an open subgroup of $\kG(S)$. Then, we have:
\begin{enumerate}
\item for $S\neq S_{0,4}$, the kernel of the natural homomorphism $\check\Theta^\l$ is naturally isomorphic to the group 
$\Hom(\kG^\l/Z(\kG^\l),Z(\kG^\l))$;
\item for $S= S_{0,4}$, there is a natural exact sequence:
\[1\to\Der(\kG^\l/(\kG^\l\cap K_4),\kG^\l\cap K_4)\to\ker\check\Theta^\l\to\Aut(\kG^\l\cap K_4),\]
where $\Der(\kG^\l/(\kG^\l\cap K_4),\kG^\l\cap K_4)$ denotes the group of continuous derivations (crossed homomorphisms).
\end{enumerate}

\end{enumerate}
\end{theorem}

\begin{proof}(i): For all $f\in\kG(S)$ and a profinite Dehn twist $\tau_\g$, with $\g\in\hL(S)_0$, there is the identity $f\cdot\tau_\g\cdot f^{-1}=\tau_{f(\g)}$.
This implies that an element $f$ of $\kG(S)$ acts trivially on the curve complex $\kC(S)$ if and only if it centralizes all profinite Dehn twists and so
centralizes the pure procongruence mapping class group $\kPG(S)$.
By Theorem~\ref{slim}, this implies that, for $S\neq S_{0,4}$, the element $f$ is in the center of $\kG(S)$ and, for $S= S_{0,4}$, that $f\in K_4$. 

We have thus proved the part of item (i) about homomorphisms to $\Aut(\kC(S))$. The part about homomorphisms to $\Aut(\kC_P(S))$ 
then follows from Theorem~\ref{autinjection}.
\smallskip

\noindent
(ii): Assume first that $Z(\kG^\l)=\{1\}$ and, if $S=S_{0,4}$, that $\kG^\l\cap K_4=\{1\}$. 
By the previous item, if $\kG^\l$ satisfies these hypotheses, 
the natural homomorphism $\Inn(\kG^\l)\to\Aut(\kC(S))$ is injective. By Lemma~\ref{grouplemma}, 
its extension $\Aut^\ast(\kG^\l)\to\Aut(\kC(S))$ is then also injective.

Consider now the general case. By Theorem~\ref{slim}, the center of $\kG^\l$ can be nontrivial only for $S=S_{g,n}$ of type $(2,0)$, $(1,1)$ 
or $(1,2)$. When this happens, the center of $\kG^\l$ is generated by the hyperelliptic involution $\iota$ and the quotient $\kG^\l/\langle\iota\rangle$ 
identifies with a center-free open subgroup of a genus $0$ procongruence mapping class group. Moreover, there is
an identification of $\kC(S)$ with the respective procongruence curve complex. From the previous item, it then follows that the homomorphism 
$\Aut^\ast(\kG^\l)\to\Aut(\kC(S))$ factors through the injective homomorphism $\Aut^\ast(\kG^\l/Z(\kG^\l))\hookra\Aut(\kC(S))$.
For $S=S_{0,4}$, we have that $\G(S)/K_4\cong\PSL_2(\Z)$ and the latter group acts faithfully on the set $C(S)$. 
Hence, $\kG(S)/K_4\cong\wh{\PSL}_2(\Z)$ acts faithfully on $\kC(S)$. 

Therefore, an automorphism $f\in\Aut^\ast(\kG^\l)$ acts trivially on $\kC(S)$ if and only if it induces the trivial automorphism on $\kG^\l/Z(\kG^\l)$ 
and $\kG^\l/\kG^\l\cap K_4$, respectively. The conclusion now follows
from a profinite version of Lemma~\ref{kernelautbis}:

\begin{lemma}\label{kernelautpro}Let $G$ be a profinite group and $\Aut(G)_A$ the subgroup of elements of $\Aut(G)$ which preserve a closed 
normal abelian subgroup $A$ of $G$. Then, there is a natural exact sequence:
\[1\to\Der(G/A,A)\to\Aut(G)_A\to\Aut(G/A)\times\Aut(A),\]
where the action of $G/A$ on $A$ is induced by the inner action of $G$ on $A$.
If, moreover, the subgroup $A$ is central, there is also an exact sequence:
\[1\to H^1(G/A,A)\to\Out(G)_A\to\Out(G/A)\times\Aut(A).\]
\end{lemma}

\begin{proof}This is a weak version of Lemma~1.5.5 in \cite{[N]} and its proof is identical to the proof of Lemma~\ref{kernelautbis}, 
except that all homomorphisms and maps considered there are to be taken continuous with respect to the profinite topologies of the
groups involved.
\end{proof}
\end{proof}

\section{Automorphisms of the procongruence pants complex}\label{prorigidity}
We now turn to the study of the automorphism group of the procongruence pants graph $\kC_P(S)$, 
where $S$ is hyperbolic and connected. First we recall that, by (i) of Theorem~\ref{faithfulness}, 
the natural representation of $\kG(S)\to\Aut(\kC_P(S))$ factors through a homomorphism: 
\[\Inn(\kG(S))\to \Aut(\kC_P(S)),\]
which is injective for $S\neq S_{0,4}$ and whose kernel, for $S= S_{0,4}$, is the Klein subgroup of $\G_{0,[4]}$.
In analogy with what happens in the topological setting, the image of this map has finite index in the codomain:

\begin{theorem}\label{completepantsrigidity}
For $S\neq S_{1,2}$ a connected hyperbolic surface such that $d(S)>1$, there is an exact sequence:
\[1\to\Inn(\kG(S))\to \Aut(\kC_P(S))\to \prod_{O(S)} \{\pm 1\},\] 
where $O(S)$ is the finite set of $\G(S)$-orbits of $(d(S)-1)$-multicurves. 
For $S$ of type $(1,2)$, the group $\Aut(\kC_P(S_{1,2}))$ must be replaced with the subgroup of 
those automorphisms preserving the set of separating curves.
\end{theorem}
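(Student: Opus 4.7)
The plan is to reduce Theorem~\ref{completepantsrigidity} to the one-dimensional (Farey) case via the cover of $\kC_P(S)$ by the profinite Farey subgraphs $\{\wh F_\s\}_{\s\in\kC(S)_{d(S)-2}}$ introduced in Definition~\ref{profarey}. Given $\phi\in\Aut(\kC_P(S))$, Theorem~\ref{autinjection} produces an induced continuous, type-preserving automorphism $\phi^{\ast}\in\Aut(\kC(S))$ (type preservation by Theorem~\ref{typepreservation}), and Lemma~\ref{edgespantsgraph} tells us that $\phi$ sends $\wh F_\s$ onto $\wh F_{\phi^{\ast}(\s)}$. Each $\wh F_\s$ is canonically isomorphic to the profinite Farey graph $\wh F$ attached to the unique modular dimension $1$ component of $S\ssm\s$, so the restriction of $\phi$ to $\wh F_\s$ falls in the one-dimensional setting.

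First I would dispose of the base case. For $S'$ of modular dimension $1$ (i.e.\ $S_{1,1}$ or $S_{0,4}$), I identify $\kC_P(S')$ with $\wh F$ and establish
\[1\to \Inn(\kG(S'))\to \Aut(\wh F)\to \{\pm 1\}\to 1.\]
The congruence subgroup property in genus $\leq 2$ collapses $\kG(S')$ with $\hG(S')$, and Proposition~\ref{triangulation} identifies $\wh F$ with the inverse limit of finite triangulated Fulton curves $\cF^\l(S')_\C$. Analyzing automorphisms of these triangulated level curves and passing to the limit, the only residual ambiguity beyond $\Inn(\kG(S'))$ is the orientation-reversing involution, producing the $\{\pm 1\}$-quotient.

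For the general case, the restriction of $\phi$ to $\wh F_\s$ is an isomorphism of profinite Farey graphs onto $\wh F_{\phi^{\ast}(\s)}$; after correcting by the stabilizer of $\s$ in $\kG(S)$, the base case attaches to this restriction a sign $\epsilon_\s\in\{\pm 1\}$. The next step is to verify that $\epsilon_\s$ depends only on the class $[\s]\in O(S)$: transitivity of the $\kG(S)$-action on each orbit and the fact that inner conjugations yield trivial signs make this essentially automatic. This produces the desired homomorphism $\Aut(\kC_P(S))\to\prod_{O(S)}\{\pm 1\}$.

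The hard part, and what I expect to be the main obstacle, is identifying the kernel with $\Inn(\kG(S))$. Assume all $\epsilon_\s=1$. The scheme is: compose with a suitable element of $\Inn(\kG(S))$ so that $\phi^{\ast}$ fixes a chosen facet $v_0\in\kC(S)_{d(S)-1}$; the base case then gives that $\phi$ restricts to the identity on every Farey subgraph containing $v_0$. Because the overlap structure of the Farey subgraphs, described by Lemma~\ref{edgespantsgraph}(ii), determines any extension of the identity across neighboring edges, $\phi$ must be the identity on the whole (connected) $\kC_P(S)$. Throughout, one uses the group-theoretic realization of $\kC_\cL(S)=\kC(S)$ from Remark~\ref{grouptheoretic} together with Corollary~\ref{centralizers multitwists} to pin down the required element of $\Inn(\kG(S))$, and invokes Theorem~\ref{faithfulness} to guarantee its uniqueness modulo the center. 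The delicate technical point is ensuring that the per-Farey normalizations glue compatibly on higher overlaps--here the flag property of $\kC(S)$ (Theorem~\ref{isomorphism}) and the stabilizer calculation in Theorem~\ref{stabmulticurves} must be used to rule out any hidden cocycle obstruction to assembling the local inner automorphisms into a single one.
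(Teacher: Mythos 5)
The main gap is in your identification of the kernel, which is the heart of the theorem. First, your propagation scheme contains a false step: after normalizing so that the induced automorphism of $\kC(S)$ fixes a facet $v_0$, it does \emph{not} follow from the one-dimensional case that $\phi$ restricts to the identity on the Farey subgraphs through $v_0$. An orientation-preserving automorphism of $\wh{F}$ fixing a vertex is far from trivial: by Proposition~\ref{completepantsrigidity1} these automorphisms form a copy of $\wh{\PSL}_2(\Z)$, and the stabilizer of a vertex there is an infinite (procyclic, parabolic) subgroup. Second, even if you normalized $\phi$ to be the identity on one whole profinite Farey subgraph, propagating across overlaps to conclude $\phi=\mathrm{id}$ would in effect require knowing that the induced automorphism of $\kC(S)$ is inner, i.e.\ a procongruence analogue of Ivanov's curve-complex rigidity --- precisely the problem the paper states is open and deliberately avoids. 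The paper's actual proof of the kernel statement (Lemma~\ref{basiclemma}) is geometric rather than combinatorial: an orientation-preserving $\phi$ is the inverse limit of maps $C_P^\mu(S)\to C_P^\l(S)$ between finite-level pants graphs; by the equilateral flat-structure argument each such map is induced by a holomorphic map of Fulton curves $\cF^\mu(S)\to\cF^\l(S)$ commuting with the projections to $\cF(S)$ (Lemma~\ref{holmaps}); these maps are then lifted through the Harvey and Bers bordifications by a fundamental-group argument, and Margalit's \emph{discrete} rigidity theorem (Theorem~\ref{rigiditypants}) applied to the lifted automorphism of $|C_P(S)|$ produces an element of $\G(S)/\G^\mu$ (Lemma~\ref{mainlemma}); the inverse limit of these elements is the required element of $\kG(S)$. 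The ``hidden cocycle obstruction'' you mention is not something one can rule out by the flag property and stabilizer computations alone; gluing the local inner automorphisms is exactly what fails without this holomorphic lifting.

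A secondary gap: the constancy of the sign $\epsilon_\s(\phi)$ on $\G(S)$-orbits is not ``essentially automatic.'' The natural orientation representation lands in $\prod_{\kC(S)_{d(S)-2}}\{\pm 1\}$, and to descend it to $\prod_{O(S)}\{\pm 1\}$ one must compare $\phi|_{\wh{F}_{f(\s)}}$ with $\phi|_{\wh{F}_\s}$ for $f\in\kG(S)$; this requires rewriting $\phi\circ\bar{f}$ as $\bar{f}'\circ\phi$ with $f'\in\kG(S)$, i.e.\ the normality of the image of $\kG(S)$ in $\Aut(\kC_P(S))$. In the paper this normality is a \emph{consequence} of the kernel identification of Lemma~\ref{basiclemma}, not an input, so your ordering (signs and orbit-constancy first, kernel afterwards) begs the question. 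The correct order is: prove $\kG(S)/Z\cong\Aut(\kC_P(S))^+$ for the full orientation representation, and only then deduce that the signs are constant on orbits.
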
  

Note that both mapping class groups $\G_{0,[4]}$ and $\G_{1,1}=\SL_2(\Z)$ act on the hyperbolic plane $\H$ through their common
quotient $\PSL_2(\Z)$, with finite kernel $Z$, respectively, the Klein subgroup $K_4$ and the center $\{\pm 1\}$.
For $d(S)=1$, there holds the following simpler and more precise form of Theorem~\ref{completepantsrigidity}:

\begin{proposition}\label{completepantsrigidity1}There is a natural short exact sequence:
\[1\to\wh{\PSL}_2(\Z)\to\Aut(\wh{F})\to\{\pm 1\}\to 1,\]
where $\wh{F}$ denotes the profinite Farey graph. 
\end{proposition}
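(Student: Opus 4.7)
\emph{Proof proposal.} The plan is to extend the classical identification $\Aut(F) = \PSL_2(\Z) \rtimes \{\pm 1\}$ to the procongruence setting, exploiting the fact that the Farey graph $F$ is the $1$-skeleton of the Farey triangulation of $\H$ and that the $\{\pm 1\}$-factor is generated by the orientation-reversing involution of $F$ induced, say, by complex conjugation on $\H$. Three steps are required: the construction of the closed embedding $\wh{\PSL}_2(\Z) \hookrightarrow \Aut(\wh{F})$, the definition of an orientation homomorphism $\epsilon \co \Aut(\wh{F}) \to \{\pm 1\}$, and the computation of its kernel.

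For the first step, I would invoke Theorem~\ref{faithfulness}(i) with $S = S_{1,1}$, so that $\kG(S) = \wh{\SL}_2(\Z)$, $Z(\kG(S)) = \{\pm 1\}$, and $\kC_P(S) = \wh{F}$: the action of $\wh{\PSL}_2(\Z) = \wh{\SL}_2(\Z)/\{\pm 1\}$ on $\wh{F}$ is faithful, yielding the claimed closed embedding. The orientation-reversing involution of $F$ extends continuously to an automorphism $\hat\iota$ of $\wh{F}$, which will realize surjectivity of $\epsilon$ once that map is defined.

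To define $\epsilon$, I first verify the profinite analogue of the ``exactly two triangles per edge'' property for $\wh{F}$: every edge of $\wh{F}$ lies in exactly two $3$-cycles. For $F$ this is classical; by Proposition~\ref{triangulation}, the same holds for every finite congruence quotient $F/\G^\l$, which is the $1$-skeleton of a triangulation of the Fulton curve $\cF^\l$; inverse limits preserve this property. The map sending an edge of $\wh{F}$ to the unordered pair of third-vertices of its two triangles is then continuous, and refining to ordered pairs yields a profinite double cover of $\wh{F}_1$. Since every finite congruence quotient is an orientable triangulated surface (the action of $\wh{\PSL}_2(\Z)$ being orientation-preserving), this double cover has two connected sheets, and the $\Aut(\wh{F})$-action on the set of sheets defines $\epsilon \co \Aut(\wh{F}) \to \{\pm 1\}$.

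For the kernel computation, let $\phi \in \ker\epsilon$. The transitivity of $\PSL_2(\Z)$ on oriented edges of $F$ together with density of $F$ in $\wh{F}$ yields transitivity of $\wh{\PSL}_2(\Z)$ on oriented edges of $\wh{F}$, so after composing with an appropriate element of $\wh{\PSL}_2(\Z)$ one may assume that $\phi$ fixes a given oriented edge $\vec e_0 = (v_0,v_1)$ with $v_0,v_1 \in F \subset \wh{F}$. The condition $\epsilon(\phi)=1$ forces $\phi$ not to swap the two triangles containing $\vec e_0$, so $\phi$ fixes the third vertex of each, both of which again lie in $F$. Iterating this developing step along newly fixed edges propagates the identity over every vertex of $F$; continuity and density of $F$ in $\wh{F}$ then give $\phi = \mathrm{id}$. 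The main obstacle is establishing the exactly-two-triangles-per-edge property in $\wh{F}$, which underlies both the definition of $\epsilon$ and the developing argument; this relies critically on the geometric interpretation of congruence quotients of $F$ furnished by Proposition~\ref{triangulation}.
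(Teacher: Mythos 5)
Your overall strategy (embed $\wh{\PSL}_2(\Z)$, cut out an orientation character $\epsilon$, kill its kernel by a developing argument) is attractive, but the step on which everything rests is not established: that every edge of $\wh{F}$ lies in exactly two $3$-cycles, i.e.\ that the endpoints of an edge of $\wh{F}$ have exactly two common neighbours. Proposition~\ref{triangulation} only tells you that each edge of a finite quotient $F^\l$ lies on exactly two \emph{faces} of the triangulation of $\bM^\l$; graph-theoretic $3$-cycles of $F^\l$ need not bound faces, the finite-level common-neighbour sets can be strictly larger than two, and the two face-apexes can even coincide (already at level $(2)$, where $F^{(2)}$ is a triangle). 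More seriously, $\wh{F}=\varprojlim_{\l}F/\G^\l$ is taken over \emph{all} finite-index normal subgroups contained in an abelian level, a system cofinal among all finite-index subgroups of $\PSL_2(\Z)$, so the classical congruence tower (where one can check the claim by hand via cusp classes of $\Gamma(N)$) is not cofinal and ``inverse limits preserve this property'' does not close the argument: the transition maps between the common-neighbour sets need not be eventually injective. What you actually need is a profinite rigidity statement in $\wh{\PSL}_2(\Z)$, essentially that $\ol{\langle T\rangle}\,S\,\ol{\langle T\rangle}\cap S\,\ol{\langle T\rangle}\,S\,\ol{\langle T\rangle}$ meets only the two discrete double-coset classes corresponding to the Farey apexes $\pm1$; nothing in the paper (or in your sketch) supplies this, and it is exactly the kind of local combinatorial control of $\wh{F}$ that the authors' proof is designed to avoid. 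Since both your definition of $\epsilon$ and the developing step use this property, the gap is central.

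The construction of $\epsilon$ has a second problem even granting the two-triangle property. A double cover of a profinite set has no meaningful ``two connected sheets'' (the base is totally disconnected, and such a cover admits many clopen splittings), and an orientation of the quotient surfaces does not single out an ordering of the two apexes of an \emph{unoriented} edge. To make $\Aut(\wh{F})$ act on the two-element set of orientations of the tower you must first know that an arbitrary continuous automorphism induces, at finite levels, nondegenerate simplicial maps of the triangulated surfaces (ramified coverings), so that a locally constant sign exists and is multiplicative; this is precisely the nontrivial content of the paper's proof, obtained there from the factorization $\pi_{\mu\nu}=(\phi^{-1})_{\l\nu}\circ\phi_{\mu\l}$ and then upgraded, via the equilateral piecewise-flat structure of Section~\ref{flatsurfaces}, to holomorphic or antiholomorphic maps $\bM^\mu\to\bM^\l$ which factor through automorphisms $\Phi_\mu$ of $\bM^\mu$, giving $\phi=\varprojlim\Phi_\mu$ directly. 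Finally, a small but real slip: complex conjugation fixes $\P^1(\Q)$ pointwise, hence induces the identity on $F$ and on every $F^\l$; the orientation-reversing generator of the $\{\pm1\}$-quotient must be a reflection in $\PGL_2(\Z)\smallsetminus\PSL_2(\Z)$, e.g.\ $z\mapsto-\bar{z}$, which acts on vertices by $x\mapsto -x$ and does swap the apexes of the edge $\{0,\infty\}$.
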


In contrast with Theorem~\ref{completepantsrigidity}, the proof of Proposition~\ref{completepantsrigidity1} is relatively elementary and given in Section \ref{proof82}. 
As we saw in Section~\ref{geomintpants}, for $d(S)=1$, the pants graph $C_P(S)$ is just the $1$-skeleton $F$ of the Farey ideal triangulation of the
hyperbolic plane $\H$ and the procongruence pants graph $\kC_P(S)$, which coincides with the profinite pants graph $\hC_P(S)$, is the inverse
limit $\wh{F}=\varprojlim_{\l\in\L}F/\G^\l$, where $\{\G^\l\}_{\l\in\L}$ is the set of finite index normal subgroups of $\PSL_2(\Z)$ which are contained in some
abelian level $\G(m)$ of order $m\geq 2$. Each finite quotient $F^\l:=F/\G^\l$ is the $1$-skeleton of the triangulation of the closed 
Riemann surface $\bM^\l:=\ol{\H}/\G^\l$ induced by the Farey triangulation on $\ol{\H}$. 
The proof of Proposition~\ref{completepantsrigidity1} is then based on the properties of flat surfaces recalled in the following section.

\subsection{Flat surfaces with conical points}\label{flatsurfaces}
A {\em piecewise flat} surface $S$ is a surface endowed with a triangulation $\Delta$ 
whose 2-simplexes are Euclidean triangles and transition maps between adjacent 
2-simplexes are plane isometries. The length metric provides a piecewise flat metric 
on $S$. To every vertex of the triangulation we can associate its conical angle which is 
the sum of Euclidean angles of triangles incident to the vertex. When the conical angle 
is $2\pi$ the vertex is called {\em regular}, otherwise it is a singular (conical) point of $S$. 
Let $S'=S-V$, where $V$ is the set of singular points. 
Then $S'$ carries a well-defined Riemannian flat metric. Around a singular point $p\in V$ of 
conical angle $\theta$ we can use polar coordinates $(r,\varphi)\in \R_+\times \R/\theta \Z$, where 
$r$ is the distance at $p$ and $\varphi$ the angular variable mod $\theta$ on the flat 
cone. In these local coordinates the flat Riemannian metric has the form 
\[  d r^2 + r^2 d\varphi^2 = 
 |z|^{\frac{2\theta -2}{2\pi}} |dz|^2, \;\, {\rm where} \; z= \frac{\theta}{2\pi}\left(re^{i\varphi}\right)^{\frac{\theta}{2\pi}}\]
The charts $(U,z)$ as above are complex charts around each singularity $p$, which can be chosen to cover $S$. 
The transition maps between two charts are obviously conformal maps, as the 
metric above shows that around each singularity the flat Riemannian metric is conformal to a smooth 
metric. It follows that the collection of charts  of the form $(U,z)$ define a holomorphic 
structure on $S$, which will be called the Riemann structure associated to the 
piecewise flat structure. Troyanov proved that conversely, for any compact connected 
Riemann surface without boundary, finite set of points $V\subset S$ and conical angles 
$\theta(p), p\in V$ satisfying the Gauss Bonnet formula 
\[ \chi(S)+  \sum_{p\in V} \left (\frac{\theta(p)}{2\pi}-1\right) =0\]
there exists a unique up to homothety conformal flat metric  on $S$ with conical angles $\theta(p)$ at $p\in V$  (see \cite{[Tr]} for more details).

Let now $\Delta$ be some triangulation of a closed orientable surface $S$. 
Consider now the {\em piecewise flat equilateral} surface $S(\Delta)$ 
obtained by endowing every 2-simplex in $\Delta$ with the Euclidean metric of an 
equilateral triangle. We also denote by $S(\Delta)$ the associated Riemann surface structure.

The crucial observation for what follows is that, given another triangulated surface $S(\Delta')$, a simplicial ramified finite covering 
$f\co S(\Delta)\to S(\Delta')$, i.e.\ a map which is a topological covering outside a finite set of points and preserves the triangulations, 
induces a holomorphic or antiholomorphic map $F\co S(\Delta)\to S(\Delta')$.

\subsection{Proof of Proposition~\ref{completepantsrigidity1}}\label{proof82}
We begin with the remark that the conformal class associated to the structure of Riemann surface on $\bM^\l$ can be recovered from the finite
graph $F^\l$ by taking on $\bM^\l$ the unique flat metric with singularities contained in the vertex set of $F^\l$ and such that each edge of this 
graph has length $1$. Indeed, for each subgroup of finite index $\G^\l$ of $\PSL_2(\Z)$ contained in some nontrivial abelian level, there is a 
finite correspondence between $\bM^\l$ and $\bM^{(2)}$ which preserves the respective Farey triangulations. Since $\bM^{(2)}\cong\P^1$ 
and its Farey triangulation consists of two equilateral triangles with vertex set $\{0,1,\infty\}=\dd\bM^{(2)}$, the claim above follows. 
Let us then denote by $\Delta^\l$ the abstract $2$-dimensional simplicial complex which has $F^\l$ for $1$-skeleton and such that the natural
embedding $|F^\l|\hookra\bM^\l$ extends to a homeomorphism $|\Delta^\l|\cong\bM^\l$ which becomes a conformal isomorphism when 
$S^\l:=|\Delta^\l|$ is given the flat structure described in Section~\ref{flatsurfaces}.

For $\G^\l\subseteq\G^\mu$, let us denote by $\hat{\pi}_\l\co\wh{F}\to F^\l$ and $\pi_{\l\mu}\co F^\l\to F^\mu$ the natural projections. Then, 
a continuous automorphism $\phi$ of $\wh{F}$ determines and is determined by the directed inverse system of maps constructed as follows.
For every $\l\in\L$, there is a $\mu\in\L$ such that the composition $\phi_\l:=\hat{\pi}_\l\circ\phi\co\wh{F}\to F^\l$ factors
through a map $\phi_{\mu\l}\co F^\mu\to F^\l$. It is clear that, if $\phi_{\mu'\l'}\co F^{\mu'}\to F^{\l'}$ is another such map with $\G^{\l'}\subseteq\G^\l$
and $\G^{\mu'}\subseteq\G^\mu$, we have $\pi_{\l'\l}\circ\phi_{\mu'\l'}=\phi_{\mu\l}\circ\pi_{\mu'\mu}$. Therefore, the set of maps $\{\phi_{\mu\l}\}_{\l,\mu\in\L}$
is a directed inverse system with inverse limit the given map $\phi$.

Let us observe that the same argument above applies to the inverse automorphism $\phi^{-1}$. Let then $\{(\phi^{-1})_{\l\nu}\}_{\mu,\nu\in\L}$ be the
inverse system which determines $\phi^{-1}$ and start from the filtering inverse subsystem provided by the domains of the inverse system of maps
$\{(\phi^{-1})_{\l\nu}\}_{\mu,\nu\in\L}$ to construct the inverse system $\{\phi_{\mu\l}\}_{\l,\mu\in\L}$ which determines $\phi$.
We then have, for every $\nu$ as above, a commutative diagram:
\[\begin{array}{crc}
F^\mu&&\\
\hspace{0.3cm}\downarrow^{\pi_{\mu\nu}}&\searrow^{\phi_{\mu\l}}&\\
F^\nu&\stackrel{(\phi^{-1})_{\l\nu}}{\longleftarrow}&\!\!\!\!\!F^\l.
\end{array}\]
Let us denote with the same letters the continuous maps from the triangulated surfaces $S^\mu$ and $S^\l$ 
induced by the above maps. Since $\pi_{\mu\nu}\co S^\mu\to S^\nu$ is a ramified covering, the same is true for the map
$\phi_{\mu\l}\co S^\mu\to S^\l$. By the last remark in Section~\ref{flatsurfaces}, we then see that $\phi_{\mu\l}$ is a conformal or 
an anticonformal map and so induces a holomorphic or antiholomorphic map $\phi_{\mu\l}\co\bM^\mu\to\bM^\l$. The latter map factors through a  
holomorphic or antiholomorphic automorphism $\Phi_\mu$ of $\bM^\mu$ and the projection $\pi_{\mu\l}\co\bM^\mu\to\bM^\l$. 
In conclusion, we have $\phi=\varprojlim_{\mu\in\L'}\Phi_\mu$, for some filtering subsystem $\Lambda'$ of $\Lambda$, where either all $\Phi_\mu$ are 
holomorphic or all are antiholomorphic. The claim of the proposition then follows.

\subsection{Finite pants graphs and Fulton curves}In Proposition~\ref{triangulation}, we saw that, for $\G^\l$ a level of $\G(S)$ contained in 
an abelian level of order $m\geq 2$, the quotient of the pants graph $C_P(S)$ by the action of $\G^\l$ exists as a finite abstract simplicial 
complex $C^\l_P(S)$ whose geometric realization is the $1$-skeleton of a triangulation of the Fulton curve $\cF^\l(S)$ (more precisely, of its 
coarse moduli space). This is the triangulation
which, on every irreducible component of $\cF^\l(S)$, is induced by the Farey ideal triangulation of the hyperbolic plane. If $C^\l$ is one of these
irreducible components, it is clear that there is a finite correspondence, preserving the triangulations, between $C^\l$ and $\ol{\H}/\G(2)\cong\P^1$,
where $\G(2)$ is the abelian level of order $2$ of $\PSL_2(\Z)$. Therefore, if $C^\l$ is representable, e.g.\ the level structure $\bM^\l(S)$ is representable, 
the conformal class of the complex structure on $C^\l$ is determined by the choice of an orientation on the topological surface underlying $C^\l$ and by
the metric on the graph $|C^\l_P(S)|\cap C^\l$ where each edge is assigned length one. 

A \emph{Farey subgraph} $F^\l$ of $C_P^\l(S)$ is a subcomplex such that its geometric realization is the $1$-skeleton of the Farey triangulation of 
an irreducible component of $\cF^\l$. Modulo orientations, it is then clear that the complex structure of $\cF^\l$ is determined by the graph $C_P^\l(S)$
and its partition into Farey subgraphs. 

We say that a level $\G^\l$ of $\G(S)$ is \emph{representable} if the associated level structure $\bM^\l(S)$ is such. Let us summarize the above discussion
in the following proposition:

\begin{proposition}\label{conformalstructure}Let $\G^\l$ be a representable level of $\G(S)$ contained in some abelian level of order $m\geq 2$.
Except for the orientations of its irreducible components, the complex structure on the Fulton curve $\cF^\l(S)$ is determined by the graph 
$C^\l_P(S)$ and its partition into Farey subgraphs.
\end{proposition}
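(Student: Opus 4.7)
The plan is to reconstruct the (coarse moduli of the) Fulton curve $\cF^\l(S)$ one irreducible component at a time from each Farey subgraph, and then glue the components together using the combinatorial incidence data encoded by the partition. By Proposition~\ref{triangulation} together with the discussion preceding the present statement, the irreducible components $C_\s$ of $\cF^\l(S)$ are in bijection with the Farey subgraphs $F^\l_\s$ of $C^\l_P(S)$ (indexed by $\G^\l$-orbits $\s$ of $(d(S)-1)$-multicurves), and each $C_\s$ is a quotient $\ol\H/\bar\G^\l_\s$ by a finite index subgroup $\bar\G^\l_\s\subseteq\PSL_2(\Z)$ sitting inside some abelian level of order $m\geq 2$. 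On $C_\s$ the Farey triangulation of $\ol\H$ descends to a genuine triangulation $\Delta^\l_\s$ whose $1$-skeleton is exactly $F^\l_\s$; the $2$-simplices of $\Delta^\l_\s$ are recovered from $F^\l_\s$ as the $3$-cycles of the graph, because in the Farey tessellation of $\ol\H$ the $3$-cycles of the $1$-skeleton are precisely the triangular faces, and this property descends to the quotient since $\Delta^\l_\s$ is an honest simplicial complex structure on a closed surface by Proposition~\ref{triangulation}.

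Having reconstructed the triangulated topological surface $|\Delta^\l_\s|$ from $F^\l_\s$ alone, I would endow it with the piecewise flat equilateral metric of Section~\ref{flatsurfaces} in which every edge has length one and every $2$-simplex is isometric to a Euclidean equilateral triangle. Troyanov's theorem then produces a unique conformal structure on $|\Delta^\l_\s|$ compatible with this flat metric, once an orientation is chosen. To identify this with the conformal structure inherited from $\bM^\l(S)$, I would use that $C_\s$ admits a simplicial ramified covering onto $\bM^{(2)}\cong\P^1$ equipped with its two-triangle Farey triangulation on vertex set $\{0,1,\infty\}$ (since $\bar\G^\l_\s$ is contained in a subgroup conjugate to the level-$m$ subgroup, which factors through level $2$ up to a finite correspondence). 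By the final observation of Section~\ref{flatsurfaces}, any such simplicial covering between flat equilateral surfaces is either holomorphic or antiholomorphic, so the Troyanov conformal structure on $|\Delta^\l_\s|$ must coincide with the one on $C_\s$ up to complex conjugation, i.e.\ up to the choice of orientation on $C_\s$.

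Finally, to globalize, I would observe that the singular locus of (the coarse moduli of) $\cF^\l(S)$ consists exactly of the points corresponding to maximal multicurves on $S$, hence is in bijection with the vertex set of $C^\l_P(S)$, and that two components $C_\s,C_{\s'}$ meet (transversely, at a multicross point) precisely at those vertices lying in $F^\l_\s\cap F^\l_{\s'}$. Since the Farey-subgraph partition records which vertex of $C^\l_P(S)$ belongs to which subgraphs, gluing the individual complex curves $C_\s$ at the prescribed shared vertices reconstructs the full coarse moduli of $\cF^\l(S)$ as a complex space, with no remaining ambiguity beyond the orientation choice on each component. The main obstacle I anticipate is the matching step in the middle paragraph: verifying cleanly that the Troyanov conformal structure produced from the abstract equilateral flat metric really coincides, up to conjugation, with the one carried by $C_\s$ as a component of the Deligne--Mumford boundary; this relies crucially on the existence of a common simplicial base $\bM^{(2)}$ and on the holomorphic-or-antiholomorphic rigidity of simplicial coverings of equilateral flat surfaces established in Section~\ref{flatsurfaces}.
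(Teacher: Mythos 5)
Your overall route is the same as the paper's (the paper's ``proof'' is the discussion preceding the proposition): components of $\cF^\l(S)$ correspond to Farey subgraphs via Proposition~\ref{triangulation}, each component is compared, through the triangulation-preserving finite correspondence with $\bM^{(2)}\cong\P^1$ and the equilateral flat structures of Section~\ref{flatsurfaces}, with its Deligne--Mumford complex structure, and the partition plus shared vertices give the gluing. There is, however, one genuine gap: the step where you reconstruct the $2$-simplices of $\Delta^\l_\s$ from the abstract graph $F^\l_\s$ as its $3$-cycles. Your justification --- that the flag-type property of the Farey tessellation ``descends to the quotient since $\Delta^\l_\s$ is an honest simplicial complex structure by Proposition~\ref{triangulation}'' --- is not what that proposition gives: it only asserts that the quotient \emph{graph} $C^\l_P(S)$ is a simplicial complex and is the $1$-skeleton of a triangulation; it does not assert that the $2$-dimensional quotient triangulation is a strict simplicial complex, and the property ``every $3$-cycle bounds exactly one face'' does not automatically descend to quotients. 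Concretely, the hypotheses do not prevent the image $\bar\G^\l_\s$ from having index $6$ in $\PSL_2(\Z)$ (e.g.\ being the level $2$ subgroup $\G(2)$); in that case the component is $\ol\H/\G(2)\cong\P^1$ triangulated by \emph{two} triangles glued along all three edges and vertices, $F^\l_\s$ is a single $3$-cycle, and your recipe produces one triangle (a disk) rather than the component. Since your second and third paragraphs build on having recovered the triangulated closed surface from the graph alone, this step fails as written.

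The repair is exactly what the paper does: do not attempt to recover the faces combinatorially. Proposition~\ref{triangulation} already furnishes the graph \emph{together with} its realization as the $1$-skeleton of the Farey-induced triangulation of each irreducible component, and the content of the present proposition is only that, given this, no analytic datum beyond a choice of orientation on each component is needed: by the finite correspondence with $\bM^{(2)}$, whose Farey triangulation consists of two equilateral triangles on $\{0,1,\infty\}$, the Deligne--Mumford conformal class on each (representable) component coincides with the one attached to the piecewise flat equilateral metric in which every edge has length one, hence is determined up to orientation by the metric graph; the partition into Farey subgraphs and the incidence of vertices then determine $\cF^\l(S)$ as in your final paragraph. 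A minor related inaccuracy: $C_\s$ need not admit a simplicial covering \emph{onto} $\bM^{(2)}$ (that requires $\bar\G^\l_\s\subseteq\G(2)$); as in the paper one uses a finite correspondence, i.e.\ a common finite cover, and since both projections from that cover are simplicial ramified coverings the holomorphic-or-antiholomorphic rigidity argument of Section~\ref{flatsurfaces} applies unchanged.
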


\subsection{Orientation data}\label{section:orientations}
Let $\Delta$ be the abstract $2$-dimensional simplicial complex which has the Farey graph $F$ for $1$-skeleton and such that the natural
embedding $|F|\hookra\ol{\H}$ extends to a homeomorphism $|\Delta|\cong\ol{\H}$ which becomes a conformal isomorphism when 
$S:=|\Delta|$ is given the flat structure described in Section~\ref{flatsurfaces}. An \emph{orientation of the Farey graph} $F$ is then
simply a choice of orientation for $\Delta$. We denote by "$+$" the orientation compatible with the one associated to the complex structure on $\H$
and by "$-$" the opposite orientation.

An orientation of the Farey graph $F$ induces an orientation on 
the finite quotients by means of the natural projections $\pi_\l\co\Delta\to\Delta^\l$ and  
thus on the profinite Farey graph $\wh F$. Indeed, in this way we get compatible orientations on every finite quotient $\Delta^\l$, for $\l\in\Lambda$, 
and Proposition~\ref{completepantsrigidity1} shows that any continuous automorphism of $\wh{F}$ either preserves or reverses such orientation. 
As above, we denote by "$+$" the orientation associated to the complex structure on $\H$ and by "$-$" the opposite orientation.

\begin{definition}\label{orientation}An orientation of the procongruence pants graph $\kC_P(S)$ is a choice of orientation for each profinite
Farey subgraph $\wh{F}_\s$ of $\kC_P(S)$, for $\s\in\kC(S)_{d(S)-2}$. Therefore, an orientation of $\kC_P(S)$ is determined by an element
of the profinite group $\prod_{\kC(S)_{d(S)-2}} \{\pm 1\}$.
\end{definition}

\begin{remark}\label{compatibility}By Theorem~\ref{autinjection}, the group of automorphisms $\Aut(\kC_P(S))$ preserves the partition of $\kC_P(S)$
in Farey subgraphs and, by Proposition~\ref{completepantsrigidity1}, the stabilizer of a Farey subgraph for this action either preserves or reverses
the orientation of this subgraph. Therefore, the action of $\Aut(\kC_P(S))$ on $\kC_P(S)$ preserves the set of orientations 
$\prod_{\kC(S)_{d(S)-2}} \{\pm 1\}$. In this way, we get a natural representation:
\[\Aut(\kC_P(S))\to\prod_{\kC(S)_{d(S)-2}} \{\pm 1\}.\]
We denote by $\Aut(\kC_P(S))^+$ its kernel and call it the subgroup of \emph{orientation preserving} automorphisms.
\end{remark}

\begin{lemma}\label{basiclemma}With the above notations, there is a natural isomorphism:
\begin{equation}\label{completepantsrigidity2}
\kG(S)/Z(\kG(S))\cong\Aut(\kC_P(S))^+.
\end{equation}
\end{lemma}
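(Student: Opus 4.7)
The plan is to verify the forward direction of the isomorphism is well-defined and then to establish surjectivity by a reconstruction argument that passes through finite levels and exploits the geometric interpretation of the D--M boundary.

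For the forward direction, Theorem~\ref{faithfulness}(i) already supplies a natural injection $\kG(S)/Z\hookra\Aut(\kC_P(S))$. To see that its image lies in the orientation-preserving subgroup $\Aut(\kC_P(S))^+$, I would invoke the geometric interpretation of Section~\ref{geomintpants}: the $\kG(S)$-equivariant embedding $\check\Psi\co|\kC_P(S)|\hookra\dd\ol{\M}(S)_\C$ identifies each profinite Farey subgraph $\wh F_\s$ with the $1$-skeleton of the Farey triangulation of a $1$-dimensional complex stratum $\Delta_\s$. Because $\kG(S)$ acts as the group of deck transformations of the universal level tower, hence biholomorphically on each $\Delta_\s$, its induced action on $\wh F_\s$ respects the orientation inherited from the complex structure on $\Delta_\s$.

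For surjectivity, take $\phi\in\Aut(\kC_P(S))^+$. Theorem~\ref{autinjection} extends $\phi$ uniquely to a continuous automorphism $\phi^\ast$ of $\kC(S)$, and Theorem~\ref{typepreservation} ensures that $\phi^\ast$ preserves topological types of simplices. The idea is to descend $\phi$ and $\phi^\ast$ to finite levels and reconstruct a biholomorphism of level structures at each finite stage. For every representable geometric level $\G^\l$ contained in some abelian level of order at least $2$, continuity of $\phi$ yields a finer geometric level $\G^\mu\subseteq\G^\l$ such that $\phi$ descends to an orientation-preserving simplicial isomorphism $\phi_{\mu\l}\co C_P^\mu(S)\to C_P^\l(S)$.

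The central step is then to lift each $\phi_{\mu\l}$ to a biholomorphic morphism $\bM^\mu(S)\to\bM^\l(S)$. By Proposition~\ref{conformalstructure}, combined with the orientation data, $\phi_{\mu\l}$ is identified with a holomorphic isomorphism between the Fulton curves $\cF^\mu(S)\to\cF^\l(S)$; the argument here essentially generalises the flat-equilateral-surface technique used in the proof of Proposition~\ref{completepantsrigidity1} from the Farey graph to each irreducible component of $\cF^\l(S)$. At the same time, $\phi^\ast$ furnishes the full simplicial isomorphism between the nerves of the D--M boundaries. Patching the holomorphic structure on $1$-dimensional strata with the combinatorial structure in higher codimension, by means of the local plumbing description of $\bM^\l(S)$ near $\dd\bM^\l(S)$, should produce the desired morphism of level structures. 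Passing to the inverse limit over $\l\in\L$ and invoking the identification of $\kG(S)/Z$ with the compatible system of deck transformations of the tower $\{\bM^\l(S)\}_\l$ over $\bM(S)$ yields an element of $\kG(S)/Z$ mapping to $\phi$.

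The hardest step will be this extension from a compatible system of holomorphic automorphisms of Fulton curves, together with matching simplicial actions on the rest of the nerve, to honest automorphisms of the level structures. The orientation-preservation hypothesis is precisely what is needed to rule out antiholomorphic lifts, but a rigorous treatment requires controlling plumbing coordinates across higher-codimension strata and verifying the compatibility of local analytic models along chains of boundary divisors.
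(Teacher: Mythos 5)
Your reduction to a tower of orientation-preserving finite-level maps $\phi_{\mu\l}\co C_P^\mu(S)\to C_P^\l(S)$, and their identification with holomorphic maps of Fulton curves $\Phi_{\mu\l}\co\cF^\mu(S)\to\cF^\l(S)$ commuting with the projections to $\cF(S)$, is exactly the paper's Lemma~\ref{holmaps}, and the forward direction is handled correctly. But the step you yourself flag as hardest is precisely where the proof is missing, and the route you sketch for it is not the one that works. You propose to extend the system $\{\Phi_{\mu\l}\}$ to biholomorphisms $\bM^\mu(S)\to\bM^\l(S)$ by ``patching the holomorphic structure on $1$-dimensional strata with the combinatorial structure in higher codimension'' via plumbing coordinates. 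No such extension theorem is available from the data at hand: a holomorphic isomorphism of the $1$-dimensional boundary strata together with a simplicial isomorphism of the boundary nerve does not control the complex structure of the level structure away from the Fulton curve, and making the plumbing argument rigorous across chains of higher-codimension strata is not easier than the statement you are trying to prove. As written, surjectivity is therefore not established.

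The paper's Lemma~\ref{mainlemma} avoids the moduli spaces $\bM^\l(S)$ entirely. Instead of extending $\Phi_{\mu\l}$ outward into higher-dimensional strata, it lifts it upward to the universal object: $\cH(S)$, the preimage of $\cF(S)$ in the Bers bordification $\ol{\cT}(S)$. Concretely, one first passes to the Harvey bordification, where $\wF^\mu(S)\cong\cF^\mu(S)\times_{\cF(S)}\wF(S)$ gives a real-analytic \'etale lift $\wh\Phi_{\mu\l}\co\wF^\mu(S)\to\wF^\l(S)$; the existence of a further lift to $\wH(S)$ is then a covering-space criterion, $\wh\Phi_{\mu\l\ast}(\pi_1(\wH(S)))\subseteq\pi_1(\wH(S))$, which is verified using the separability of the congruence topology, i.e.\ $\pi_1(\wH(S))=\bigcap_\nu\pi_1(\wF^\nu(S))$, together with the commutation of the $\wh\Phi$'s with the projections in the inverse system. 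The lift descends to a holomorphic automorphism $\wt\Phi_{\mu\l}$ of $\cH(S)$ respecting the Farey triangulation, hence an automorphism of the discrete pants graph $C_P(S)$; Margalit's rigidity theorem (Theorem~\ref{rigiditypants}) then identifies it with an element of $\Mod(S)$, and orientation preservation places it in $\G(S)$, giving the required $\Phi_\mu\in\G(S)/\G^\mu$ with $\Phi_{\mu\l}=\pi_{\mu\l}\circ\Phi_\mu$. So the missing ideas in your proposal are (a) the lift to the Bers/Harvey bordification with its fundamental-group (congruence-separability) justification, and (b) the use of the topological rigidity of $C_P(S)$ to recognize the lift as a mapping class --- which together replace, and render unnecessary, any control of plumbing data on $\bM^\l(S)$.
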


The next sections will be devoted to the proof of this lemma.

\subsection{From automorphisms of $\kC_P(S)$ to towers of holomorphic maps of Fulton curves}\label{fromauto}
Let $\{\G^\l\}_{\l\in\Lambda}$ be the set of normal levels satisfying the hypotheses of Proposition~4.3 in \cite{[B4]} 
(and so in particular the hypotheses of Proposition~\ref{conformalstructure}). This is a filtering subsystem of
the inverse system of all levels of $\G(S)$ and therefore we have that $\kC_P(S)=\varprojlim_{\l\in\L}C^\l_P(S)$. Moreover, 
by Proposition~4.3 in \cite{[B4]}, for all $\l\in\L$, the irreducible components of the Fulton curve $\cF^\l(S)$ are smooth.
As in the proof of Proposition~\ref{completepantsrigidity1}, an automorphism $\phi\in\Aut^+(\kC_P(S))$ is then determined 
by an inverse system of maps $\phi_{\mu\l}\co C_P^\mu(S)\to C_P^\l(S)$ commuting with the natural projections. We have:

\begin{lemma}\label{holmaps}For $\phi\in\Aut^+(\kC_P(S))$, let $\{\phi_{\mu\l}\}_{\l\in\L}$ be an inverse system of maps which determines it.
Then, every map $\phi_{\mu\l}\co C_P^\mu(S)\to C_P^\l(S)$ is induced by an holomorphic map $\Phi_{\mu\l}\co\cF^\mu(S)\to\cF^\l(S)$. 
Moreover, the maps $\Phi_{\mu\l}$ commute with the natural projections to the Fulton curve $\cF(S)\subset\bM(S)$.
\end{lemma}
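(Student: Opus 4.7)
The plan is to globalise the $1$-dimensional statement of Proposition~\ref{completepantsrigidity1} by exploiting that, by Proposition~\ref{triangulation} and Proposition~\ref{conformalstructure}, the complex structure on each irreducible component of the Fulton curve $\cF^\l(S)$ is encoded (up to an orientation choice) by the associated Farey subgraph of $C_P^\l(S)$.

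First I would show that each $\phi_{\mu\l}\co C_P^\mu(S)\to C_P^\l(S)$ sends Farey subgraphs to Farey subgraphs. By Theorem~\ref{autinjection}, the limit automorphism $\phi$ permutes the profinite Farey subgraphs $\wh F_\s$ of $\kC_P(S)$, and by Lemma~\ref{edgespantsgraph} these are intrinsically determined by the graph structure alone. Since the projections $\hat\pi_\l\co\kC_P(S)\to C_P^\l(S)$ take profinite Farey subgraphs onto the Farey subgraphs of the finite quotient (i.e.\ onto the $1$-skeleta of the triangulations of the irreducible components of $\cF^\l(S)$ afforded by Proposition~\ref{triangulation}), naturality of the inverse system forces $\phi_{\mu\l}$ to carry every Farey subgraph $F_\s^\mu\subset C_P^\mu(S)$ onto some Farey subgraph $F_{\phi_{\mu\l}(\s)}^\l\subset C_P^\l(S)$.

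Next I would produce the holomorphic map on each irreducible component. Fix such a Farey subgraph $F_\s^\mu$ and let $C^\mu\subset\cF^\mu(S)$, $C^\l\subset\cF^\l(S)$ be the corresponding irreducible components. The restriction of $\phi_{\mu\l}$ to $F_\s^\mu$ is a simplicial map between the $1$-skeleta of the Farey triangulations of $C^\mu$ and $C^\l$. By the argument of Section~\ref{flatsurfaces} (a simplicial ramified cover of equilateral piecewise-flat surfaces induces a conformal or anticonformal map on the associated Riemann surfaces), this restriction is induced by either a holomorphic or an antiholomorphic map $\Phi_{\mu\l}^\s\co C^\mu\to C^\l$. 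The hypothesis $\phi\in\Aut(\kC_P(S))^+$, together with Definition~\ref{orientation} and Remark~\ref{compatibility}, means that $\phi$ preserves the orientation of every profinite Farey subgraph; pushing this datum down via $\hat\pi_\mu$ and $\hat\pi_\l$ shows that $\phi_{\mu\l}|_{F_\s^\mu}$ is orientation preserving, and hence that $\Phi_{\mu\l}^\s$ is in fact holomorphic. The pieces $\Phi_{\mu\l}^\s$ agree on vertices of the triangulations (which are precisely the nodes of the Fulton curves), and the Fulton curve, being nodal with smooth components (Proposition~4.3 of \cite{[B4]}), admits a holomorphic map out of it as soon as we are given holomorphic maps on its components agreeing at the intersection points. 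This yields the desired $\Phi_{\mu\l}\co\cF^\mu(S)\to\cF^\l(S)$.

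Finally, compatibility with the projections to $\cF(S)$ follows because the type-preservation result Theorem~\ref{typepreservation} implies that $\phi$ descends to the identity on the set of $\kG(S)$-orbits of $(d(S)-2)$-simplices, which parameterise the irreducible components of $\cF(S)$; together with the functorial naturality of the whole construction (holomorphicity on each component is forced by the combinatorics, and the combinatorics is preserved by the projections), this gives the stated commutation. The step I expect to be the main obstacle is verifying cleanly that the orientation datum inherited by $F_\s^\mu$ from $\wh F_\s$ is transported by $\phi_{\mu\l}$ to the orientation inherited by $F_{\phi(\s)}^\l$ from $\wh F_{\phi(\s)}$ — in other words, that the ``$+$'' condition at the profinite level genuinely descends to each finite quotient in a way compatible with the identifications made by Proposition~\ref{completepantsrigidity1}; once this bookkeeping is in place, everything else is formal.
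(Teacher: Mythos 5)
Your proposal is correct and follows essentially the same route as the paper: restrict $\phi_{\mu\l}$ to Farey subgraphs, invoke the one-dimensional flat-structure argument (the content of Proposition~\ref{completepantsrigidity1}) together with the orientation-preserving hypothesis to get holomorphy on each irreducible component, glue across the multicross points, and use type preservation to obtain compatibility with the projections to $\cF(S)$. The only step you elide — that the restriction of $\phi_{\mu\l}$ to a Farey subgraph really extends to a simplicial \emph{ramified covering} of the triangulated components, which the paper secures by factoring the projection $\pi_{\mu\nu}$ through $\phi_{\mu\l}$ via the inverse system attached to $\phi^{-1}$ — is exactly what the citation of Proposition~\ref{completepantsrigidity1} supplies, so the arguments coincide in substance.
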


\begin{proof}
Choose some arbitrary profinite Farey subgraph $\wh{F}_\s$. 
By Theorem~\ref{autinjection}, an automorphism of $\kC_P(S)$ acts on the curve complex $\kC(S)$ preserving the topological type of simplices.
Since these are the orbits by the action of $\kG(S)$, by composing $\phi$ with the automorphism induced by a suitable element of $\kG(S)$, we can assume that $\phi$ fixes $\wh{F}_\s$. 

For $\l\in\L$, let us denote by $F^\l_\s$ the image of $\wh{F}_\s$ in $C_P^\l(S)$ by the natural projection $\kC_P(S)\to C_P^\l(S)$. 
We then have $\phi_{\mu\l}(F^\mu_\s)=F^\l_\s$ and,
from Proposition~\ref{completepantsrigidity1}, it follows that the restriction $\phi_{\mu\l}\co F^\mu_\s\to F^\l_\s$ is induced by a holomorphic map 
between the corresponding irreducible components of the Fulton curves $\cF^\mu(S)$ and $\cF^\l(S)$. Since $\s\in\kC(S)_{d(S)-2}$ was chosen in an 
arbitrary way, this is then true for all Farey subgraphs of $C^\mu_P(S)$. Moreover, since $\cF^\mu(S)$ has only multicross singularities, 
these holomorphic maps assemble together and determine a holomorphic map $\Phi_{\mu\l}\co\cF^\mu(S)\to\cF^\l(S)$ which induces the
map of graphs $\phi_{\mu\l}\co C_P^\mu(S)\to C_P^\l(S)$. Note that the restriction $|\phi_{\mu\l}|\co |F^\mu_\s|\to|F^\l_\s|$ commutes 
with the natural maps to $\cF(S)$. Arguing as above, we conclude that the induced holomorphic map $\Phi_{\mu\l}\co\cF^\mu(S)\to\cF^\l(S)$
commutes with the projections to $\cF(S)$.
\end{proof}

As in the proof of Proposition~\ref{completepantsrigidity1}, in order to complete the proof of Lemma~\ref{basiclemma}, it is enough to show that:

\begin{lemma}\label{mainlemma}With the notations of Proposition~\ref{holmaps}, 
the holomorphic maps $\Phi_{\mu\l}\co\cF^\mu(S)\to\cF^\l(S)$ are obtained by composing an automorphism $\Phi_\mu$
of the Fulton curve $\cF^\mu(S)$ over $\cF(S)$ with the natural projection $\pi_{\mu\l}\co\cF^\mu(S)\to\cF^\l(S)$; in other words, for all $\l\in\L$,
we have that $\Phi_{\mu\l}=\pi_{\mu\l}\circ\Phi_\mu$, for some $\Phi_\mu\in \G(S)/\G^\mu$. 
\end{lemma}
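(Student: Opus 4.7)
The plan is to view both $\Phi_{\mu\l}$ and the natural projection $\pi_{\mu\l}$ as morphisms of Galois covers of $\cF(S)$, and then deduce from the Galois correspondence that they differ by a deck transformation in $\G(S)/\G^\mu$. By the choice of levels in Section~\ref{fromauto} (via Proposition~4.3 of \cite{[B4]}), $\cF^\mu(S)$ and $\cF^\l(S)$ are proper reduced nodal curves with smooth irreducible components, and the maps $\pi_\mu\co\cF^\mu(S)\to\cF(S)$, $\pi_\l\co\cF^\l(S)\to\cF(S)$ are finite and Galois (on their smooth loci) with deck groups $\G(S)/\G^\mu$ and $\G(S)/\G^\l$, respectively, while the intermediate projection $\pi_{\mu\l}$ realizes $\cF^\l(S)$ as the quotient of $\cF^\mu(S)$ by $\G^\l/\G^\mu$.

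First I would pass to the smooth locus $\cF(S)^\circ$ obtained by removing the nodes and any stacky points, on which $\cF^\mu(S)^\circ\to\cF(S)^\circ$ and $\cF^\l(S)^\circ\to\cF(S)^\circ$ are \'etale Galois covers. Standard covering-space theory then identifies the set of $\cF(S)^\circ$-morphisms $\cF^\mu(S)^\circ\to\cF^\l(S)^\circ$ with a right torsor under $\G(S)/\G^\mu$ acting by deck transformations of the source: every such morphism has the form $\pi_{\mu\l}\circ g$ for some $g\in\G(S)/\G^\mu$. Applied to $\Phi_{\mu\l}$, which is a holomorphic morphism of covers by Lemma~\ref{holmaps}, this produces $\Phi_\mu\in\G(S)/\G^\mu$ with $\Phi_{\mu\l}|_{\cF^\mu(S)^\circ}=(\pi_{\mu\l}\circ\Phi_\mu)|_{\cF^\mu(S)^\circ}$. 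Since both sides are holomorphic maps between proper reduced curves that agree on a dense open set, the identity $\Phi_{\mu\l}=\pi_{\mu\l}\circ\Phi_\mu$ holds on the whole of $\cF^\mu(S)$.

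The main obstacle I expect is the verification that $\Phi_{\mu\l}$ is indeed a morphism of covers in the sense required for the Galois correspondence to produce a \emph{single global} element of $\G(S)/\G^\mu$. By Proposition~\ref{completepantsrigidity1} combined with Lemma~\ref{holmaps}, the restriction of $\Phi_{\mu\l}$ to each irreducible component of $\cF^\mu(S)$ is a conformal isomorphism onto an irreducible component of $\cF^\l(S)$, compatibly with projection to $\cF(S)$; the conformality here uses the orientation-preservation hypothesis $\phi\in\Aut^+(\kC_P(S))$. The delicate point is to patch these componentwise deck-transformation data into one global group element. For this I would exploit that $\Phi_{\mu\l}$ sends nodes of $\cF^\mu(S)$ to nodes of $\cF^\l(S)$ in a manner prescribed by the action on $\kC(S)$ (Theorem~\ref{autinjection}): at each node two components meet, and matching the local lifts on those components forces the two candidate deck transformations to coincide. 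Since the dual graph of $\cF^\mu(S)$ is connected, this constraint propagates and singles out a unique $\Phi_\mu\in\G(S)/\G^\mu$, completing the proof.
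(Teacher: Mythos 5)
Your reduction to covering theory breaks down exactly at the point you flag as the ``main obstacle'', and the patching argument you sketch there is not a proof. First, the torsor claim is not available: after removing the nodes (and stacky points) the curve $\cF^\mu(S)^\circ$ is \emph{disconnected}, so $\cF(S)^\circ$-morphisms $\cF^\mu(S)^\circ\to\cF^\l(S)^\circ$ do not form a torsor under $\G(S)/\G^\mu$; covering theory (which needs a connected source for uniqueness of lifts) only yields, on each irreducible component $C$ lying over the stratum of a $(d(S)-1)$-multicurve $\s$, some element $g_C$ with $\Phi_{\mu\l}|_C=\pi_{\mu\l}\circ g_C|_C$, and $g_C$ is well defined only modulo the subgroup of $\G(S)/\G^\mu$ acting trivially on $C$ (which contains, e.g., the image of $\hI_\s$). (Also, these restrictions are finite covers of the corresponding components of $\cF^\l(S)$, of the same relative degree as $\pi_{\mu\l}$, not conformal isomorphisms.) Second, matching at a node does \emph{not} force the two candidate deck transformations to coincide: a node corresponds to a pants decomposition, and compatibility there only constrains $g_{C_1}$ and $g_{C_2}$ modulo the image in $\G(S)/\G^\mu$ of the stabilizer of that node, which is large (it contains the whole multitwist group along the pants curves, among other things). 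So connectedness of the dual graph propagates only congruences modulo varying large subgroups, and the existence of a single global $\Phi_\mu$ compatible with all components is a genuine local-to-global problem (a cocycle condition on the dual graph), which is essentially the content of the lemma rather than a consequence of formal covering theory. Nothing in your argument uses more than one map $\Phi_{\mu\l}$, yet an arbitrary morphism $\cF^\mu(S)\to\cF^\l(S)$ over $\cF(S)$ built componentwise in this way need not come from an element of $\G(S)/\G^\mu$; so an argument at a single finite level cannot suffice as stated.

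This is precisely why the paper's proof takes a different route: it uses the \emph{entire} inverse system $\{\Phi_{\nu\xi}\}$ (available because $\phi$ is an automorphism of the procongruence pants graph, with inverse) to lift $\Phi_{\mu\l}$ through the Harvey-type bordifications to the universal object $\cH(S)$ inside the Bers bordification. The crucial step is the inclusion $\wh{\Phi}_{\mu\l\ast}(\pi_1(\wH(S)))\subseteq\pi_1(\wH(S))$, proved by writing $\pi_1(\wH(S))$ as the intersection $\bigcap_{\nu\leq\mu}\pi_1(\wF^{\nu}(S))$ and using the commutative triangles relating $\Phi_{\mu\l}$ to maps at arbitrarily deep levels; this is where the tower is indispensable and is exactly the globalization your patching step tries to get for free. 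Once the lift $\wt{\Phi}_{\mu\l}\co\cH(S)\to\cH(S)$ exists, it preserves the Farey triangulation, hence induces an automorphism of the discrete pants graph $C_P(S)$, and Margalit's rigidity theorem (Theorem~\ref{rigiditypants}) together with orientation preservation identifies it with an element of $\G(S)$, whose class in $\G(S)/\G^\mu$ is the desired $\Phi_\mu$. To salvage your approach you would have to either prove the cocycle on the dual graph is a coboundary (which again seems to require input from deeper levels) or reproduce the paper's passage to the universal cover; as written, the proposal has a genuine gap.
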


Indeed, Lemma~\ref{mainlemma} implies that the given $\phi\in\Aut^+(\kC_P(S))$ is induced by the action of the element:
\[\Phi:=\varprojlim_{\mu}\Phi_\mu\in\varprojlim_{\mu}\G(S)/\G^\mu=\kG(S)\]

\subsection{Proof of Lemma~\ref{mainlemma}}
Let $\cH(S)$ be the inverse image of the Fulton curve $\cF(S)\subset\bM(S)$ in the Bers bordification $\ol{\cT}(S)$ of the Teichm\"uller space $\cT(S)$
and let $\Delta_\cH$ be the triangulation of $\cH(S)$ which on each irreducible component restricts to the standard Farey triangulation.
Let us observe that, in order to prove Lemma~\ref{mainlemma}, it would be enough to show that the holomorphic maps 
$\Phi_{\mu\l}\co\cF^\mu(S)\to\cF^\l(S)$ lift to holomorphic maps $\wt{\Phi}_{\mu\l}\co\cH(S)\to\cH(S)$. Indeed, such a lift $\wt{\Phi}_{\mu\l}$
would induce an automorphism of the triangulation $\Delta_\cH$ and hence of its $1$-skeleton $|C_P(S)|$. By Margalit rigidity theorem 
(cf.\ Theorem~\ref{rigiditypants}),  this implies that $\wt{\Phi}_{\mu\l}$ is induced by an element $\wt{\Phi}_{\mu}\in\Mod(S)$ which, since $\wt{\Phi}_{\mu\l}$
is orientation preserving, actually lies in $\G(S)$. The image $\Phi_\mu$ of $\wt{\Phi}_{\mu\l}$ in $\G(S)/\G^\mu$ has then the properties stated in 
Lemma~\ref{mainlemma}. 

In order to construct the above lift, we need to pass through the Harvey bordification $\wT(S)$ of the Teichm\"uller space $\cT(S)$ introduced in 
Section~\ref{curvecomplexint}. Let then $\wH(S)$ (resp.\ $\wF^\l(S)$) be the inverse image of the Farey curve $\cF(S)$ in $\wT(S)$
(resp.\ in $\wM^\l(S)$) via the natural map $\wT(S)\to\bM(S)$ (resp.\ $\wM^\l(S)\to\bM(S)$). Note that the natural maps
$\wH(S)\to\cH(S)$ and $\wF^\l(S)\to\cF^\l(S)$ are, outside the singular loci of $\cH(S)$ and $\cF^\l(S)$, fibrations in $(d(S)-1)$-dimensional tori. 

By Proposition~\ref{holmaps}, the maps $\Phi_{\mu\l}\co\cF^\mu(S)\to\cF^\l(S)$ commute with the natural projections of $\cF^\mu(S)$ and $\cF^\l(S)$
to $\cF(S)$. Since, by construction, there are canonical isomorphisms $\wF^\mu(S)\cong\cF^\mu(S)\times_{\cF(S)}\wF(S)$ and 
$\wF^\l(S)\cong\cF^\l(S)\times_{\cF(S)}\wF(S)$, these holomorphic maps lift to real-analytic maps 
\[\wh{\Phi}_{\mu\l}\co\wF^\mu(S)\to\wF^\l(S),\]
which commute with the natural projections of $\wF^\mu(S)$ and $\wF^\l(S)$ to $\wF(S)$. Since these projections are \'etale maps, it follows that
also the maps $\wh{\Phi}_{\mu\l}\co\wF^\mu(S)\to\wF^\l(S)$ are \'etale.

Let us consider the short exact sequence 
\begin{equation}\label{shortexact}
1\ra\pi_1(\wH(S))\ra\pi_1(\wF^\mu(S))\ra\G^\mu\ra 1
\end{equation}
associated to the natural (\'etale) projection map $\wH(S)\to\wF^\mu(S)$. 

In order to prove that the maps $\wh{\Phi}_{\mu\l}\co\wF^\mu(S)\to\wF^\l(S)$ lift to real-analytic automorphisms 
\begin{equation}\label{lift}
\wh{\Phi}_{\mu\l}^\sim\co\wH(S)\to\wH(S),
\end{equation}
which then are determined up to the action of an element of $\G^\mu$,
we have to show that, at the level of fundamental groups, $\wh{\Phi}_{\mu\l}$ induces an inclusion:

\begin{equation}\label{fundinclusion}
\wh{\Phi}_{\mu\l\ast}(\pi_1(\wH(S)))\subseteq \pi_1(\wH(S)).
\end{equation}

For $\mu,\nu\in\L$, we write $\nu\leq\mu$ if the level $\G^\nu$ is contained in $\G^\mu$.
By the short exact sequence~(\ref{shortexact}) and the fact that the congruence topology on $\G^\mu$ is separated, we then have that 
$\pi_1(\wH(S))=\bigcap_{\nu\leq\mu}\pi_1(\wF^{\nu}(S))$.
Moreover, as we observed above, the map $\wh{\Phi}_{\mu\l}$ is \'etale and so the induced map $\wh{\Phi}_{\mu\l\ast}$ is injective. Therefore, we have

\[\wh{\Phi}_{\mu\l\ast}(\pi_1(\wH(S)))=\bigcap_{\nu\leq\mu}\wh{\Phi}_{\mu\l\ast}(\pi_1(\wF^{\nu}(S)))\]
and, in order to prove the inclusion~(\ref{fundinclusion}), we are now reduced to show that there holds

\begin{equation}\label{reduction1}
\bigcap_{\nu\leq\mu}\wh{\Phi}_{\mu\l\ast}(\pi_1(\wF^{\nu}(S)))\subseteq\pi_1(\wH(S)).
\end{equation}

It is actually enough to prove the inclusion~(\ref{reduction1}) for some subset of levels $\{\G^\nu\}_{\nu\in\Nu}$ such that $\G^\nu\leq\G^\mu$ 
for all $\nu\in\Nu$. We can construct this subset in the following way. Let $\L'=\{\xi\in\L|\,\xi\leq\l\}$. Then, for every $\xi\in\L'$, there is $\nu\in\L'$
such that we have a map $\wh{\Phi}_{\nu\xi}\co\wF^\nu(S)\to\wF^\xi(S)$ in the inverse system $\{\wh{\Phi}_{\mu\l}\}_{\l\in\L}$. This map fits in the
commutative diagram

\[ \begin{array}{rcr}
\wF^{\nu}(S) & \stackrel{\widehat\Phi_{\nu\xi}}{\longrightarrow} & \wF^{\xi}(S)\\
\downarrow^{\pi_{\nu\mu}} & &  \downarrow^{\pi_{\xi\l}}\\
\wF^{\mu}(S) & \stackrel{\widehat\Phi_{\mu\l}}{\longrightarrow} & \wF^{\l}(S)\\
\end{array}\]
and so we have $\widehat\Phi_{\mu\l\ast}(\pi_1(\wF^{\nu}(S)))=\widehat\Phi_{\nu\xi\ast}(\pi_1(\wF^{\nu}(S)))\subseteq\pi_1(\wF^{\xi}(S))$. 
Let $\{\G^\nu\}_{\nu\in\Nu}$ be the set of levels thus obtained. Since $\{\G^\xi\}_{\xi\in\L'}$ is a filtering subsystem of the set of all congruence levels,
we have:
\[\bigcap_{\nu\in\Nu}\wh{\Phi}_{\mu\l\ast}(\pi_1(\wF^{\nu}(S)))\subseteq\bigcap_{\xi\in\L'}\pi_1(\wF^{\xi}(S))=\pi_1(\wH(S)).\]

This proves the inclusion~(\ref{reduction1}) and then~(\ref{fundinclusion}) and so the existence of the lift~(\ref{lift}).

Let us denote by $r\co\wH(S)\to\cH(S)$ the natural retraction and by $p\co\cH(S)\to\cF(S)$ the natural projection. 
By construction, the automorphism $\wh{\Phi}_{\mu\l}^\sim\co\wH(S)\to\wH(S)$ 
commutes with the projection $p\circ r$ to the Fulton curve $\cF(S)$. This implies that, for every point $x\in\cH(S)$, we have 
$\wh{\Phi}_{\mu\l}^\sim(r^{-1}(x))=r^{-1}(y)$ for some other point $y\in\cH(S)$ such that $p(y)=p(x)$. Therefore, the automorphism 
$\wh{\Phi}_{\mu\l}^\sim$ is compatible with the retraction map to $\cH(S)$ and so induces a real analytic automorphism
$\wt{\Phi}_{\mu\l}\co\cH(S)\to\cH(S)$ which is indeed a lifting of the map $\Phi_{\mu\l}\co\cF^\mu(S)\to\cF^\l(S)$ with which we started out.
In particular, $\wt{\Phi}_{\mu\l}$ is holomorphic and respects the Farey triangulation of $\cH(S)$. Thus, the lemma follows.

\subsection{Conclusion of the proof of Theorem~\ref{completepantsrigidity}}
By Lemma~\ref{basiclemma}, in order to complete the proof of Theorem~\ref{completepantsrigidity}, it is enough to show that 
the orientation representation $\Aut(\kC_P(S))\to\prod_{\kC(S)_{d(S)-2}} \{\pm 1\}$ of Remark~\ref{compatibility} is constant on 
the $\kG(S)$-orbits of profinite Farey subgraphs of $\kC_P(S)$, that is to say, if an automorphism of $\kC_P(S)$ preserves the
orientation of a profinite Farey subgraph $\wh{F}_\s$, for $\s\in\kC(S)_{d(S)-2}$, then it preserves the orientation of all profinite Farey subgraphs 
$\wh{F}_{f(\s)}$, for $f\in\kG(S)$. But this immediately follows from the fact that, by Lemma~\ref{basiclemma}, the image of $\kG(S)$ in
$\Aut(\kC_P(S))$ is a normal subgroup. Hence, for $\phi\in\Aut(\kC_P(S))$ as above, we have the identity:
\[\phi|_{\wh{F}_{f(\s)}}=\phi\circ\bar f|_{\wh{F}_{\s}}=\bar f'\circ\phi|_{\wh{F}_{\s}},\]
where, for $f,f'\in\kG(S)$, we denote by $\bar f,\bar f'$ their images in $\Aut(\kC_P(S))$. This identity clearly implies the above claim since an element
of $\Aut(\kC_P(S))$ in the image of $\kG(S)$ (by definition) preserves the orientations of all profinite Farey subgraphs of $\kC_P(S)$.

\section{Anabelian properties of moduli stacks of curves}\label{sect:anabelian}
In this section, we present some arithmetic consequences of Theorem~\ref{completepantsrigidity}.
According to Grothendieck's anabelian philosophy (cf.\ \cite{[G]}), moduli stacks of curves should be prime examples of \emph{anabelian} objects,
that is to say they should be reconstructible from their \'etale fundamental groups. Theorem~\ref{mainanabelian} will partially vindicate this conjecture,
with some limitations explained below. In this section, the base field $\k$ is a field over which the anabelian conjecture for hyperbolic curves is valid.
By Mochizuki's theorem in \cite{[Mo]}, we can take $\k$ to be a sub-$p$-adic field, 
that is to say a subfield of a finitely generated extension of $\Q_p$ for some prime $p$ (e.g.\ a number field). 

\subsection{Isomorphisms of stacks}
As recalled in the introduction, the $\hom$ functor on the category of stacks takes values in groupoids. More precisely, $\Hom(X,Y)$,
for each pair of stacks $X$ and $Y$, is a category whose objects are the $1$-morphisms $f\co X\to Y$
and whose (invertible) $2$-morphisms, between two $1$-morphisms $f,g\co X\to Y$, are denoted by a double arrow $\alpha\co f\Rightarrow g$ and
called $2$-morphisms. We denoted the objects of the category $\Hom(X,Y)$ by $1$-$\Hom(X,Y)$ and its morphisms by $2$-$\Hom(X,Y)$.
If $X$ and $Y$ are defined over $\k$, we then denote by $\Hom_\k(X,Y)$ the subcategory of $\Hom(X,Y)$ formed by $1$-morphisms and
$2$-morphisms which are defined over $\k$.

The group of {\em generic automorphisms} of a DM stack $X$ is the local system $\cA$ formed by the maximal subgroups of automorphisms shared
locally by points of $X$ (cf.\ Section~5 in \cite{Ro}, for the precise definition). In the case of the moduli stack of curves $\cM(S)$, this is a nontrivial but
\emph{constant} local system for $S=S_2, S_{1,1}$ and $S_{1,2}$.  It is nontrivial and \emph{nonconstant} for the case $S=S_{0,4}$.

Let us denote by $X\!\!\fatslash \cA$ the DM stack obtained from $X$ by erasing its group of generic automorphisms $\cA$. 
This is called the {\em rigidification of $X$ along $\cA$} (cf.\ Definition~5.1.9 in \cite{ACV} and Section~5 in \cite{Ro}).

Let us assume, for simplicity, that $X$ is connected.
Since $X$ is an $\cA$-gerbe over $X\!\!\fatslash \cA$, it follows that there is a short exact sequence (cf.\ Section~4 in \cite{Noohi}):
\begin{equation}\label{gerbex}
1\to A\to\pi_1^\mathrm{et}(X)\to\pi_1^\mathrm{et}(X\!\!\fatslash\cA)\to 1,
\end{equation}
where $A$ is the fiber of $\cA$ at the base point of $\pi_1^\mathrm{et}(X)$. The associated action of $\pi_1^\mathrm{et}(X\!\!\fatslash\cA)$ on $A$ 
defines a local system on $X\!\!\fatslash\cA$ which we also denote by $\cA$. We then denote by $Z(\cA)$ the local subsystem formed by the
centers of the fibers of $\cA$.

There is a natural outer action of $1\mbox{-}\Aut(X)$ on $\pi_1^\mathrm{et}(X)$ which preserves $A$. 
Let $1\mbox{-}\underline{\Aut}(X)$ be the kernel of the natural homomorphism $1\mbox{-}\Aut(X)\to1\mbox{-}\Aut(X\!\!\fatslash \cA)$. 
Note that, since $1\mbox{-}\underline{\Aut}(X)$ preserves points, it acts on $\pi_1^\mathrm{et}(X)$ (and so on $A$) by genuine automorphisms. 

\begin{lemma}\label{1-2-aut}With the above hypothesis and notations, we have:
\begin{enumerate}
\item The group of $2$-automorphisms $\Aut(\mathrm{id}_X)$ of the identity map $\mathrm{id}_X\co X\to X$ identifies with 
$H^0(X\!\!\fatslash\cA, Z(\cA))$. In particular, $2$-$\Aut(X)$ is a trivial $H^0(X\!\!\fatslash \cA, Z(\cA))$-torsor over $1$-$\Aut(X)$.
\item There is an exact sequence: 
\[1\to H^1(X\!\!\fatslash \cA, Z(\cA))\to 1\mbox{-}\underline{\Aut}(X)\to\Aut(A).\]
\end{enumerate}
\end{lemma}

\begin{proof}(i): The question is local. So we can reduce to the case of the classifying stack $\operatorname{B}\!A$ for which the claim is an easy 
consequence of the various definitions involved.
\smallskip

\noindent
(ii): A $1$-isomorphism of $X$, which induces the identity isomorphism on both $X\!\!\fatslash\cA$ and $A$, restricts to the trivial automorphism
on a local trivializing chart for the gerbe $X\to X\!\!\fatslash\cA$ and then define a $1$-cocycle for the \v{C}ech cohomology of $X\!\!\fatslash\cA$
with coefficients in $Z(\cA)$ (the locally constant sheaf of $2$-automorphisms of $X$ over $X\!\!\fatslash\cA$). This defines an element of 
$H^1(X\!\!\fatslash\cA, Z(\cA))$. The other direction is also clear.
\end{proof}

\begin{remark}\label{2-iso}Note that $H^0(X\!\!\fatslash \cA, Z(\cA))$ identifies with the invariants $(Z(A))^{\pi_1^\mathrm{et}(X)}$
for the natural action of $\pi_1^\mathrm{et}(X)$ on $Z(A)$ induced by conjugation (cf.\ the sequence~(\ref{gerbex})). 
It follows that given another DM stack $Y$, the set $2$-$\Isom(Y,X)$, if nonempty, is a trivial $(Z(A))^{\pi_1^\mathrm{et}(X)}$-torsor over $1$-$\Isom(Y,X)$.
\end{remark}

\subsection{The anabelian conjecture for moduli stacks of curves}
Let $\cM^\l\ra\cM(S)$ be a (possibly trivial) level structure defined over a sub-$p$-adic field $\k$. 
By Grothendieck's theory of the \'etale fundamental group and its generalization to DM stacks (cf.\ \cite{Noohi} and \cite{Oda}, for instance), 
for a geometric base point $\ol\xi$ on $\cM^\l_\k$, the structural morphism $\cM^\l_\k\ra\Spec\k$ induces the short exact sequence 
of \'etale fundamental groups:
\begin{equation}\label{fundamentalseq}
1\to\pi_1^\mathrm{et}(\cM^\l_{\ol{\k}},\ol{\xi})\to\pi_1^\mathrm{et}(\cM^\l_\k,\ol{\xi})\to G_\k\ra 1.
\end{equation}

The left term  $ \pi_1^\mathrm{et}(\cM^\l_{\ol{\k}},\ol{\xi})$ of this short exact sequence is the \emph{geometric \'etale fundamental group of} 
$\cM^\l_\k$. After fixing an embedding $\ol{\k}\hookra\C$, this is naturally isomorphic to the profinite completion of the topological fundamental group  
of the complex analytic stack associated to the complex DM stack $\cM^\l_\C$ with base point $\ol{\xi}$ and then $\pi_1^\mathrm{et}(\cM^\l_{\ol{\k}},\ol{\xi})$ 
can be identified with the profinite completion of the level $\G^\l$ of the mapping class group $\G(S)$. 
The rightmost map $\pi_1^\mathrm{et}(\cM^\l_\k,\ol{\xi})\to G_\k$, in the above short exact 
sequence~(\ref{fundamentalseq}), is called the \emph{augmentation map}.

For another (possibly trivial) level structure $\cM^\mu\ra\cM(S')$, also defined over $\k$, and a geometric base point $\ol\xi'$ on $\cM^\mu_\k$, 
the set of isomorphisms $\pi_1^\mathrm{et}(\cM^\l_\k,\ol{\xi})\to \pi_1^\mathrm{et}(\cM^\mu_\k,\ol{\xi}')$ which are compatible 
with  the augmentation maps to the Galois group $G_\k$ is denoted 
\[\Isom_{G_\k}(\pi_1^\mathrm{et}(\cM^\l_\k,\ol{\xi}),\pi_1^\mathrm{et}(\cM^\mu_\k,\ol{\xi}')).\] 
Then, we denote by
\[ \Isom_{G_\k}(\pi_1^\mathrm{et}(\cM^\l_\k,\ol{\xi}),\pi_1^\mathrm{et}(\cM^\mu_\k,\ol{\xi}'))^\mathrm{out}\] 
the set of orbits of $\Isom_{G_\k}(\pi_1^\mathrm{et}(\cM^\l_\k,\ol{\xi}),\pi_1^\mathrm{et}(\cM^\mu_\k,\ol{\xi}'))$ 
by the inner action of the fundamental group $\pi_1^\mathrm{et}(\cM^\mu_{\ol{\k}},\ol{\xi}')$. 
The latter set is independent of the choice of base points. 

There is an alternative description of the above sets which only involves the geometric part of the \'etale fundamental group. The short exact
sequences~(\ref{fundamentalseq}) for the levels $\l$ and $\mu$ define representations $G_\k\to\Out(\pi_1^\mathrm{et}(\cM^\l_{\ol{\k}},\ol{\xi}))$ 
and $G_\k\to\Out(\pi_1^\mathrm{et}(\cM^\mu_{\ol{\k}},\ol{\xi}'))$. Then, from the group theoretic Corollary~1.5.7 in \cite{[N]}, 
it follows that there is a canonical bijection
\[\Isom_{G_\k}(\pi_1^\mathrm{et}(\cM^\l_\k,\ol{\xi}),\pi_1^\mathrm{et}(\cM^\mu_\k,\ol{\xi}'))
\simeq\Isom_{G_\k}(\pi_1^\mathrm{et}(\cM^\l_{\ol{\k}},\ol{\xi}),\pi_1^\mathrm{et}(\cM^\mu_{\ol{\k}},\ol{\xi}')),\]
where $\Isom_{G_\k}(\pi_1^\mathrm{et}(\cM^\l_{\ol{\k}},\ol{\xi}),\pi_1^\mathrm{et}(\cM^\mu_{\ol{\k}},\ol{\xi}'))$ denotes the set 
of isomorphisms which are $G_\k$-equivariant modulo inner automorphisms. Taking the respective sets of orbits 
by the inner action of the fundamental group $\pi_1^\mathrm{et}(\cM^\mu_{\ol{\k}},\ol{\xi}')$, we then get a canonical bijection:
\begin{equation}\label{geometricformulation}\Isom_{G_\k}(\pi_1^\mathrm{et}(\cM^\l_\k,\ol{\xi}),\pi_1^\mathrm{et}(\cM^\mu_\k,\ol{\xi}'))^\mathrm{out}\simeq
\Isom_{G_\k}(\pi_1^\mathrm{et}(\cM^\l_{\ol{\k}},\ol{\xi}),\pi_1^\mathrm{et}(\cM^\mu_{\ol{\k}},\ol{\xi}'))^\mathrm{out}.
\end{equation}

\begin{definition}\label{genericaut}
For a level structure $\cM^\l\ra\cM(S)$, let us denote by ${\mathcal Z}^\l$ the group of generic automorphisms of the DM stack $\cM^\l$ (for $\cM^\l=\cM(S)$, 
we simply let ${\mathcal Z}^\l={\mathcal Z}$). 
\end{definition}

Note that, for $\G^\l$ a level of $\G(S)$ and $S\neq S_{0,4}$, the group of generic automorphisms ${\mathcal Z}^\l$ of the stack $\cM^\l$ 
is a constant local system whose fibers identify with the center of $\G^\l$. 
For $S= S_{0,4}$, the group ${\mathcal Z}^\l$ of generic automorphisms of the stack $\cM^\l$ is, in general, a nonconstant local system 
whose fibers are isomorphic to the intersection $K^\l:=\G^\l\cap K_4$, where $K_4$ is the Klein subgroup of $\G(S_{0,4})$.

\begin{definition}\label{centerMCG}For $\G^\l$ a level of $\G(S)$ and $S\neq S_{0,4}$, we denote by $Z^\l$ the center of $\G^\l$. For a
congruence level, by Theorem~\ref{slim}, we can identify $Z^\l$ also with the center of $\kG^\l$. 
Since, in this case, ${\mathcal Z}^\l$ is a constant local system, we simply also denote ${\mathcal Z}^\l$ by $Z^\l$. 

For $S= S_{0,4}$, we instead put $Z^\l=\G^\l\cap K_4$, where $K_4$ is the Klein subgroup of $\G(S_{0,4})$. 
By Theorem~\ref{slim}, we can identify $Z^\l$ with the maximal normal finite subgroup of $\kG^\l=\hG^\l$.

For $\G^\l=\G(S)$, we simply let $Z^\l=Z$.
\end{definition}

The fibers of the groups of generic automorphisms of $\cM^\l$ and $\cM^\mu$ over the base points $\ol{\xi}$ and $\ol{\xi}'$ can, in all cases, 
be identified with normal (abelian) subgroups $Z^\l$ and $Z^\mu$ of $\pi_1^\mathrm{et}(\cM^\l_\k,\ol{\xi})$ and 
$\pi_1^\mathrm{et}(\cM^\mu_\k,\ol{\xi}')$, respectively. 

Then, by Remark~\ref{2-iso}, if the set $1$-$\Isom_{G_\k}(\pi_1^\mathrm{et}(\cM^\l_\k,\ol{\xi}),\pi_1^\mathrm{et}(\cM^\mu_\k,\ol{\xi}'))^\mathrm{out}$ 
is nonempty, the set $2$-$\Isom_{G_\k}(\pi_1^\mathrm{et}(\cM^\l_\k,\ol{\xi}),\pi_1^\mathrm{et}(\cM^\mu_\k,\ol{\xi}'))^\mathrm{out}$ has a natural structure 
of trivial $(Z^\mu)^{\pi_1^\mathrm{et}(\cM^\mu_\k,\ol{\xi}')}$-torsor 
over the set $1$-$\Isom_{G_\k}(\pi_1^\mathrm{et}(\cM^\l_\k,\ol{\xi}),\pi_1^\mathrm{et}(\cM^\mu_\k,\ol{\xi}'))^\mathrm{out}$.

We formulate an {\em anabelian conjecture for moduli stacks of curves with level structure} as the assertion that the groupoid
$\Isom_\k(\cM^\l_\k,\cM^\mu_\k)$ can be recovered from the augmented arithmetic fundamental groups of $\cM^\l_\k$ and $\cM^\mu_\k$
in the following way:

\begin{conjecture}\label{anabelianconjmod}With the above notations, we have:
\begin{enumerate}
\item There is a natural bijection:
\[1\mbox{-}\Isom_\k(\cM^\l_\k,\cM^\mu_\k)\stackrel{\sim}{\to}
\Isom_{G_\k}(\pi_1^\mathrm{et}(\cM^\l_\k,\ol{\xi}),\pi_1^\mathrm{et}(\cM^\mu_\k,\ol{\xi}'))^\mathrm{out}.\]
\item The fiber $Z^\mu$ over $\ol{\xi}'$ of the group of generic automorphisms of $\cM^\mu_\k$ identifies with the maximal normal finite subgroup
of the group $\pi_1^\mathrm{et}(\cM^\mu_{\ol{\k}},\ol{\xi}')$.
\end{enumerate}
\end{conjecture}
 
\subsection{The procongruence anabelian conjecture for moduli stacks}
We were not able to prove the anabelian conjecture as stated above and we need to modify it in two ways. 
In the first place, we have to replace the profinite completion with the procongruence completion and then to impose a 
further condition on automorphisms (cf.\ Definition~\ref{starconditioniso}). 

In this setting, we proceed as follows. Let us recall that the moduli stack $\cM(S)$ carries a universal $n$-punctured, genus $g$ smooth curve
$\cC(S)\to\cM(S)$, where $n=n(S)$ and $g=g(S)$. Let then $\cC^\l\to\cM^\l$ be the pull-back of this curve via the natural morphism $\cM^\l\to\cM(S)$ 
and let $\cC^\l_{\ol\xi}$ be its fiber over the geometric point $\ol\xi$. There is a short exact sequence of algebraic fundamental groups:
\[1\to\pi_1^\mathrm{et}(\cC^\l_{\ol\xi},\tilde{\xi})\to\pi_1^\mathrm{et}(\cC^\l,\tilde{\xi})\to\pi_1^\mathrm{et}(\cM^\l_\k,\ol{\xi})\ra 1.\]
The associated outer representation
\[\rho^\l\co\pi_1^\mathrm{et}(\cM^\l_\k,\ol{\xi})\to\Out(\pi_1^\mathrm{et}(\cC^\l_{\ol\xi},\tilde{\xi}))\]
is the {\it universal \'etale monodromy representation} associated to the level structure $\cM^\l_\k$. 

We then set $\check{\pi}_1(\cM^\l_\k,\ol{\xi})=\rho^\l(\pi_1^\mathrm{et}(\cM^\l_\k,\ol{\xi}))$
and $\check{\pi}_1(\cM^\l_{\ol{\k}},\ol{\xi})=\rho^\l(\pi_1^\mathrm{et}(\cM^\l_{\ol{\k}},\ol{\xi}))$, respectively. 
Note that $\check{\pi}_1(\cM^\l_{\ol{\k}},\ol{\xi})$ can be identified with the congruence completion $\kG^\l$ of the level $\G^\l$.

Hoshi and Mochizuki in \cite{HM} showed that the kernel of $\rho^\l$ identifies with the congruence kernel, i.e.\ the kernel of the natural epimorphism 
$\hG(S)\to\kG(S)$ (see also Corollary~7.10 in \cite{[B3]}). Therefore, the short exact sequence~(\ref{fundamentalseq}) descends to a
short exact sequence:
\begin{equation}\label{fundamentalseq2}
1\to\check{\pi}_1(\cM^\l_{\ol{\k}},\ol{\xi})\to\check{\pi}_1(\cM^\l_\k,\ol{\xi})\to G_\k\ra 1.
\end{equation}
In particular, there is an \emph{augmentation map} $\check{\pi}_1(\cM^\l_\k,\ol{\xi})\to G_\k$.  

As above, the set of isomorphisms $\check{\pi}_1(\cM^\l_\k,\ol{\xi})\to \check{\pi}_1(\cM^\mu_\k,\ol{\xi}')$,
which are compatible with the augmentation maps to the absolute Galois group $G_\k$, is then denoted by
\[ \Isom_{G_\k}(\check{\pi}_1(\cM^\l_\k,\ol{\xi}),\check{\pi}_1(\cM^\mu_\k,\ol{\xi}'))\]
and we let
\[ \Isom_{G_\k}(\check{\pi}_1(\cM^\l_\k,\ol{\xi}),\check{\pi}_1(\cM^\mu_\k,\ol{\xi}'))^\mathrm{out}\] 
be the set of orbits of this set by the inner action of the group $\check{\pi}_1(\cM^\mu_{\ol{\k}},\ol{\xi}')$.

Let $\kG^\l$ and $\kG^\mu$ be open subgroups of $\kG(S)$ and $\kG(S')$, respectively, such that the associated level structures
are both defined over $\k$. Then, the \emph{procongruence anabelian conjecture for congruence level structures} states that:

\begin{conjecture}\label{anabelianconjcong}With the above notations, we have:
\begin{enumerate}
\item There is a natural bijection:
\[1\mbox{-}\Isom_\k(\cM^\l_\k,\cM^\mu_\k)\stackrel{\sim}{\to}\Isom_{G_\k}(\check{\pi}_1(\cM^\l_\k,\ol{\xi}),\check{\pi}_1(\cM^\mu_\k,\ol{\xi}'))^\mathrm{out}.\]
\item The fiber $Z^\mu$ over $\ol{\xi}'$ of the group of generic automorphisms of $\cM^\mu_\k$ identifies with the maximal normal finite subgroup
of the group $\check{\pi}_1(\cM^\mu_{\ol{\k}},\ol{\xi}')$.
\end{enumerate}
\end{conjecture}

The isomorphism in the first item of Conjecture~\ref{anabelianconjcong} can be formulated in terms of procongruence mapping class groups
as follows. The short exact sequence~(\ref{fundamentalseq}), applied to the levels $\l$ and $\mu$, defines representations 
$G_\k\to \Out(\check{\pi}_1(\cM^\l_{\ol{\k}},\ol{\xi}))$ and $G_\k\to \Out(\check{\pi}_1(\cM^\mu_{\ol{\k}},\ol{\xi}'))$. We then define
\[ \Isom_{G_\k}(\check{\pi}_1(\cM^\l_{\ol{\k}},\ol{\xi}),\check{\pi}_1(\cM^\mu_{\ol{\k}},\ol{\xi}'))\] 
to be the set of isomorphisms which are $G_\k$-equivariant modulo inner automorphisms and 
\[\Isom_{G_\k}(\check{\pi}_1(\cM^\l_{\ol{\k}},\ol{\xi}),\check{\pi}_1(\cM^\mu_{\ol{\k}},\ol{\xi}'))^\mathrm{out}\] 
to be its set of orbits by the inner action of $\check{\pi}_1(\cM^\mu_{\ol{\k}},\ol{\xi}')$.

By the group theoretic Corollary~1.5.7 in \cite{[N]}, there are natural bijections
\[\Isom_{G_\k}(\check{\pi}_1(\cM^\l_\k,\ol{\xi}),\check{\pi}_1(\cM^\mu_\k,\ol{\xi}'))\simeq
\Isom_{G_\k}(\check{\pi}_1(\cM^\l_{\ol{\k}},\ol{\xi}),\check{\pi}_1(\cM^\mu_{\ol{\k}},\ol{\xi}'))\]
and
\[\Isom_{G_\k}(\check{\pi}_1(\cM^\l_\k,\ol{\xi}),\check{\pi}_1(\cM^\mu_\k,\ol{\xi}'))^\mathrm{out}\simeq
\Isom_{G_\k}(\check{\pi}_1(\cM^\l_{\ol{\k}},\ol{\xi}),\check{\pi}_1(\cM^\mu_{\ol{\k}},\ol{\xi}'))^\mathrm{out}.\]

After identifying $\check{\pi}_1(\cM^\l_{\ol{\k}},\ol{\xi})$ with $\kG^\l$ and $\check{\pi}_1(\cM^\mu_{\ol{\k}},\ol{\xi}')$ 
with $\kG^\mu$, we get the natural bijection:
\begin{equation}\label{geometricformulation2}
\Isom_{G_\k}(\check{\pi}_1(\cM^\l_\k,\ol{\xi}),\check{\pi}_1(\cM^\mu_\k,\ol{\xi}'))^\mathrm{out}\simeq\Isom_{G_\k}(\kG^\l,\kG^\mu)^\mathrm{out}.
\end{equation}
So that we can now reformulate item (i) of Conjecture~\ref{anabelianconjcong} as the statement that there is a natural bijection:
\[1\mbox{-}\Isom_\k(\cM^\l_\k,\cM^\mu_\k)\stackrel{\sim}{\to}\Isom_{G_\k}(\kG^\l,\kG^\mu)^\mathrm{out}.\]

Conjecture~\ref{anabelianconjcong} is somewhat more treatable than Conjecture~\ref{anabelianconjmod}
thanks to the results in \cite{[B3]} and \cite{congtop} (cf.\ Section~\ref{section:isomorphism} and Section~\ref{section:prodehntwists}). 
For one thing, the second item of the conjecture follows from Theorem~\ref{slim}. However, in order to get some grip on the set 
$\Isom_{G_\k}(\kG^\l,\kG^\mu)^\mathrm{out}$, we need to relate this to the procongruence curve complexes we introduced
in Section~\ref{sect3} and studied thereafter. In other words, we need, for instance, to restrict to those automorphisms of a procongruence
mapping class group which induce an automorphism of the corresponding procongruence curve complex.
For this, we need one more definition:

\begin{definition}\label{starconditioniso}For $\kG^\l$ and $\kG^\mu$ open subgroups of $\kG(S)$ and $\kG(S')$, respectively, we
let $\Isom_{G_\k}^\ast(\kG^\l,\kG^\mu)$ be the set of isomorphisms which are $G_\k$-equivariant modulo  inner automorphisms  
of $\kG^\mu$ and map each subgroup of $\kG^\l$ in the set $\{\kG^\l_\g\}_{\g\in\hL(S)_0}$ onto a subgroup of $\kG^\mu$ in the set 
$\{\kG^\mu_\delta\}_{\delta\in\hL(S')_0}$. We then let $\Isom_{G_\k}^\ast(\kG^\l,\kG^\mu)^\mathrm{out}$ be its set of orbits for the 
action of $\kG^\mu$ by inner automorphisms.
\end{definition}  

In the topological case, as we already observed in Section~\ref{rigiditycurvecomplex}, all isomorphisms of mapping class groups
satisfy a version of the $\ast$-condition. This deep result depends upon a characterization of the so called pseudo-Anosov elements
in terms of their centralizers in the mapping class group. A similar result is not available for the procongruence mapping class group.
Therefore, even though we expect that, for $d(S)$ and $d(S')>1$, the $\ast$-condition, as formulated in Definition~\ref{starconditioniso}, 
is satisfied by all isomorphisms (even those who are not $G_\k$-equivariant modulo  inner automorphisms), at the current state of knowledge, 
we need to assume this as an hypothesis. We can now formulate our anabelian result:

\begin{theorem}\label{mainanabelian}Let $\kG^\l$ and $\kG^\mu$ be open subgroups of $\kG(S)$ and $\kG(S')$, respectively,
(where we assume that $S,S'\neq S_{0,4}$) and $\k$ a sub-$p$-adic field over which the associated level structures are both defined. 
We then have:
\begin{enumerate}
\item there is a natural bijection $1$-$\Isom_\k(\cM^\l_\k,\cM^\mu_\k)\stackrel{\sim}{\to}\Isom_{G_\k}^\ast(\kG^\l,\kG^\mu)^\mathrm{out}$;
\item the group of generic automorphisms $Z^\mu$ of $\cM_\k^\mu$ identifies with the center of $\kG^\mu$.
\end{enumerate}
\end{theorem}

\begin{remark}For $\kG^\l=\kG^\mu=\kG_{0,n}$ and then, by the genus $0$ case of the congruence subgroup property for mapping class groups, 
$\kG^\l=\kG^\mu=\hG_{0,n}$, a version of Theorem~\ref{mainanabelian} without the $\ast$-condition was proved by Nakamura and Tamagawa 
(cf.\ Theorem~C and Corollary~C in \cite{IN}). Since moduli spaces of $n$-pointed, genus $0$ curves over a field $\k$ are examples of 
\emph{strongly hyperbolic Artin neighbourhood} (resp.\ \emph{hyperbolic polycurves}) in the sense of Definition~6.1 in \cite{ScSt}
(resp.\ Definition~1.9 in \cite{Hoshi}), this is also a particular case of the much more general Corollary~1.6 in \cite{ScSt} over a finite 
extension of $\Q$ (resp.\ Theorem~B in \cite{Hoshi} over a sub-$p$-adic field). 
\end{remark}

\subsection{Transporters and $G_\k$-isomorphisms}Let us recall (cf.\ Definition~\ref{transporter}) that, for two subgroups 
$H_1$ and $H_2$ of a group $G$, the transporter $T_{G}(H_1,H_2)$ of $H_1$ onto $H_2$
is the set of elements $\phi\in\Inn G$ such that $\phi(H_1)=H_2$. 
The following result, of independent interest, is then the first step in the proof of Theorem~\ref{mainanabelian}: 

\begin{theorem}\label{mainanabelian2}Let $\kG^\l$ and $\kG^\mu$ be open subgroups of $\kG(S)$ and let
$\k$ be a sub-$p$-adic field such that all automorphisms of $\cM^\l_\k$ and $\cM^\mu_\k$ are defined over $\k$.
Then, there is a natural continuous $\kG^\mu$-equivariant bijection:
\[\Isom_{G_\k}^\ast(\kG^\l/ Z^\l,\kG^\mu/ Z^\mu)\stackrel{\sim}{\to}T_{\kG(S)/Z}(\kG^\l/ Z^\l,\kG^\mu/ Z^\mu),\]
where  $Z^\l$, $Z^\mu$ and $Z$ are as in Definition~\ref{centerMCG}.  
\end{theorem}

\begin{proof}Let us consider first the case $d(S)=1$ (modular dimension $1$), which is the one we can directly reduce to Mochizuki anabelian theorem. 
Since $\kG(S_{1,1})/\langle\iota\rangle$ identifies with $\kG(S_{0,4})/K_4$ and $\cM(S_{1,1})\!\!\!\fatslash\langle\iota\rangle$ with 
$\cM(S_{0,4})\!\!\!\fatslash K_4$, the case $S=S_{1,1}$ reduces to the case $S=S_{0,4}$ and so we can assume that $S=S_{0,4}$.

Let us observe first that the weight characterization of inertia subgroups of profinite surface groups 
by Nakamura (cf.\ Theorem~2.1.1 in \cite{[N]}) easily generalizes to the inertia subgroups of profinite Fuchsian groups like $\kG^\l/Z^\l=\hG^\l/Z^\l$ 
and $\kG^\mu/Z^\mu=\hG^\mu/Z^\mu$. Let us then observe that the sets $\{\kG^\l_\g/Z^\l\}_{\g\in\hL(S_{0,4})_0}$ and 
$\{\kG^\mu_\delta/Z^\mu\}_{\delta\in\hL(S_{0,4})_0}$ of Definition~\ref{starconditioniso} coincide with the sets of inertia subgroups
considered by Nakamura in Section~2.1 of \cite{[N]}. Therefore, by Theorem~2.1.1 in \cite{[N]} (or, more generally, its version for profinite 
Fuchsian groups), we have that:
\[\Isom_{G_\k}(\hG^\l/Z^\l,\hG^\mu/Z^\mu)=\Isom_{G_\k}^\ast(\hG^\l/Z^\l,\hG^\mu/Z^\mu).\]
 
By Remark~\ref{grouptheoretic}, for $f\in\Isom_{G_\k}(\hG^\l/Z^\l,\hG^\mu/Z^\mu)$, the assignment $\g\mapsto f_\ast(\g)$, where
$\g\in\hL(S_{0,4})_0$ and $\hG^\mu_{f_\ast(\g)}/Z^\mu:=f(\hG^\l_\g/Z^\l)$, then defines an isomorphism $f_\ast\co \hC(S_{0,4})\to\hC(S_{0,4})$.
So there is a natural continuous $\hG^\mu$-equivariant map 
\[\Isom_{G_\k}(\hG^\l/Z^\l,\hG^\mu/Z^\mu)\to\Aut(\hC(S_{0,4})),\]
compatible with the natural embedding of the transporter $T_{\wh{\PSL}_2(\Z)}(\hG^\l/Z^\l,\hG^\mu/Z^\mu)$ inside 
$\Isom_{G_\k}(\hG^\l/Z^\l,\hG^\mu/Z^\mu)$ and its natural action on $\hC(S_{0,4})$. 
 
From (i) of Theorem~\ref{faithfulness}, it follows that the above map is injective. Since $\hC(S_{0,4})$ is the vertex set of the profinite Farey graph 
$\hC_P(S_{0,4})$, in order to prove the case $d(S)=1$ of Theorem~\ref{mainanabelian2}, by Proposition~\ref{completepantsrigidity1}, 
we have to show that this action also preserves the edges of the profinite Farey graph $\wh{F}$ and its orientation  as defined in 
Section~\ref{section:orientations}. It is then enough to show that, for $f\in \Isom_{G_\k}(\hG^\l/Z^\l,\hG^\mu/Z^\mu)$ and a normal 
finite index subgroup $\hG^{\l'}$ of $\hG^\l/Z^\l$, which we assume is contained in $\hG_{0,4}\subset\hG_{0,[4]}/Z$, the restriction 
$f\co\hG^{\l'}\to\hG^{f(\l')}$, where $\hG^{f(\l')}:=f(\hG^{\l'})\leq\hG^\mu/Z^\mu$, induces a map of graphs $f'\co C_P^{\l'}(S_{0,4})\to C_P^{f(\l')}(S_{0,4})$, 
which preserves the orientations.

From the group theoretic Corollary~1.5.7 in \cite{[N]}, it follows that the isomorphism $f$ extends to an isomorphism 
$\tilde{f}\co\pi_1^\mathrm{et}(\cM^\l_\k\!\!\fatslash Z^\l)\to\pi_1^\mathrm{et}(\cM^\mu_\k\!\!\fatslash Z^\mu)$ compatible with the 
augmentation maps to $G_\k$. Let $\k'$ be a finite extension of $\k$ such that the level structures associated to $\hG^{\l'}$ and $\hG^{f(\l')}$ 
can be both defined over $\k'$, so that there are short exact sequences of \'etale fundamental groups:
\[\begin{array}{l}
1\to\hG^{\l'}\to\pi_1^\mathrm{et}(\cM^{\l'}_{\k'})\to G_{\k'}\to 1,\\
\\
1\to\hG^{f(\l')}\to\pi_1^\mathrm{et}(\cM^{f(\l')}_{\k'})\to G_{\k'}\to 1.
\end{array}\]
Then, $\tilde{f}$ restricts to an isomorphism $\tilde{f}\co\pi_1^\mathrm{et}(\cM^{\l'}_{\k'})\to\pi_1^\mathrm{et}(\cM^{f(\l')}_{\k'})$ compatible with the 
augmentation maps to $G_{\k'}$. Let us observe that both stacks $\cM^{\l'}_{\k'}$ and $\cM^{f(\l')}_{\k'}$ associated to the levels 
$\hG^{\l'}$ and $\hG^{f(\l')}$ are representable and hence are smooth curves defined over $\k'$. 

Thus, from Theorem~16.5 in \cite{[Mo]}, we conclude that $\tilde{f}$ (and so $f$) 
is induced by a $\k'$-isomorphism of algebraic curves $\alpha_f\co\cM^{\l'}_{\k'}\to\cM^{f(\l')}_{\k'}$. 
This isomorphism then induces the orientation preserving isomorphism $f'\co C_P^{\l'}(S_{0,4})\to C_P^{f(\l')}(S_{0,4})$ we were looking for.
\medskip

For $d(S)>1$, let us observe first that, since $\kG(S_{1,2})/\langle\iota\rangle$ identifies with an open subgroup of $\kG(S_{0,5})$ and 
$\cM(S_{1,2})\!\!\fatslash\langle\iota\rangle$ with the associated level structure over $\cM(S_{0,5})$ (resp.\ $\kG(S_{2})/\langle\iota\rangle$ 
identifies with an open subgroup of $\kG(S_{0,6})$ and $\cM(S_{2})\!\!\fatslash\langle\iota\rangle$ with the associated level structure over 
$\cM(S_{0,6})$), the cases $S=S_{1,2}$ and $S_2$ reduce to the cases $S=S_{0,5}$ and $S_{0,6}$, respectively. 
Therefore, we can assume that $S\neq S_{1,2},S_2$ and, in particular, $Z(\kG(S))=\{1\}$.

By Remark~\ref{grouptheoretic}, for $f\in\Isom_{G_\k}^\ast(\kG^\l,\kG^\mu)$, the assignment $\g\mapsto f_\ast(\g)$, where
$\g\in\hL(S_{g,n})_0$ and $\hG^\mu_{f_\ast(\g)}:=f(\hG^\l_\g)$, defines a continuous bijection 
$f_\ast\co \hC(S_{g,n})_0\to\hC(S_{g,n})_0$. The same argument which we used in  the proof of Proposition~\ref{inertiapreservingprop} 
then shows that this map extends to an isomorphism $\psi(f)\co\kC(S_{g,n})\to\kC(S_{g,n})$. It follows that there is a natural continuous 
$\kG^\mu$-equivariant map: 
\[\psi\co\Isom_{G_\k}^\ast(\kG^\l,\kG^\mu)\to\Aut(\kC(S_{g,n})),\]
compatible with the natural embedding of $T_{\kG(S)}(\kG^\l,\kG^\mu)$ in $\Isom_{G_\k}^\ast(\kG^\l,\kG^\mu)$ 
and its natural faithful action on the curve complex $\kC(S_{g,n})$. From (i) of Theorem~\ref{faithfulness}, it then follows that $\psi$ is injective. 

Therefore, in order to conclude the proof Theorem~\ref{mainanabelian2} for $S\neq S_{0,4}$, we have to show that, for any 
$f\in\Isom_{G_\k}^\ast(\kG^\l,\kG^\mu)$, its image $\psi(f)\in\Aut(\hC(S_{g,n}))$ induces a compatible element of the transporter 
$T_{\kG(S)}(\kG^\l,\kG^\mu)$. 

The automorphism $\psi(f)$ 
acts faithfully on the set of vertices of the procongruence pants complex $\kC_P(S_{g,n})$. By Theorem~\ref{completepantsrigidity},  
it is then enough to show that this action preserves the edges of $\kC_P(S_{g,n})$ and the orientations 
of its profinite Farey subgraphs.

For simplicity, let us denote $\psi(f)$ simply by $f$.
By Lemma~\ref{produalgraph}, $f$ induces an automorphism of the dual graph $\kC^\ast(S_{g,n})$ of $\kC(S_{g,n})$. 
By (iii) of Lemma~\ref{edgespantsgraph}, for an edge $e$ of $\kC^\ast(S_{g,n})$ with vertices $v_0$ and $v_1$, the intersection
$\Star_{v_0}\cap\Star_{v_1}$ is the vertex set of a profinite Farey subgraph $\wh{F}_e$ of $\kC_P(S_{g,n})$. In particular, $f$ maps the vertex set of 
$\wh{F}_e$ to the vertex set of $\wh{F}_{f(e)}$. So, for every $(d_{g,n}-1)$-dimensional simplex $\s$ of $\kC(S_{g,n})$, the automorphism
$f$ maps the vertex set of the profinite Farey graph $\wh{F}_\s$ to the vertex set of $\wh{F}_{f(\s)}$. In order to complete the proof 
we have to show that $f$ maps every edge of $\wh{F}_\s$ to an edge of $\wh{F}_{f(\s)}$ and that the resulting isomorphism
$\wh{F}_\s\stackrel{\sim}{\to}\wh{F}_{f(\s)}$ is orientation preserving. This can be reduced to the case of modular dimension $1$, treated in the
first part of the proof.

In fact, the profinite Farey graphs $\wh{F}_\s$ and $\wh{F}_{f(\s)}$ identify, respectively, with the profinite Farey graphs associated to the $1$-dimensional
open strata $\dot{\Delta}_\s^\l$ and $\dot{\Delta}_{f(\s)}^\mu$ of the DM boundary of $\cM^\l_\k$ and $\cM^\mu_\k$ parameterized by $\s$ and $f(\s)$, 
respectively.

By the natural bijection~(\ref{geometricformulation2}), the given isomorphism $f\in\Isom_{G_\k}^\ast(\kG^\l,\kG^\mu)$ 
extends to an isomorphism: 
\[\tilde{f}\co\check{\pi}_1(\cM^\l_\k)\to\check{\pi}_1(\cM^\mu_\k),\]
compatible with the augmentation maps to $G_\k$. 
Let $\k'$ be a finite extension of $\k$ such that the both $\dot{\Delta}_\s^\l$ and $\dot{\Delta}_{f(\s)}^\mu$ can be defined over $\k'$. 
Then, $\tilde{f}$ induces an isomorphism
\[\tilde{f}_\s\co\pi_1^\mathrm{et}(\dot{\Delta}_\s^\l\times\Spec\k'\!\!\fatslash Z^\l_\s)\to
\pi_1^\mathrm{et}(\dot{\Delta}_{f(\s)}^\mu\times\Spec\k'\!\!\fatslash Z^\mu_{f(\s)}),\]
compatible with the augmentation maps to $G_{\k'}$, where $ Z^\l_\s$ and $Z^\mu_{f(\s)}$ are the generic automorphisms groups 
of $\dot{\Delta}_\s^\l$ and $\dot{\Delta}_{f(\s)}^\mu$, respectively. 

Arguing as in the first part of the proof, we then see that $\tilde{f}_\s$ 
induces an orientation preserving isomorphism between the profinite Farey graphs $\wh{F}_\s$ and $\wh{F}_{f(\s)}$.
\end{proof}

As a consequence of the generalized Royden's theorems, we can give a more geometric formulation of Theorem~\ref{mainanabelian2} 
which is a further step in the proof of Theorem~\ref{mainanabelian}: 

\begin{theorem}\label{mainanabelian3}For $\kG^\l$ and $\kG^\mu$ open subgroups of $\kG(S)$ and $\k$ a sub-$p$-adic field such that 
all automorphisms of $\cM^\l_\k$ and $\cM^\mu_\k$ are defined over $\k$, there is a natural bijection:
\begin{equation}\label{erased}
\Isom_{G_\k}^\ast(\kG^\l/ Z^\l,\kG^\mu/ Z^\mu)^\mathrm{out}\stackrel{\sim}{\to}
1\mbox{-}\Isom_\k(\cM^\l_\k\!\!\fatslash {\mathcal Z}^\l,\cM^\mu_\k\!\!\fatslash {\mathcal Z}^\mu),
\end{equation}
where  $Z^\l$, $Z^\mu$ and ${\mathcal Z}^\l$, ${\mathcal Z}^\mu$ are as in Definition~\ref{centerMCG} and Definition~\ref{genericaut}, respectively.  
\end{theorem}

For the proof of Theorem~\ref{mainanabelian3}, we will need the following definition and two lemmas:

\begin{definition}\label{outtransporter}For $H_1$ and $H_2$ subgroups of a group $G$, we let $T_{G}(H_1,H_2)^\mathrm{out}$ 
be the set of orbits of $T_{G}(H_1,H_2)$ under the action of $\Inn H_2$.  
\end{definition}

\begin{lemma}\label{transportercomp}Let $\phi\co G\to\check{G}$ be a homomorphism from a group to a profinite group with dense image. Let $\check{U}$, 
$\check{V}$ be open subgroups of $\check{G}$ and put $U:=\phi^{-1}(\check{U})$, $V:=\phi^{-1}(\check{V})$. Then, $\phi$ induces a natural bijection:
$T_G(U,V)^\mathrm{out}\stackrel{\sim}{\to}T_{\check{G}}(\check{U},\check{V})^\mathrm{out}$.
\end{lemma}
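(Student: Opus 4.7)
The plan is to define the natural map $\phi_\ast\co T_G(U,V)^{\mathrm{out}}\to T_{\check{G}}(\check{U},\check{V})^{\mathrm{out}}$ by sending the class of $g\in G$ with $gUg^{-1}=V$ to the class of $\phi(g)$, and to show it is a bijection. The structural fact that makes everything run is that $\phi(U)$ is dense in $\check{U}$: indeed, $\check{U}$ is an open, hence closed, subgroup of the profinite group $\check{G}$, so $\ol{\phi(U)}\subseteq\check{U}$; conversely, given any non-empty relatively open $W\subseteq\check{U}$, the set $W$ is open in $\check{G}$, so density of $\phi(G)$ provides $g\in G$ with $\phi(g)\in W$, and then $g\in\phi^{-1}(\check{U})=U$. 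The same argument applies to $V$.

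First I would check well-definedness. If $gUg^{-1}=V$, then $\phi(g)\phi(U)\phi(g)^{-1}=\phi(V)$; taking closures in $\check{G}$ and using the density observation above yields $\phi(g)\check{U}\phi(g)^{-1}=\check{V}$, so $\phi(g)\in T_{\check{G}}(\check{U},\check{V})$. If $g'=vg$ with $v\in V$, then $\phi(g')=\phi(v)\phi(g)$ with $\phi(v)\in\check{V}$, so the outer class is unchanged. Injectivity is then immediate from $V=\phi^{-1}(\check{V})$: if $\phi(g_1)=\check{v}\phi(g_2)$ with $\check{v}\in\check{V}$, then $\phi(g_1g_2^{-1})\in\check{V}$, whence $g_1g_2^{-1}\in V$.

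For surjectivity, given $\check{g}\in T_{\check{G}}(\check{U},\check{V})$, the coset $\check{V}\check{g}$ is a non-empty open subset of $\check{G}$, so by density of $\phi(G)$ there exists $g\in G$ with $\phi(g)=\check{v}\check{g}$ for some $\check{v}\in\check{V}$. Direct computation gives $\phi(g)\check{U}\phi(g)^{-1}=\check{v}\check{V}\check{v}^{-1}=\check{V}$, and then the general identity $gUg^{-1}=g\phi^{-1}(\check{U})g^{-1}=\phi^{-1}\bigl(\phi(g)\check{U}\phi(g)^{-1}\bigr)=\phi^{-1}(\check{V})=V$ shows $g\in T_G(U,V)$ and $\phi_\ast([g])=[\check{g}]$. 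The only subtle point — and thus the ``main obstacle'', though a mild one — is the last displayed identity, which crucially uses that $U$ and $V$ are the \emph{full} preimages of $\check{U}$ and $\check{V}$ rather than only being mapped into them; the argument would fail without this hypothesis, since $\phi$ need not be injective.
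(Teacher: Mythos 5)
Your proof is correct, and it reaches the conclusion by a genuinely more direct route than the paper. The paper's own argument first transfers non-emptiness using exactly your surjectivity trick (an element $x\in\check{G}$ with $x\check{U}x^{-1}=\check{V}$ gives the open coset $x\check{U}$, which must meet the dense subgroup $\phi(G)$, and any preimage of a point in the intersection transports $U$ onto $V$), but then, instead of verifying well-definedness and injectivity by hand, it invokes the torsor structure: $T_G(U,V)^{\mathrm{out}}$ is a free transitive $N_G(V)/V$-set and $T_{\check{G}}(\check{U},\check{V})^{\mathrm{out}}$ a free transitive $N_{\check{G}}(\check{V})/\check{V}$-set, so the bijection reduces to showing that $\phi$ induces an isomorphism $N_G(V)/V\to N_{\check{G}}(\check{V})/\check{V}$, which is proved by descending to a finite quotient $\check{G}/\check{N}$ for $\check{N}$ an open normal subgroup of $\check{G}$ contained in $\check{V}$, with $N=\phi^{-1}(\check{N})$. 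You replace the normalizer/finite-quotient step by two observations: density of $\phi(U)$ in $\check{U}$ (so conjugation identities pass to closures, giving well-definedness), and the full-preimage identity $gUg^{-1}=\phi^{-1}\bigl(\phi(g)\check{U}\phi(g)^{-1}\bigr)$ together with $V=\phi^{-1}(\check{V})$ (giving injectivity and the surjectivity computation); your remark that this is where the hypothesis $U=\phi^{-1}(\check{U})$, $V=\phi^{-1}(\check{V})$ enters essentially is exactly right. What the paper's route buys is the explicit identification of both sides as torsors under the respective normalizer quotients (in the spirit of the torsor statements used elsewhere in that section); what yours buys is a shorter, self-contained element-level argument with no appeal to finite quotients. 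Note that, like the paper's proof, you treat transporter classes as cosets $Vg$ (respectively $\check{V}\check{g}$) of group elements rather than of inner automorphisms, which is consistent with how the lemma is stated and applied.
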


\begin{proof}If $T_G(U,V)$ is non empty, of course, the same is true for
$T_{\check{G}}(\check{U},\check{V})$. Let us show that, if $T_{\check{G}}(\check{U},\check{V})$ is non empty, also $T_G(U,V)$ is non empty.
Let $x\in\check{G}$ be such that $x\check{U}x^{-1}=\check{V}$. Every element in the open coset $x \check{U}$ then has the same property.
Since $\phi$ has dense image, the set $x \check{U}\cap\phi(G)$ is nonempty. 
For $y\in x \check{U}\cap\phi(G)$, let then $\tilde{y}\in\phi^{-1}(y)$. We clearly have $\tilde{y} U\tilde{y}^{-1}=V$.
So, in the proof of the lemma, we can assume that both transporters $T_G(U,V)$ and $T_{\check{G}}(\check{U},\check{V})$ are non empty.

There is a natural map $T_G(U,V)^\mathrm{out}\to T_{\check{G}}(\check{U},\check{V})^\mathrm{out}$ whose domain is a transitive free $N_G(V)/V$-set 
and whose codomain a transitive free $N_{\check{G}}(\check{V})/\check{V}$-set. Therefore, in order to prove that this map is bijective, 
it is enough to show that the natural homomorphism $N_G(V)/V\to N_{\check{G}}(\check{V})/\check{V}$, induced by $\phi$, is an isomorphism.

Let $\check{N}$ be an open normal subgroup of $\check{G}$ contained in $\check{V}$ and put $N:=\phi^{-1}(\check{N})$. Since $\phi$ has dense
image, we can identify the quotient $G/N$ with the quotient $\check{G}/\check{N}$ and the quotient $V/N$ with the quotient $\check{V}/\check{N}$.
There is then a series of natural isomorphisms:
\[N_G(V)/V\cong N_{G/N}(V/N)\left/(V/N)\right.\cong N_{\check{G}/\check{N}}(\check{V}/\check{N})\left/(\check{V}/\check{N})\right.
\cong N_{\check{G}}(\check{V})/\check{V}.\]
\end{proof} 
 
\begin{lemma}\label{Royden}There is a natural bijection:
\[1\mbox{-}\Isom_\C(\cM^\l_\C\!\!\fatslash {\mathcal Z}^\l,\cM^\mu_\C\!\!\fatslash {\mathcal Z}^\mu)\stackrel{\sim}{\to}
T_{\G(S)/Z}(\G^\l/Z^\l,\G^\mu/Z^\mu)^\mathrm{out}.\]
\end{lemma}

\begin{proof}It is clear that an inner automorphism of $\G(S)/Z$ which maps $\G^\l/Z^\l$ onto $\G^\mu/Z^\mu$ induces an automorphism
of the Teichm\"uller space $\cT(S)$ which descends to a $1$-isomorphism between $\cM^\l_\C\!\!\fatslash {\mathcal Z}^\l$ and 
$\cM^\mu_\C\!\!\fatslash {\mathcal Z}^\mu$ and that two such inner automorphisms of $\G(S)/Z$ which differ by an inner automorphism
of $\G^\mu/Z^\mu$ induce the same $1$-isomorphism.

Thus, we only have to show that every $1$-isomorphism between $\cM^\l_\C\!\!\fatslash {\mathcal Z}^\l$ and 
$\cM^\mu_\C\!\!\fatslash {\mathcal Z}^\mu$ is induced by an inner automorphism of $\G(S)/Z$, which is unique modulo $\Inn(\G^\mu/Z^\mu)$. 

By the Theorem in Section~2  and Corollary~1 in Section~4 of \cite{EK}, a $1$-isomorphism between 
$\cM^\l_\C\!\!\fatslash {\mathcal Z}^\l$ and $\cM^\mu_\C\!\!\fatslash {\mathcal Z}^\mu$ lifts to a self-isometry of the Teichm\"uller space $\cT(S)$, 
which is unique modulo covering transformations of $\cT(S)\to\cM^\mu_\C\!\!\fatslash {\mathcal Z}^\mu$. 
The restriction to non-exceptional types of surfaces in the hypothesis of the Theorem in Section~2 of \cite{EK}  is not necessary in our situation, since we are already assuming that $(g,n)=(g',n')$. 
By Corollary~3 in Section~4 of \cite{EK}, this self-isometry is induced by an element of $\Inn(\G(S)/Z)$
and the group of covering transformations of $\cT(S)\to\cM^\mu_\C\!\!\fatslash {\mathcal Z}^\mu$ identifies with $\Inn(\G^\mu/Z^\mu)$. 
\end{proof}

 \begin{proof}[Proof of Theorem~\ref{mainanabelian3}]
By Lemma~\ref{transportercomp}, if we let $\G^\l:=\kG^\l\cap\G(S)$ and $\G^\mu:=\kG^\mu\cap\G(S)$, there is a natural bijection of finite sets:
\[T_{\G(S)/Z}(\G^\l/Z^\l,\G^\mu/Z^\mu)^\mathrm{out}\stackrel{\sim}{\to}T_{\kG(S)/Z}(\kG^\l/Z^\l,\kG^\mu/Z^\mu)^\mathrm{out}.\]
Theorem~\ref{mainanabelian3} now follows from Theorem~\ref{mainanabelian2}, the above bijection and Lemma \ref{Royden}. 
\end{proof}

\begin{proof}[Proof of Theorem~\ref{mainanabelian}] Item (ii) of the theorem immediately follows from Theorem~\ref{slim}. 
Let us then prove the first item of the theorem under the additional hypothesis that all automorphisms of $\cM^\l_\k$ and $\cM^\mu_\k$ 
are defined over $\k$. By Theorem in Section~2 of \cite{EK}, the set $1\mbox{-}\Isom_\k(\cM^\l_\k,\cM^\mu_\k)$ is empty unless $S=S'$, 
except, possibly, for the exceptional cases: $S=S_{1,2}$ or $S_{0,5}$ and $S'=S_{0,5}$ or $S_{1,2}$, $S=S_2$ or $S_{0,6}$ and 
$S'=S_{0,6}$ or $S_2$ (note that we are assuming that $S,S'\neq S_{0,4}$). 

Similarly, since an element $f\in\Isom^\ast(\kG^\l,\kG^\mu)$ induces an isomorphism $f_\ast\co\kC(S)\to\kC(S')$, from Theorem~\ref{nonisomorphism},
it follows that, except for the same exceptional cases considered above, $\Isom_{G_\k}^\ast(\kG^\l,\kG^\mu)^\mathrm{out}$ is also empty unless $S=S'$. 

The exceptional cases can also be reduced to the case $S=S'$. For the exceptional cases such that $d(S)=d(S')=2$, let us observe that, if $Z^\l\neq\{1\}$, 
then also $Z^\mu\neq\{1\}$, so that $S=S'=S_{1,2}$. If instead $Z^\l=\{1\}$, then $Z^\mu=\{1\}$ and both $\kG^\l$, $\kG^\mu$ identify with open
subgroups of $\kG(S_{0,5})$ and $\cM^\l$, $\cM^\mu$ with level structures over $\cM(S_{0,5})$. For the case $d(S)=d(S')=3$, we can argue 
in a similar way.

We can then assume $S=S'\neq S_{0,4}$. By Theorem~\ref{mainanabelian3}, there is a natural bijection~(\ref{erased}) of finite sets:
\[\Isom_{G_\k}^\ast(\kG^\l/ Z^\l,\kG^\mu/ Z^\mu)^\mathrm{out}\stackrel{\sim}{\to}1\mbox{-}\Isom_\k(\cM^\l_\k\!\!\fatslash Z^\l,\cM^\mu_\k\!\!\fatslash Z^\mu).\]

Moreover, since the sets $\Isom_\k(\cM^\l_\k,\cM^\mu_\k)$ and $\Isom(\kG^\l,\kG^\mu)$ are both nonempty or empty according to whether 
$Z^\l=Z^\mu$ or $Z^\l\neq Z^\mu$, we can also assume that $Z^\l=Z^\mu$. 

With all the above assumptions, by (ii) of Lemma~\ref{1-2-aut}, $1\mbox{-}\Isom_\k(\cM^\l_\k ,\cM^\mu_\k)$ is a trivial 
$H_1(\kG^\mu/Z^\mu,Z^\mu)$-torsor over the set $1\mbox{-}\Isom_\k(\cM^\l_\k\!\!\fatslash Z^\l,\cM^\mu_\k\!\!\fatslash Z^\mu)$. 
Similarly, by Lemma~\ref{kernelautpro}, $\Isom_{G_\k}^\ast(\kG^\l,\kG^\mu)^\mathrm{out}$ is a trivial $H_1(\kG^\mu/Z^\mu,Z^\mu)$-torsor 
over the set $\Isom_{G_\k}^\ast(\kG^\l/ Z^\l,\kG^\mu/ Z^\mu)^\mathrm{out}$. 
These torsor structures are clearly compatible with each other and with the bijection~(\ref{erased}). Therefore, we get a natural bijection:
\[1\mbox{-}\Isom_\k(\cM^\l_\k,\cM^\mu_\k)\stackrel{\sim}{\to}\Isom_{G_\k}^\ast(\kG^\l,\kG^\mu)^\mathrm{out}.\]

This concludes the proof of Theorem~\ref{mainanabelian} for the case in which all automorphisms of $\cM^\l_\k$ and $\cM^\mu_\k$ are defined over $\k$.

The general case follows from standard descent techniques. Let $\k'$ be a finite Galois extension of $\k$ such that all automorphisms of 
$\cM^\l_\k$ and $\cM^\mu_\k$ are defined over $\k'$. By the previous part of the proof, there is a natural bijection:
\[1\mbox{-}\Isom_{\k'}(\cM^\l_\k,\cM^\mu_\k)\stackrel{\sim}{\to}\Isom_{G_{\k'}}^\ast(\kG^\l,\kG^\mu)^\mathrm{out}\] 
and $1\mbox{-}\Isom_\k(\cM^\l_\k,\cM^\mu_\k)$ identifies with the subset of invariants for the natural action of the finite Galois group $G_{\k'/\k}$ on
$1\mbox{-}\Isom_{\k'}(\cM^\l_\k,\cM^\mu_\k)$. Since a similar statement is also true for the right hand side of the bijection above, the conclusion follows.
\end{proof}

\subsection{Automorphisms of arithmetic procongruence mapping class groups}In the introduction,
we defined $\kG(S)_\Q:=\check{\pi}_1(\cM(S)_\Q,\ol{\xi})$ to be the \emph{arithmetic procongruence mapping class group}. More generally, 
we let $\kG^\l_\k:=\check{\pi}_1(\cM^\l_\k,\ol{\xi})$. Note that, by the geometric interpretation of the procongruence curve complex $\kC(S)$ 
given in Section~\ref{curvecomplexint}, there is a natural action of $\kG(S)_\Q$ on $\kC(S)$. For a number field $\k$, the group $\kG^\l_\k$ identifies 
with an open subgroup of $\kG(S)_\Q$. There is then also an action of $\kG^\l_\k$ on $\kC(S)$.

\begin{definition}\label{decpreservingdef}For $U$ an open subgroup of $\kG(S)_\Q$, let $\Aut^\ast(U)$ be the closed subgroup 
of $\Aut(U)$ consisting of those automorphisms which preserve the set of stabilizers $\{U_\g\}_{\g\in\hL(S)_0}$ for the action 
of $U$ on $\kC(S)$. 
\end{definition}

We then have:

\begin{theorem}\label{absoluteanabelian}For $d(S)>1$, let $U$ be an open normal subgroup of the arithmetic procongruence mapping class group 
$\kG(S)_\Q$. There is then a short exact sequence:
\[1\to \Hom(U/Z(U),Z(U))\to\Aut^\ast(U)\to\Inn(\kG(S)_\Q)\to 1.\]
In particular, for $S\neq S_{1,2},S_{2}$, we have:
\[\Aut^\ast(U)=\Inn(\kG(S)_\Q)\cong\kG(S)_\Q.\]
\end{theorem}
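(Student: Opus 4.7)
The plan is to mimic the strategy of Theorem \ref{faithfulness}(ii), replacing the congruence level $\kG^\l$ by the open subgroup $U$ of $\kG(S)_\Q$, and then to incorporate the Galois structure via Theorem \ref{mainanabelian}. First I would extend Proposition \ref{inertiapreservingprop} to $U$: since $U \cap \kG(S)$ is open in $\kG(S)$, Corollary \ref{centralizers multitwists} and Remark \ref{grouptheoretic} imply that $\gamma \mapsto U_\gamma$ is an injective continuous $U$-equivariant map $\hL(S)_0 \hookrightarrow \cG(U)$. Hence any $f \in \Aut^\ast(U)$ permutes the stabilizers $\{U_\gamma\}_{\gamma \in \hL(S)_0}$ and, by the flag property of $\kC(S)$ (two profinite curves are joined by an edge iff the centers of the corresponding stabilizers commute), extends to a continuous action on all of $\kC(S)$, giving a continuous homomorphism $\psi \co \Aut^\ast(U) \to \Aut(\kC(S))$. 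A computation parallel to Lemma \ref{kernelautpro}, together with the arithmetic extension of Theorem \ref{slim} identifying $\ker(U \to \Aut(\kC(S)))$ with $Z(U)$ (valid since $d(S) > 1$ excludes the $S_{0,4}$ pathology), identifies $\ker \psi$ with the continuous derivations $\Der(U, Z(U))$, which coincide with $\Hom(U, Z(U))$ because $Z(U)$ is central.

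Since $U$ is normal in $\kG(S)_\Q$, conjugation induces a natural homomorphism $\Inn(\kG(S)_\Q) \to \Aut^\ast(U)$ whose composition with $\psi$ is faithful (an arithmetic extension of Theorem \ref{faithfulness}(i), using that $G_\Q$ acts faithfully on $\kG(S)$ by Corollary~7.6 in \cite{[B3]}, hence $Z(\kG(S)_\Q) \subseteq Z(\kG(S))$). It thus suffices to prove $\psi(\Aut^\ast(U))$ coincides with $\psi(\Inn(\kG(S)_\Q))$ in $\Aut(\kC(S))$: then $\Aut^\ast(U)/\Hom(U, Z(U))\stackrel{\sim}{\to}\Inn(\kG(S)_\Q)$, giving the required exact sequence. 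Passing to the pants complex $\kC_P(S)$ via Theorem \ref{autinjection}, any $f \in \Aut^\ast(U)$ agrees on $\kC_P(S)$, up to an element of $\Inn(\kG(S))$, with the action determined by an orientation character $\chi(f) \in \prod_{O(S)}\{\pm 1\}$ provided by Theorem \ref{completepantsrigidity}. The task reduces to realizing every $\chi(f)$ by conjugation by an element of $\kG(S)_\Q$.

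Here I would apply Theorem \ref{mainanabelian} to $\cM^\l = \cM^\mu$ equal to the level structure of $U$ (defined over some number field $\k \subset \Q$): it identifies $G_\k$-equivariant $\ast$-automorphisms of $U$ modulo inner automorphisms with the $\k$-automorphisms of the stack, which are precisely the conjugations by elements of $\kG(S)_\Q$. Complex conjugation in $G_\Q$ acts on $\kC_P(S)$ by simultaneously reversing the orientations of all profinite Farey subgraphs (since it reverses the complex structure on $\P^1(\C)$, and hence on every irreducible component of every Fulton curve), while inner automorphisms of $\kG(S)$ permute the orbit set $O(S)$; combined, these actions of $\kG(S)_\Q$ realize exactly the orientation characters in $\Im(\chi)$. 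The main obstacle is precisely this step: ruling out \emph{a priori} exotic orientation patterns produced by abstract $\ast$-automorphisms of $U$ but not realized arithmetically, which requires the full anabelian strength of Theorem \ref{mainanabelian} rather than just the rigidity of $\kC_P(S)$. The ``in particular'' statement for $S \neq S_{1,2}, S_2$ then follows from the arithmetic extension of Theorem \ref{slim}, which gives $Z(\kG(S)_\Q) = Z(U) = \{1\}$ in these cases, whence $\Hom(U, Z(U)) = 0$ and $\Inn(\kG(S)_\Q) \cong \kG(S)_\Q$.
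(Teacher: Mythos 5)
Your proposal identifies the right ingredients (rigidity of $\kC_P(S)$ plus the anabelian theorem), but two of its load-bearing steps are unsupported, and the paper's proof is organized precisely to avoid them. First, your kernel computation and the claimed injectivity of $\Inn(\kG(S)_\Q)\to\Aut(\kC(S))$ rest on an ``arithmetic extension of Theorem~\ref{slim}'', i.e.\ on the assertion that an element of $U$ (or of $\kG(S)_\Q$) acting trivially on $\kC(S)$ is central. Neither Theorem~\ref{slim} nor Corollary~7.6 of \cite{[B3]} gives this: an element $x$ lying over $\sigma\in G_\k$ conjugates each profinite Dehn twist by $\tau_\g\mapsto\tau_{x(\g)}^{\chi(\sigma)}$ ($\chi$ the cyclotomic character), so fixing every simplex does not force $x$ to centralize $\kPG(S)$, and the argument of Theorem~\ref{faithfulness}(i) breaks down; for the same reason your edge criterion ``the centers of $U_{\g_0}$ and $U_{\g_1}$ commute'' is problematic, since $Z(U_\g)$ need not contain any power of $\tau_\g$. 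The paper never proves (or needs) faithfulness of the arithmetic action on $\kC(S)$; instead it shows $U\cap\kG(S)$ is characteristic in $U$ (Nakamura's Lemma~1.6.2 argument), that $Z(U)=Z(U\cap\kG(S))$ (Lemma~\ref{centerarithmetic}), and that the $\ast$-condition transfers between $\{U_\g\}$ and $\{(U\cap\kG(S))_\g\}$ via the characteristic subgroups $Z(U\cap\kG(S)_\s)$ (Lemmas~\ref{decompstructure} and~\ref{restrictionmap}), so that all kernel/faithfulness questions are settled inside the geometric group by Theorem~\ref{slim} and Proposition~7.9 of \cite{[B3]}.

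Second, the surjectivity step is where your sketch stops short. Theorem~\ref{mainanabelian} (and the theorem actually used by the paper, Theorem~\ref{mainanabelian2}(ii)) takes as input a \emph{$G_\k$-equivariant $\ast$-isomorphism between open subgroups of the geometric group} $\kG(S)$; an abstract $f\in\Aut^\ast(U)$ is not such a datum, so ``applying Theorem~\ref{mainanabelian} to the level structure of $U$'' is a type mismatch. Bridging this gap is the substance of the paper's proof: restrict $f$ to the characteristic subgroup $U\cap\kG(S)$, use $\Aut(G_\k)=\Inn\,G_\Q$ (Neukirch--Uchida) and Corollary~1.5.7 of \cite{[N]} to arrange $G_\k$-equivariance after twisting by an element of $\kG(S)_\Q$, and then quote Theorem~\ref{mainanabelian2}(ii), which identifies $\Aut^\ast_{G_\k}(U\cap\kG(S))$ with a transporter in $\kG(S)$; the split sequence $1\to\Aut^\ast_{G_\k}(U\cap\kG(S))\to\Aut^\ast(U)\to\Inn\,G_\Q\to 1$ then yields the statement, with no orientation-character analysis at this stage (that analysis lives inside the proof of Theorem~\ref{mainanabelian2}, where Mochizuki's theorem applied to the Fulton curves is what shows Farey edges and orientations are preserved). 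Your endgame is also miscalibrated: $\Inn(\kG(S)_\Q)$ realizes only the two constant orientation characters, so the task is not to ``realize every $\chi(f)$'' but to exclude non-constant ones, and even before that, the passage from $\psi(f)\in\Aut(\kC(S))$ to an automorphism of $\kC_P(S)$ is not automatic, since an automorphism of the dual graph need not preserve the Farey edges inside each clique. You correctly flag this as the main obstacle, but deferring it to Theorem~\ref{mainanabelian} without the characteristic-subgroup and Galois-rigidity steps leaves the proof incomplete.
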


\begin{remark}\label{absan}Note that $\Inn(\kG(S)_\Q)/\Inn(U)\cong\Aut_\k(\cM^U)$, where $\cM^U$ and $\k$ are, respectively, the \'etale
covering of $\cM(S)_\Q$ and the field extension of $\Q$ determined by the open subgroup $U$ of $\kG(S)_\Q$. Therefore, the last identity
of Theorem~\ref{absoluteanabelian} implies the isomorphism $\Out^\ast(U)\cong\Aut_\k(\cM^U)$, which can be interpreted as an absolute 
anabelian property for $U$. 
\end{remark}

\begin{proof}Let us denote by $\pi\co\kG(S)_\Q\to G_\Q$ the augmentation map. 
The image $\pi(U)$ is an open normal subgroup of $G_\Q$ and identifies with the absolute Galois group $G_\k$ of a finite normal 
extension $\k$ of $\Q$.  The kernel $U\cap\kG(S)$ of the augmentation map restricted to $U$ is a (topologically) finitely generated subgroup. 
The same argument of Lemma~1.6.2 in \cite{[N]} then implies that $U\cap\kG(S)$ is a characteristic subgroup of $U$. 
Therefore, there are two natural representations:
\[\Aut(U)\to\Aut(G_\k)\hspace{0.6cm}\mbox{and}\hspace{0.6cm}\Aut(U)\to\Aut(U\cap\kG(S)).\]

Note that, by the Neukirch-Uchida theorem (cf.\ Theorem~12.2.1 in \cite{NSW}), since $\Q\subseteq\k$ is a normal extension, 
there holds $\Aut(G_\k)=\Inn(G_\Q)$. In particular, the natural representation $\Aut(U)\to\Aut(G_\k)=\Inn(G_\Q)$ is surjective. For the other one, we have:

\begin{lemma}\label{characteristic2}If the center $Z(U)$ of $U$ is trivial, then the natural representation $\Aut(U)\to\Aut(U\cap\kG(S))$ is injective.
\end{lemma}

\begin{proof}By Proposition~7.9 in \cite{[B3]}, for $Z(U)=\{1\}$, the natural representation $\Inn(U)\to\Aut(U\cap\kG(S))$ is injective.
The conclusion then follows from Lemma~\ref{grouplemma}.
\end{proof}

Let us denote by $\Aut_{G_\k}(U)$ the kernel of the representation $\Aut(U)\to\Aut G_\k$ and put 
$\Aut_{G_\k}^\ast(U):=\Aut_{G_\k}(U)\cap\Aut^\ast(U)$. By Definition~\ref{inertiapreservingdef},  
$\Aut^\ast(U\cap\kG(S))$ is the closed subgroup of $\Aut(U\cap\kG(S))$ consisting of those automorphisms which preserve
the set of subgroups $\{(U\cap\kG(S))_\g\}_{\g\in\hL(S)_0}$. Then, we have:

\begin{lemma}\label{restrictionmap}The image of $\Aut_{G_\k}^\ast(U)$ by the homomorphism $\Aut(U)\to\Aut(U\cap\kG(S))$ 
is the subgroup $\Aut^\ast_{G_\k}(U\cap\kG(S))$ of $\Aut^\ast(U\cap\kG(S))$ which consists of those automorphisms which commute 
with $G_\k$ modulo inner automorphisms.
\end{lemma}

\begin{proof}By Corollary~1.5.7 in \cite{[N]}, the image of
$\Aut_{G_\k}(U)$ by $\Aut(U)\to\Aut(U\cap\kG(S))$ is precisely the subgroup $\Aut_{G_\k}(U\cap\kG(S))$ 
of $\Aut(U\cap\kG(S))$ which consists of those automorphisms which commute with $G_\k$ modulo inner automorphisms.

Since $U_\s\cap\kG(S)=(U\cap\kG(S))_\s=U\cap\kG(S)_\s$, for all $\s\in\kC(S)$, it is also clear that the epimorphism 
$\Aut_{G_\k}(U)\to\Aut_{G_\k}(U\cap\kG(S))$ maps the subgroup $\Aut^\ast_{G_\k}(U)$ to the subgroup $\Aut^\ast_{G_\k}(U\cap\kG(S))$. 
In order to prove that this is onto, we have to show that an element $f\in\Aut(U)$ which preserves the set of subgroups $\{U\cap\kG(S)_\g\}_{\g\in\hL(S)_0}$ 
also preserves the set of stabilizers $\{U_\g\}_{\g\in\hL(S)_0}$. For this, we need the lemma:

\begin{lemma}\label{decompstructure}\leavevmode
\begin{enumerate}
\item For $\s\in\kC(S)$, the center $Z(U\cap\kG(S)_\s)$ of $U\cap\kG(S)_\s$ is generated by $\hI_\s\cap U$ and by,
if $S\ssm\s=S_{1,1}$ or $S_{1,2}$, the hyperelliptic involution.
\item $Z(U\cap\kG(S)_\s)$ is  a characteristic subgroup of the stabilizer $U_\s$.
\end{enumerate}
\end{lemma}

\begin{proof}(i): This follows from Theorem~\ref{stabilizers} and Corollary~6.2 in \cite{[B3]}.
\smallskip

\noindent
(ii): As above, the same argument of Lemma 1.6.2 in \cite{[N]} implies that $U\cap\kG(S)_\s$ is a characteristic subgroup of 
$U_\s$. Since the center $Z(U\cap\kG(S)_\s)$ is a characteristic subgroup of $U\cap\kG(S)_\s$, the claim follows. 
\end{proof}

The conclusion  of Lemma \ref{restrictionmap} then follows because, by (ii) of Lemma~\ref{decompstructure}, the given $f$   
preserves the set of subgroups $\{Z(U\cap\kG(S)_\g)\}_{\g\in\hL(S)_0}$ and so preserves their normalizers in $U$ which, 
by (i) of Lemma~\ref{decompstructure} and Corollary~\ref{centralizers multitwists}, is just the set of stabilizers $\{U_\g\}_{\g\in\hL(S)_0}$.
\end{proof}
 
 We need one more lemma:
 
\begin{lemma}\label{centerarithmetic}For every open subgroup $U$ of $\kG(S)_\Q$, we have:
\[Z_{\kG(S)_\Q}(U)=Z_{\kG(S)}(U\cap\kG(S)).\]
In particular, $Z(U)=Z(U\cap\kG(S))$.
\end{lemma}
\begin{proof}It is well known that, for every open subgroup $V$ of $G_\Q$, we have $Z_{G_\Q}(V)=\{1\}$ (cf.\ Corollary~12.1.6 in \cite{NSW}). 
Therefore, at least, we have $Z_{\kG(S)_\Q}(U)\subseteq Z_{\kG(S)}(U\cap\kG(S))$. 
The centralizer of $U\cap\kG(S)$ in $\kG(S)$, if not trivial, is generated by the hyperelliptic involution which is defined over 
$\Q$ and thus is invariant under the action of $G_\Q$. The conclusion then follows.
\end{proof}

By Lemma~\ref{centerarithmetic}, we have $Z(U\cap\kG(S))=Z(U)$. Hence
$Z(U)$ is a characteristic subgroup of $U$ of order at most $2$. By Lemma~\ref{kernelautpro}, there is then an exact sequence:
\[1\to \Hom(U/Z(U),Z(U))\to\Aut^\ast(U)\to\Aut^\ast(U/Z(U)).\]
Therefore, it is enough to prove Theorem~\ref{absoluteanabelian} for the case $Z(U)=\{1\}$.

By Lemma~\ref{characteristic2} and Lemma~\ref{restrictionmap}, for $Z(U\cap\kG(S))=Z(U)=\{1\}$, there is a split short exact sequence:
\[1\to\Aut^\ast_{G_\k}(U\cap\kG(S))\to\Aut^\ast(U)\to\Inn G_\Q\to 1.\]
Since, by Theorem~\ref{mainanabelian2}, we have that $\Aut^\ast_{G_\k}(U\cap\kG(S))=\Inn(\kG(S))$, we conclude
that $\Aut^\ast(U)=\Inn(\kG(S)_\Q)$.
\end{proof}

\begin{remark}For $S=S_{0,n}$ and $U=\hG(S)_\Q$, a $\ast$-free version of the isomorphism $\Out^\ast(U)\cong\Aut_\k(\cM^U)$ (cf.\
Remark~\ref{absan}) was proved by Nakamura and Tamagawa (cf.\ Corollary~C in \cite{IN}).
\end{remark}


\begin{thebibliography}{99}


\bibitem{ACV}
D. Abramovich, A. Corti and A. Vistoli. \textsl{Twisted bundles and admissible covers}. 
Comm. Algebra  {\bf 31} (2003), 3547--3618.



\bibitem{Asada}M. Asada. \textsl{The faithfulness of the monodromy representations associated
with certain families of algebraic curves.} J. Pure Appl. Algebra {\bf 159} (2001), 123--147. 

\bibitem{Belyi}G.V. Bely\v{\i} \textsl{On Galois extension of a maximal cyclotomic field}. 
Math. USSR Izv. {\bf 14} (1980), 247--256.

\bibitem{[B1]} M. Boggi. \textsl{Profinite Teichm{\"u}ller theory.} Math. Nachr. {\bf 279} (2006), 953--987.

\bibitem{BP} M. Boggi, P. Lochak. \textsl{Profinite complexes of curves, their automorphisms and anabelian properties of moduli stacks 
of curves.} \url{https://arxiv.org/abs/0706.0859}.  

\bibitem{hyp} M. Boggi. \textsl{The congruence subgroup property for the hyperelliptic 
Teichm\"uller modular group: the open surface case}. Hiroshima Math. J. {\bf 39} (2009), 351--362.  

\bibitem{[B3]} M. Boggi. \textsl{On the procongruence completion of the Teichm\"uller modular groups.}
Trans. Amer. Math. Soc. {\bf 366} (2014), 5185--5221. 

\bibitem{[B4]} M. Boggi. \textsl{Galois coverings of moduli spaces of curves and loci of curves with symmetry.} 
Geom. Dedicata {\bf 168} (2014), 113--142. 

\bibitem{[BZ1]} M. Boggi, P. Zalesskii. \textsl{Characterizing closed curves on Riemann surfaces via 
homology groups of coverings.}  Int. Math. Res. Not. 2017, 944--965.

\bibitem{[BZ2]} M. Boggi, P. Zalesskii. \textsl{A restricted Magnus property for profinite surface groups.}
Trans. Amer. Math. Soc. {\bf 371} (2019), 729--753.

\bibitem{congtop}M. Boggi. \textsl{Congruence topologies on the mapping class groups}. J. of Algebra {\bf 546} (2020), 518--552. 

\bibitem{Drinfeld} V.G. Drinfeld. \textsl{On quasitriangular quasi-Hopf algebras and a group closely connected with $\mathrm{Gal}(\ol{\Q}/\Q)$.}
Leningrad Math. J. Vol. {\bf 2} (1991),  829--860.

\bibitem{EK} C.J. Earle, I. Kra. \textsl{On isometries between Teichm\"uller spaces.}
Duke Math. J. Vol {\bf 41}, No. 3 (1974), 583--591.

\bibitem{[FM]}
B. Farb and D. Margalit. \textsl{A primer on mapping class groups}. Princeton Math. Series, Vol. {\bf 49}, 2012. 


\bibitem{FG}L. Funar, R. Gelca. \textsl{On the Groupoid of Transformations of Rigid Structures on Surfaces.}
J. Math. Sci. Univ. Tokyo {\bf 6} (1999),  599--646.

\bibitem{[G]}A. Grothendieck. \textsl{Brief an G. Faltings}. In {\it Geometric Galois Actions I}, 
edited by L. Schneps and P. Lochak, London Math. Soc. 242, 1997, 49--58.

\bibitem{[GKM]} A. Gibney, S. Keel, I. Morrison. \textsl{Towards the ample cone of $\bM_{g,n}$.}
J. Amer. Math. Soc.  {\bf 15} (2002), 273--294.

\bibitem{HS} D. Harbater, L. Schneps. \textsl{Fundamental groups of moduli and the Grothendieck-Teichm\"uller group.} 
Trans. Amer. Math. Soc. {\bf 352} (2000), 3117--3148.



\bibitem{Harvey} W.J. Harvey. \textsl{Boundary structure of the modular group}. In \emph{Riemann surfaces and related topics:
Proceedings of the 1978 Stony Brook Conference}. (State Univ. New York, Stony Brook, N.Y., 1978),
Ann. of Math. Stud. {\bf 97}, Princeton Univ. Press, Princeton, N.J., 1981, 245--251.

\bibitem{[HLS]} A. Hatcher, P. Lochak, L.Schneps. \textsl{On the Teichm{\"u}ller tower 
of mapping class groups.} J. Reine Angew. Math. {\bf 521} (2000), 1--24.

\bibitem{Hoshi}Y.\ Hoshi. \textsl{A Note on an Anabelian Open Basis for a Smooth Variety}. Tohoku Math. J. (2) 
{\bf 72} (2020), 537--550.

\bibitem{HM}Y.\ Hoshi, S.\ Mochizuki. \textsl{On the combinatorial anabelian geometry
of nodally nondegenerate outer representations}. Hiroshima Math. J.
 {\bf 41} (2011), 275--342.

\bibitem{HM2} Y.Hoshi, S.Mochizuki. \textsl{Topics surrounding the combinatorial anabelian 
geometry of hyperbolic curves I: inertia groups and profinite Dehn twists.} In {\em  Galois-Teichm\"uller theory and arithmetic geometry}, 
(H.Nakamura, F.Pop, L.Schneps, A. Tamagawa, ed). Selected papers based on the presentations at the workshop and conference, 
Kyoto, Japan, October 25–30, 2010. Tokyo, Mathematical Society of Japan,  
Advanced Studies in Pure Mathematics 63,  2012, 659--811.

\bibitem{Ihara} Y. Ihara. \textsl{Automorphisms of Pure Sphere Braid Groups and Galois Representations}.
In {\em The Grothendieck Festschrift}. (Cartier, P., Katz, N.M., Manin, Y.I., Illusie, L., Laumon, G., Ribet, K.A., ed), 
Modern Birkh\"auser Classics. Birkh\"auser, Boston, MA. \url{https://doi.org/10.1007/978-0-8176-4575-5_8}.

\bibitem{IN} Y. Ihara, H. Nakamura. \textsl{Some illustrative examples for anabelian geometry
in high dimensions}. In {\it Geometric Galois Actions I}, edited by L. Schneps and P. Lochak,
London Math. Soc.  242,  1997, 127--138.

\bibitem{[I1]} N.V. Ivanov. \textsl{Subgroups of Teichm{\"u}ller modular groups}.
Translations of Mathematical Monographs  115, AMS Publ., 1992.

\bibitem{[I2]} N.V. Ivanov. \textsl{Automorphisms of complexes of curves and of Teichm{\"u}ller spaces.}
Int. Math. Res. Not. IMRN {\bf 14} (1997), 651--666.




\bibitem{LNS} P. Lochak, H. Nakamura, L. Schneps. \textsl{On a new version of the Grothendieck-Teichm{\"u}ller group.} 
C. R. Acad. Sci. Paris, t. {\bf 325} (1997), 11--16.

\bibitem{[L]} F. Luo. \textsl{Automorphisms of the complex of curves.}
Topology {\bf 39} (2000), 283--298.


\bibitem{[M]} D. Margalit. \textsl{Automorphisms of the pants complex.}
Duke Math. J. {\bf 121} (2004), 457--479.

\bibitem{[Mo]} S. Mochizuki. \textsl{The local pro-$p$ anabelian geometry of curves.}
Invent. Math. {\bf 138} (1999), 319--423.

\bibitem{[McC]} J.D. McCarthy. \textsl{Automorphisms of surface mapping class groups:
A recent theorem of N.Ivanov.} Invent. Math. {\bf 84} (1986), 49--71.


\bibitem{[N]} H. Nakamura. \textsl{Galois rigidity of pure sphere braid groups and profinite calculus.}
J. Math. Sci. Univ. Tokyo {\bf 1} (1994), 72--136.

\bibitem{NSW} J.~Neukirch. A.~Schmidt. K.~Wingberg \textsl{Cohomology of Number Fields.}  
Grundlehren der mathematischen Wissenschaften 323, Springer Berlin, Heidelberg, 2008.

\bibitem{Noohi} B.~Noohi. \textsl{Fundamental groups of algebraic stacks.} J. Inst. Math. Jussieu {\bf 3} (2004), 69--103.


\bibitem{Oda}T. Oda. \textsl{Etale homotopy type of the moduli spaces of algebraic curves.} In {\it Geometric Galois Actions 1}, 
edited by L. Schneps and P. Lochak, London Math. Soc. 242, 1997, 113--126.



\bibitem{Ro}
M. Romagny. \textsl{Group actions on stacks and applications}. Michigan Math. J. {\bf 53} (2005), 209--236. 

 


\bibitem{ScSt} A. Schmidt, J. Stix. \textsl{Anabelian geometry with \'etale homotopy types.} Ann. of Math. (2) {\bf 184} (2016), 817--868.

\bibitem{stacks-project} The Stacks Project Authors. \textsl{Stacks Project.} http://stacks.math.columbia.edu, 2019

\bibitem{[Tr]}  M. Troyanov. \textsl{On the moduli space of singular Euclidean surfaces.}  
Handbook of Teichm\"uller theory, (A.Papadopoulos, Ed.) vol.1, 507--540, Eur. Math. Soc., Zurich, 2007. 

\bibitem{Wells} C. Wells. \textsl{Automorphisms of group extensions.}
Trans. Amer. Math. Soc. {\bf 155}  (1971), 189--194. 

\end{thebibliography}
\end{document}